\documentclass[12pt,letterpaper]{article}
\usepackage[utf8]{inputenc}

\usepackage{geometry} 
\usepackage{mathtools}
\usepackage{color,latexsym,amssymb,amsthm,graphicx,subfigure}
\usepackage{cite}
\usepackage[percent]{overpic}
\usepackage{tikz-cd}
\usepackage{enumerate}
\usepackage{float}

\let\eps\varepsilon

\newcommand{\CC}{\mathbb{C}}
\newcommand{\RR}{\mathbb{R}}

\newcommand{\tubes}{\mathbb{T}}
\newcommand{\tube}{T}

\newcommand{\lines}{\mathcal{L}}
\newcommand{\II}{\operatorname{II}}
\newcommand{\dist}{\operatorname{dist}}

\newcommand{\itemizeEqnVSpacing}{\rule{0pt}{1pt}\vspace*{-12pt}}

\newtheorem{thm}{Theorem}[section]
\newtheorem{lem}{Lemma}[section]
\newtheorem{cor}{Corollary}[section]

\newtheorem{conj}{Conjecture}[section]
\newtheorem{prop}{Proposition}[section]

\newtheorem*{techDiscretizedSeveriThm}{Theorem \ref{discretizedSeveri}, technical version}

\theoremstyle{remark}
\newtheorem{rem}{Remark}[section]
\newtheorem{defn}{Definition}[section]

\begin{document}

\title{A discretized Severi-type theorem with applications to harmonic analysis}
\author{Joshua Zahl\thanks{University of British Columbia, Vancouver BC. Supported by a NSERC Discovery Grant.}}
\maketitle
\begin{abstract}
In 1901, Severi proved that if $Z$ is an irreducible hypersurface in $\mathbb{P}^4(\mathbb{C})$ that contains a three dimensional set of lines, then $Z$ is either a quadratic hypersurface or a scroll of planes. We prove a discretized version of this result for hypersurfaces in $\mathbb{R}^4$. As an application, we prove that at most $\delta^{-2-\varepsilon}$ direction-separated $\delta$-tubes can be contained in the $\delta$-neighborhood of a low-degree hypersurface in $\mathbb{R}^4$. 

This result leads to improved bounds on the restriction and Kakeya problems in $\mathbb{R}^4$. Combined with previous work of Guth and the author, this result implies a Kakeya maximal function estimate at dimension $3+1/28$, which is an improvement over the previous bound of $3$ due to Wolff. As a consequence, we prove that every Besicovitch set in $\mathbb{R}^4$ must have Hausdorff dimension at least $3+1/28$. Recently, Demeter showed that any improvement over Wolff's bound for the Kakeya maximal function yields new bounds on the restriction problem for the paraboloid in $\mathbb{R}^4$. 
\end{abstract}
\section{Introduction}\label{introSection}
In \cite{S}, Severi classified projective hypersurfaces in $\mathbb{P}^4(\CC)$ that contain many lines.
\begin{thm}[Severi]\label{SeveriThm}
Let $Z\subset \mathbb{P}^4(\CC)$ be an irreducible hypersurface. Let $\Sigma$ be the set of lines contained in $Z$. Then
\begin{itemize}
\item If $\dim(\Sigma)=4$, then $Z$ is a hyperplane.
\item If $\dim(\Sigma)=3$, then $Z$ is either a quadratic hypersurface or a scroll of planes. 
\end{itemize}
\end{thm}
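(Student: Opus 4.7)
The plan is to analyze the incidence variety $I = \{(p,\ell) : p \in \ell \subset Z\}$. Its projection to $\Sigma$ has line fibers, so $\dim I = \dim\Sigma + 1$; in both cases the lines of $\Sigma$ cover $Z$ (since $\dim\Sigma + 1 > \dim Z$), and for a general smooth $p \in Z$ the fiber $C_p$ of lines in $Z$ through $p$ has dimension $\dim\Sigma - 2$. Any such line is tangent to $Z$ at $p$, so $C_p$ sits inside the $\mathbb{P}^2$ parametrizing lines through $p$ in the tangent hyperplane $T_p Z$.

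When $\dim\Sigma = 4$, $C_p$ has dimension $2$ and therefore equals this entire $\mathbb{P}^2$: every line through $p$ in $T_p Z$ lies in $Z$, so $T_p Z \subseteq Z$, and by irreducibility and equal dimension $Z = T_p Z$ is a hyperplane.

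When $\dim\Sigma = 3$, $C_p$ is a curve in the $\mathbb{P}^2$ of lines through $p$ in $T_p Z$. Writing $Z = \{F = 0\}$ with $d = \deg F$ and expanding $F(p + tv) = \sum_{i=1}^{d} t^i F_i(p,v)$, the curve $C_p$ is cut out in this $\mathbb{P}^2$ by the restrictions $F_2(p,\cdot), \ldots, F_d(p,\cdot)$, of degrees $2, \ldots, d$ in $v$. Let $m$ denote the degree of $C_p$ as a plane curve, and split on $m$. If $m = 1$, then $C_p$ is a line in $\mathbb{P}^2$, corresponding to a pencil of lines through $p$ whose union is a $2$-plane $\Pi_p \subset Z$; for any $p' \in \Pi_p$ to which the same analysis applies, the pencil of lines of $\Pi_p$ through $p'$ lies in $C_{p'}$ and exhausts its degree, so $\Pi_{p'} = \Pi_p$. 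Hence the planes $\Pi_p$ form a single $1$-parameter family covering $Z$, displaying $Z$ as a scroll of planes. If $m \ge 2$, then (since $F_2(p,\cdot)|_{T_p Z}$ is generically nonzero---otherwise $Z$ would be a hyperplane) the conic $Q_p \subset \mathbb{P}^2$ cut out by $F_2(p,\cdot)|_{T_p Z} = 0$ is $1$-dimensional and contains $C_p$, so $C_p = Q_p$; the Nullstellensatz then gives $F_i(p,\cdot)|_{T_p Z} \in (F_2(p,\cdot)|_{T_p Z})$ for every $i \ge 3$. The remaining task is to globalize this pointwise divisibility to force $F_3, \ldots, F_d \equiv 0$, so that $\deg Z = 2$ and $Z$ is a quadric.

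The main obstacle is this globalization step in the $m \ge 2$ case. I would attempt it using the projective second fundamental form of $Z$ at a smooth point, whose symbol modulo $F_1$ is exactly $F_2(p,\cdot)|_{T_p Z}$ and whose higher-order osculation invariants along lines of $C_p$ are governed by $F_3, F_4, \ldots$; the hypothesis $m \ge 2$ says that every line of $C_p$ is inflectional to arbitrarily high order along $Z$, and classical projective differential geometry then forces $Z$ to be a quadric. An alternative is a Gauss map argument: along any line of $C_p$ the tangent hyperplane $T_p Z$ must be constant, so the fibers of the Gauss map $p \mapsto T_p Z$ are positive-dimensional; combined with the conic structure of $C_p$, known dimension bounds on Gauss fibers then leave only the quadric possibility. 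Either route requires carefully tracking how the conic $Q_p$ varies with $p$, which is the genuine technical heart of the argument.
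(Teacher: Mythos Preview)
The paper does not prove Theorem~\ref{SeveriThm}; it is quoted as a classical result of Severi (1901), with pointers to Rogora and to Sharir--Solomon for modern treatments. So there is no ``paper's own proof'' to compare against, and your proposal should be read as an attempt at the classical argument itself.

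Your outline follows the standard incidence-variety setup, and the $\dim\Sigma = 4$ case is clean and correct. In the $\dim\Sigma = 3$ case, the dichotomy on the degree $m$ of $C_p$ is the right move, and the scroll case ($m=1$) is essentially fine: for general $p$ the degree of $C_p$ is constant by semicontinuity, so your argument that $\Pi_{p'}=\Pi_p$ for general $p'\in\Pi_p$ goes through and yields a one-parameter family of $2$-planes sweeping out $Z$.

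The genuine problem is in the quadric case. You correctly flag the globalization of the pointwise divisibility $F_i|_{T_pZ}\in (F_2|_{T_pZ})$ as the crux, but one of the two routes you propose is based on a false premise. You write that ``along any line of $C_p$ the tangent hyperplane $T_pZ$ must be constant,'' and hence the Gauss map has positive-dimensional fibers. This is not true for lines on a smooth quadric threefold: if $Q=\{\sum x_i^2=0\}\subset\mathbb{P}^4$ and $\ell\subset Q$ is a line, the tangent hyperplane at $q\in\ell$ is $\{\sum q_i x_i=0\}$, which rotates in a pencil as $q$ moves along $\ell$. So the Gauss map of a smooth quadric is generically finite, and your proposed Gauss-fiber argument cannot distinguish the quadric from anything else. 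The correct classical route goes through the second fundamental form: the hypothesis $C_p=Q_p$ for general $p$ says the second fundamental form is a nondegenerate (or rank~$\geq 2$) quadric, and one must show that the \emph{third} fundamental form vanishes, which is what forces $F_3,\ldots,F_d$ to vanish modulo $F_1,F_2$. This is the step that needs real work (it is what the references carry out), and neither of your sketches supplies it.
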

This theorem was later generalized to higher dimensions by Segre \cite{Se}. See \cite{R} for further discussion and \cite[Appendix A]{SS} for a modern (and English) proof. 

Severi's theorem allows us to control the set of directions of lines inside a hypersurface.
\begin{cor}\label{SeveriCor}
Let $Z\subset \mathbb{P}^4(\CC)$ be an irreducible hypersurface. Then the lines contained in $Z$ point in at most a two-dimensional set of directions.
\end{cor}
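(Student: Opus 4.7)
The plan is to apply Severi's theorem (Theorem \ref{SeveriThm}) case by case, where in each case I encode the ``direction'' of a line $\ell\subset\mathbb{P}^4(\CC)$ by choosing a hyperplane at infinity $H_\infty\cong\mathbb{P}^3(\CC)$ and looking at the point $\ell\cap H_\infty$. (For the finitely many components of $\Sigma$ that lie inside $H_\infty$ one can perturb the chart or handle them separately, since such a component contributes only a lower-dimensional set of directions.) With this setup, the ``set of directions'' of lines in $Z$ is the image of the rational direction map $\Sigma\dashrightarrow H_\infty$, and I need to show that this image has dimension at most $2$.

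If $\dim(\Sigma)\le 2$, there is nothing to prove since the image of the direction map has dimension at most $\dim(\Sigma)$. If $\dim(\Sigma)=4$, Severi's theorem tells us that $Z$ is a hyperplane, and then (after choosing $H_\infty\ne Z$) every direction lies in the plane $Z\cap H_\infty\cong \mathbb{P}^2$, which is $2$-dimensional. So the interesting cases are when $\dim(\Sigma)=3$, which by Severi's theorem splits into two subcases.

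If $Z$ is a scroll of planes, write $Z=\bigcup_{t\in C}\Pi_t$ where $C$ is a curve and each $\Pi_t$ is a $2$-plane. The direction of a line $\ell\subset \Pi_t$ lies in $\Pi_t\cap H_\infty$, which is generically a $\mathbb{P}^1$; so the set of all directions is swept out by a $1$-parameter family of $\mathbb{P}^1$'s inside $H_\infty$ and is at most $2$-dimensional. If $Z$ is an irreducible quadric hypersurface, then every line $\ell\subset Z$ satisfies $\ell\cap H_\infty\subset Z\cap H_\infty$, so all directions lie in the quadric surface $Q':=Z\cap H_\infty\subset H_\infty\cong\mathbb{P}^3$; since $Z\ne H_\infty$, the intersection $Q'$ is a proper subvariety of $H_\infty$ of dimension exactly $2$.

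The routine part is handling the scroll and hyperplane cases; the step I expect to require the most care is the quadric case, specifically verifying that $Z\cap H_\infty$ has pure dimension $2$ (rather than collapsing to something lower-dimensional for a bad choice of $H_\infty$), and dealing with the degenerate possibility that some component of $\Sigma$ consists of lines lying in $H_\infty$. Both issues can be resolved by choosing $H_\infty$ generically: for a generic hyperplane at infinity, $Z\cap H_\infty$ is an irreducible surface by Bertini, and no irreducible component of $\Sigma$ is contained in the family of lines in $H_\infty$.
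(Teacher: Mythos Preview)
The paper states this corollary without proof, so there is no argument of the paper's to compare against directly. Your case analysis is largely sound, but there is a small gap in the scroll case, and in fact a uniform argument you already use in two of your cases handles everything at once without invoking Severi's theorem.

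In the hyperplane and quadric cases you argue that every line $\ell\subset Z$ (not lying in $H_\infty$) has its direction $\ell\cap H_\infty$ inside the surface $Z\cap H_\infty\subset H_\infty\cong\mathbb{P}^3$. This observation uses nothing about $Z$ beyond $Z\neq H_\infty$: since $Z$ is an irreducible hypersurface, $Z\cap H_\infty$ is always $2$-dimensional. Applied uniformly, this already proves the corollary: choose $H_\infty$ generic so that no component of $\Sigma$ lies in it (exactly as you do), and observe that the direction map lands in the surface $Z\cap H_\infty$. No case split, and no Severi, is needed.

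Your scroll argument, by contrast, switches strategies and only accounts for lines $\ell$ lying inside some ruling plane $\Pi_t$, concluding that their directions sweep out $\bigcup_t(\Pi_t\cap H_\infty)$. You never verify that every line in the scroll lies in some $\Pi_t$, and in general a scroll can contain directrix-type lines transversal to the ruling. So as written this case is incomplete. The fix is simply to reuse the $Z\cap H_\infty$ argument here too: whatever line $\ell\subset Z$ you pick, $\ell\cap H_\infty\in Z\cap H_\infty$, and indeed $\bigcup_t(\Pi_t\cap H_\infty)=Z\cap H_\infty$ is the same surface.
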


Algebraic varieties containing many lines have recently become a topic of interest when studying the Kakeya and restriction problems. In \cite{KLT}, Katz, \L{}aba, and Tao observed that the Heisenbeg group 
$$
\mathbb{H}=\{(z_1,z_2,z_3)\in\CC^3\colon \operatorname{Im}(z_3) = \operatorname{Im}(z_1\bar z_2)\}
$$
is an ``almost counter-example'' to the Kakeya conjecture---it is a subset of $\CC^3$ that contains many complex lines, few of which lie in a common plane. In four dimensions, Guth and the author showed in \cite{GZ} that low-degree hypersurfaces containing many lines are the only possible obstruction to obtaining improved Kakeya estimates in $\RR^4$. Similar statements are implicit in the works of Guth \cite{G} and Demeter \cite{D}.

In this paper, we will prove a discretized version of Theorem \ref{SeveriThm}. Our theorem will classify the algebraic hypersurfaces in $\RR^4$ whose $\delta$-neighborhood, restricted to the unit ball, contains many unit line segments. In contrast to the classical situation studied by Severi, it is not true that if $Z(P)$ is an irreducible hypersurface in $\RR^4$ whose $\delta$-neighborhood contains many unit line segments, then $Z(P)$ must be a hyperplane, a quadric hypersurface, or a scroll of planes. For example, let $P(x) = x_1x_2 + \delta^{100}$. It is easy to verify that $P$ is irreducible, and the $\delta$-neighborhood of $Z(P)$ contains many unit line segments. Geometrically, $Z(P)$ is a small perturbation of the variety $\{x_1 = 0\}\cup \{x_2=0\}$, which is a union of two hyperplanes. In particular, large regions of the $\delta$-neighborhoods of $Z(P) \cap B(0,1)$ and $\{x_1 = 0\}\cup \{x_2=0\}\cap B(0,1)$ are comparable. In the example above, ``half'' of the unit line segments lying near $N_{\delta}(Z)$ are contained in the $\delta$--neighborhood of the hyperplane $\{x_1 = 0\}$, and ``half'' are contained in the $\delta$--neighborhood of the hyperplane $\{x_2=0\}$.

As a more extreme example, $Z(P)$ might be a small perturbation of the variety $Z_1\cup Z_2\cup Z_3\cup Z_4$, where $Z_1$ is an arbitrary low-degree hypersurface containing few lines; $Z_2$ is a scroll of planes; $Z_3$ is a hyperplane; and $Z_4$ is a quadratic hypersurface. Our discretized version of Severi's theorem follows this idea. Informally it says that if $Z$ is a hypersurface, then the unit line segments contained in $N_{\delta}(Z)$  can be partitioned into four classes in the spirit of the above example.

As an application of our techniques, we prove a variant of Corollary \ref{SeveriCor}, which says that the set of unit line segments in the unit ball lying near $N_{\delta}(Z)$ can point in at most $\delta^{-2-\eps}$ different $\delta$-separated directions. As discussed in Section \ref{KakeyaSection}, this result yields an improved bound for the Kakeya maximal function in $\RR^4$. In \cite{De}, Demeter proved that such an improvement for the Kakeya maximal function would yield new bounds on the restriction problem for the paraboloid in $\RR^4$. This will be discussed further in Section \ref{progRestrictionSec}. 

Before stating the main result of this paper, we will introduce some notation.

\begin{defn}\label{directionOfLinesDef}
Let $\lines$ be the set of lines in $\RR^4$. For each $\ell\in\lines$, define $\operatorname{Dir}(\ell)$ to be a unit vector in $\RR^4$ pointing in the same direction as $\ell$. By convention, we will choose the unit vector $v=(v_1,v_2,v_3,v_4)$ with $v_1\geq 0;\ v_2\geq 0$ if $v_1=0$; $v_3\geq 0$ if $v_1=v_2=0$; and $v_4 = 1$ if $v_1=v_2=v_3=0$. 

If $E$ is a set of unit vectors in $\RR^4$ and $\delta>0$, we will write $\mathcal{E}_{\delta}(E)$ to denote the $\delta$ covering number of $E$. More generally if $(X,d)$ is a metric space and $E\subset X$, $\mathcal{E}_{\delta}(E)$ will denote the $\delta$ covering number of $E$.
\end{defn}

Our proof will refer to ``rectangular prisms,'' which are the discretized analogues of lines, planes, and hyperplanes. These prisms will have ``long directions,'' which have length two, and ``short directions,'' which have length much smaller than two. Informally, we say a rectangular prism is $k$ dimensional if it has $k$ long directions (all such prisms will be contained in $\RR^4$). We say that a line is covered by a rectangular prism if the intersection has length at least two.

The following is a discretized version of Theorem \ref{SeveriThm}.
\begin{thm}\label{discretizedSeveri}
Let $P\in\RR[x_1,x_2,x_3,x_4]$ be a polynomial of degree $D$, and let $Z = Z(P)\cap B(0,1)$. Let $\delta,\kappa,u,s\in (0,1)$ be numbers satisfying $0<\delta<  u < s<1$ and $\delta<\kappa<1$ (if these conditions are not satisfied the theorem is still true, but it has no content). 

Define
$$
\Sigma = \{\ell\in\lines\colon |\ell\cap N_{\delta}(Z)|\geq 1 \}.
$$
Then we can write $\Sigma = \Sigma_1\cup\Sigma_2\cup\Sigma_3\cup\Sigma_4$, where

\begin{itemize}
\item There is a collection of $O_D\big(|\log\delta|^{O(1)}s^{-2}\big)$ rectangular prisms of dimensions $2\times s\times s \times s$ so that every line from $\Sigma_1$ is covered by one of these prisms.
\item There is a collection of $O_D\big((|\log\delta|/s)^{O(1)}u^{-1}\big)$ rectangular prisms of dimensions $2\times 2\times u \times u$ so that every line from $\Sigma_2$ is covered by one of these prisms.
\item There is a collection of $O_D\big(|\log\delta|^{O(1)}\big)$ rectangular prisms of dimensions $2 \times 2 \times 2 \times \kappa$ so that every line in $\Sigma_3$ is covered by one of these prisms.
\item There is a set $\Sigma_4^\prime\subset\Sigma_4$ with 
$$
\mathcal{E}_\delta(\Sigma_4^\prime)\gtrsim_{D}\big(us\kappa/|\log\delta|\big)^{O(1)}\mathcal{E}_\delta(\Sigma_4)
$$
and a quadratic hypersurface $Q$ so that for every line $\ell^\prime\in \Sigma_4^\prime$, there is a line $\ell$ contained in $Z(Q)$ with 
$$
\operatorname{dist}(\ell,\ell^\prime)\lesssim_D \big(|\log\delta|/(us\kappa)\big)^{O(1)}\delta.
$$
\end{itemize}
\end{thm}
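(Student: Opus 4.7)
The plan is to mirror the classical Severi trichotomy---hyperplane, scroll of planes, or quadric---in the $\delta$-discretized setting, while absorbing ``generic'' hypersurfaces (which carry only a two-parameter family of lines) into the $\Sigma_1$ class. First, factor $P=P_1\cdots P_m$ with $m\le D$. Any $\ell\in\Sigma$ satisfies $|\ell\cap N_\delta(Z(P_i))|\gtrsim 1/D$ for some $i$, so it suffices to classify $\Sigma$ for each irreducible factor separately, losing only $D$-dependent constants in the union. For each irreducible $P_i$, the singular locus of $Z(P_i)$ is a subvariety of real dimension at most $2$ and degree $O_D(1)$; lines passing within $O(\delta)$ of this locus can be covered by $O_D(1)$ prisms and absorbed into $\Sigma_3$. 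We may thus restrict attention to lines whose contact interval lies in the smooth part of $Z(P_i)$.

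The key local invariant at each smooth $z\in Z(P_i)$ is the second fundamental form $\II_z$ on $T_zZ(P_i)$, viewed as a symmetric bilinear form on a three-dimensional real vector space. A short Taylor expansion shows that if $\ell\in\Sigma$ passes within $O(\delta)$ of $z$ with a unit-length interval of contact, then $\operatorname{Dir}(\ell)$ lies at distance $O(\delta^{1/2})$ from the asymptotic cone $\{v\in T_zZ(P_i)\colon \II_z(v,v)=0\}$. Stratify the smooth locus of $Z(P_i)$ by the rank $r\in\{0,1,2,3\}$ of $\II_z$; each stratum is cut out by the vanishing of minors of a Hessian-type matrix restricted to the tangent space, and is therefore algebraic of degree $O_D(1)$.

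Now handle each stratum.
\begin{itemize}
\item If $r\equiv 0$ on a Zariski-open subset of $Z(P_i)$, then $\II\equiv 0$ identically, and $Z(P_i)$ is forced to be a hyperplane. All of $N_\delta(Z(P_i))$ is covered by $O(1)$ prisms of dimensions $2\times 2\times 2\times\kappa$, placed in $\Sigma_3$.
\item If $r\equiv 1$ generically, the two-dimensional kernel of $\II_z$ integrates to a foliation of $Z(P_i)$ by $2$-planes, so $Z(P_i)$ is a developable scroll. The scroll is parametrized by a curve of degree $O_D(1)$, whose $u$-cover yields $O_D(u^{-1})$ planes and hence $\Sigma_2$ prisms.
\item If $r\equiv 2$ generically, the one-dimensional kernel defines a line field on $Z(P_i)$. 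Its integral curves are either true straight lines, giving a two-parameter algebraic ruling in the line Grassmannian, or else a third-order Landsberg/Darboux computation forces $Z(P_i)$ to be a quadric. In the first case the ruling is a surface of degree $O_D(1)$, whose $s$-cover yields $O_D(s^{-2})$ prisms of size $2\times s\times s\times s$ for $\Sigma_1$; in the second, the lines are placed in $\Sigma_4$ with $Q=P_i$.
\item If $r=3$ generically, $\II_z$ is nondegenerate, and a Landsberg cubic-form argument shows that only $O_D(1)$ lines achieve sufficient-order contact for a unit interval; place these in $\Sigma_1$.
\end{itemize}

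The hard part is handling \emph{approximate} reducibility. The polynomial $P(x)=x_1x_2+\delta^{100}$ is irreducible and its real zero set is smooth, yet $N_\delta(Z(P))$ behaves like the union of two transverse hyperplanes, so the exact rank stratification above fails to capture the geometry that matters. A quantitative near-factorization step is therefore needed: detect when $|P|$ is small on a large portion of some lower-degree subvariety (via an effective \L{}ojasiewicz inequality), peel off the near-factor, and recurse. Tracking the polylogarithmic losses through this recursion---and in particular preserving the entropy bound $\mathcal{E}_\delta(\Sigma_4^\prime)\gtrsim_D(us\kappa/|\log\delta|)^{O(1)}\mathcal{E}_\delta(\Sigma_4)$---is the technical core of the argument. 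A secondary difficulty is the rank-$2$/quadric dichotomy, where the choice between $\Sigma_1$ and $\Sigma_4$ must be made using $\delta$-robust invariants rather than the exact vanishing conditions that appear in Severi's original proof.
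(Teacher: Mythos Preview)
Your high-level picture---mirror Severi's trichotomy via the rank of $\II$---is natural, but the argument as written has a genuine gap: the exact-rank stratification is not $\delta$-robust, and this breaks the proof already in the rank-one case, not just in the ``approximate reducibility'' case you flag at the end.

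Take your own Taylor claim: if $\ell\in\Sigma$ touches $Z$ near $z$, you assert $\operatorname{Dir}(\ell)$ lies within $O(\delta^{1/2})$ of the null cone $\{\II_z(v,v)=0\}$. This is only true when $\|\II_z\|\gtrsim 1$; the honest bound is $O\big((\delta/\|\II_z\|)^{1/2}\big)$. Now feed in $P(x)=x_1+\delta x_2^2$. This polynomial is irreducible with smooth zero set, and $\II$ has rank exactly $1$ everywhere, so your stratification places it squarely in the scroll case. The kernel of $\II$ is the $(x_3,x_4)$-plane at each point, and the foliation leaves are $\{x_2=c,\ x_1=-\delta c^2\}$. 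But the line $\ell(t)=(0,t,0,0)$ satisfies $|\ell\cap N_\delta(Z)|\geq 1$ and is \emph{transverse} to every leaf of this foliation---it is not covered by any $2\times 2\times u\times u$ prism built from the leaves. So the implication ``rank $1$ $\Rightarrow$ lines lie near the plane foliation'' fails, and this failure has nothing to do with near-factorization: $P$ is genuinely irreducible. The same mechanism breaks your rank-$2$ and rank-$3$ cases whenever $\|\II\|$ happens to be small.

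The paper's route is different in two essential ways. First, it never factors $P$ and never invokes \L{}ojasiewicz; instead it partitions $N_\delta(Z)$ into $O(|\log\delta|)$ pieces according to the dyadic size of $|\nabla P|$, and on each piece rescales $P$ so that $|\nabla P|\sim 1$ (Proposition~\ref{goodPolyProp}). This handles your $x_1x_2+\delta^{100}$ example with only logarithmic loss and no recursion. Second, it replaces exact rank by two $\delta$-scale notions: a threshold $\|\II_z\|\lessgtr\kappa$ (small $\Rightarrow$ the piece is $O(\kappa)$-flat $\Rightarrow\Sigma_3$), and a ``broadness'' dichotomy that asks whether the set of contact directions at $z$ is trapped near one or two great circles at width $w\sim(us\kappa)^{O(1)}$ (narrow $\Rightarrow$ covered by tube/plane prisms $\Rightarrow\Sigma_1,\Sigma_2$; broad $\Rightarrow$ a hairbrush/14-point argument produces the quadric $Q$ for $\Sigma_4$). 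Broadness is the robust surrogate for ``$\II$ has rank $\geq 2$ with nondegenerate null cone,'' and thresholding $\|\II\|$ is what rescues the Taylor bound. Without those two ingredients your rank stratification cannot close.
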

We will prove Theorem \ref{discretizedSeveri} (or actually a slightly more technical generalization) in Section \ref{mainProofSection} below. As a corollary of (the more technical version of) Theorem \ref{discretizedSeveri}, we obtain the following discretized analogue of Corollary \ref{SeveriCor}.
\begin{cor}[Few directions near a low-degree variety]\label{mainThm}
Let $P\in\RR[x_1,\ldots,x_4]$ be a polynomial of degree $D$ and let $Z= Z(P)\cap B(0,1)$. Let $0<\delta<1$ and define 
$$
\Sigma = \{\ell\in\lines\colon |\ell\cap N_{\delta}(Z)|\geq 1 \}.
$$

Then for each $\eps>0$,
\begin{equation}\label{fewDirectionsInV}
\mathcal{E}_{\delta}\big(\operatorname{Dir}(\Sigma)\big) \lesssim_{D,\eps} \delta^{-2-\eps}.
\end{equation}
\end{cor}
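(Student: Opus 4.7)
The plan is to apply the technical version of Theorem~\ref{discretizedSeveri} to $P$ with parameters $s, u, \kappa$ of the form $\delta^{\eta(\eps, D)}$ for a sufficiently small $\eta > 0$, and then bound the $\delta$-covering number of $\operatorname{Dir}(\Sigma_i)$ separately for each of $i = 1, 2, 3, 4$ in the resulting decomposition. The goal is to show that each contribution is at most $\delta^{-2-\eps}$ up to constants depending on $D$ and $\eps$.

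For $\Sigma_4$: any quadric hypersurface in $\RR^4$ contains at most a two-parameter family of lines, so the direction set $E$ of lines on $Z(Q)$ is a $2$-dimensional subset of $S^3$. Since the directions in $\Sigma_4^\prime$ lie within $O((us\kappa)^{-O(1)}\delta)$ of $E$, the $\delta$-covering number of $\operatorname{Dir}(\Sigma_4^\prime)$ in $S^3$ is at most $(us\kappa)^{-O(1)}\delta^{-2}$; combining with the relation $\mathcal{E}_\delta(\Sigma_4^\prime) \gtrsim (us\kappa)^{O(1)} \mathcal{E}_\delta(\Sigma_4)$ yields $\mathcal{E}_\delta(\operatorname{Dir}(\Sigma_4)) \lesssim \delta^{-2-\eps/2}$ for the chosen $\eta$.

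For $\Sigma_1, \Sigma_2, \Sigma_3$: a line with intersection length at least $2$ inside a prism of dimensions $2\times s\times s\times s$, $2\times 2\times u\times u$, or $2\times 2\times 2\times \kappa$ has direction within $O(s)$, $O(u)$, or $O(\kappa)$ of a point, great circle, or great $2$-sphere in $S^3$, respectively. The naive per-prism direction counts $(s/\delta)^3$, $u^2/\delta^3$, $\kappa/\delta^3$, combined with the prism counts $s^{-2}$, $u^{-1}$, $O(1)$ from Theorem~\ref{discretizedSeveri}, give totals of order $s\delta^{-3}$, $u\delta^{-3}$, $\kappa\delta^{-3}$, which are too large for the $s, u, \kappa = \delta^{\eta}$ balance needed in $\Sigma_4$. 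I would refine each per-prism count by projecting onto the prism's long face and invoking a lower-dimensional analogue of this corollary to bound the directions of lines in the $\delta$-neighborhood of a degree-$D$ algebraic variety inside that face: for $\Sigma_3$ this gives a $3$-dimensional estimate of $\delta^{-1-\eps}$ directions in $S^2$, which multiplied by the $\kappa/\delta$ transverse cover gives $\kappa\delta^{-2-\eps}$ directions per prism and hence $\delta^{-2-\eps}$ for $\Sigma_3$; analogous reductions would handle $\Sigma_2$ and $\Sigma_1$.

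The main obstacle is obtaining per-prism direction counts that genuinely improve on the naive angular bound---this requires tracking the algebraic structure of $P$ under projection onto each prism's long face, and in the case of $\Sigma_3$ appealing to a $3$-dimensional Severi-type statement analogous to what is being proved here---and then simultaneously balancing $s, u, \kappa$ so as to overcome the $(us\kappa)^{-O(1)}$ blow-up in the $\Sigma_4$ contribution. This tight interplay between the four pieces is precisely what the technical version of Theorem~\ref{discretizedSeveri} is designed to accommodate, and I expect the choice $s = u = \kappa = \delta^{\eta(\eps, D)}$ with $\eta$ a sufficiently small polynomial function of $\eps$ to close the argument.
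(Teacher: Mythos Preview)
Your $\Sigma_4$ argument is essentially correct, and the idea of bounding $\Sigma_1,\Sigma_2,\Sigma_3$ prism-by-prism is the right one. But the mechanism you propose for the per-prism bounds---project onto the long face and invoke a lower-dimensional analogue of the corollary---does not work, and this is where the argument breaks down.

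The problem is sharpest for $\Sigma_1$. A $2\times s\times s\times s$ prism has a one-dimensional long face, and there is no useful ``lower-dimensional analogue'' on a line: the face carries only one direction, so projecting buys nothing, and you are left with the naive count $(s/\delta)^3$ per prism and $s\delta^{-3}$ in total. For $\Sigma_2$ and $\Sigma_3$ the projection idea is also problematic: the projection of $Z(P)\cap(\text{prism})$ onto the long face is generically full-dimensional in that face (a three-dimensional hypersurface in $\RR^4$ projected to a plane fills the plane), and intersecting $Z(P)$ with the face instead gives a variety that the lines in the prism need not be near. So there is no degree-$D$ variety in the face controlling the projected directions.

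What the paper does instead is \emph{induction on scale}. One anisotropically rescales each prism so that its short directions have length $1$; the rescaled variety is still the zero set of a degree-$D$ polynomial in $\RR^4$, and the lines now satisfy $|\ell\cap N_{\delta/t}(\tilde Z)|\geq 1$ at the larger scale $\delta/t$ (with $t=s,u,$ or $\kappa$). Applying the induction hypothesis at that larger scale gives at most $C_{D,\eps}(\delta/t)^{-2-\eps}$ directions per prism, which after unwinding the rescaling yields $C_{D,\eps}\,t^{3-d+\eps}\delta^{-2-\eps}$ for a prism with $d$ long directions. Multiplying by the number of prisms produces a factor of $|\log\delta|^{O(1)}t^{\eps}$ in front of $C_{D,\eps}\delta^{-2-\eps}$, so one chooses $s,u,\kappa$ of size $|\log\delta|^{-C}$ (not $\delta^{\eta}$) to make this factor at most $1/4$ and close the induction. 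With these polylogarithmic parameters, the $(us\kappa)^{-O(1)}$ loss in the $\Sigma_4$ step is also only polylogarithmic, and is absorbed by choosing $C_{D,\eps}$ large. The self-referential rescaling is the idea you are missing.
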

In brief, Corollary \ref{mainThm} follows from Theorem \ref{discretizedSeveri} by choosing $s=|\log\delta|^{-C_1},\ u = |\log\delta|^{-C_2},\ \kappa=|\log\delta|^{-C_3}$, where $C_1,C_2,C_3$ are large absolute constants. Since the lines contained in a quadratic hypersurface in $\RR^4$ can point in few directions, the set of directions of lines in $\Sigma_4^\prime$ is small. Using a slightly more technical version of Theorem \ref{discretizedSeveri}, we can also guarantee that the set of directions of lines in $\Sigma_4$ is small. The lines in $\Sigma_1,\Sigma_2,$ and $\Sigma_3$ are handled by re-scaling and induction on scales. Corollary \ref{mainThm} will be proved in detail in Section \ref{proofOfMainCorSection}.
\begin{rem}
We could replace the condition $|\ell\cap N_{\delta}(Z)|\geq 1$ in the definition of $\Sigma$ with $|\ell\cap N_{\delta}(Z)|\geq c$ for any fixed constant $c>0$. Then the implicit constant in \eqref{fewDirectionsInV} would also depend on $c$.
\end{rem}
%
%
%

In \cite{G}, Guth stated the following conjecture
\begin{conj}\label{guthConj}
Let $Z\subset\RR^d$ be a $m$-dimensional variety defined by polynomials of degree at most $D$, and let $\Sigma$ be the set of lines in $\RR^d$ satisfying $|\ell\cap N_{\delta}(Z)\cap B(0,1)|\geq 1$. Then for each $\delta>0$ and $\eps>0$, the set of directions of lines in $\Sigma$ can be covered by $O_{d,D,\eps}(\delta^{1-m})$ balls of radius $\delta$.
\end{conj}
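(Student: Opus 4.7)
The plan is to extend the strategy of Corollary \ref{mainThm} to all pairs $(d,m)$, taking the hypersurface case $m=d-1$ as the base and inducting on $d-m$ to handle lower-dimensional varieties. For the base case, I would first establish a higher-dimensional analog of Theorem \ref{discretizedSeveri}: the set $\Sigma$ of unit line segments near $N_\delta(W)$, for an irreducible hypersurface $W\subset\RR^d$ of bounded degree, should be partitionable into classes, each either covered by rectangular prisms with at most $k$ long directions (for $k=1,\dots,d-2$) or lying near a quadric hypersurface. This would require invoking Segre's generalization \cite{Se} of Severi's theorem, which classifies the irreducible hypersurfaces in $\RR^d$ containing a $(d-1)$-dimensional family of lines. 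Adapting the case analysis of Section \ref{mainProofSection} to this richer setting should yield the discretized version, and then parameter choices and induction on scales as in Corollary \ref{mainThm} would give the bound $\delta^{2-d-\eps}$ for hypersurfaces.

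To reduce the general case $m<d-1$ to the hypersurface case, I would slice $Z$ by a generic hyperplane $H\subset\RR^d$, obtaining an $(m-1)$-dimensional variety in $H\cong\RR^{d-1}$. Applying the conjecture inductively in $\RR^{d-1}$ bounds the directions of lines near $Z\cap H$ by $\delta^{2-m-\eps}$. Iterating over a family of hyperplanes that collectively cover all directions appearing in $\operatorname{Dir}(\Sigma)$, and carefully accounting for overlap, would in principle yield the target $\delta^{1-m-\eps}$.

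The main obstacle is this reduction step. A single line $\ell\in\Sigma$ is contained in a positive-dimensional family of slicing hyperplanes, so naively summing over a net in the dual Grassmannian overcounts each direction substantially. One needs a canonical choice of hyperplane per direction---perhaps via a Gauss-map type argument that associates to each tangent direction of $Z$ near $\ell$ a specific slice $H(\ell)$---so that the inductive bound applies without multiplicity loss. A secondary obstacle is that Segre's theorem in dimensions $d>4$ lacks a modern discretized treatment: scrolls of $k$-planes for $k\geq 3$ contribute new prism shapes absent in Theorem \ref{discretizedSeveri}, and one should expect additional exceptional cases requiring case-by-case analysis. Finally, the $\delta^{-\eps}$ loss is intrinsic to the induction-on-scales framework used here, so obtaining the sharp bound $O_{d,D,\eps}(\delta^{1-m})$ conjectured by Guth almost certainly requires new ideas beyond a direct generalization of the present paper's approach.
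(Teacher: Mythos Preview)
The paper does not prove this statement. Conjecture~\ref{guthConj} is stated as an open problem; the paper establishes only the special case $d=4$, $m=3$ (via Corollary~\ref{mainThm}), and explicitly says ``For $m\geq 4$ the conjecture remains open.'' The addendum records that the full conjecture was subsequently proved by Katz and Rogers \cite{KR}, whose argument is entirely different from yours: it proceeds via effective quantifier elimination (Tarski--Seidenberg) and a compactness argument, not via any structural classification of ruled varieties or induction on codimension.

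Your proposal is a plan rather than a proof, and you correctly flag its main obstacles. But the slicing reduction has a more basic defect than overcounting. If $\ell\in\Sigma$ and $H$ is a hyperplane containing $\ell$, it does \emph{not} follow that $|\ell\cap N_\delta(Z\cap H)\cap B(0,1)|\geq 1$: the set $N_\delta(Z)\cap H$ can be vastly larger than $N_\delta(Z\cap H)$. Concretely, if $Z$ is a $2$-plane in $\RR^4$ and $H$ meets $Z$ transversely in a single line $L$, then $\ell$ can lie in $N_\delta(Z)\cap H$ while being at distance $\sim 1$ from $L$. So the inductive hypothesis in $\RR^{d-1}$ gives you no control over $\ell$ at all. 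One would need instead to slice so that $H$ \emph{contains} a piece of $Z$ near $\ell$, but for generic $Z$ no such hyperplane exists.

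The hypersurface base case is also a genuine open-ended project rather than a routine extension. Segre's classification in dimension $d>4$ involves scrolls of $k$-planes for every $2\leq k\leq d-2$, and the broad/narrow dichotomy of Sections~\ref{stronglyBroadReductionCase}--\ref{broadVarietiesSection} is tied specifically to the geometry of quadratic cones in $\RR^3$ (Lemma~\ref{quadraticDichotomy}); in higher dimensions the second fundamental form has more signatures and the analogous case analysis proliferates. None of this is impossible, but it is not ``adapting the case analysis''---it is the heart of the difficulty, and indeed Katz--Rogers bypass it completely.
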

When $m=2$, Conjecture \ref{guthConj} is straightforward, and the result was used by Guth in \cite{G2} to obtain improved restriction estimates in $\RR^3$. Corollary \ref{mainThm} proves Conjecture \ref{guthConj} in the case $d=4,m=3$. For $m\geq 4$ the conjecture remains open.

\emph{Addendum added April 2018}: Conjecture \ref{guthConj} was recently proved in all dimensions by Katz and Rogers \cite{KR}.
\subsection{Progress on the Kakeya conjecture}
Recall the Kakeya maximal function conjecture. In the statement below, a $\delta$-tube is the $\delta$-neighborhood of a unit line segment.
\begin{conj}\label{KakeyaConj}
Let $\tubes$ be a set of $\delta$-tubes in $\RR^d$ that point in $\delta$-separated directions. Then for each $\eps>0$,
\begin{equation}\label{generalKakeyaConj}
\Big\Vert\sum_{T\in\tubes}\chi_T\Big\Vert_{p^\prime} \lesssim_\eps\delta^{1-d/p-\eps},\quad p = d.
\end{equation}
\end{conj}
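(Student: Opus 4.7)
The statement displayed above is the full Kakeya maximal function conjecture, which at the time of writing is open in every dimension $d\geq 3$; accordingly what follows is a proposed \emph{program}, not a complete argument. The strategy I would pursue is the one implicit in the introduction of the paper: use Corollary~\ref{mainThm} (and, more generally, an eventual proof of Conjecture~\ref{guthConj} in all dimensions $m\leq d$) to gain quantitative control over collections of $\delta$-tubes whose mass concentrates near a low-degree variety, and then feed this structure into a polynomial-partitioning induction on scales.

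Concretely, the first step is a standard reduction: by interpolation with the trivial $L^\infty$ bound $\|\sum_{T\in\tubes}\chi_T\|_\infty\lesssim \delta^{-(d-1)}$ and the $L^1$ bound $\|\sum_{T\in\tubes}\chi_T\|_1\lesssim 1$, it suffices to prove \eqref{generalKakeyaConj} at the endpoint $p=d$. I would then apply Guth's polynomial partitioning to the function $\sum_{T\in\tubes}\chi_T$ and run a Guth--Katz-style dichotomy: either $\sum_{T\in\tubes}\chi_T$ is controlled in the ``cellular'' case, where each cell sees a negligible fraction of the tubes and one inducts on the total number of tubes, or most of the $L^{d/(d-1)}$ mass concentrates in the $\delta$-neighborhood of a variety $Z$ of dimension $m\leq d-1$ and degree $O_\eps(1)$. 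In the algebraic case, Corollary~\ref{mainThm} (extended to $m$-dimensional varieties via Conjecture~\ref{guthConj}) forces the contributing tubes to lie in at most $\delta^{-m-\eps}$ distinct $\delta$-separated directions; one would then restart the whole argument at the smaller scale obtained by re-scaling the thinnest direction of $N_\delta(Z)$ up to unit length.

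The \emph{main obstacle}, and the reason this is still a conjecture, is closing the induction. The ``few directions near a variety'' bound does not, on its own, produce the sharp $\delta^{-\eps}$ endpoint estimate at $p=d$: after locating tubes near $N_\delta(Z)$, one must quantify how those tubes pack within the neighborhood, and the Heisenberg-type examples of Katz--\L{}aba--Tao show that this packing can be pathological for varieties that fall outside the Severi classification. Without an $m$-dimensional analogue of Theorem~\ref{discretizedSeveri} (which would describe the tubes, not just their directions) combined with a sharp hairbrush/planebrush estimate inside each algebraic piece, the framework developed in this paper yields only a quantitative improvement over Wolff's bound (the $3+1/28$ result in $\RR^4$), rather than the endpoint. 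A realistic attack on the full conjecture would thus require iterating Theorem~\ref{discretizedSeveri} across all intermediate dimensions and scales, coupled with new sticky-Kakeya / grains-type decompositions and multilinear $X$-ray estimates sufficient to rule out the Heisenberg-type obstructions as sharp; each of these ingredients is presently beyond the reach of the techniques in this paper.
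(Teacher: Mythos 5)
You have correctly identified that this statement is labeled as a conjecture in the paper and is not proved there: the paper explicitly records that \eqref{generalKakeyaConj} is known only for $d=2$ (C\'ordoba) and open for $d\geq 3$, and the paper's actual contribution is the partial result of Theorem \ref{maximalFnThm}, namely a maximal function estimate in $\RR^4$ at the non-endpoint exponent $p=3+1/28$. Since the paper contains no proof of Conjecture \ref{KakeyaConj}, there is nothing to compare your proposal against, and declining to claim a proof is the right call. Your sketch of how Corollary \ref{mainThm} enters --- as the input \eqref{fewTubesInAHypersurface} to the Guth--Zahl polynomial Wolff axioms machinery, which is then closed off by the two-ends and bilinear reductions --- is consistent with how the paper actually derives its partial result in Section \ref{KakeyaSection}, and your diagnosis of why this yields only $3+1/28$ rather than the endpoint (the directional count $\delta^{-2-\eps}$ controls directions, not the packing of tubes inside $N_\delta(Z)$, and Heisenberg-type configurations obstruct the sharp endpoint) matches the discussion in the introduction.

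One small correction: your opening reduction is stated backwards. The conjecture as written is already the endpoint $p=d$; interpolation with the trivial $L^1$ and $L^\infty$ bounds is used to deduce the estimates for $1\leq p<d$ \emph{from} the endpoint, not to reduce \emph{to} it. Also, the paper's route to Theorem \ref{maximalFnThm} does not apply polynomial partitioning directly to $\sum_{T}\chi_T$ in the cellular/algebraic dichotomy you describe; rather, Proposition \ref{GZProp} is quoted as a black box from \cite{GZ} with Corollary \ref{mainThm} verifying its hypothesis. These do not affect the substance of your (correctly hedged) assessment.
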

Conjecture \ref{KakeyaConj} has been proved when $d=2$ by C\'ordoba \cite{C} and remains open for $d\geq 3$. If the exponent $p=d$ in \eqref{generalKakeyaConj} is replaced by a number $1\leq p\leq d$, then  \eqref{generalKakeyaConj} is called a Kakeya maximal function estimate in $\RR^d$ at dimension $p$.

Using the results of Guth and the author from \cite{GZ}, Theorem \ref{mainThm} can be used to obtain a Kakeya maximal function estimate in $\RR^4$ at dimension $3+1/28$.
\begin{thm}\label{maximalFnThm}
Let $\tubes$ be a set of $\delta$-tubes in $\RR^4$ that point in $\delta$-separated directions. Then for each $\eps>0$, 
\begin{equation}\label{maximalFnBd}
\Big\Vert\sum_{T\in\tubes}\chi_T\Big\Vert_{p^\prime} \lesssim_\eps\delta^{1-4/p-\eps},\quad p = 3 + 1/28.
\end{equation}
\end{thm}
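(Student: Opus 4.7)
The plan is to feed Corollary~\ref{mainThm} into the polynomial-partitioning machinery developed by Guth and the author in \cite{GZ}. That paper reduces the problem of improving Wolff's $L^3$ Kakeya bound in $\RR^4$ to a ``few directions near a hypersurface'' estimate: an upper bound of the form $\mathcal{E}_{\delta}\bigl(\operatorname{Dir}(\Sigma)\bigr) \lesssim_{D,\eps} \delta^{-2-\eps}$ for tubes concentrating near a low-degree three-variety is precisely what is needed to push the Kakeya maximal exponent from Wolff's $p=3$ up to $p = 3 + 1/28$. Corollary~\ref{mainThm} supplies exactly this estimate, so the task reduces to checking that the hypotheses of the \cite{GZ} reduction are met and then running that reduction verbatim.

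First I would assume for contradiction that \eqref{maximalFnBd} fails at the claimed endpoint $p = 3 + 1/28$ for some $\delta$-tube configuration $\tubes$, and dyadically pigeonhole so that we are studying a single superlevel set $\{\sum_T \chi_T \geq \lambda\}$. Following the broad/narrow and two-ends reductions of \cite{GZ}, I would then apply polynomial partitioning with a polynomial $P$ of degree $D = D(\eps)$, which produces two cases:
\begin{itemize}
\item a cellular case, in which each cell of $\RR^4 \setminus Z(P)$ inherits roughly the same tube geometry at a finer scale, so that induction on scales together with Wolff's bound closes the argument;
\item an algebraic case, in which a positive fraction of the tubes satisfy $|T \cap N_{C\delta}(Z(P))| \gtrsim |T|$.
\end{itemize}

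The algebraic case is the one that needs the new input. In that case, each relevant tube's core line $\ell$ satisfies $|\ell \cap N_{C\delta}(Z(P)) \cap B(0,1)| \geq c$ for a fixed absolute constant $c>0$, so by Corollary~\ref{mainThm} (applied to $P$, with the constant $c$-variant mentioned in the remark after that corollary) the directions of these lines can be covered by $O_{D,\eps}(\delta^{-2-\eps})$ balls of radius $\delta$. Since the tubes in $\tubes$ are $\delta$-direction-separated, the number of algebraic-case tubes is therefore at most $O_{D,\eps}(\delta^{-2-\eps})$, a saving of $\delta^{1-O(\eps)}$ over the trivial bound $\delta^{-3}$. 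Threading this saving through the optimization already carried out in \cite{GZ} produces exactly the exponent $p = 3 + 1/28$ in \eqref{maximalFnBd}.

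The hard part will be the bookkeeping rather than any genuinely new geometric idea: one must carefully track the $|\log\delta|^{O(1)}$ and $\delta^{-O(\eps)}$ losses through the induction on scales, verify that the algebraic case is truly the only remaining bottleneck in \cite{GZ}'s scheme, and make sure that the linear improvement from $\delta^{-3}$ to $\delta^{-2-\eps}$ in the direction count propagates through the broad/narrow optimization with the correct weight. The specific numerical value $1/28$ is not special to Corollary~\ref{mainThm}; it is fixed by the exponents appearing in the \cite{GZ} optimization, and Corollary~\ref{mainThm} serves solely to make the conditional estimate of \cite{GZ} unconditional at the endpoint $\alpha = 2$.
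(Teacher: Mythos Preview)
Your proposal is correct and matches the paper's approach. The only difference is packaging: the paper quotes \cite[Proposition~6.2]{GZ} as a black box (the conditional volume bound \eqref{volumeBoundExplicitTrans} under the hypothesis \eqref{fewTubesInAHypersurface}), observes that Corollary~\ref{mainThm} verifies that hypothesis after a rescaling, and then invokes the standard two-ends and bilinear reductions; you instead sketch the polynomial-partitioning dichotomy inside \cite{GZ} explicitly, but the logical content is identical.
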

\begin{cor}\label{dimOfBesicovitch}
Every Besicovitch set in $\RR^4$ has Hausdorff dimension at least $3+1/28.$
\end{cor}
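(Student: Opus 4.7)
The plan is to derive Corollary \ref{dimOfBesicovitch} from Theorem \ref{maximalFnThm} by the standard Bourgain--Wolff conversion of a Kakeya maximal function estimate at dimension $p$ into a lower bound of $p$ on the Hausdorff dimension of Besicovitch sets. The general implication is: for every $d\geq 2$ and $p\in[1,d]$, if the $L^{p'}$ bound $\|\sum_{T\in\tubes}\chi_T\|_{p'}\lesssim_\eps\delta^{1-d/p-\eps}$ holds uniformly for every collection $\tubes$ of $\delta$-tubes in $\RR^d$ in $\delta$-separated directions, then every Besicovitch set in $\RR^d$ has Hausdorff dimension at least $p$. Applied with $d=4$, $p=3+1/28$, and Theorem \ref{maximalFnThm}, this yields Corollary \ref{dimOfBesicovitch}.

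I sketch the implication as follows. Suppose for contradiction that $B\subset\RR^4$ is a Besicovitch set with $\dim_H(B)<p$, fix $\beta$ with $\dim_H(B)<\beta<p$, and let $\eta>0$ be small. By definition of Hausdorff dimension, for every $R>0$ there is a cover of $B$ by balls $\{B(x_i,r_i)\}$ with $r_i\leq R$ and $\sum_i r_i^\beta<\eta$. I decompose the cover dyadically by radius, then choose a scale $\delta\ll\min_i r_i$. For each direction $v$ in a maximal $\delta$-separated set $V\subset S^3$, let $T_v$ be the $\delta$-tube around a unit segment $\ell_v\subset B$ in direction $v$. A double pigeonhole---first over the $O(\log(1/\delta))$ dyadic radius scales, then over $V$---isolates a single scale $r\leq R$ and, after passing to an $r$-separated subset of directions, a set $V''$ with $|V''|\gtrsim r^{-3}|\log\delta|^{-O(1)}$, such that for each $v\in V''$ the $r$-tube $T_v^{(r)}$ obtained by thickening $T_v$ satisfies $|T_v^{(r)}\cap E_r|\gtrsim r^3|\log\delta|^{-O(1)}$, where $E_r=\bigcup_{i\in I_{j_0}}B(x_i,r_i)$ is the union of the cover balls at the pigeonholed scale $\sim r$.

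The decisive step is to bound $|E_r|$ in two ways. On one hand the cover gives $|E_r|\leq|I_{j_0}|\,r^4\lesssim\eta\, r^{4-\beta}$. On the other hand, applying Theorem \ref{maximalFnThm} at scale $r$ to $\{T_v^{(r)}\}_{v\in V''}$ together with Hölder's inequality gives $|E_r|\gtrsim r^{4-p+p\eps}|\log\delta|^{-O(1)}$. Comparing the two yields $\eta\gtrsim r^{\beta-p+p\eps}|\log\delta|^{-O(1)}$. Since $\beta-p<0$, for $\eps$ small enough the exponent $\beta-p+p\eps$ is strictly negative, so taking $R\to 0$ forces $r\to 0$ and the right-hand side to blow up, contradicting the fixed value of $\eta$. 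I expect no genuine obstacle: the conversion is mechanical and the polylogarithmic bookkeeping through the pigeonholes is routine, so all the substantive work lies in Theorem \ref{maximalFnThm}, and Corollary \ref{dimOfBesicovitch} is essentially a direct translation along the same template by which Wolff's $L^{(d+2)/2}$ Kakeya maximal estimate yields the $(d+2)/2$ lower bound on the Hausdorff dimension of Besicovitch sets in $\RR^d$.
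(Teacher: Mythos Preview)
Your proposal is essentially correct and aligns with the paper, which gives no separate proof of Corollary~\ref{dimOfBesicovitch}: it is simply recorded as the standard Bourgain consequence of Theorem~\ref{maximalFnThm} (a Kakeya maximal estimate at exponent $p$ forces every Besicovitch set to have Hausdorff dimension at least $p$), and you have correctly sketched that argument.

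One minor bookkeeping caveat: as written, your polylogarithmic losses are in $|\log\delta|$ with $\delta$ chosen below $\min_i r_i$, but a cover with $\sum_i r_i^\beta<\eta$ and all $r_i\leq R$ may have $\min_i r_i$ uncontrollably small relative to $R$, so the quantity $r^{\beta-p+p\eps}|\log\delta|^{-O(1)}$ need not blow up as $R\to 0$. The routine fix is to replace the uniform pigeonhole over dyadic scales by the weighted one using $\sum_k k^{-2}<\infty$, so that the loss at the selected scale $r=2^{-k_0}$ is $k_0^{-O(1)}\sim|\log r|^{-O(1)}$ rather than $|\log\delta|^{-O(1)}$; the contradiction then follows since $r\leq R\to 0$.
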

Theorem \ref{maximalFnThm} will be proved in Section \ref{KakeyaSection}. It is an improvement over the previous bound $p=3$, which was due to Wolff \cite{W}. Previously, \L{}aba and Tao \cite{LT} proved that every Besicovitch set in $\RR^4$ must have upper Minkowski dimension at least $3+\eps_0$ for some positive constant $\eps_0>0$\footnote{Without attempting to optimize their arguments, \L{}aba and Tao obtained the estimate $\eps_0\geq 2^{-30}$. A careful analysis of their methods would likely yield a larger value of $\eps_0$.}. In a similar vein, Tao \cite{T} proved that every Kakeya set in $\mathbb{F}_p^4$ must have cardinality at least $cp^{3+1/16}$. This was later improved by Dvir \cite{D} and then Dvir, Kopparty, Saraf, and Sudan \cite{DKSS}, who proved nearly sharp bounds on the size of Kakeya sets in $\mathbb{F}_p^n$ for every $n$.

\subsection{Progress on the restriction conjecture}\label{progRestrictionSec}
Let $f\colon [-1,1]^{d-1}\to\CC$. For each $x = (\underline x, x_d)\in\RR^d$, define the extension operator $Ef$ by 
$$
Ef(x)=\int_{[-1,1]^{d-1}}f(\xi) e^{\xi\cdot\underline x + |\xi|^2 x_n }d\xi.
$$
The restriction conjecture for the paraboloid relates the size of $f$ and $Ef$. 
\begin{conj}\label{restrictionConjStatement}
For each $q>\frac{2d}{d-1}$ and each $f\colon [-1,1]^d\to \CC$, we have
\begin{equation}\label{restrictionConj}
\Vert Ef\Vert_{q}\lesssim_{q,d}\Vert f\Vert_{\infty}.
\end{equation}
\end{conj}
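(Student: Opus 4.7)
The plan is to attack Conjecture \ref{restrictionConjStatement} via an iterated polynomial partitioning scheme whose main structural input at each scale is the discretized Severi-type theorem of this paper. By Tao's $\varepsilon$-removal lemma, duality, and wave packet decomposition, it suffices to prove that for every $q>2d/(d-1)$ and every $\varepsilon>0$,
$$
\|Ef\|_{L^q(B_R)}\lesssim_\varepsilon R^\varepsilon\|f\|_\infty,\qquad R=\delta^{-1},
$$
after writing $Ef=\sum_{T\in\tubes}a_T\psi_T$ as a sum over $R^{1/2}$-tubes in $R^{-1/2}$-separated directions, and then closing the estimate by induction on $R$ and on $d$.

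I would apply Guth's polynomial partitioning at degree $D=R^\eta$, for a small $\eta=\eta(\varepsilon)$. On each open cell of $B_R\setminus N_{R^{1/2}}(Z(P))$ the argument recurses to a smaller scale. On the wall term consisting of tubes trapped in $N_{R^{1/2}}(Z(P))$, apply the $d$-dimensional analogue of Theorem \ref{discretizedSeveri}, deduced by combining the argument of Section \ref{mainProofSection} with Segre's higher-dimensional extension of Severi's classical theorem. This partitions the wall tubes into four families $\tubes_1\cup\tubes_2\cup\tubes_3\cup\tubes_4$: the first three lie in prisms of dimension at most $d-1$, and after rescaling each prism to the unit ball reduce to a lower-dimensional extension problem, handled by induction on $d$ with base case $d=2$ being classical; the fourth family lies in the $\delta$-neighborhood of a quadric hypersurface, where extension reduces to a $(d-1)$-dimensional restriction problem on a quadric and the sharp range is accessible via Tao's bilinear estimate applied to its Gauss map.

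The main obstacle is a multi-scale bookkeeping argument that prevents losses from accumulating across the $\log R$ scales of the recursion. Each partitioning step costs $R^{O(\eta)}$ from the polynomial degree, and each invocation of Theorem \ref{discretizedSeveri} costs $|\log\delta|^{O(1)}$; absent further cancellation these compound to a fatal $R^{O(\eta\log R)}$ loss. The hard step is therefore to establish a refined, \emph{multi-scale} version of Theorem \ref{discretizedSeveri} in which the quadric $Q$ assigned to a wave packet at one scale uniquely determines, modulo a $\delta$-perturbation, the quadric at every finer scale at which the same wave packet reappears in the $\Sigma_4$ family, and in which the nested prisms at successive scales are geometrically compatible. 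With this rigidity in hand, each wave packet is charged to the $\Sigma_4$ branch at most $O(1)$ times and to each $\Sigma_i$ branch ($i=1,2,3$) along a tree whose leaves form a geometrically disjoint cover of the wave packet with total count $R^{O(\eta)}$, so that the induction on scale closes at the conjectured exponent $q>2d/(d-1)$. Supplying this multi-scale rigidity — which goes beyond the single-scale Theorem \ref{discretizedSeveri} proved in the present paper — is the core new ingredient that the plan requires, and is the step most likely to demand a substantially new idea.
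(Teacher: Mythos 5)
This statement is a \emph{conjecture}, not a theorem of the paper: the restriction conjecture for the paraboloid is open for every $d\geq 3$, and the paper neither claims nor contains a proof of it. What the paper actually establishes is far weaker: combining its Kakeya maximal function estimate at dimension $3+1/28$ (Theorem \ref{maximalFnThm}) with Demeter's conditional result (Theorem \ref{DThm}) yields \eqref{restrictionConj} for $q>\frac{14}{5}-\frac{2}{416515}$ when $d=4$, a tiny improvement over Guth's $q>14/5$, nowhere near the conjectured range $q>8/3$. So there is no ``paper's own proof'' to compare against; any purported proof of the full conjecture must be scrutinized as a claim to have solved a major open problem.

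Your proposal does not constitute such a proof, and you essentially concede this yourself in the final paragraph: the ``multi-scale rigidity'' version of Theorem \ref{discretizedSeveri} that your induction on scale requires is not established here or anywhere, and you acknowledge it ``is the step most likely to demand a substantially new idea.'' A plan whose load-bearing step is an unproved strengthening of the paper's main theorem is a research program, not a proof. Beyond that admitted gap, two of your other steps would not close at the conjectured exponent even if granted. First, handling the quadric family $\tubes_4$ ``via Tao's bilinear estimate'' gives at best the bilinear range $q>\frac{2(d+2)}{d}$, which is strictly worse than $\frac{2d}{d-1}$ for $d\geq 3$; the gap between the bilinear and linear exponents is exactly the known obstruction, and nothing in your sketch bridges it. Second, the reduction of the prism families $\tubes_1,\tubes_2,\tubes_3$ to lower-dimensional restriction by induction on $d$ is precisely the lossy step in Guth's polynomial partitioning argument: wave packets concentrated near a lower-dimensional variety are the enemy there, and naive dimensional induction produces exponents like $14/5$ in $\RR^4$, not $8/3$. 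Finally, note that Theorem \ref{discretizedSeveri} as proved is a statement about lines in the $\delta$-neighborhood of a degree-$D$ hypersurface with $D=O(1)$; in a partitioning scheme with $D=R^{\eta}$ the implicit constants $O_D(1)$ and the exponents $O(1)$ in the theorem are not uniform in $D$, so even the single-scale input you want is not directly available at the degrees your scheme uses.
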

Conjecture \ref{restrictionConjStatement} has been proved when $d=2$ by Fefferman and Zygmund \cite{F, Z}. For $d\geq 2$ the problem remains open; the current best bounds are due to Guth \cite{G2, G}. When $d=4$, \eqref{restrictionConj} is known for $q>14/5$. In \cite{D}, Demeter proved that improvements to the Kakeya maximal function conjecture in $\RR^4$ would lead to progress on the restriction conjecture. 
\begin{thm}[\cite{D}, Theorem 1.4]\label{DThm}
Let $d=4$. If \eqref{generalKakeyaConj} holds for some $p > 3$, then \eqref{restrictionConj} holds for some $q<14/5$.  
\end{thm}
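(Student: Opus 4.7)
The plan is to run the Bourgain--Guth broad/narrow analysis for the extension operator for the paraboloid in $\RR^4$, using the hypothesised improved Kakeya bound to replace Wolff's $p=3$ estimate in the narrow pieces. Fix a large scale $R$ and a smaller auxiliary scale $K$, and decompose $Ef = \sum_\tau Ef_\tau$ where $\tau$ ranges over $K^{-1}$-caps in $[-1,1]^3$. Cover the ball $B_R$ by balls $B_K$ and perform a broad/narrow dichotomy on each: either several caps with pairwise transverse normals contribute comparably to $Ef$ on $B_K$ (broad), or all significant caps lie in a $O(K^{-1})$-neighborhood of a proper linear subspace in frequency (narrow).

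For the broad contribution, apply the Bennett--Carbery--Tao multilinear restriction theorem in $\RR^4$: this already yields $L^q$ bounds below $q=14/5$ for fully transverse caps, so the broad part poses no obstacle. For the narrow contribution, perform a wave packet decomposition of $Ef_\tau$ at scale $R^{1/2}$, so that each packet is localized in an $R^{1/2}$-tube pointing in a direction normal to $\tau$. Because the surviving caps concentrate near a lower-dimensional subvariety of the paraboloid, the corresponding tube directions lie within $O(K^{-1})$ of a lower-dimensional linear subspace. Bound $\|Ef\|_{L^q}^q$ on this piece via H\"older's inequality and an $L^2$ wave packet orthogonality estimate, reducing the question to controlling $\bigl\|\sum_T \chi_T\bigr\|_{L^{p'}}$ where $T$ runs over the relevant $R^{-1/2}$-tubes, pointing in the restricted set of directions.

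Now invoke the hypothesised Kakeya maximal bound \eqref{generalKakeyaConj} at dimension $p$ with $\delta = R^{-1/2}$: the gain $p - 3 > 0$ over Wolff's exponent translates to a polynomial gain in $R$ in the narrow contribution, producing $\|Ef\|_{L^q(B_R)} \lesssim_\eps R^\eps \|f\|_\infty$ for some $q < 14/5$. Close the argument by inducting on $R$ through the narrow step, so that nested narrow iterations at finer scales are absorbed into the same estimate, and by choosing $K$ as a function of $R$ to balance the broad and narrow bounds.

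The main obstacle is the exponent bookkeeping: the Bourgain--Guth induction introduces losses of order $K^{O(1)}$ and $R^\eps$ at each stage, and one must verify that the polynomial improvement coming from $p > 3$ strictly dominates these losses after all narrow iterations are compounded. A secondary difficulty is handling the degenerate narrow configurations where the significant caps concentrate near a $1$-plane rather than a $2$-plane; these must be treated separately, typically by exploiting that $Ef_\tau$ on such caps is essentially $1$-dimensional and admits direct $L^\infty$ bounds, so they do not obstruct the argument.
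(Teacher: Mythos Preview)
The paper does not prove this theorem. It is quoted verbatim as Theorem 1.4 of Demeter's paper \cite{De} and used as a black box: the author proves the Kakeya maximal estimate at $p=3+1/28$ (Theorem~\ref{maximalFnThm}) and then invokes Demeter's result to deduce a restriction estimate below $q=14/5$. There is no proof in the paper to compare your proposal against.

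As for your sketch on its own merits: the broad/narrow framework you describe is the right general architecture, but be aware that Demeter's argument in \cite{De} is not the classical Bourgain--Guth bilinear scheme you outline. It sits inside Guth's polynomial-partitioning machinery from \cite{G}, where the Kakeya input enters specifically in the ``algebraic'' case---when wave packets concentrate near a low-degree hypersurface---rather than in a simple linear narrow case. Your proposal to feed the improved Kakeya exponent directly into a H\"older/$L^2$-orthogonality bound on narrow tubes glosses over this; the actual mechanism by which a Kakeya gain at $p>3$ propagates through the polynomial-partitioning recursion is more delicate and is precisely the content of Demeter's paper. If you were asked to supply a proof here, you would need to reproduce that recursion and track where the Kakeya estimate is invoked, not just sketch the older Bourgain--Guth dichotomy.
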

When Theorems \ref{maximalFnThm} and \ref{DThm} are combined, they yield an improved restriction estimate for the paraboloid in $\RR^4$. Inserting the exponent $p = 3+1/28$ into Demeter's argument yields the exponent $q = \frac{14}{5} - \frac{2}{416515}.$

\subsection{Proof sketch}
In this section we will survey the main ideas in the proof of Theorem \ref{discretizedSeveri}. Let $P\in\RR[x_1,\ldots,x_4]$ be a polynomial of degree at most $D$ and let $Z = Z(P) \cap B(0,1)$. We wish to understand the set of lines that satisfy $|\ell\cap N_{\delta}(Z)|\geq 1$. 

For the purposes of this sketch, we will assume that $\nabla P(z)\sim 1$ for all $z\in Z$ and that $|P(x)|\leq \delta$ for all $x\in N_{\delta}(Z)$. While these assumptions certainly need not hold in general, a reduction performed in Section \ref{replacingPSection} allows us to reduce to the case where a similar (though slightly more technical) statement holds.

Let $\ell$ be a line satisfying $|\ell\cap N_{\delta}(Z)|\geq 1$ and let $x\in \ell\cap N_{\delta}(Z)$. Let $\ell(t)$ be a unit speed parameterization of $\ell$ with $\ell(0)=x$. Then $P(\ell(t))$ is a univariate polynomial of degree at most $D$, and $|P(\ell(t))|$ is small for many values of $t$. More precisely, we have
$$
|\{t\in [-1,1]\colon |P(\ell(t))|\lesssim\delta \}|\gtrsim 1.
$$
This means that all of the coefficients of $P(\ell(t))$ have magnitude $\lesssim \delta$ (the implicit constant may depend on $D$, but we will suppress this dependence here). In particular, if $v$ is a unit vector pointing in the same direction as $\ell$ and if $z\in Z$ satisfies $\operatorname{dist}(x,z)\leq\delta$, then 
\begin{equation}\label{smallGradient}
|v\cdot \nabla P(z)|\lesssim\delta\quad\textrm{and}\quad|(v\cdot \nabla)^2 P(z)|\lesssim\delta.
\end{equation} 
For each $z\in Z(P)$, the set of vectors $\{v\in S^3\colon v\cdot\nabla P(z) = 0,\ (v\cdot\nabla)^2 P(z) = 0\}$ is called the quadratic cone of $Z(P)$ at $z,$ and it is closely related to the second fundamental form of $Z(P)$ at $z$. 

We wish to understand the relationship between the set of vectors satisfying \eqref{smallGradient} and the set of vectors in the quadratic cone of $Z(P)$ at $z$. It thus seems reasonable to ask: if $z\in Z$ and if a unit vector $v\in S^3$ satisfies \eqref{smallGradient}, must it be the case that $v$ is contained in the $\lesssim\delta$-neighborhood of the quadratic cone of $Z(P)$ at $z$?

In short, the answer is no. As a first example, consider the polynomial $P_1(x_1,x_2,$ $x_3,x_4) = x_1 + \delta x_2^2$, and let $z=0$. Then the quadratic cone of $Z(P_1)$ at $z=0$ is $\{(v_1,v_2,v_3,v_4)\in S^3 \colon v_1 = v_2 = 0\}.$ However, the set of vectors satisfying \eqref{smallGradient} is much larger; it is comparable to $\{(v_1,v_2,v_3,v_4)\in S^3 \colon |v_1|\lesssim\delta \}.$ In this example, the $\delta$-neighborhood of $Z(P_1)\cap B(0,1)$ is comparable to the $\delta$-neighborhood of a hyperplane. In Section \ref{wellCurvedReductionSec} we will expand on this observation. We will prove a technical variant of the following idea: if the coefficients of the second fundamental form of $Z(P)$ are all very small at a typical point, then large pieces of $Z(P)\cap B(0,1)$ can be contained in the thin neighborhood of a hyperplane. The lines having large intersection with these pieces will end up in the set $\Sigma_3$ from the statement of the theorem. 

As a second example, consider the polynomial $P_2(x_1,x_2,x_3,x_4) = x_1 + x_2^2$ and let $z=0$. Then the quadratic cone of $Z(P_2)$ at $z=0$ is again $\{(v_1,v_2,v_3,v_4)\in S^3 \colon v_1 = v_2 = 0\},$ while the set of vectors satisfying \eqref{smallGradient} is comparable to $\{(v_1,v_2,v_3,v_4)\in S^3 \colon |v_1|\lesssim\delta,\ |v_2|\lesssim \delta^{1/2}\}.$ In this example, at least one coefficient of the second fundamental form of $Z(P_2)$ at $z=0$ has large magnitude, and the quadratic cone of $Z(P_2)$ at $z=0$ is a plane. In Section \ref{stronglyBroadReductionCase}, we will show that if at least one coefficient of the second fundamental form of $Z(P)$ is large at a typical point, and if the quadratic cone is comparable to either a plane or a union of two planes, then most of the lines contained in $N_{\delta}(Z(P)\cap B(0,1))$ can be covered by a union of one and two dimensional prisms. These lines will end up in the sets $\Sigma_1$ and $\Sigma_2$ from the statement of the theorem. 

Now let $z\in Z(P)\cap B(0,1)$ and suppose that at least one coefficient of the second fundamental form of $Z(P)$ at $z$ is large and that the quadratic cone of $Z(P)$ at $z$ is not comparable to either a plane or a union of two planes. Then the set of vectors satisfying \eqref{smallGradient} is comparable to the $\delta$-neighborhood of the quadratic cone of $Z(P)$ at $z$. In Section \ref{broadVarietiesSection}, we will show that if this happens at a typical point, then morally speaking $Z(P)$ must be a quadratic hypersurface. More precisely, many of the lines lying near $N_{\delta}(Z(P)\cap B(0,1))$ are almost contained in a quadratic hypersurface. These lines will be end up in the set $\Sigma_4$ from the statement of the theorem.

\subsection{Thanks}
The author would like to thank Larry Guth and Ciprian Demeter for helpful discussions.
\section{A primer on real algebraic geometry}
Our proof will use several facts about semi-algebraic sets. Further background can be found in \cite{BCR}. A semi-algebraic set is a boolean combination of sets of the form
\begin{equation}\label{defnSemiAlg}
S=\{x\in\RR^d\colon P_1(x)=0, \ldots, P_{k}(x)=0,\ Q_1(x)>0,\ldots,Q_{\ell}(x)>0\},
\end{equation}
where $P_1,\ldots,P_k$ and $Q_1,\ldots,Q_{\ell}$ are polynomials. We define the complexity of $S$ to be the minimum of $\deg(P_1)+\ldots+\deg(P_k)+\deg(Q_1)+\ldots+\deg(Q_\ell)$, where the minimum is taken over all representations of $S$ of the form \eqref{defnSemiAlg}. We define the complexity of a semi-algebraic set to be the sum of the complexities of its constituent components of the form \eqref{defnSemiAlg} above.

If $S,T\subset\RR^d$ are semi-algebraic sets of complexity $E_1$ and $E_2$ respectively, then $S\cup T,\ S\cap T,$ and $S\backslash T$ are semi-algebraic sets of complexity $O_{d,E_1,E_2}(1)$. If $\pi\colon\RR^{d}\to\RR^e$ is a projection, then $\pi(S)$ is a semi-algebraic set of complexity $O_{d,E_1}(1)$. 

If $S\subset\RR^d,\ T\subset\RR^e$ are semi-algebraic sets, we say that a function $f\colon S\to T$ is semi-algebraic of complexity $E$ if the graph of $f$ is a semi-algebraic set of complexity $E$. Clearly if $f\colon S\to T$ is a semi-algebraic bijection of complexity $E$, then $f^{-1}\colon T\to S$ is also a semi-algebraic bijection of complexity $E$.

If $S\subset\RR^d$ is a semi-algebraic set, we define its dimension $\dim(S)$ to be the largest integer $e$ so that there is a subset $S^\prime\subset S$ that is homeomorphic to the open $e$-dimensional cube $(0,1)^e$. If $S\subset \RR^d$ has dimension $e$ and complexity $E$, then there is an $e$-dimensional real algebraic variety $S\subset Z\subset\RR^d$ that is defined by polynomials of degree $O_{d,E}(1)$. If $S$ and $T$ are semi-algebraic sets, and if there is a semi-algebraic bijection $f\colon S\to T$, then $S$ and $T$ have the same dimension.

Occasionally, we will refer to semi-algebraic subsets of the sphere $S^d$ or the affine Grassmannian $\operatorname{Grass}(d;e)$ of $e$-dimensional affine subspaces of $\RR^d$. To do this, we will identify $S^d$ or $\operatorname{Grass}(d;e)$ with a semi-algebraic set in $\RR^N$ for some $N=O_{d}(1)$.

In the remainder of this section, we will list several standard results about real algebraic sets that will be used throughout the proof.
\begin{lem}[Milnor-Thom theorem]\label{MTTheorem}
Let $S\subset\RR^d$ be a semi-algebraic set of complexity at most $E$. Then $S$ has $O_{E,d}(1)$ connected components. 
\end{lem}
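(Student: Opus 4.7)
The plan is to follow the classical Milnor--Thom approach, which proceeds in two stages: first, reduce from a general semi-algebraic set to a real algebraic variety (in a possibly higher-dimensional ambient space), and second, bound the number of connected components of that variety by a Morse-theoretic argument combined with Bezout's theorem.

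For the reduction step, I would combine the equations $P_1 = \ldots = P_k = 0$ into the single equation $P_1^2 + \ldots + P_k^2 = 0$, which has the same zero set and degree at most $2E$. To handle the strict inequalities $Q_j > 0$, I would introduce auxiliary variables $y_1, \ldots, y_\ell$ and adjoin the equations $1 - y_j Q_j = 0$. This forces $Q_j \neq 0$, uniquely determines $y_j = 1/Q_j$, and isolates the locus where $Q_j > 0$ (say, by further restricting to $y_j > 0$, which can be absorbed in the same reduction by doubling $y_j$). The result is a real algebraic variety $V' \subset \RR^{d+\ell}$ defined by polynomials of degree $O(E)$ in $d + O(E)$ variables, whose projection to $\RR^d$ is a semi-algebraic bijection onto $S$; in particular $V'$ and $S$ have the same number of connected components.

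For the second stage, I would prove Milnor's bound on the zeroth Betti number of a real algebraic variety $W = Z(P) \subset \RR^n$. First, perturb and compactify by replacing $P$ with $P_\eps = P^2 + \eps(|x|^{2N} - R)$ for suitable large $N, R$ and generic small $\eps > 0$; by Sard's theorem, $Z(P_\eps)$ is a smooth compact hypersurface for almost every $\eps$. Choose a generic linear functional $h$, and count the critical points of $h|_{Z(P_\eps)}$: these are the common zeros of $P_\eps$ and the $(n-1)$ maximal minors of the Jacobian of $(P_\eps, h)$, a system that Bezout's theorem bounds by $\deg(P_\eps) \cdot (\deg(P_\eps)-1)^{n-1} = O_{n, \deg P}(1)$. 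By the Morse inequalities, the sum of the Betti numbers of $Z(P_\eps)$, and in particular $b_0(Z(P_\eps))$, is bounded by this critical-point count. A standard limiting argument as $\eps \to 0$ then transfers the bound to $W$: every connected component of $W$ contains, for all sufficiently small $\eps$, at least one component of $Z(P_\eps)$, so the number of components of $W$ is at most $\liminf_{\eps \to 0} b_0(Z(P_\eps))$.

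The main obstacle is making the limiting argument fully rigorous, since $W$ may be singular, non-compact, or of positive codimension, and connected components of $W$ could in principle collapse or merge under the perturbation--compactification. One must check that for the particular choice $P_\eps = P^2 + \eps(|x|^{2N} - R)$ with $N$ much larger than $\deg P$, each connected component of $W \cap B(0, R^{1/(2N)})$ is shadowed by at least one component of $Z(P_\eps)$ nearby, and that unbounded components are captured by taking $R$ large. Assembling these bounds yields a final count of $O_{d,E}(1)$ connected components, where the implicit constant is polynomial in $E$ with exponent depending on $d$ (and on the $O(E)$ auxiliary variables introduced in the reduction).
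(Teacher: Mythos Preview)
The paper does not prove this lemma; it is stated as a standard result with citations to Milnor, Thom, and (for the semi-algebraic formulation) Barone--Basu. So there is no ``paper's own proof'' to compare against---your sketch is simply doing more than the paper does.

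Your outline is the classical Milnor argument and is essentially correct, with one imprecision worth flagging in the reduction step. The equation $1 - y_j Q_j = 0$ only forces $Q_j \neq 0$, not $Q_j > 0$; your phrase ``doubling $y_j$'' presumably means substituting $y_j = z_j^2$, so that $z_j^2 Q_j = 1$ forces $Q_j > 0$. With that substitution the projection $V' \to S$ is a $2^\ell$-to-$1$ cover rather than a bijection, but since it is a continuous surjection the number of components of $S$ is still bounded by that of $V'$, which is all you need. Note also that introducing $\ell = O(E)$ auxiliary variables makes Milnor's bound of the form $(\deg)^{n-1}$ come out as $E^{O(d+E)}$ rather than polynomial in $E$ with exponent depending only on $d$; you acknowledge this in your final parenthetical, and it is still $O_{d,E}(1)$, which is all the lemma claims.
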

This is a variant of the Milnor-Thom Theorem \cite{M,T}. See Barone-Basu \cite{BB} for the above formulation.
\begin{lem}[Wongkew \cite{Wo}]
Let $Z\subset \RR^d$ be a real algebraic variety of dimension $e$ that is defined by polynomials of degree at most $D$. Then for each $u>0$, we have
$$
|N_{u}(Z\cap B(0,1))|\leq\sum_{j=0}^e C_{d,j} (Du)^{d-e},
$$
where the numbers $C_{d,j}$ are constants depending only on $d$ and $j$.
\end{lem}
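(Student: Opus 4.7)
The plan is to derive the volume estimate from a covering number bound: if I can show that $Z\cap B(0,1)$ admits a cover by $\lesssim_d D^e u^{-e}$ balls of radius $u$, then $N_u(Z\cap B(0,1))$ is contained in the corresponding collection of balls of radius $2u$, yielding $|N_u(Z\cap B(0,1))|\lesssim_d u^d\cdot D^e u^{-e}$, which gives the claimed bound (up to repackaging the $D$-dependence to match the stated form $(Du)^{d-e}$). The reduction of a volume estimate to a covering estimate is essentially lossless in this regime, so the whole content lies in the covering bound.

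To establish the covering bound, I would induct on the dimension $e$. The base case $e=0$ is Bezout: a zero-dimensional variety defined by polynomials of degree at most $D$ consists of $O_d(D^d)$ points, which trivially admits a cover of the required size. For the inductive step, I would first rotate coordinates so that no $e$-dimensional irreducible component of $Z$ is contained in any hyperplane perpendicular to the $x_1$-axis; since Lemma \ref{MTTheorem} gives only $O_{d,D}(1)$ irreducible components of $Z$, a generic rotation suffices. Then slice by the $O(u^{-1})$ hyperplanes $H_k=\{x_1=ku\}\cap B(0,1)$. Each $Z\cap H_k$ is a real algebraic variety of dimension at most $e-1$ defined by polynomials of degree at most $D$, so by the inductive hypothesis admits a cover by $\lesssim_{d,D} u^{-(e-1)}$ balls of radius $u$; pooling over the $O(u^{-1})$ slices gives $\lesssim_{d,D} u^{-e}$ balls total.

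The main obstacle is that this slicing does not automatically capture every point of $Z\cap B(0,1)$: a point $x\in Z$ within distance $u$ of some $H_k$ need not lie within distance $u$ of $Z\cap H_k$, since the tangent plane $T_x Z$ might be nearly parallel to $H_k$. To handle this I would stratify $Z_{\smooth}$ according to which of the $\binom{d}{e}$ coordinate $e$-planes the tangent space $T_x Z$ projects onto with bounded condition number; at every smooth point at least one such projection works. On each stratum, $Z$ is locally the graph of a Lipschitz function over the chosen coordinate plane, and slicing in the complementary directions really does cover the stratum at scale $u$. The singular locus of $Z$ is a real algebraic variety of dimension at most $e-1$ and degree $O_D(1)$, and is controlled directly by the inductive hypothesis.

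The delicate piece is showing that the Lipschitz constants on each stratum can be taken uniformly bounded in terms of $d$ and $D$; this is where a quantitative transversality statement of \L{}ojasiewicz type is needed, and it is the source of the $D^{d-e}$ factor in the final bound. I would obtain this by noting that the set of points of $Z_{\smooth}$ where $T_xZ$ projects badly onto a fixed coordinate $e$-plane is cut out by polynomial conditions of bounded degree, so by Milnor--Thom has bounded topological complexity; subdividing further into $O_{d,D}(1)$ pieces on which the projection has condition number bounded by an absolute constant, the inductive count is only blown up by $O_{d,D}(1)$, which is absorbed into the implicit constant.
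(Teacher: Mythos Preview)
The paper does not prove this lemma; it is quoted from Wongkew \cite{Wo} and used only through the corollary $|N_u(S)|\lesssim_{d,E} u^{d-e}$, with all degree dependence absorbed into the implicit constant. So there is no argument in the paper to compare against.

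Your slicing-plus-induction sketch is the standard route (and close to Wongkew's own proof). For the weaker bound $|N_u(Z\cap B(0,1))|\lesssim_{d,D} u^{d-e}$, which is all the paper actually needs, it is essentially correct: the stratification by well-conditioned coordinate projections does repair the slicing, since if $T_xZ$ projects onto a coordinate $e$-plane $\pi$ with condition number $\le C_d$ (and by Grassmannian compactness some $\pi$ always works), then moving $x$ along $Z$ to the nearest slice orthogonal to a direction \emph{in} $\pi$ costs only $O_d(u)$. (You wrote ``complementary directions,'' which is backwards---slicing in a direction orthogonal to $\pi$ need not be transverse to a graph over $\pi$.) Two further small points: the base case $e=0$ is Milnor--Thom rather than B\'ezout, and the singular locus is cut out by Jacobian minors of degree $O(D)$, not $D$; both are harmless once $D$ is absorbed.

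The one genuine gap is the explicit $D$-dependence. Your argument as written produces $O_{d,D}(1)\cdot u^{d-e}$, and the ``repackaging'' you allude to is not free: obtaining the stated $(Du)^{d-e}$ with $C_{d,j}$ independent of $D$ requires tracking $D$ through the induction, in particular bounding the number of sheets of $Z$ over a generic point of each coordinate $e$-plane by $O(D^{d-e})$ via B\'ezout on the $(d-e)$-dimensional fibers, rather than burying everything in a Milnor--Thom $O_{d,D}(1)$. You assert this factor appears but do not derive it.
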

\begin{cor}
Let $S\subset B(0,1)\subset \RR^d$ be a semi-algebraic set of dimension $e$ and complexity at most $E$. Then for each $u>0$, we have
$$
|N_u(S)|\lesssim_{E,d} u^{d-e}.
$$
\end{cor}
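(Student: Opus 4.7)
The plan is to reduce the semi-algebraic statement to Wongkew's lemma, which has just been stated for genuine algebraic varieties. By the remark recalled in the paragraph preceding this corollary, there exists a real algebraic variety $Z\subset\RR^d$ of dimension $e$, cut out by polynomials of degree $D=O_{d,E}(1)$, with $S\subset Z$. Consequently $N_u(S)\subset N_u(Z)$, and from $S\subset B(0,1)$ we also have $N_u(S)\subset B(0,1+u)$.

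The one minor issue is that Wongkew's lemma controls $N_u(Z\cap B(0,1))$ rather than $N_u(Z)\cap B(0,1+u)$. For $u\geq 1$ this is not an issue: the trivial bound $|N_u(S)|\leq|B(0,1+u)|\lesssim_d u^d$ handles the (uninteresting) large-$u$ regime. For $0<u<1$ I would rescale by $\Phi(x)=x/2$, which sends $Z$ to a variety $\tilde Z=\Phi(Z)$ of the same dimension and the same defining polynomial degrees (a linear change of coordinates preserves both). Under $\Phi$, the inclusion $N_u(S)\subset N_u(Z)\cap B(0,2)$ becomes $\Phi(N_u(S))\subset N_{u/2}(\tilde Z\cap B(0,1))$, and Wongkew's lemma applied to $\tilde Z$ with parameter $u/2$ bounds the volume of the right-hand side by $O_{E,d}(u^{d-e})$. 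Pulling back through $\Phi^{-1}$ multiplies the volume by $2^d$, which is absorbed into the implicit constant.

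No real obstacle arises: the content is entirely packaged into the existence of the enveloping variety $Z$ of the correct dimension and bounded complexity, and that fact is quoted just above. The only bookkeeping is the harmless rescaling needed so that the ambient ball in Wongkew's lemma contains $N_u(S)$.
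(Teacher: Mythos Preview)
Your proof is correct and follows exactly the approach the paper intends (the corollary is stated without proof, as an immediate consequence of the enveloping-variety fact and Wongkew's lemma). One simplification: since $S\subset B(0,1)$ and $S\subset Z$, you already have $S\subset Z\cap B(0,1)$, hence $N_u(S)\subset N_u(Z\cap B(0,1))$ and Wongkew applies directly --- the rescaling detour is unnecessary.
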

Since $\mathcal{E}_{u}\big(N_{w}(S)\big)\lesssim u^{-d}|N_{u+w}(S)|$, we obtain the following corollary.
\begin{lem}[Covering number of neighborhoods of semi-algebraic sets]\label{coveringNumberSemiAlg}
Let $S\subset B(0,1)\subset \RR^d$ be a semi-algebraic set of dimension $e$ and complexity at most $E$. Then for each $u>0$, we have
$$
\mathcal{E}_u(N_{u}(S)) \lesssim_{E,d} u^{-e}.
$$
If $0<u<w$, then
$$
\mathcal{E}_u(N_{w}(S)) \lesssim_{E,d} u^{-d} w^{d-e} =  u^{-e} (w/u)^{e-d}.
$$
\end{lem}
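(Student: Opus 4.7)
The plan is to deduce both bounds directly from the preceding Wongkew-type volume estimate, using the elementary inequality relating covering numbers to Lebesgue measure that is already alluded to in the paragraph immediately above the lemma statement.

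First I would recall the standard volume-to-covering-number bound: for any bounded set $X\subset\RR^d$,
$$
\mathcal{E}_u(X)\lesssim_d u^{-d}|N_u(X)|.
$$
The usual proof is to take a maximal $(u/2)$-separated subset $\{x_i\}$ of $X$; then the balls $B(x_i,u)$ cover $X$, so $\mathcal{E}_u(X)\leq \#\{x_i\}$, while the balls $B(x_i,u/4)$ are pairwise disjoint and each lie in $N_u(X)$, so $\#\{x_i\}\lesssim_d u^{-d}|N_u(X)|$.

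Next, I would observe that $N_u(N_w(S))\subset N_{u+w}(S)$, which immediately yields
$$
\mathcal{E}_u(N_w(S))\lesssim_d u^{-d}|N_{u+w}(S)|.
$$
Plugging in the preceding corollary, which gives $|N_{u+w}(S)|\lesssim_{E,d}(u+w)^{d-e}$ for a semi-algebraic set $S\subset B(0,1)$ of dimension $e$ and complexity at most $E$, produces
$$
\mathcal{E}_u(N_w(S))\lesssim_{E,d} u^{-d}(u+w)^{d-e}.
$$

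To finish, I would specialize. Setting $w=u$ gives $u^{-d}(2u)^{d-e}\lesssim u^{-e}$, which is the first claim. When $0<u<w$ we have $u+w\leq 2w$, so $u^{-d}(u+w)^{d-e}\lesssim u^{-d}w^{d-e}$, which is the second claim (the alternate form $u^{-e}(w/u)^{e-d}$ is just algebraic rewriting, noting $e-d\leq 0$). There is really no significant obstacle here: once the earlier volume estimate is in hand, the lemma is an essentially mechanical consequence of covering $N_w(S)$ by balls whose total measure is controlled by the neighborhood volume bound. The only point requiring any care is the harmless loss from replacing $N_{u+w}$ by $N_{2\max(u,w)}$ in the two regimes, which is absorbed into the implicit constants.
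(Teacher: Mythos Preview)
Your proposal is correct and follows exactly the approach the paper indicates: the paper simply notes the inequality $\mathcal{E}_{u}\big(N_{w}(S)\big)\lesssim u^{-d}|N_{u+w}(S)|$ immediately before stating the lemma and declares the result a corollary of the preceding Wongkew-type volume bound. Your write-up is a faithful expansion of this one-line derivation.
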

A similar result can be found in \cite[Theorem 5.9]{Yo}.
The next lemma is a quantitative version of the statement that every connected smooth semi-algebraic set is path connected.
\begin{lem}\label{semiAlgebraicPath}
Let $S\subset\RR^d$ be a semi-algebraic set of complexity at most $E$ and diameter at most one. Suppose as well that $S$ is a connected smooth manifold. Then any two points in $S$ can be connected by a smooth curve of length $O_{d,E}(1)$. 
\end{lem}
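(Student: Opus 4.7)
The plan is to prove the lemma by induction on $e = \dim S$, using a fibering argument to reduce to lower-dimensional slices. After translating, we may assume $S \subset B(0,1)$.

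First I would handle the base case $e = 1$. Here $S$ is a smooth one-dimensional semi-algebraic manifold of complexity at most $E$, and in particular is contained in a one-dimensional real algebraic variety $V$ of degree $O_E(1)$. A Cauchy-Crofton / B\'ezout argument (a generic affine hyperplane meets $V$ in $O_E(1)$ points, integrated over affine hyperplanes meeting $B(0,1)$) yields $\mathcal{H}^1(V \cap B(0,1)) = O_{d,E}(1)$. Since $S$ is smooth, connected, and one-dimensional, it is homeomorphic to an interval or a circle, so any two of its points are joined by a sub-arc of length at most $\mathcal{H}^1(S) = O_{d,E}(1)$.

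For $e \geq 2$, I would pick a generic linear function $\pi\colon\RR^d\to\RR$ so that $\pi|_S$ is a Morse function; its critical values form a set $C \subset [-1,1]$ of cardinality $O_{d,E}(1)$. For each regular $t$, the slice $S_t = \pi^{-1}(t) \cap S$ is a smooth $(e-1)$-dimensional semi-algebraic manifold of complexity $O_{d,E}(1)$, to which the induction hypothesis applies on each connected component. Using a quantitative semi-algebraic bi-Lipschitz trivialization (as provided by the Mostowski-Parusi\'nski theory of Lipschitz stratifications), over each component $(a,b) \subset [-1,1] \setminus C$ the preimage $\pi^{-1}((a,b)) \cap S$ decomposes as a product $(a,b)\times S_{t_0}$ via a semi-algebraic map whose bi-Lipschitz constant is $O_{d,E}(1)$. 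Given $p, q\in S$, I would concatenate a path as follows: move from $\pi(p)$ toward $\pi(q)$, crossing the critical values in $C$ one at a time; within each sub-interval the trivialization lets one simultaneously shift $t$ (length cost $O_{d,E}(|\Delta t|) = O_{d,E}(1)$) and move within the current fiber $S_t$ via a curve of length $O_{d,E}(1)$ supplied by the induction hypothesis. Each crossing of a critical value of $\pi|_S$ contributes $O_{d,E}(1)$ additional length through the local Morse picture and connectedness of $S$. The result is a piecewise smooth curve of length $O_{d,E}(1)$, which a standard bump-function smoothing converts into a genuinely smooth curve of comparable length.

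The main obstacle is the quantitative bi-Lipschitz trivialization step with constants depending only on $d$ and $E$. Mere existence of a semi-algebraic trivialization over each stratum is Hardt's theorem, but upgrading this to bi-Lipschitz with constants controlled by the complexity is the deep ingredient and requires the Mostowski-Parusi\'nski Lipschitz stratification theorem (or an analogous direct argument tailored to this setting). Once that tool is in hand, the rest of the proof is a routine induction combined with path concatenation.
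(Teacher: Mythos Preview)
The paper does not prove this lemma: it appears in the ``primer on real algebraic geometry'' section as a quoted standard fact, alongside Milnor--Thom, Wongkew's theorem, and the Remez inequality, with no argument given. So there is nothing in the paper to compare your proposal against.

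Your proposal is a reasonable outline, and you correctly isolate the hard ingredient: a bi-Lipschitz (not merely topological) trivialization over each regular interval with constants depending only on $(d,E)$, which is exactly what the Mostowski--Parusi\'nski Lipschitz stratification machinery supplies. Two points deserve more care. First, the concatenation step as written (``move from $\pi(p)$ toward $\pi(q)$'') suggests a $t$-monotone path, but connectedness of $S$ does not guarantee one exists---think of a U-shaped $S$ where $\pi(p)=\pi(q)$ lie in different arms. The fix is to build the finite graph whose vertices are the connected components of the fibers over each regular interval and whose edges record adjacencies across critical levels; this graph has $O_{d,E}(1)$ vertices (by Milnor--Thom applied to the fibers) and is connected because $S$ is, so there is a graph-path of length $O_{d,E}(1)$, and each step costs $O_{d,E}(1)$ arc length by your trivialization plus the inductive bound in the fiber. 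Second, the claim that a generic linear $\pi$ restricts to a Morse function with $O_{d,E}(1)$ critical points is true but should be justified: the critical locus is semi-algebraic of complexity $O_{d,E}(1)$, and for generic $\pi$ it is zero-dimensional, hence finite of the required cardinality. With these adjustments (and granting the Lipschitz stratification input), the argument is sound.
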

We will also need the following multi-dimensional Remez-type inequality. See, e.g. \cite[Theorem 2]{BG}.
\begin{lem}\label{multiDimRemez}
Let $P\in\RR[x_1,\ldots,x_d]$ be a polynomial of degree at most $D$. Let $\Omega\subset\RR^d$ be an open convex set. Let $m = \sup_{x\in\Omega}|P(x)|$. Then for each $0<\lambda<1$, 
\begin{equation}\label{volumeEstimate}
|\{x\in\Omega\colon |P(x)|\leq\lambda m \}|\leq 4d |\Omega|\lambda^{1/D}.
\end{equation}
\end{lem}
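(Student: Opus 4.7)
My plan is to reduce the statement to the classical one-dimensional Remez inequality by a radial slicing argument centered at a maximum point of $|P|$; this is one natural route to the Brudnyi--Ganzburg inequality referenced in the statement.

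\emph{One-dimensional template.} I take as given the classical 1D Remez inequality: for $Q\in\RR[t]$ of degree at most $D$ on an interval $[a,b]$ with $M=\sup_{[a,b]}|Q|$,
$$
\bigl|\{t\in[a,b]:|Q(t)|\leq\lambda M\}\bigr|\leq 4(b-a)\lambda^{1/D},\qquad 0<\lambda<1.
$$
This follows, after rescaling to $[-1,1]$, from the extremal inequality $T_D\!\bigl(\tfrac{2+s}{2-s}\bigr)\geq 1/\lambda$ (where $s$ is the measure of the set on which $|Q|>\lambda M$) combined with the two-sided Chebyshev bound $\tfrac{1}{2}(y+\sqrt{y^2-1})^D\leq T_D(y)\leq (y+\sqrt{y^2-1})^D$ and a little algebra.

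\emph{Radial slicing.} Since $\Omega$ is open, convex, and (to have nontrivial content) of finite measure, $m=\sup_{\overline{\Omega}}|P|$ is attained at some point $x_0\in\overline{\Omega}$. For each direction $v\in S^{d-1}$ set $R(v)=\sup\{r\geq 0:x_0+rv\in\overline{\Omega}\}$; convexity of $\overline{\Omega}$ implies that the segment $[x_0,x_0+R(v)v]$ lies in $\overline{\Omega}$, so polar coordinates centered at $x_0$ give
$$
|\Omega|=|\overline{\Omega}|=\frac{1}{d}\int_{S^{d-1}}R(v)^d\,dv.
$$

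\emph{Integration.} On each ray consider the univariate polynomial $Q_v(t)=P(x_0+tv)$, which has degree at most $D$ and satisfies $|Q_v(0)|=m$, so $\sup_{[0,R(v)]}|Q_v|\geq m$. Applying the 1D Remez inequality above with threshold $\lambda m$ yields
$$
\bigl|\{t\in[0,R(v)]:|Q_v(t)|\leq\lambda m\}\bigr|\leq 4R(v)\lambda^{1/D}.
$$
Setting $E=\{x\in\Omega:|P(x)|\leq\lambda m\}$ and plugging this into the spherical decomposition, using the crude bound $t^{d-1}\leq R(v)^{d-1}$ on each chord,
$$
|E|\leq\int_{S^{d-1}}R(v)^{d-1}\cdot 4R(v)\lambda^{1/D}\,dv=4\lambda^{1/D}\int_{S^{d-1}}R(v)^d\,dv=4d\,|\Omega|\lambda^{1/D},
$$
which is exactly \eqref{volumeEstimate}.

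I do not expect any genuine obstacle: the entire content of the lemma is the 1D Remez inequality, and the constant $4d$ arises precisely from the Jacobian factor $R(v)^d$ in the spherical decomposition, losing only a single dimensional factor $d$ relative to the 1D case. The only bookkeeping subtlety is that the maximizer $x_0$ may lie on $\partial\Omega$ rather than in $\Omega$ itself, but this is harmless because $|\overline{\Omega}|=|\Omega|$ and the chord $[x_0,x_0+R(v)v]$ still lies in $\overline{\Omega}$, so the 1D Remez bound on each chord is unaffected.
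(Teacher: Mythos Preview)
Your argument is correct. The paper does not actually prove this lemma; it simply cites Brudnyi--Ganzburg \cite{BG}, and your radial-slicing reduction to the one-dimensional Remez inequality is precisely the standard route to that result (and the one you would find in \cite{BG}). The constant $4d$ falls out exactly as you say: the factor $4$ comes from the one-dimensional Remez bound on each chord, and the factor $d$ from the polar Jacobian identity $|\Omega|=\tfrac{1}{d}\int_{S^{d-1}}R(v)^d\,dv$ after using the crude estimate $t^{d-1}\leq R(v)^{d-1}$.
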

\begin{cor}\label{multiDimRemezSphere}
Let $P\in\RR[x_1,\ldots,x_d]$ be a homogeneous polynomial of degree $D$. Let $m = \sup_{|x|=1}|P(x)|$. Then for each $0<\lambda<1$, 
\begin{equation}\label{volumeEstimate}
\mu_{\sigma}\Big(\{x\in S^{d-1}\colon |P(x)|\leq\lambda m \}\Big)\lesssim_{d} \lambda^{1/O_D(1)},
\end{equation}
where $\mu_{\sigma}$ is the Haar probability measure on the sphere. 
\end{cor}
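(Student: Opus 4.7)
The plan is to deduce Corollary \ref{multiDimRemezSphere} directly from its ball-version cousin, Lemma \ref{multiDimRemez}, by exploiting the homogeneity of $P$ to pass between the sphere and the unit ball.

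First I would apply Lemma \ref{multiDimRemez} with $\Omega = B(0,1)$. Since $P$ is homogeneous of degree $D$, we have $|P(rx)| = r^D |P(x)|$, so
$$\sup_{x \in B(0,1)} |P(x)| = \sup_{x \in S^{d-1}} |P(x)| = m.$$
Lemma \ref{multiDimRemez} then gives
$$\bigl|\{x \in B(0,1) : |P(x)| \le \lambda m\}\bigr| \le 4d \, |B(0,1)| \, \lambda^{1/D}.$$

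Next I would set $S_\lambda = \{\xi \in S^{d-1} : |P(\xi)| \le \lambda m\}$ and use homogeneity once more: if $\xi \in S_\lambda$ and $r \in [0,1]$, then $|P(r\xi)| = r^D |P(\xi)| \le \lambda m$. Hence the cone $C(S_\lambda) := \{r\xi : \xi \in S_\lambda,\ r \in [0,1]\}$ is contained in the sublevel set above. Computing $|C(S_\lambda)|$ in polar coordinates yields $|C(S_\lambda)| = \tfrac{1}{d}\sigma(S_\lambda)$, where $\sigma$ denotes unnormalized surface measure on $S^{d-1}$. Combining with the Remez bound and dividing by $\sigma(S^{d-1}) = d\,|B(0,1)|$ gives the claimed estimate
$$\mu_\sigma(S_\lambda) \lesssim_d \lambda^{1/D},$$
so one may take $O_D(1) = D$ in the statement.

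This is really a two-line reduction, and there is no serious obstacle. The only small point worth care is selecting a convex body on which $\sup |P|$ agrees exactly with $m$; the unit ball does this for free because of homogeneity, which would not be the case for an arbitrary convex set meeting the sphere. Any subsequent constants ($4d$, $\omega_{d-1}$, etc.) are absorbed into the $\lesssim_d$ notation.
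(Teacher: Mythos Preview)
Your argument is correct. The paper states Corollary \ref{multiDimRemezSphere} without proof, as an immediate consequence of Lemma \ref{multiDimRemez}; your reduction via the cone over $S_\lambda$ and polar coordinates is precisely the intended deduction, and it even pins down the exponent as $1/D$ rather than an unspecified $1/O_D(1)$.
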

\begin{lem}[Selecting a point from each fiber]\label{selPtFiberProp}
Let $X\subset\RR^d$ be a semi-algebraic set and let $f\colon X\to Y$ be a semi-algebraic map. Suppose that both $X$ and $f$ have complexity at most $E$. Then there is a semi-algebraic set $X^\prime\subset X$ of complexity $O_{E,d}(1)$ so that $f(X^\prime) = f(X)$ and $f\colon X^\prime\to Y$ is an injection.
\end{lem}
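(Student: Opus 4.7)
The plan is to construct $X'$ via a lexicographic selection, using Tarski--Seidenberg quantifier elimination together with the complexity bound of Lemma \ref{MTTheorem}. Fix the lexicographic order $\prec$ on $\RR^d$. Ignoring temporarily the issue of non-attained infima, I would define
$$
X' = \bigl\{x \in X : \text{for every } x' \in X \text{ with } f(x') = f(x),\ x \prec x' \text{ or } x = x' \bigr\}.
$$
This set is first-order definable using only the polynomials that describe $X$ and the graph of $f$, so by quantifier elimination it is semi-algebraic, and the Milnor--Thom style bound of Lemma \ref{MTTheorem} applied to the resulting sign conditions gives complexity $O_{E,d}(1)$. By construction $f$ is injective on $X'$ and $f(X') \subset f(X)$, with equality whenever every fiber of $f$ realizes its lex-minimum.

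The principal obstacle is that the lex-minimum of $f^{-1}(y)$ need not be attained: when the fiber is open in $\RR^d$, no $x \in X'$ lies over $y$. I would handle this by iteration on the coordinates of $\RR^d$. First, observe that $\sigma_1(y) = \inf\{x_1 : x \in f^{-1}(y)\}$ is a semi-algebraic function of $y$ of complexity $O_{E,d}(1)$ by Tarski--Seidenberg. On the semi-algebraic set of targets where this infimum is attained, one restricts attention to $\{x \in X : x_1 = \sigma_1(f(x))\}$, of strictly smaller fiber-dimension, and iterates with $\sigma_2, \ldots, \sigma_d$. On the complementary (smaller-dimensional, again by Tarski--Seidenberg) set of targets where $\sigma_1$ is not attained, one replaces the fiber by the semi-algebraic subset of $X$ of points whose $x_1$-coordinate lies within the smallest value achieved and repeats the argument on this residual set, whose ambient $x_1$-extent has collapsed. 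After at most $d$ iterations each fiber has been reduced to a single point, and at every step complexity grows only by a bounded factor depending on $E$ and $d$, so the final $X'$ has complexity $O_{E,d}(1)$.

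The main technical worry is maintaining that the selected point remains inside $X$ (rather than merely its closure) at each iterative step, while simultaneously keeping the complexity controlled. If this direct construction becomes unwieldy, I would fall back on cylindrical algebraic decomposition applied to $\operatorname{graph}(f) \subset \RR^d \times \RR^{\dim Y}$: such a decomposition partitions the graph into $O_{E,d}(1)$ semi-algebraic cells on each of which the projection to $Y$ is either a semi-algebraic homeomorphism onto its image or a trivial fibration. Selecting one ``section cell'' per image cell by a uniform lexicographic rule, and taking the union of their projections to $X$, yields a semi-algebraic $X'$ of the required complexity; global consistency of the selection and semi-algebraicity of the union again follow from Tarski--Seidenberg and Lemma \ref{MTTheorem}.
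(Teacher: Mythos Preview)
Your lexicographic-selection idea is exactly what the paper does: it fixes a semi-algebraic total order $<$ on $\RR^d$, sets $B=\{(x,x')\in X\times X: f(x)=f(x'),\ x<x'\}$, and defines $X'=X\setminus\pi(B)$, the set of fiberwise $<$-maxima. The paper then asserts $f(X')=f(X)$ and proves only the injectivity.

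You are right to flag the non-attainment issue; in fact the paper's own argument glosses over exactly this point---a fiber such as $(0,1)\subset\RR$ has no lex-maximum, so the naive $X'$ misses that fiber entirely. Your iterative-infimum scheme is the standard repair, but the branch where $\sigma_1$ is not attained (``lies within the smallest value achieved'') is still imprecise: you need to specify exactly which nonempty semi-algebraic subset of the fiber you pass to there and why the process terminates. The cylindrical-algebraic-decomposition fallback you outline is the clean way to finish: a CAD of $\operatorname{graph}(f)$ adapted to the projection onto $Y$ yields finitely many cells over each of which the projection admits a semi-algebraic section, and taking one section per base cell produces $X'$ with complexity $O_{E,d}(1)$.
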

\begin{proof}
Define a semi-algebraic ordering ``$>$'' on $\RR^d$ with the following properties. (A): If $x,x^\prime\in\RR^d$ then exactly one of the following holds: $x>x^\prime,\ x=x^\prime,$ or $x<x^\prime$. (B): The set  
$$
O = \{(x,x^\prime)\in\RR^d\times\RR^d\colon x<x^\prime\}
$$
is semi-algebraic of complexity $O_d(1)$.

An example of such an ordering is the lexicographic order on $x = (x_1,\ldots,x_d)$. Observe that
$$
\{(x,x^\prime)\in X\times X\colon f(x) = f(x^\prime)\}
$$
is semi-algebraic of complexity $O_{E}(1)$. Thus
$$
B = \{(x,x^\prime) \in X\times X \colon\ f(x) = f(x^\prime),\ x<x'\}
$$
is semi-algebraic of complexity $O_{E,d}(1)$. Let $\pi\colon X\times X\to X$ be the projection to the first coordinate. Then
$$
X^\prime = X\backslash \pi(B)=\{x\in X\colon x\geq x^\prime\ \forall\ x^\prime \in\ X\ \textrm{with}\ f(x) = f(x^\prime)\}
$$
is semi-algebraic of complexity $O_{E,d}(1)$. Note that $f(X^\prime) = f(X)$ and that $f\colon X^\prime\to Y$ is an injection. Indeed, if $x,x^\prime\in W$ with $f(x) = f(x^\prime)$ then $x\geq x^\prime$ and $x^\prime\geq x$, which implies $x=x^\prime$.
\end{proof}
\subsection{Bundles of lines}\label{bundlesOfLinesSection}
The map $\ell\mapsto\operatorname{Dir}(\ell)$ described in Definition \ref{directionOfLinesDef} is badly behaved for lines lying in (or near) the hyperplane $x_1 = 0$. To avoid this issue, we will only consider lines $\ell\in\lines$ that make a small angle with the $e_1$ direction. Abusing notation slightly, we will re-define $\lines$ to be the set of lines in $\RR^4$ that make an angle $\leq 1/10$ with the $e_1$ direction. 
\begin{defn}
For $Z\subset\RR^4$ and $0<\delta<c$, define
$$
\Sigma_{\delta,c}(Z)=\{\ell\in\lines\colon |\ell\cap N_{\delta}(Z)|\geq c\}.
$$
Define $\Sigma_{\delta}(Z)=\Sigma_{\delta,1}(Z)$.
\end{defn}
Let $\ell\in\lines$. Define $v(\ell)$ to be the unit vector $v\in \RR^4$ that points in the same direction as $\ell$ and satisfies $\angle(v,e_1)\leq 1/10$.
\begin{defn}
Let $\Sigma\subset\lines$ be a set of lines. For each $x\in\RR^4$, define 
$$
\Sigma(x) = \{\ell\in\Sigma\colon x\in\ell\}.
$$
\end{defn}
\begin{defn}\label{defnGammaZSigma}
Let $Z\subset\RR^4$ and let $\Sigma\subset\lines$  be a set of lines. Define
$$
\Gamma(Z,\Sigma)=\{(x,\ell)\in Z\times\Sigma\colon x\in\ell\}.
$$

If $\Gamma\subset\Gamma(Z,\Sigma)$, then for each $x\in Z$ define
$$
\Gamma(x) = \{\ell\in\lines\colon (x,\ell)\in \Gamma\},
$$
and for each $\ell\in\Sigma$, define
$$
\Gamma(\ell) = \{x \in \RR^4 \colon (x,\ell)\in \Gamma\}.
$$
\end{defn}
%
%
%

%
%
%

%
%
%
%
%
%
%
%
%
%
\section{Replacing $P$ by a better-behaved polynomial}\label{replacingPSection}
In this section we will perform a convenient technical reduction. Informally speaking, this reduction lets us assume that the polynomial $P$ from the statement of Theorem \ref{discretizedSeveri} obeys the bounds
\begin{equation}\label{wishfullThinking}
\begin{split}
&|\nabla P(x)|\sim 1\quad\ \textrm{for all}\ x\in Z(P)\cap B(0,1),\\
&|P(x)|\lesssim\delta\quad\ \ \ \phantom{.} \textrm{for all}\ x\in N_{\delta}(Z(P))\cap B(0,1).
\end{split}
\end{equation}
If \eqref{wishfullThinking} were true, it would yield a lot of useful information about the unit line segments contained in $N_{\delta}(Z(P))$. For example, if $\ell\in\Sigma_\delta(Z)$ and $x \in Z(P)\cap N_{\delta}(\ell)\cap B(0,1)$, then $v(\ell)$ is almost contained in the tangent plane $T_x(Z(P))$. While \eqref{wishfullThinking} need not be true, the following proposition will still allow us to recover some of the useful consequences of \eqref{wishfullThinking}.

\begin{prop}\label{goodPolyProp}
Let $P$ be a polynomial of degree at most $D$. Let $Z = Z(P)\cap B(0,1)$, let $\delta>0$, and let $\Sigma\subset \Sigma_{\delta}(Z)$ be a semi-algebraic set of complexity at most $E$. Then there exist sets $\Sigma_1,\ldots,\Sigma_b$; polynomials $P_1,\ldots,P_b$, and sets $\Gamma_1,\ldots,\Gamma_b$ so that the following holds.
\begin{enumerate}
\item\label{sizeOfB} $b\lesssim_{D,E}|\log\delta|$.
\item\label{semiAlgCplxty} Each $P_j$ has degree at most $D$. Each set $\Sigma_j$ and $\Gamma_j$ is semi-algebraic of complexity $O_{D,E}(1)$.
\item\label{GammaJInZJ} $\Gamma_j\subset \Gamma(N_{\delta}(Z_j),\Sigma_j)$, where
$$
Z_j = \{x\in Z(P_j)\cap B(0,1)\colon 1\leq|\nabla P_j(x)|\leq 2\}.
$$
\item\label{linesCover} $\Sigma=\bigcup_{j=1}^b \Sigma_j$ 
\item\label{GammaJCovers} For each $\ell\in\Sigma_j,$ we have $|\Gamma_j(\ell)|\gtrsim_{D}|\log\delta|^{-1}$.
\item\label{smallDerivatives} For each $(x,\ell)\in \Gamma_j$, each $y\in Z_j$ with $\operatorname{dist}(x,y)\leq\delta$, and each non-negative integer $i$, we have 
\begin{equation}\label{smallDirectionalDerivative}
|(v(\ell)\cdot \nabla)^i P_j(y)| \lesssim_{D,i} |\log\delta|^i \delta.
\end{equation}
\end{enumerate} 
\end{prop}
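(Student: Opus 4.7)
The strategy is to rescale $P$ dyadically so that its gradient has magnitude comparable to $1$ on the relevant piece of the zero set, to handle the deeply singular region by induction on $\deg P$, and to extract the derivative bounds on a long connected arc of $\ell\cap N_\delta(Z_j)$ using Lemma \ref{multiDimRemez} together with Markov's inequality. After normalizing $P$ so that $\sup_{B(0,1)}|\nabla P|\lesssim_D 1$, let $K=\lceil C_D\log(1/\delta)\rceil$ for a suitably large constant $C_D$, and for $k=0,1,\ldots,K$ set $P^{(k)}=2^kP$ and
\[
Z_k=\{x\in Z(P^{(k)})\cap B(0,1):1\leq|\nabla P^{(k)}(x)|\leq 2\}.
\]
Each $P^{(k)}$ tautologically satisfies the structural requirement on $Z_k$, leaving only the residual singular locus $Z^{\mathrm{sing}}=\{x\in Z(P)\cap B(0,1):|\nabla P(x)|<2^{-K}\}$ to be covered separately.

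I would reach the lines concentrating near $Z^{\mathrm{sing}}$ by applying Proposition \ref{goodPolyProp} inductively on $D$ to the partial derivatives $\partial_iP$ (each of degree $\leq D-1$). On $Z^{\mathrm{sing}}$, some $\partial_iP$ has magnitude $<2^{-K}\leq\delta$, placing $Z^{\mathrm{sing}}$ in the $\delta$-neighborhood of $Z(\partial_iP)$; the base case $D=1$ is trivial since a nonzero linear polynomial already has constant nonzero gradient. The inductive call contributes $O_D(|\log\delta|)$ additional indices, keeping the total $b\lesssim_{D,E}|\log\delta|$. For each $\ell\in\Sigma$, the decomposition
\[
1\leq|\ell\cap N_\delta(Z(P))|\leq\sum_{k=0}^{K}|\ell\cap N_\delta(Z_k)|+|\ell\cap N_\delta(Z^{\mathrm{sing}})|
\]
has $\lesssim_D|\log\delta|$ terms, so pigeonhole produces one term of size $\gtrsim_D|\log\delta|^{-1}$, determining the index $j=j(\ell)$ and the assignment into $\Sigma_j$. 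Now $\ell\cap N_\delta(Z_j)$ is semi-algebraic of complexity $O_{D,E}(1)$, so by Lemma \ref{MTTheorem} it has $O_{D,E}(1)$ connected components, one of which is an arc $\ell(I)$ for an interval $I\subset[-1,1]$ with $|I|\gtrsim_{D,E}|\log\delta|^{-1}$. I would define $\Gamma_j$ to consist of pairs $(x,\ell)$ with $\ell\in\Sigma_j$ and $x$ in such a long arc of $\ell\cap N_\delta(Z_j)$; this is semi-algebraic of complexity $O_{D,E}(1)$ and meets $|\Gamma_j(\ell)|\geq|I|\gtrsim_D|\log\delta|^{-1}$.

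For the derivative bound \eqref{smallDirectionalDerivative}, fix $(x,\ell)\in\Gamma_j$ with $x=\ell(t_0)$ and let $F(t)=P_j(\ell(t))$, a univariate polynomial of degree $\leq D$. Since $|\nabla P_j|\lesssim 1$ in an $O(\delta)$-neighborhood of $Z_j$, one has $|F(t)|\lesssim_D\delta$ on $I$. Lemma \ref{multiDimRemez} then forces $\sup_I|F|\lesssim_D\delta$, and Markov's inequality for polynomials on an interval of length $|I|$ gives
\[
\sup_I|F^{(i)}|\lesssim_{D,i}\frac{\sup_I|F|}{|I|^i}\lesssim_{D,i}\delta|\log\delta|^i,
\]
which, since $F^{(i)}(t_0)=(v(\ell)\cdot\nabla)^iP_j(\ell(t_0))$, is the desired bound at $x$; transferring to any $y\in Z_j$ with $|y-x|\leq\delta$ costs only a further $D$-dependent factor via the Lipschitz bound on the polynomial $(v(\ell)\cdot\nabla)^iP_j$. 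The hard part will be the induction in the second step: the inductive hypothesis must be applied with a relaxed intersection-length constant $c\sim|\log\delta|^{-1}$ at a scale $\delta'\approx\delta$, so that the $O_D(|\log\delta|)$ term count, the exponents $|\log\delta'|^i\sim|\log\delta|^i$, and the semi-algebraic complexity bounds all propagate cleanly through the recursion. A small bookkeeping refinement---allowing the hypothesis of the proposition to be ``$|\ell\cap N_\delta(Z)|\geq c$'' with the constants in the conclusion depending on $c$---makes this induction go through uniformly.
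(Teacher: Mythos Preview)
Your handling of the deeply singular locus via induction on $\deg P$ does not work. The inference ``$|\partial_iP(z)|<2^{-K}$ places $z$ in $N_\delta(Z(\partial_iP))$'' is false in general, and without it the recursive step produces nothing. Consider
\[
P(x)=\tfrac{1}{3}x_2^3+\epsilon\,x_2,\qquad 0<\epsilon\ll\delta^{C_D}.
\]
This is normalized ($\sup_{B(0,1)}|\nabla P|=1+\epsilon\sim1$), has $Z(P)\cap B(0,1)=\{x_2=0\}\cap B(0,1)$, and on this set $|\nabla P|=\epsilon<2^{-K}$. Thus $Z^{\mathrm{sing}}=Z(P)\cap B(0,1)$ and every $Z_k$ with $k\leq K$ is empty, so every line $\ell\subset\{x_2=0\}$ (of which there are many making small angle with $e_1$) must be handled by your inductive step. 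But $\partial_2P=x_2^2+\epsilon$ has \emph{no real zeros}, while $\partial_1P=\partial_3P=\partial_4P\equiv0$. Recursing on the identically zero partials is degenerate (their zero set is all of $\RR^4$, and your base case $D=1$ explicitly assumes a \emph{nonzero} linear polynomial), and recursing on $\partial_2P$ gives $\Sigma_\delta(Z(\partial_2P))=\emptyset$. The lines in $\{x_2=0\}$ are simply not covered.

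The paper sidesteps this entirely by never inducting on degree. The two key differences are: (i) it stratifies by $m(z)=\sup_{B(z,\delta)}|\nabla P|$ rather than by the pointwise value $|\nabla P(z)|$; since $|\nabla P|^2$ is a polynomial with a coefficient of size $\gtrsim_D1$, one has $m(z)\gtrsim_D\delta^{O_D(1)}$ uniformly, so only $O_D(|\log\delta|)$ dyadic levels are needed and there is no residual ``$Z^{\mathrm{sing}}$''; and (ii) at each level it \emph{shifts} $P$ by a constant before rescaling, setting $P_{j,k}=m_j^{-1}(P-w_{j,k})$ for finitely many values $w_{j,k}$ chosen so that $w_{j,k}\in P(B(z,\delta))$. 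The shift moves the zero set of $P_{j,k}$ onto a nearby point where the (rescaled) gradient is $\sim1$, which is exactly what fails when you only rescale. In the example above, $m(z)\approx\delta^2$ and $P_{j,k}\approx\delta^{-2}(P-w)$ with $|w|\lesssim\delta^3$ has its zero set near $\{x_2\approx\delta\}$, where $|\nabla P_{j,k}|\sim1$. Your derivative--bound argument via a long arc and Markov's inequality is fine and is essentially what the paper does; the gap is purely in how you reach the regime $|\nabla P_j|\sim1$.
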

 \begin{proof}
First, we can assume without loss of generality that the largest coefficient of $P$ has magnitude 1. If $Z(P)\cap B(0,1)=\emptyset$ then $Z = \emptyset$ and thus $\Sigma=\emptyset$ so the result is trivial. Thus we can also assume that at least one non-constant coefficient of $P$ has magnitude $\gtrsim_D 1$.

For each point $z\in B(0,1)$, define 
$$
m(z)=\sup_{x\in B(z,\delta)}|\nabla P(x)|.
$$

Observe that $|\nabla P(x)|^2$ is a polynomial, all of whose coefficients have magnitude $\lesssim_D 1$ and at least one coefficient has magnitude $\gtrsim_D 1$. Thus   
\begin{equation}\label{boundOnSizeOfMZ}
\delta^{O_D(1)}\lesssim_D m(z)\lesssim_D 1\quad \textrm{for all}\ z\in B(0,1).
\end{equation}

For each $z\in N_{\delta}(Z)\cap B(0,1)$, we have 
$$
P(B(z,\delta))\subset \big[-\delta m(z),\ \delta m(z)\big].
$$ 
By Lemma \ref{multiDimRemez}, there is a constant $c \gtrsim_D 1$ so that for each $z\in B(0,1)$, 
\begin{equation}\label{mostOfGradientIsLarge}
\big|\big\{ x\in B(z,\delta)\} \colon |\nabla P(x)| \geq c m(z)\big\}\big|\geq \frac{99}{100} |B(x,\delta)|.
\end{equation}
This implies that $P(B(z,\delta))$ contains an interval of length $\gtrsim_D \delta m(z)$.
%
%
Let $m_1,\ldots,m_b,$ $b\lesssim_{D}|\log\delta|$ be a geometric sequence of non-negative numbers with $m_1 = \delta^{O_D(1)},\ m_b = O_D(1)$ and $m_{j+1}=2m_j$. If we select $m_1$ and $b$ appropriately, then for each $z\in N_{\delta}(Z)\cap B(0,1)$ there exists an index $j$ so that $m_j\leq m(z)< m_{j+1}.$ For each index $j$, let $w_{j,1},\ldots,w_{j,h},\ h = O_D(1)$ be real numbers in $[-\delta m(z), \delta m(z)]$ so that for every $z\in N_{\delta}(Z)\cap B(0,1)$ satisfying $m_j\leq m(z)< m_{j+1}$, we have 
$$
\{w_{j,1},\ldots,w_{j,h}\} \cap P(B(z,\delta))\neq\emptyset,$$
i.e. at least one of the values $w_{j,1},\ldots, w_{j,h}$ is contained in $P(B(z,\delta))$. This can always be achieved since $P(B(z,\delta))$ must contain an interval of length $\gtrsim_D \delta m(z)$. We can select $h=O_D(1)$ to be independent of $j$. Observe that there are $O_{D}(|\log\delta|)$ pairs of indices $j,h$, which establishes Item \ref{sizeOfB} in the statement of the lemma (later in the proof we will re-index the pairs $(j,h)$ to use a single indexing variable).

For each $j=1,\ldots, b$ and each $k = 1,\ldots, h$, define
$$
X_{j,k} = \{ z\in N_{\delta}(Z)\cap B(0,1)\colon m_j \leq m(z) < m_{j+1},\ \textrm{and}\ w_{j,k}\in P\big(B(z,\delta)\big)\}.
$$
Then each set $X_{j,k}$ is semi-algebraic of complexity $O_D(1)$, and 
$$
N_{\delta}(Z)\cap B(0,1)= \bigcup_{j=1}^b \bigcup_{k=1}^h X_{j,k}. 
$$

For each $\ell\in\Sigma$, there exists indices $j,k$ so that $|\ell\cap X_{j,k}|\geq c_1/|\log\delta|$, where $c_1\gtrsim_D 1$.
Since the set $\ell\cap X_{j,k}$ is semi-algebraic of complexity $O_D(1)$, it contains an interval of length $\geq c_2/|\log\delta|$, where $c_2\gtrsim_D 1$. Define
$$
\Sigma_{j,k}=\{\ell\in \Sigma\colon \ell\cap X_{j,k}\ \textrm{contains an interval of length}\ \geq c_2/|\log\delta|\}.
$$
With this definition, each set $\Sigma_{j,k}$ is semi-algebraic of complexity $O_{D,E}(1)$ and $\Sigma = \bigcup\Sigma_{j,k}$, so Item \ref{linesCover} in the statement of the lemma is satisfied.

Define
$$
\Gamma_{j,k}=\{(x,\ell)\colon \ell\in\Sigma_{j,k},\ x\ \textrm{is contained in an interval in}\ \ell\ \cap\ X_{j,k}\ \textrm{of length}\ \geq c_2/|\log\delta|\}.
$$
Then $|\Gamma_{j,k}(\ell)|\gtrsim_{D}|\log\delta|^{-1}$ for each $\ell\in\Sigma_{j,k}$, so Item \ref{GammaJCovers} is satisfied. Define
$$
P_{j,k}(z) = m_j^{-1} (P(z) - h_{j,k});
$$ 
each polynomial $P_{j,k}(z)$ has degree $\leq D$, so Item \ref{semiAlgCplxty} is satisfied. If $(x,\ell)\in \Gamma_{j,k}$, then
\begin{equation}\label{P1SmallOnW}
|P_{j,k}(y)|\leq 4\delta\quad\ \textrm{for all}\ y\in B(x,\delta).
\end{equation}
This is because $m(x) < 2m_j$, so $|\nabla P_{j,k}(y)|\leq 2$ for all $y\in B(x,\delta)$. \eqref{P1SmallOnW} then follows from the fact that  $B(x,\delta)\cap N_{2\delta}(Z(P_{j,k}))\neq\emptyset$.

We have that if $(x,\ell)\in \Gamma_{j,k}$, then $x\in N_{\delta}(Z_{j,k})$, where
$$
Z_{j,k} = \{z \in Z(P_{j,k})\cap B(0,1)\colon 1\leq |\nabla P_{j,k}(z)|\leq 2\}.
$$
Thus Item \ref{GammaJInZJ} is satisfied.  

It remains to verify Item \ref{smallDerivatives}. Fix indices $j,k$ and let $\Gamma = \Gamma_{j,k}$. Let $(x,\ell)\in \Gamma$ and let $y\in Z_{j,k}$ with $\operatorname{dist}(x,y)\leq\delta$. Then there is a line segment $I\subset \Gamma(\ell)$ of length $\gtrsim_D |\log\delta|^{-1}$ containing $x$. By \eqref{P1SmallOnW} we have that $|P_{j,k}(z)|\leq 4\delta$ for all $z\in N_{\delta}(I)$. 
\begin{figure}[h!]
 \centering
\begin{overpic}[width=0.9\textwidth]{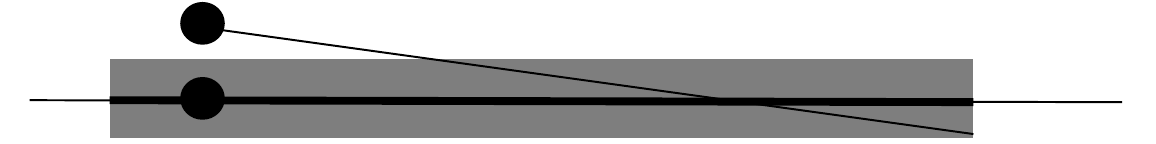}
 \put (98,4) {$\ell$}
 \put (14,12) {$y$}
 \put (14,5) {$x$}
 \put (30,9) {$L$}
 \put (35,1) {$I$}
\end{overpic}
 \caption{The points $x$ and $y$ (black circles); the lines $\ell$ and $L$ (thin black lines); the line segment $I$ (thick black line) and the region $N_{\delta}(I)$ (grey rectangle). Observe that if $\operatorname{dist}(x,y)\leq \delta$, then $|L\cap N_{\delta}(I)|\gtrsim |I|$}\label{xLineFig}
\end{figure}
Let $L$ be a line containing $y$ with $|L\cap N_{\delta}(I)|\gtrsim_D |\log\delta|^{-1}$ (see Figure \ref{xLineFig}). Let $L(t)\colon \RR\to L$ be a unit speed parameterization of $L$, with $L(0)=y$ (i.e. $L(t) = y + tv(L)$ ). Then $P_{j,k}(L(t))$ is a univariate polynomial of degree $\leq D$ that satisfies $|P_{j,k}(L(t))|\leq 4\delta$ for all $t$ in an interval $J\subset [0,1]$ of length $\gtrsim_D |\log\delta|^{-1}$. This implies that 
$$
\Big|\frac{d^i}{dt^i}P_{j,k}(L(t))|_{t=0}\Big|\lesssim_{D,i} |\log\delta|^{-i} \delta, 
$$
which gives us \eqref{smallDirectionalDerivative}. 

The sets $\{\Sigma_{j,k}\}$ and $\{\Gamma_{j,k}\}$, and the polynomials $\{P_{j,k}\}$ satisfy the conclusions of Lemma \ref{goodPolyProp}. All that remains is to re-index the indices $j,k$ to use a single indexing variable. 
\end{proof}
\section{Curved varieties and the second fundamental form}\label{wellCurvedReductionSec}
In this section, we will consider the region where $Z$ has small second fundamental form. We will show that lines lying near this region must be contained in a thin neighborhood of a hyperplane; this will be the set of lines $\Sigma_3$ from the statement of Theorem \ref{discretizedSeveri}. This result will be proved in Proposition \ref{FlatVarietiesNearAPlaneProp}, which is the main result of this section. 


\subsection{A primer on the second fundamental form}
Define the functions $\varphi_i\colon\RR^4\to\RR^4,\ i=0,1,2,3$ by
\begin{align*}
&\varphi_0(x_1,x_2,x_3,x_4) = (\phantom{-}x_1,\phantom{-}x_2,\phantom{-}x_3,\phantom{-}x_4),\\
&\varphi_1(x_1,x_2,x_3,x_4) = (-x_2,\phantom{-}x_1,-x_4,\phantom{-}x_3),\\
&\varphi_2(x_1,x_2,x_3,x_4) = (-x_3,\phantom{-}x_4,\phantom{-}x_1,-x_2),\\
&\varphi_3(x_1,x_2,x_3,x_4) = (-x_4,-x_3,\phantom{-}x_2,\phantom{-}x_1).
\end{align*}
Note that for each $x\in\RR^4$, $\varphi_0(x),\varphi_1(x),\varphi_2(x),$ and $\varphi_3(x)$ have the same magnitude and are orthogonal. 

Let $P\in\RR[x_1,\ldots,x_4]$ and let $x\in Z(P)$. Suppose that $\nabla P(x)\neq 0$ and that $Z(P)$ is a smooth manifold in a neighborhood of $x$. Define $N(x) =\frac{\nabla P(x)}{|\nabla P(x)|}$. For each $i,j\in\{1,2,3\}$, define 
$$
a_{ij}(x) = \Big(\varphi_i(\nabla P(x))\cdot\nabla_y\Big)\ \Big(\varphi_j(\nabla P(x))\cdot\nabla_y\Big)\ P(y)\ \Big|_{y=x}.
$$
To untangle the above definition: $\nabla P(x)$ is a vector in $\RR^4$; $\varphi_i(\nabla P(x))$ is a vector in $\RR^4$; $\big(\varphi_i(\nabla P(x))\cdot\nabla_y\big)$ is an operator acting on functions $f\colon\RR^4\to\RR$. Similarly, $\big(\varphi_j(\nabla P(x))\cdot\nabla_y\big)$ is an operator acting on functions $f\colon\RR^4\to\RR$. We apply these operators to the function $P(y)$, and then evaluate the resulting function at the point $y=x$. 

Note that for each $i,j\in\{1,2,3\}$,  $a_{ij}\in\RR[x_1,\ldots,x_4]$ is a polynomial of degree $O(\deg P)$. Define
$$
II(x)= \frac{1}{|\nabla P(x)|^3}\left[\begin{array}{ccc} 
a_{11}(x)&a_{12}(x)&a_{13}(x)\\
a_{21}(x)&a_{22}(x)&a_{23}(x)\\
a_{31}(x)&a_{32}(x)&a_{33}(x)\\
\end{array}\right].
$$
Then $II(x)$ is the second fundamental form of $Z(P)$ at $x$, written in the basis $\frac{\varphi_1(x)}{|\varphi_1(x)|},\ \frac{\varphi_2(x)}{|\varphi_2(x)|},\ \frac{\varphi_3(x)}{|\varphi_3(x)|}$ (this is a basis for the tangent space $T_x(Z(P))$ ). Note that if the polynomial $P(x)$ is replaced by $tP(x)$ for $t\neq 0$, then the matrix $II(x)$ is unchanged. For each $x\in\RR^4$, define $\Vert II(x)\Vert_{\infty}$ to be the $\ell^\infty$ norm of the entries of $II(x)$ (so $\Vert II(x)\Vert_{\infty}$ is a function from $\RR^4$ to $\RR$). Observe that if $P\in\RR[x_1,\ldots,x_4]$ is a polynomial of degree at most $D$, then for each $\kappa>0$, the set 
$$
\{x\in Z(P)\colon \Vert II(x)\Vert_{\infty}>\kappa \}
$$ 
is semi-algebraic of complexity $O_D(1)$.

Note that if $0\in Z(P)$ and if $N(0)=(0,0,0,1)$, then in a neighborhood of the origin we can write $Z(P)$ as the graph $x_4 = f(x_1,x_2,x_3)$, with $f(0,0,0)=0$ and $\nabla f(0)=(0,0,0)$. Then
$$
II(0)= \left[\begin{array}{ccc} 
\partial_{x_1x_1}f(0)&\partial_{x_1x_2}f(0)&\partial_{x_1x_3}f(0)\\
\partial_{x_2x_1}f(0)&\partial_{x_2x_2}f(0)&\partial_{x_2x_3}f(0)\\
\partial_{x_3x_1}f(0)&\partial_{x_3x_2}f(0)&\partial_{x_3x_3}f(0)\\
\end{array}\right].
$$ 
\begin{lem}
Let $P\in\RR[x_1,\ldots,x_4]$ and let $M\subset Z(P)$ be a smooth manifold. Suppose that $\nabla P(x)\neq 0$ for all $x\in M$, $\Vert II(x)\Vert_\infty\leq \kappa$ for all $x\in M$, and that every pair of points in $M$ can be connected by a smooth curve of arclength $\leq t$. Then the image of the Gauss map $N(M)$ can be contained in a ball of diameter $t \kappa$. 
\end{lem}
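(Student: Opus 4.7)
The plan is to reduce the statement to the classical Lipschitz estimate for the Gauss map, combined with integration along the hypothesized connecting curves. The key identification is $dN_x = -S_x$, where $N(x) = \nabla P(x)/|\nabla P(x)|$ is the unit normal and $S_x : T_xM \to T_xM$ is the Weingarten shape operator, which is self-adjoint with matrix $II(x)$ in any orthonormal tangent frame. The frame $e_i(x) = \varphi_i(\nabla P(x))/|\nabla P(x)|$, $i=1,2,3$, is orthonormal and tangent to $Z(P)$ at $x$ (since each $\varphi_i(u)$ is orthogonal to $u=\varphi_0(u)$ and to the other $\varphi_j(u)$'s, and all have the same magnitude as $u$), and a direct computation shows the matrix of $S_x$ in this basis is precisely $(a_{ij}(x)/|\nabla P(x)|^3)_{ij} = II(x)$.

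Given this identification, since $II(x)$ is a symmetric $3\times 3$ matrix with $|II_{ij}| \leq \kappa$ throughout $M$, its operator norm satisfies $\|II(x)\|_{\mathrm{op}} \leq 3\kappa$, so for every tangent vector $v \in T_xM$ we have $|dN_x(v)| \leq 3\kappa|v|$. Now fix any $x_0 \in M$ and let $y \in M$ be arbitrary. By hypothesis, pick a smooth curve $\gamma:[0,L] \to M$ with $\gamma(0)=x_0$, $\gamma(L)=y$, $|\gamma'| \equiv 1$, and $L \leq t$. Applying the fundamental theorem of calculus to $s \mapsto N(\gamma(s))$,
\[
|N(y)-N(x_0)| = \Big|\int_0^L dN_{\gamma(s)}(\gamma'(s))\,ds\Big| \leq \int_0^L 3\kappa\,|\gamma'(s)|\,ds = 3\kappa L \leq 3\kappa t.
\]
Thus $N(M)$ lies in the closed ball of radius $3\kappa t$ about $N(x_0)$, hence in a ball of diameter $\lesssim t\kappa$, as claimed.

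The main technical step, in the sense of the only one requiring a genuine computation rather than citation, is verifying that the matrix $(a_{ij}/|\nabla P|^3)_{ij}$ really does represent the Weingarten map in the orthonormal basis $\{e_i\}$. This is a short but bookkeeping-heavy calculation: differentiate $N$ in the direction $e_i$, note that the component along $N$ vanishes because $|N|^2 \equiv 1$, pair with $e_j$, and use the fact that in the expression $(\varphi_i(\nabla P(x))\cdot\nabla_y)(\varphi_j(\nabla P(x))\cdot\nabla_y)P(y)|_{y=x}$ the vectors $\varphi_k(\nabla P(x))$ are constant in $y$, so all derivatives land on $P$ and produce the Hessian of $P$ evaluated on the correct pair of vectors. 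The universal factor of $3$ in the diameter bound reflects the gap between the $\ell^\infty$ and operator norms on symmetric $3\times 3$ matrices and will be harmlessly absorbed into the implicit constants in later applications of the lemma.
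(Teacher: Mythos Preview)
Your proof is correct and follows essentially the same route as the paper: fix a basepoint, integrate $\frac{d}{ds}N(\gamma(s))$ along a connecting unit-speed curve of length $\leq t$, and bound the integrand pointwise by the second fundamental form. The paper asserts $|\frac{d}{ds}N(\gamma(s))|\leq \Vert II(\gamma(s))\Vert_\infty$ directly, whereas you are more careful in passing through the Weingarten identification and the $\ell^\infty$-to-operator-norm comparison, picking up the honest factor of $3$; both land on a ball of radius $O(t\kappa)$, which is all that is used downstream.
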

\begin{proof}
Fix a point $x_0\in M$. For each $x\in M$, let $\gamma(s)$ be a unit-speed paramaterization of a smooth curve with $\gamma(0)=x_0$ and $\gamma(s_0)=x$. By hypothesis, we can select such a curve with $s_0\leq t$. Then $|\frac{d}{ds}N(\gamma(s))|\leq \Vert II(\gamma(s)) \Vert_{\infty}\leq \kappa$; here $N(\gamma(s))$ is a map from $\RR$ to the unit sphere $S^3\subset\RR^4$, and $|\cdot|$ denotes the Euclidean norm of the four-dimensional vector $\frac{d}{ds}N(\gamma(s)).$ We conclude that $N(x)$ is contained in the ball (in $S^3$) centered at $x_0$ of radius $t\kappa$.
\end{proof}
\begin{lem}[Hypersurfaces with small second fundamental form lie near a hyperplane]\label{secondFundFormHyperplaneLem}
Let $S\subset\RR^4$ be a semi-algebraic set contained in $B(0,1)$ of complexity at most $E$. Suppose that $S$ is a connected smooth three-dimensional manifold and that $\Vert \II(x)\Vert_{\infty}\leq \kappa$ for all $x\in S$. Then $S$ can be contained in the $O_E(\kappa)$--neighborhood of a hyperplane.
\end{lem}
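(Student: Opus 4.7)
The plan is to combine the preceding lemma controlling the Gauss map with a simple fundamental-theorem-of-calculus argument. Since $S$ is a connected smooth semi-algebraic manifold of complexity $\leq E$ contained in $B(0,1)$, Lemma \ref{semiAlgebraicPath} tells me that any two points in $S$ can be joined by a smooth curve of length $O_E(1)$. The hypothesis $\|\II(x)\|_\infty \leq \kappa$ together with the unnamed lemma preceding this statement then implies that the image $N(S)$ of the Gauss map is contained in a ball (on $S^3$) of diameter $O_E(\kappa)$.

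Having established that the Gauss map is nearly constant, I would fix a base point $x_0 \in S$, set $N_0 = N(x_0)$, and verify that $S$ lies in the $O_E(\kappa)$-neighborhood of the hyperplane $H = \{x \in \RR^4 : (x - x_0)\cdot N_0 = 0\}$. Given any $x \in S$, choose a unit-speed smooth curve $\gamma \colon [0,L] \to S$ with $\gamma(0) = x_0$, $\gamma(L) = x$, and $L = O_E(1)$. Then
\[
(x - x_0)\cdot N_0 = \int_0^L \gamma'(s)\cdot N_0\, ds = \int_0^L \gamma'(s)\cdot\bigl(N_0 - N(\gamma(s))\bigr)\, ds,
\]
where I used the fact that $\gamma'(s)$ lies in the tangent space $T_{\gamma(s)}(Z(P))$ and is therefore orthogonal to $N(\gamma(s))$. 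Since $|N_0 - N(\gamma(s))| \leq O_E(\kappa)$ by the Gauss map bound and $|\gamma'(s)| = 1$, the integrand has magnitude $O_E(\kappa)$, and integrating over a length $O_E(1)$ interval yields $|(x - x_0)\cdot N_0| \leq O_E(\kappa)$. This gives $\dist(x,H) = O_E(\kappa)$ as required.

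The only real step to worry about is ensuring Lemma \ref{semiAlgebraicPath} is applicable with the stated complexity bound; this is immediate since $S$ is by hypothesis a connected smooth semi-algebraic manifold of complexity $\leq E$ and diameter at most $2$ (rescaling or absorbing the factor $2$ into the implicit constant). Everything else is a short calculation, so I do not expect any genuine obstacle. The conceptual content is simply: small second fundamental form $+$ bounded intrinsic diameter $\Longrightarrow$ nearly constant normal $\Longrightarrow$ near a hyperplane.
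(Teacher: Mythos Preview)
Your proof is correct and follows essentially the same approach as the paper: both use Lemma~\ref{semiAlgebraicPath} to connect points by curves of length $O_E(1)$, invoke the preceding lemma to see that the Gauss map varies by $O_E(\kappa)$, and then integrate $\gamma'(s)\cdot N_0$ along such a curve using the orthogonality $\gamma'(s)\perp N(\gamma(s))$. Your write-up is in fact slightly more explicit than the paper's, which compresses the integral step into a one-line remark.
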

\begin{proof}
After applying a rigid transformation, we can assume that $0\in S$ and $N(0)=(0,0,0,1)$. Let $x\in S$. By Lemma \ref{semiAlgebraicPath}, we can find a smooth curve $\gamma\subset S$ of length $O_E(1)$ whose endpoints are $0$ and $x$. Let $\gamma(s)$ be a unit-speed parameterization of this curve, so $\gamma(0)=0$ and $\gamma(s_0)=x$, for some $s_0  = O_E(1)$. 

Since $\frac{d}{ds}\gamma(s)\in T_{\gamma(s)}S$, we must have $|\frac{d}{ds}\gamma(s)\cdot(0,0,0,1)|\lesssim s_0 \kappa$. In particular, the $x_4$ coordinate of $\gamma(s)$ must have magnitude $\lesssim s_0 s \kappa=O_E(\kappa)$. Thus after applying a rigid transformation, $S$ is contained in the $O_E(\kappa)$-neighborhood of the hyperplane $\{x_4=0\}$.
\end{proof}
\begin{lem}[Hypersurfaces with large second fundamental form escape every hyperplane]\label{largeFundFormEscapesPlane}
Let $P\in\RR[x_1,\ldots,x_4]$ be a polynomial of degree at most $D$ and let $Z\subset Z(P)\cap B(0,1)$ be a semi-algebraic set of complexity at most $E$. Suppose that $1\leq |\nabla P(x)|\leq 2$ and $\Vert II(x)\Vert_{\infty}\geq \kappa$ at every point $x\in Z$. Then for every hyperplane $H$ and every $0<a\leq b\leq 1$, we have
\begin{equation}\label{volumeBoundCurvedSurfaceHyperplane}
| N_{a}(Z)\cap N_{b}(H)| \lesssim_{D,E} a (b/\kappa)^{1/2}.
\end{equation}
\end{lem}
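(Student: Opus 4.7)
The plan is to bound $|N_a(Z)\cap N_b(H)|$ by first reducing to an estimate on $\mathcal{H}^3(S)$, where $S:=Z\cap N_{b+a}(H)$. The inclusion $N_a(Z)\cap N_b(H)\subseteq N_a(S)$, combined with the standard bound $|N_a(S)|\lesssim_{D,E} a\,\mathcal{H}^3(S)$ (obtained by covering $S$ with $a$-balls and invoking Lemma \ref{coveringNumberSemiAlg}), reduces matters to showing $\mathcal{H}^3(S)\lesssim_{D,E}\sqrt{b/\kappa}$; the hypothesis $a\leq b$ absorbs the $+a$ in $b+a$. One may further assume $b\leq\kappa$, since otherwise the bound $|N_a(Z)|\lesssim_{D,E} a$ from Lemma \ref{coveringNumberSemiAlg} already suffices.

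After a rigid motion, take $H=\{x_4=0\}$ and set $h(x)=x_4$. Note $|\nabla_Z h(x)|^2 = 1-(N(x)\cdot e_4)^2$ is the squared sine of the angle between $T_xZ$ and $H$. Split $S=S_{\mathrm{steep}}\cup S_{\mathrm{flat}}$ at the threshold $|\nabla_Z h|=\sqrt{b\kappa}$. On $S_{\mathrm{steep}}$, the coarea formula for $h$ together with the fact that each fiber $S_{\mathrm{steep}}\cap h^{-1}(t)$ is semi-algebraic of complexity $O_{D,E}(1)$ (so $\mathcal{H}^2\lesssim_{D,E} 1$ by Lemma \ref{coveringNumberSemiAlg}) gives
\[
\mathcal{H}^3(S_{\mathrm{steep}})\leq\int_{-b-a}^{b+a}\frac{\mathcal{H}^2(S_{\mathrm{steep}}\cap h^{-1}(t))}{\sqrt{b\kappa}}\,dt\lesssim_{D,E}\frac{b}{\sqrt{b\kappa}}=\sqrt{b/\kappa}.
\]

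On $S_{\mathrm{flat}}$ the normal $N(x)$ lies within angle $O(\sqrt{b\kappa})$ of $\pm e_4$, so by semi-algebraic cell decomposition $S_{\mathrm{flat}}$ is a union of $O_{D,E}(1)$ pieces, each realized as a graph $x_4=f(y_1,y_2,y_3)$ with $|\nabla f|\lesssim\sqrt{b\kappa}$ and $|f|\leq b+a$. When $\nabla P$ is nearly parallel to $e_4$, the basis $\{\varphi_i(\nabla P)/|\varphi_i(\nabla P)|\}$ used to define $\II$ agrees, up to a sign-permutation and a rotation of angle $O(\sqrt{b\kappa})$, with the standard basis of $\{x_4=0\}$; combined with the implicit-function identity $\partial_{ij}f = -\partial_{ij}P/\partial_4 P + O(|\nabla f|)$ and the normalization $|\partial_4 P|\sim 1$, this transfers the hypothesis $\|\II(x)\|_\infty\geq\kappa$ into the operator-norm bound $\|H_f(y)\|_{\mathrm{op}}\gtrsim\kappa$. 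Partition each graph piece semi-algebraically into $O(1)$ subpieces, indexed by the approximate direction and sign of the dominant eigenvector of $H_f$, so that on each subpiece a fixed unit $\vec{v}\in\RR^3$ satisfies $\vec{v}^\top H_f(y)\vec{v}\geq\kappa/C$ with fixed sign. On any line parallel to $\vec{v}$ the restriction of $f$ is strictly convex (or concave) with second derivative $\geq\kappa/C$; balancing the quadratic term against the linear one (using $|\nabla f|\lesssim\sqrt{b\kappa}$) shows that $\{t:|f(y_0+t\vec{v})|\leq b+a\}$ has 1-D measure $\lesssim\sqrt{b/\kappa}$. Fubini in the two perpendicular directions yields $\mathcal{H}^3\lesssim_{D,E}\sqrt{b/\kappa}$ on each subpiece, and summing gives $\mathcal{H}^3(S_{\mathrm{flat}})\lesssim_{D,E}\sqrt{b/\kappa}$.

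Combining the bounds on $S_{\mathrm{steep}}$ and $S_{\mathrm{flat}}$ gives the desired estimate on $\mathcal{H}^3(S)$. The main technical obstacle is the translation $\|\II\|_\infty\geq\kappa\Rightarrow\|H_f\|_{\mathrm{op}}\gtrsim\kappa$: this requires simultaneously tracking the implicit-function formula for $H_f$ in terms of the ambient second derivatives of $P$ and the error incurred by approximating the moving $\varphi_i$-basis by the fixed coordinate basis on $\{x_4=0\}$. Both approximations are controlled precisely because $N$ is uniformly close to $\pm e_4$ on $S_{\mathrm{flat}}$, which in turn is exactly the regime in which the steep-coarea argument is too weak and the curvature hypothesis must be used.
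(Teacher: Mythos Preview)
Your proof is correct and takes a genuinely different route from the paper's. The paper first partitions $Z$ into $O_{D,E}(1)$ pieces on which $\nabla P$ is nearly constant (this partition is chosen \emph{independently} of $H$), rotates so that the normal is close to $e_4$ and the direction of large second derivative is $e_3$, and then writes each piece as a graph $x_4=f(x_1,x_2,x_3)$ with $|\partial_{x_3}^2 f|\gtrsim\kappa$. The slab estimate along $e_3$-lines gives $|Z''\cap\{|x_4|\le 2b\}|\lesssim(b/\kappa)^{1/2}$, and the paper then asserts the same bound for $Z''\cap N_{a+b}(H)$ with general $H$ (this last step is left implicit; it follows since subtracting an affine function does not change $\partial_{x_3}^2 f$, and the case where $H$ is far from tangent to $Z''$ is easier).

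Your approach instead fixes $H=\{x_4=0\}$ at the outset and splits $S$ according to the angle between $T_xZ$ and $H$. This makes the two regimes explicit: the steep part, where transversality alone gives the bound via the coarea formula, and the flat part, where the curvature hypothesis is genuinely needed and the graph argument applies with $H$ already aligned with the coordinate hyperplane. This buys you a cleaner treatment of the interaction between $Z$ and $H$, at the cost of introducing the coarea formula and the threshold $\sqrt{b\kappa}$, neither of which appear in the paper. Both arguments ultimately rest on the same one-variable fact: a function with second derivative $\gtrsim\kappa$ in a fixed direction can stay within $[-2b,2b]$ only on a set of length $\lesssim\sqrt{b/\kappa}$.

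One small remark: your use of the bound $|\nabla f|\lesssim\sqrt{b\kappa}$ on $S_{\mathrm{flat}}$ to control the linear term in the Taylor expansion is in fact stronger than what you need. Since $g''$ has a fixed sign and $|g''|\gtrsim\kappa$, the derivative $g'$ is monotone with rate $\gtrsim\kappa$, so the condition $|g'|\lesssim\sqrt{b\kappa}$ already confines $t$ to an interval of length $\lesssim\sqrt{b/\kappa}$, independent of the values of $g$ itself. This shortcut also makes the semi-algebraic partition by the sign of $\vec v^{\top}H_f\vec v$ slightly less delicate.
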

\begin{proof}
Let $\rho\gtrsim_{D} 1$ be a small parameter to be determined later. Partition $Z$ into $O_{\rho,D,E}(1)=O_{D,E}(1)$ connected semi-algebraic sets so that on each set $Z^\prime$, each of $\partial_{x_i}P(x),\ i\in \{1,2,3,4\}$ vary by at most an additive factor of $\rho$. It suffices to establish \eqref{volumeBoundCurvedSurfaceHyperplane} for each of these sets individually. Fix one of these sets $Z^\prime$. After applying a rotation, we can assume that for all $x\in Z^\prime$ we have $1-\rho\leq |\partial_{x_4}P(x)|\leq 2+\rho$ and $|\partial_{x_i}P(x)|\leq\rho,\ i=1,2,3$. In particular, each of the vectors $\varphi_i(N(x)),\ i=0,1,2,3$ are constant up to an additive factor of $\rho$ for all $x\in Z^\prime$.

Since $\Vert II(x)\Vert_{\infty}\geq \kappa$ for all $x\in Z^\prime$, and $1\leq |\nabla P(x)|\leq 2$, there is a unit vector $v\in T_x(Z^\prime)$ with $|(v\cdot\nabla)^2P(x)|\geq \kappa$. Phrased differently, there is a unit vector $(v_1,v_2,v_3)\in\RR^3$ so that
$$
\big|\big( (v_1 \varphi_1(N(x)) + v_2\varphi_2(N(x)) + v_3 \varphi_3(N(x)) )\cdot\nabla\big)^2P(x)\big|\geq \kappa.
$$ 

Since the map 
$$
(v_1^\prime,v_2^\prime,v_3^\prime) \mapsto \big|\big( (v_1^\prime \varphi_1(N(x)) + v_2^\prime \varphi_2(N(x)) + v_3^\prime \varphi_3(N(x)) )\cdot\nabla\big)^2P(x)\big|^2
$$
is homogeneous of degree four, there is a constant $c>0$ so that for all $v^\prime = (v_1^\prime,v_2^\prime,v_3^\prime)$ with $\angle(v,v^\prime)\leq c$, we have 
$$
 \big|\big( (v_1^\prime \varphi_1(N(x)) + v_2^\prime \varphi_2(N(x)) + v_3^\prime \varphi_3(N(x)) )\cdot\nabla\big)^2P(x)\big|\geq \frac{99}{100}\kappa.
$$

Thus after further partitioning $Z^\prime$ into $O_D(1)$ connected semi-algebraic sets, we have that for each such set $Z^{\prime\prime}$, there is a unit vector $v$ so that
$$
\big|\big( (v_1^\prime \varphi_1(N(x)) + v_2^\prime \varphi_2(N(x)) + v_3^\prime \varphi_3(N(x)) )\cdot\nabla\big)^2P(x)\big|\geq \frac{99}{100}\kappa
$$
for all $x\in Z^{\prime\prime}$ and all vectors $v^\prime=(v_1^\prime,v_2^\prime,v_3^\prime)\in S^2\subset\RR^3$ with $\angle(v,v^\prime)\leq c$. It suffices to establish \eqref{volumeBoundCurvedSurfaceHyperplane} for each of these sets $Z^{\prime\prime}$ individually. Fix such a set.

Fix a point $x_0\in Z^{\prime\prime}$ and apply a rigid transformation so that $N(x_0)=(0,0,0,1)$ and $v=(1,0,0)$. Note that $\varphi_1(N(x_0)) =(0,0,-1,0)$. If $\rho\gtrsim_D1$ is selected sufficiently small, then $\angle \big(\varphi_1(N(x_0)), \varphi_1(N(x))\big)\leq c/2$ for all $x\in Z^{\prime\prime}$. This means that for all $x\in Z^{\prime\prime}$, if $v^\prime\in \RR^4$ is a unit vector in $T_x(Z^{\prime\prime})$ with $\angle(v^\prime, (0,0,-1,0))\leq c/2$, then $|(v^\prime\cdot\nabla)^2 P(x)|\geq \frac{99}{100}\kappa$. 

This means that we can write $Z^{\prime\prime}$ as the graph of a function $f(x_1,x_2,x_3)$; more precisely, for each $(x_1,x_2,x_3,x_4)\in Z^{\prime\prime}$, we can write $x_4 = f(x_1,x_2,x_3)$, and 
\begin{equation}\label{boundOnSecondDerivative}
|\partial_{x_3}^2 f(x_1,x_2,x_3)|\gtrsim_D \kappa.
\end{equation}

Let $\pi(x_1,x_2,x_3,x_4)=(x_1,x_2,x_3)$, so $Z^{\prime\prime}$ is the graph of $f$ above $\pi(Z^{\prime\prime})$. By \eqref{boundOnSecondDerivative}, we have that if $I \subset \pi(Z^{\prime\prime})$ is a line segment pointing in the direction $(0,0,1)$ and if $|f(x_1,x_2,x_3)|\leq 2b$ for all $(x_1,x_2,x_3)\in I$, then we must have $|I|\lesssim_D (b/\kappa)^{1/2}$. Next, let $L$ be a line in $\RR^3$ pointing in the direction $(0,0,1)$. Then 
$$
\Big|\{(x_1,x_2,x_3)\in L\colon |f(x_1,x_2,x_3)|\leq 2b\}\Big|\lesssim_D (b/\kappa)^{1/2},
$$
where $|\cdot|$ denotes one dimensional Lebesgue measure. Thus by Fubini's theorem,
$$
\Big|\{(x_1,x_2,x_3)\in \pi(Z^{\prime\prime})\colon |f(x_1,x_2,x_3)|\leq 2b\}\Big|\lesssim_D (b/\kappa)^{1/2},
$$
where $|\cdot|$ denotes three dimensional Lebesgue measure. Thus
$$
|\{(x_1,x_2,x_3,x_4)\in Z^{\prime\prime} \colon |x_4|\leq 2b\}\Big|\lesssim_D (b/\kappa)^{1/2},
$$
where again $|\cdot|$ denotes three dimensional Lebesgue measure. We conclude that 
\begin{equation}\label{volumeIntersectionWithHyperplane}
|N_{a}(Z^{\prime\prime})\cap N_b(H)|\lesssim_{D,E} a \big|Z^{\prime\prime}\cap N_{a+b}(H)\big|  \lesssim_{D,E} a(b/\kappa)^{1/2}.
\end{equation}
Since \eqref{volumeIntersectionWithHyperplane} holds for each of the $O_{D,E}(1)$ connected sets $Z^{\prime\prime}$, we obtain \eqref{volumeBoundCurvedSurfaceHyperplane}.
\end{proof}

\subsection{Flat varieties are contained near a hyperplane}
\begin{prop}\label{FlatVarietiesNearAPlaneProp}
Let $0<\delta<\kappa$ and let $c>0$. Let $P\in\RR[x_1,\ldots,x_4]$ be a polynomial of degree at most $D$. Define 
$$
Z=\{x\in Z(P)\cap B(0,1)\colon \nabla P(x)\neq 0,\ \Vert II(x)\Vert_\infty\leq \kappa\}.
$$ 
Then there is a set of $O_{D}(c^{-O(1)})$ rectangular prisms of dimensions $2\times 2\times 2\times\kappa$ so that every line in $\Sigma_{\delta,c}(Z)$ is contained on one of these prisms.  
\end{prop}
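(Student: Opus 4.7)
\emph{Plan and preliminaries.} The plan is to stratify $Z$ into $O_D(1)$ connected smooth pieces via Milnor--Thom, apply Lemma \ref{secondFundFormHyperplaneLem} to trap each piece in an $O_D(\kappa)$-neighborhood of a hyperplane, deduce that lines in $\Sigma_{\delta,c}(Z)$ are nearly tangent to and nearly contained in that hyperplane, and finally cover them by a discretized family of tilted $\kappa$-thick prisms. For the stratification: since $\nabla P\neq 0$ on $Z$, the implicit function theorem makes (the relative interior of) $Z$ a smooth $3$-submanifold of $B(0,1)$; Lemma \ref{MTTheorem} then splits it into $m\lesssim_D 1$ connected components $Z_1,\dots,Z_m$ of complexity $O_D(1)$, and Lemma \ref{secondFundFormHyperplaneLem} produces hyperplanes $H_j=\{n_j\cdot x=d_j\}$ with $Z_j\subset N_{C_D\kappa}(H_j)$. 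For $\ell\in\Sigma_{\delta,c}(Z)$, a pigeonhole over $j$ combined with $\delta<\kappa$ gives $|\ell\cap N_{O_D(\kappa)}(H_j)|\gtrsim_D c$ for some $j$, and a standard chord-length/sine argument then yields $\sin\angle(\ell,H_j)\lesssim_D \kappa/c$ and $\ell\cap B(0,1)\subset N_{C'_D\kappa/c}(H_j)$ for some $C'_D=O_D(1)$.

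\emph{Discretized covering.} For each $j$ I will construct $O_D(c^{-4})$ prisms. Choose a $(\kappa/4)$-net $\mathcal N$ of unit vectors in $B(n_j,C'_D\kappa/c)\cap S^3$; since this is a $3$-dimensional spherical cap of angular radius $O_D(\kappa/c)$, its cardinality is $O_D(c^{-3})$. For each $n^{(i)}\in\mathcal N$ choose offsets $o_k$ in a $(\kappa/4)$-net of $[d_j-C''_D\kappa/c,d_j+C''_D\kappa/c]$, giving $O_D(c^{-1})$ values. For each $n^{(i)}$ also fix an $O(1)$-sized $1$-net $\{v_i^{(\alpha)}\}$ of the $2$-sphere $S^3\cap(n^{(i)})^\perp$, and for each $\alpha$ complete to an orthonormal frame $(v_i^{(\alpha)},u_1^{(i,\alpha)},u_2^{(i,\alpha)},n^{(i)})$. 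Finally, for each $(c_1,c_2,c_3)$ in a fixed $O(1)$-sized net of $[-1,1]^2\times\{0\}$ (in the frame's three long directions), form the $2\times 2\times 2\times\kappa$ prism $R_{i,k,\alpha,c_1,c_2,c_3}$ centered at $c_1 v_i^{(\alpha)}+c_2 u_1^{(i,\alpha)}+c_3 u_2^{(i,\alpha)}+o_k n^{(i)}$ with long axes $(v_i^{(\alpha)},u_1^{(i,\alpha)},u_2^{(i,\alpha)})$ and short axis $n^{(i)}$. Summed across the $O_D(1)$ components $Z_j$, the total count is $O_D(c^{-4})=O_D(c^{-O(1)})$.

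\emph{Verification and the main obstacle.} Given $\ell\in\Sigma_{\delta,c}(Z_j)$ with unit direction $v=v(\ell)$ and closest-to-origin point $q\in\ell$ (so $q\perp v$, $|q|\leq 1$), choose $n^{(i)}\in\mathcal N$ approximating the ``ideal'' $(n_j-(v\cdot n_j)v)/|\cdot|$ to within $\kappa/4$ (possible by the net-size calculation above, which gives $|v\cdot n^{(i)}|\leq\kappa/4$); choose $o_k$ within $\kappa/8$ of $q\cdot n^{(i)}$ (possible because $|q\cdot n^{(i)}-d_j|\lesssim_D\kappa/c$); choose $v_i^{(\alpha)}$ with $v_i^{(\alpha)}\cdot\tilde v\geq\sqrt 3/2$, where $\tilde v$ is the $H_{i,k}$-projection of $v$ (possible by the 1-net); and choose $(c_1,c_2,c_3)$ close to $(q\cdot v_i^{(\alpha)},q\cdot u_1^{(i,\alpha)},q\cdot u_2^{(i,\alpha)})$ from the spatial net. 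A direct but slightly involved check, using $|v\cdot n^{(i)}|\leq\kappa/4$, $v_i^{(\alpha)}\cdot\tilde v\geq\sqrt 3/2$, and the small residuals from each net, shows that the four coordinate constraints cut out a common $t$-interval of length at least $2$, on which $\ell(t)$ stays inside the prism; hence $|\ell\cap R_{i,k,\alpha,c_1,c_2,c_3}|\geq 2$. The main obstacle lies precisely here: the rigid $2\times 2\times 2\times\kappa$ dimensions force the \emph{simultaneous} discretization of the prism's normal, its offset, its long-axis orientation within the slab, and its spatial center, and one must verify that the resulting chord actually achieves length $\geq 2$ rather than merely a positive length. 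The enabling observation is that a line nearly tangent to the long $3$-plane of a prism and passing through a prism center lying on $\ell$ has chord $2/\max_j|v\cdot f_j|\geq 2$ along the long direction nearest its own; the discretization is chosen fine enough that each residual distortion of the prism center off the line is absorbed by the slack in the $\sqrt 3/2$ bound on $v_i^{(\alpha)}\cdot\tilde v$.
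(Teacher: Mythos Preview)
Your approach is essentially the paper's: split $Z$ into $O_D(1)$ connected smooth pieces, apply Lemma~\ref{secondFundFormHyperplaneLem} to trap each in an $O_D(\kappa)$-slab, pigeonhole each line to a component, and then cover by tilted prisms. The paper simply asserts the final covering step (``we can select $O_D(c^{-O(1)})$ rectangular prisms \ldots''), while you supply an explicit construction; so at the structural level there is nothing to compare.

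That said, your explicit construction has a small bug. You take the spatial net in $[-1,1]^2\times\{0\}$, forcing $c_3=0$. But if $v\cdot u_2^{(i,\alpha)}$ happens to be very small (which your $\sqrt3/2$ condition allows, since it only forces $|v\cdot u_j|\le 1/2$), the $u_2$-constraint becomes essentially $|q\cdot u_2^{(i,\alpha)}|\le 1$, and this can fail when $|q|$ is close to $1$. The fix is immediate: let $(c_1,c_2,c_3)$ range over an $O(1)$-net of $[-2,2]^3$ rather than fixing $c_3=0$; the count remains $O_D(c^{-4})$. A second, subtler point: with a center exactly on $\ell$ the chord has length $2/\max_j|v\cdot w_j|\ge 2$, with equality precisely when $v$ is aligned with a long axis. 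Your frame choice pushes $|v\cdot v_i^{(\alpha)}|$ toward $1$, which leaves essentially no slack to absorb the spatial discretization in that coordinate. The clean remedy is either to choose the long frame so that $\tilde v$ is \emph{diagonal} (say $|v\cdot w_j|\le 1/\sqrt2$ for all $j$, achievable with an $O(1)$-net of frames), giving chord $\ge 2\sqrt2$ and hence room for an $O(1)$-spacing spatial net; or to note that translating the center along $v$ costs nothing, so only the two transverse coordinates need a net, and there the $1/2$ bound already gives length $\ge 4$. Either way the argument closes and your count stands.
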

\begin{proof}
Let $Z_1,\ldots,Z_p,\ p = O_{D}(1)$ be the connected components of $Z$ (without loss of generality, we can assume that each of these components is a smooth 3-dimensional manifold.) Each line $\ell\in \Sigma_{\delta,c}(Z)$ satisfies $|\ell\cap N_{\delta}(Z_j)|\geq c/p$ for some index $j$. By Lemma \ref{secondFundFormHyperplaneLem}, each connected component $Z_j$ can be contained in the $\kappa^\prime = O_{D}(\kappa)$--neighborhood of a hyperplane; call this hyperplane $H_j$. 

Finally, for each index $j$, we can select $O_{D}(c^{-O(1)})$ rectangular prisms of dimensions $2\times 2\times 2\times\kappa$ so that every line $\ell$ satisfying $|\ell\cap N_{\kappa^\prime}(H_j)|\geq c/p$ must must be covered by one of these prisms. 
\end{proof}

\section{Multilinearity and quantitative broadness}
In this section we will explore the notion of ``broadness,'' which was introduced by Guth in \cite{G} to study the restriction problem. Throughout this section, we will often refer to the following ``standard setup.''
\begin{defn}[Standard setup]\label{standardSetupDefn}
Let $d$ be a positive integer. Let $Z\subset\RR^d$ and let $\Phi\subset Z\times S^{d-1}$ be semi-algebraic sets of complexity at most $E$. Let $\pi_Z \colon \Phi\to Z$ and $\pi_S \colon \Phi \to S^{d-1}$ be the projection of $\Phi$ to $Z$ and $S^{d-1}$, respectively. For each $z\in Z$, define $\Phi(z) = \pi_{Z}^{-1}(z).$
\end{defn}
\subsection{$(m,A)$-broadness}\label{QuantBroadnessSection}
\begin{defn}\label{defnBroadness}
Let $d,\Phi,$ and $Z$ be defined as in the standard setup from Definition \ref{standardSetupDefn}. For each positive integer $0\leq m\leq d-1$, let $\mathcal{S}_{m}$ be the set of $m$ dimensional unit spheres contained in $S^{d-1}$ (recall that a zero dimensional unit sphere in $S^{d-1}$ is just a pair of antipodal points). We will identify $\mathcal{S}_{m}$ with a semi-algebraic subset of $\RR^N$ for some $N=O_{d}(1)$. Let $A\geq 1$ be an integer and let $u\geq 0$. Define
\begin{equation*}
\begin{split}
(m,A)\operatorname{-Narrow}_u(\Phi) = \big\{z\in Z\colon \exists\ S_1,\ldots,S_A \in&\ \mathcal{S}_{m}\ \operatorname{s.t.}\ \forall\ v\in \pi_S(\Phi(z)),\\
& \exists\ i\in\ \{1,\ldots,A\}\ \textrm{s.t.}\ \angle(v, S_i\}\leq u \big\}.
\end{split}
\end{equation*}
In words, $z\in (m,A)\operatorname{-Narrow}_u(\Phi)$ if and only if there is a set of $A$ $m$-dimensional unit spheres $S_1,\ldots,S_A$ so that every vector $v\in \pi_S(\Phi(z))$ makes an angle at most $u$ with one of these spheres. Observe that if $m\leq m^\prime,\ A\leq A^\prime,$ and $u\leq u^\prime$, then $(m,A)\operatorname{-Narrow}_u(\Phi)\subset (m^\prime,A^\prime)\operatorname{-Narrow}_{u^\prime}(\Phi)$.

Define 
$$
(m,A)\operatorname{-Broad}_u(\Phi) = Z\ \backslash\ (m,A)\operatorname{-Narrow}_u(\Phi).
$$
If $(m,A)\operatorname{-Narrow}_u(\Phi)=Z$, we say that $\Phi$ is $(m,A)$-narrow at width $u$. If $(m,A)\operatorname{-Broad}_u(\Phi) = Z$, we say that $\Phi$ is $(m,A)$-broad at width $u$.
\end{defn}
The sets $(m,A)\operatorname{-Broad}_u(\Phi)$ and $(m,A)\operatorname{-Narrow}_u(\Phi)$ have complexity $O_{d,E}(1)$. In practice we will have $d=4$ so the complexity is $O_E(1)$. 
\begin{lem}\label{coveringNumberOfDistinguishedSpheres}
Let $d,\Phi,$ and $Z$ be defined as in the standard setup from Definition \ref{standardSetupDefn}. Let $u\leq s$. Let $m\geq 1$. Suppose that $\Phi$ is $(m,1)$-narrow at width $u$ and $(m-1,1)$-broad at width $s$. Define
$$
A = \{(z, S)\in Z\times \mathcal{S}_{m}\colon \angle(v, S)\leq u\ \forall\ v\in \pi_S(\Phi(z))\}.
$$

If we identify $Z\times \mathcal{S}_{m}$ with a subset of $\RR^d\times \RR^N,\ N =O_d(1)$, then 
\begin{equation}
\mathcal{E}_u(A) \lesssim_{d,E} u^{-\operatorname{dim}(Z)}\ s^{-\operatorname{codim}(Z)},
\end{equation}
where $\operatorname{codim}(Z) = d + N - \operatorname{dim}(Z) = O_{d}(1)$.
\end{lem}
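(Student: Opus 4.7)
The plan is to show that $A$ lives in a thin tube over a graph over $Z$, and then apply Lemma \ref{coveringNumberSemiAlg} in the ambient space $\RR^{d+N}$. First, the $(m,1)$-narrow hypothesis guarantees that the fiber $A_z := \{S \in \mathcal{S}_m : (z,S) \in A\}$ is nonempty for every $z \in Z$, so $\pi_Z(A) = Z$. The set $A$ is semi-algebraic of complexity $O_{d,E}(1)$ because both $\Phi$ and the defining condition $\angle(v,S)\le u\ \forall v\in \pi_S(\Phi(z))$ are.

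The heart of the argument is a fiber-diameter bound: each $A_z$ has diameter $\lesssim_d u/s$ in $\mathcal{S}_m\subset\RR^N$. To establish this, I would take $S_1, S_2 \in A_z$ with principal-angle distance $\theta$ in the Grassmannian, write $S_i = V_i\cap S^{d-1}$ for $(m{+}1)$-planes $V_i\subset\RR^d$, and use standard spherical geometry to show
$$N_u(S_1)\cap N_u(S_2) \subset N_{C_d u/\theta}(T),$$
where $T$ is some great $(m{-}1)$-sphere containing $(V_1\cap V_2)\cap S^{d-1}$. This reduces, after diagonalizing along the principal-angle basis, to the elementary planar fact that two lines meeting at angle $\theta$ have $u$-neighborhoods whose intersection sits in an $O(u/\theta)$-ball about their crossing point. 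Since $\pi_S(\Phi(z))$ lies in $N_u(S_1)\cap N_u(S_2)$, if $\theta \ge C_d u/s$ for a suitable constant $C_d$, then $\pi_S(\Phi(z)) \subset N_s(T)$, contradicting the $(m-1,1)$-broadness of $\Phi$ at width $s$. Hence any two elements of $A_z$ are at distance $\lesssim_d u/s$.

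With the fiber-diameter estimate in hand, I apply Lemma \ref{selPtFiberProp} to the projection $\pi_Z\colon A\to Z$ to obtain a semi-algebraic subset $A'\subset A$ of complexity $O_{d,E}(1)$ on which $\pi_Z$ is a bijection onto $Z$. Thus $\dim A' = \dim Z$, and the fiber-diameter bound gives
$$A \subset N_{C u/s}(A')\qquad\text{inside }\RR^{d+N}.$$
Applying Lemma \ref{coveringNumberSemiAlg} in the ambient space $\RR^{d+N}$ with set dimension $\dim Z$ and outer scale $w=Cu/s$ (note $u\le w$ since $s\le 1$), we conclude
$$\mathcal{E}_u(A) \le \mathcal{E}_u\bigl(N_{Cu/s}(A')\bigr) \lesssim_{d,E} u^{-(d+N)}(u/s)^{(d+N)-\dim Z} = u^{-\dim Z}\, s^{-\operatorname{codim}(Z)},$$
which is the stated bound.

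The main obstacle I anticipate is the quantitative geometric lemma about intersections of $u$-tubes around great $m$-spheres; once that is available, everything else is a direct combination of the section lemma (Lemma \ref{selPtFiberProp}) and the covering-number estimate (Lemma \ref{coveringNumberSemiAlg}).
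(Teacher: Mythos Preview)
Your proposal is correct and follows essentially the same route as the paper: establish the fiber-diameter bound $\operatorname{diam}(A_z)\lesssim u/s$ via the $(m-1,1)$-broadness hypothesis, apply Lemma~\ref{selPtFiberProp} to extract a semi-algebraic section $A'$ with $\dim A'=\dim Z$, observe $A\subset N_{O(u/s)}(A')$, and finish with Lemma~\ref{coveringNumberSemiAlg}. The paper is terser on the geometric step (it simply asserts that if $N_u(S)\cap N_u(S')$ is not contained in the $s$-neighborhood of an $(m-1)$-sphere then $\operatorname{dist}(S,S')\lesssim u/s$), whereas you spell out the principal-angle argument explicitly; otherwise the arguments are identical.
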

\begin{proof}
Since the constants $d$ and $E$ are fixed, all implicit constants may depend on these quantities. First, since $\Phi$ is $(m,1)$-narrow at width $u$, i.e.~$(m,1)\operatorname{-Narrow}_u(\Phi)= Z$, we have that $\pi\colon A\to Z$ is onto. Apply Lemma \ref{selPtFiberProp} to select a semi-algebraic set $A^\prime\subset A$ of complexity $O(1)$ so that $\pi\colon A^\prime\to Z$ is a bijection. In particular, we have $\dim(A^\prime)=\dim(Z).$

We claim that if $(z, S),\ (z,S^\prime)\in A$, then $\operatorname{dist}(S,S^\prime)\lesssim u/s,$ where $\dist(S,S^\prime)$ denotes the Euclidean distance in $\RR^N$ between the points in $\RR^N$ identified with $S$ and $S^\prime$. Indeed, note that 
$$
\pi_S(\Phi(z))\subset N_{u}(S)\cap N_{u}(S^\prime).
$$
Since $z\in (m-1,1)\operatorname{-Broad}_s(\Phi)$ we have that $\pi_S(\Phi(z))\subset N_{u}(S)\cap N_{u}(S^\prime)$ cannot be contained in the $s$--neighborhood of a $(m-1)$-dimensional unit sphere, and thus $N_u(S)\cap N_u(S^\prime)$ cannot be contained in the $s$--neighborhood of a $(m-1)$-dimensional unit sphere. This implies $\operatorname{dist}(S,S^\prime)\lesssim u/s$.

By Lemma \ref{coveringNumberSemiAlg}, 
\begin{equation*}
\begin{split}
\mathcal{E}_u(A) & \lesssim \mathcal{E}_u \big(N_{u/s}(A^\prime)\big)\\
&\lesssim u^{\operatorname{dim}(A^\prime)}\ s^{-\operatorname{codim}(A^\prime)}\\
&=u^{\operatorname{dim}(Z)}\ s^{-\operatorname{codim}(Z)}.\qedhere
\end{split}
\end{equation*}
\end{proof}
In practice, we will use Lemma \ref{coveringNumberOfDistinguishedSpheres} in the special case $Z\subset\RR^4,\ \Phi\subset Z\times S^3\subset\RR^8$, and $m=2$. The lemma will help us analyze the situation where a hypersurface $Z(P)\subset\RR^4$ is ruled by planes. 
\subsection{Strong broadness}\label{strongBroadnessSection}
If $\mathcal{V}\subset S^{d-1}$ is semi-algebraic of complexity at most $E$, then by Lemma \ref{MTTheorem} there is a number $K_{d,E}$ so that $\mathcal{V}$ is a union of at most $K_{d,E}$ connected components. This means that for each $s>0$, either $\mathcal{V}$ can be contained in the $s$--neighborhood of a union of $K_{d,E}$ vectors, or $\mathcal{V}$ contains a connected component of diameter at least $s$.
\begin{defn}\label{defnSBroad}
Let $d,\Phi,$ and $Z$ be defined as in the standard setup from Definition \ref{standardSetupDefn}. Let $E_1=O_{d,E}$ be an integer so that $\pi_S(\Phi(z))$ has complexity at most $E_1$ for each $z\in Z$. 

Define
$$
1\operatorname{-SBroad}_s(\Phi) = (1,K_{d,E_1})\operatorname{-Broad}_s(\Phi).
$$
The ``S'' stands for ``Strong.'' If $1\operatorname{-SBroad}_s(\Phi) =Z$, we say that $\Phi$ is strongly $1$-broad at width $s$.
\end{defn}
Note that if $z\in 1\operatorname{-SBroad}_s(\Phi)$, then $\pi_S(\Phi(z))$ contains a connected component of diameter $\geq s$. Conversely, if $\pi_S(\Phi(z))$ contains a connected component of diameter at least $K_{d,E_1}s$, then $z\in 1\operatorname{-SBroad}_s(\Phi)$. It would be preferable to just directly define $1\operatorname{-SBroad}_s(\Phi) $ to be the set of points $z\in Z$ so that $\pi_S(\Phi(z))$ contains a connected component of diameter at least $s$, but it is not clear whether $1\operatorname{-SBroad}_s(\Phi)$ would be semi-algebraic with this definition. Under Definition \ref{defnSBroad}, $1\operatorname{-SBroad}_s(\Phi)\subset Z$ is semi-algebraic of complexity $O_{d,E}(1)$. 
\begin{rem}
It would be straightforward to define a notion of strong $m$-broadness for each $m\geq 1$, but this definition is not particularly useful if $m> 1$. One of the key properties of strong 1-broadness is that if $z\in 1\operatorname{-SBroad}_s(\Phi)$, then 
$$
\mathcal{E}_s\big(\pi_S(\Phi(z))\big)\gtrsim_{d,E}s^{-1}.
$$
Unfortunately, the analogous statement for strong $m$-broadness (with $s^{-1}$ replaced by $s^{-m}$) need not be true. 
\end{rem}
\begin{lem}\label{entropyOfNarrowGamma}
Let $d,\Phi,$ and $Z$ be defined as in the standard setup from Definition \ref{standardSetupDefn} and let $s>0$. Suppose that $1\operatorname{-SBroad}_{s}(\Phi)=\emptyset$. Then
$$
\mathcal{E}_s(\Phi)\lesssim_{d,E} s^{-\operatorname{dim}(Z)}.
$$
\end{lem}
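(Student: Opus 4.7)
The plan is to use the narrowness hypothesis to confine $\Phi$ to the $s$-neighborhood of a low-dimensional semi-algebraic set, and then conclude via Lemma~\ref{coveringNumberSemiAlg}. The key structural input is that $1\operatorname{-SBroad}_s(\Phi) = \emptyset$ means every $z \in Z$ is $(1, K)$-narrow at scale $s$, where $K = K_{d, E_1}$: there exist great circles $S_1, \ldots, S_K \in \mathcal{S}_1$ (depending on $z$) so that $\pi_S(\Phi(z)) \subset \bigcup_i N_s(S_i)$. The proof then reduces to making this selection semi-algebraic and performing a dimension count on the resulting locus.

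First I would form the semi-algebraic ``data set''
\[
W = \big\{(z, S_1, \ldots, S_K) \in Z \times \mathcal{S}_1^K : \pi_S(\Phi(z)) \subset N_s(S_1) \cup \ldots \cup N_s(S_K)\big\},
\]
which has complexity $O_{d, E}(1)$ by the standard closure properties of semi-algebraic sets (projection, union, complement) applied to $\Phi$; note that the dependence on $s$ enters only through constants in polynomial inequalities, so the combinatorial complexity is independent of $s$. The narrowness hypothesis says the projection $W \to Z$ is surjective, so Lemma~\ref{selPtFiberProp} provides a semi-algebraic subset $W^\prime \subset W$ of complexity $O_{d, E}(1)$ on which this projection is a bijection; in particular $\dim W^\prime = \dim Z$.

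Next I would unfold the selected great circles into the sphere direction by defining
\[
\widetilde W = \big\{(z, v) \in Z \times S^{d-1} : \exists\, (z, S_1, \ldots, S_K) \in W^\prime,\; v \in S_1 \cup \ldots \cup S_K\big\},
\]
which is the image, under a semi-algebraic map of bounded complexity, of the parameter space $W^\prime \times \{1, \ldots, K\} \times S^1$, hence semi-algebraic of complexity $O_{d, E}(1)$. By construction $\Phi \subseteq N_s(\widetilde W)$ as subsets of $\RR^d \times \RR^d$, so Lemma~\ref{coveringNumberSemiAlg} yields
\[
\mathcal{E}_s(\Phi) \leq \mathcal{E}_s(N_s(\widetilde W)) \lesssim_{d, E} s^{-\dim \widetilde W},
\]
and the bound on $\dim \widetilde W$ coming from $\dim W^\prime = \dim Z$ gives the claimed estimate.

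The main subtlety is the semi-algebraic bookkeeping: one must check that $W$ can be defined with complexity independent of $s$, and that Lemma~\ref{selPtFiberProp} can be invoked to extract a section $W^\prime$ whose complexity does not blow up. Once this is in place, the rest is a routine dimension count and a direct appeal to Lemma~\ref{coveringNumberSemiAlg}; no further geometric input is needed.
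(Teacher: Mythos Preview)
Your overall architecture---parametrize the covering data semi-algebraically, extract a section via Lemma~\ref{selPtFiberProp}, unfold into $Z\times S^{d-1}$, and invoke Lemma~\ref{coveringNumberSemiAlg}---is exactly the paper's approach. The gap is in the dimension count at the very end. Your set $\widetilde W$ is, as you say, the image of $W'\times\{1,\ldots,K\}\times S^1$ under a semi-algebraic map, so
\[
\dim\widetilde W \;\le\; \dim W' + 1 \;=\; \dim Z + 1,
\]
and in general equality holds: over each $z$ the fiber of $\widetilde W$ is a union of $K$ great circles, which is genuinely one-dimensional. Lemma~\ref{coveringNumberSemiAlg} then only yields $\mathcal{E}_s(\Phi)\lesssim s^{-\dim Z - 1}$, which is off by a factor of $s^{-1}$ from the claim.

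The repair is to use points rather than circles. The discussion immediately preceding Definition~\ref{defnSBroad} is the operative fact: if $z\notin 1\operatorname{-SBroad}_s(\Phi)$ then $\pi_S(\Phi(z))$ has no connected component of diameter $\ge s$, and since it has at most $K_{d,E_1}$ components (Milnor--Thom), it lies in the $s$-neighborhood of at most $K_{d,E_1}$ \emph{vectors}. The paper implements this by taking, for each $z$, a maximal $s$-separated subset $\{v_1,\ldots,v_k\}\subset\pi_S(\Phi(z))$ with $k\le O_{d,E}(1)$; these points necessarily $s$-cover the fiber. After stratifying by $k$ and applying Lemma~\ref{selPtFiberProp} to the projection $(z,v_1,\ldots,v_k)\mapsto z$, one obtains semi-algebraic sets $W_k\subset Z\times S^{d-1}$ whose fibers over $Z$ are \emph{finite} (of cardinality $\le k$), hence $\dim W_k\le\dim Z$, and $\Phi\subset N_s\big(\bigcup_k W_k\big)$. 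Now Lemma~\ref{coveringNumberSemiAlg} gives the correct exponent. If you replace your $\mathcal{S}_1$ by $\mathcal{S}_0$ (antipodal pairs) and drop the $S^1$ factor, your argument goes through verbatim and matches the paper's.
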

\begin{proof}
Since $d$ and $E$ are fixed, all implicit constants may depend on these quantities. For each $k=1,\ldots, O(1)$, define
$$
A_k = \{(z,v_1,\ldots,v_k) \colon z\in Z, v_1,\ldots,v_k \in \pi_S(\Phi(z)),\ \angle(v_i,v_j)\geq s\ \textrm{if}\ i\neq j\}.
$$
Let $\pi_k\colon A_k\to Z$ be the projection to $Z$, and let $Z_k = \pi_k(A_k)$. Since $1\operatorname{-SBroad}_{s}(\Phi)=\emptyset$, we have that
\begin{equation}\label{everythingCovered}
\bigcup_{k=1}^{O(1)} Z_k = \pi_Z(\Phi).
\end{equation}

Apply Lemma \ref{selPtFiberProp} to each map $\pi_k\colon A_k\to Z_k$ to obtain sets $A_k^\prime \subset A_k$ so that the projection map $\pi_k\colon A_k^\prime \to Z_k$ is a bijection. Define $Z_k^\prime = Z_k \backslash \bigcup_{j>k} Z_j$. Note that each set $Z_k^\prime$ has dimension $\leq \dim(Z)$ and is semi-algebraic of complexity $O(1)$.

Define 
$$
\Phi_k = \{(z,v)\in \Phi\colon z\in Z_k^\prime\}.
$$ 
Then by \eqref{everythingCovered}, we have
\begin{equation}\label{everythingCoveredGamma}
\Phi = \bigsqcup_{k=1}^{O(1)}\Phi_k.
\end{equation}

Note that if $z\in Z_k^\prime$ and if $(z, v_1,\ldots,v_k)=\pi_k^{-1}(z)$ (here $\pi_k\colon A_k^\prime\to Z_k^\prime$ is a bijection, so it has a well-defined inverse), then every vector in $\pi_S(\Phi(z))$ must be close to one of the vectors $v_1,\ldots,v_k$; more precisely,
\begin{equation}\label{vSigmaClose}
\pi_S(\Phi(z)) \subset N_{s}(\{v_1,\ldots, v_k\}).
\end{equation}
Indeed, if \eqref{vSigmaClose} did not hold, then there exists $v_{k+1}\in \pi_S(\Phi(z))\ \backslash\ N_{s}(\{v_1,\ldots,v_k\}).$ But  then $(z,v_1,\ldots,v_k,v_{k+1})\in A_{k+1}$, so $z\in Z_{k+1}$, which contradicts the assumption that $z\in Z_k^\prime$. 

For each index $k$ and each $j=1,\ldots,k$, define the projections $\pi_{k,j}\colon A_k^\prime\to Z_k^\prime\times S^{d-1}$ by $(z,v_1,\ldots,v_k)\mapsto (z,v_j)$. Define $W_k = \bigcup_{j=1}^k \pi_{k,j}(A_k^\prime)$. Thus $W_k\subset Z_k^\prime\times S^{d-1}$, and for each $z\in Z_k^\prime$, we have that $W_k\cap (\{z\}\times S^{d-1})$ is the set $\{(z, v_1),\ldots, (z,v_k)\}$, where $(z, v_1,\ldots,v_k)=\pi_k^{-1}(z)$. 

Observe that $W_k$ is a semi-algebraic set of dimension $\leq \dim(Z)$ and complexity $O(1)$, and thus by Lemma \ref{coveringNumberSemiAlg},
\begin{equation}\label{complexityWk}
\mathcal{E}_s(W_k)\lesssim s^{-\dim(Z)}.
\end{equation}

On the other hand, by \eqref{vSigmaClose} we have that
\begin{equation}\label{coveringNumberOfGammak}
\mathcal{E}_s(\Phi_k)\lesssim \mathcal{E}_s(W_k). 
\end{equation}
The lemma now follows from \eqref{everythingCoveredGamma}, \eqref{vSigmaClose}, \eqref{complexityWk}, and \eqref{coveringNumberOfGammak}.
\end{proof}
\subsection{Bundles of lines and the standard setup}
In this section we will relate the objects $Z,\Sigma$ and $\Gamma$ from Section \ref{bundlesOfLinesSection} with the standard setup from Definition \ref{standardSetupDefn}. 

\begin{defn}\label{defnAssociated}
Let $\delta>0$ and let $P\in\RR[x_1,\ldots,x_4]$ be a polynomial of degree at most $D$. Let $Z\subset  Z(P)\cap B(0,1)$ and $\Gamma\subset\Gamma(N_{\delta}(Z),\lines)$ be semi-algebraic sets of complexity at most $E$. For each $x\in N_{\delta}(Z)$, define $f_Z(x)$ to be the point $z\in Z$ that minimizes $\operatorname{dist}(x,z)$. If more than one such point exists, select the one that is minimal under the lexicographic order (any semi-algebraic total order would be equally good). Then the set $\{(x, z)\in N_{\delta}(Z)\times Z\colon z=f_Z(x)\}$ is semi-algebraic of complexity $O_{E,D}(1)$. Define the set $\Phi \subset Z \times S^3$ to be the set of ordered pairs
$$
\Phi = \{(f_Z(x), v(\ell)) \colon (x,\ell)\in\Gamma\}.
$$
Then $\Phi\subset Z\times S^3$ is semi-algebraic of complexity $O_{D,E}(1)$. We will say that set $\Phi$ is associated to $\Gamma$ (the set $Z$ and the parameter $\delta$ will be obvious from context). 
\end{defn}

By construction, the map $\Gamma\to \Phi,\ (x,\ell)\mapsto (f_Z(x), v(\ell))$ is onto. If $\Phi^\prime\subset \Phi$, we will define $\Gamma^\prime$ to be the pre-image of $\Phi^\prime$ under this map. 

\section{Narrow varieties}\label{stronglyBroadReductionCase}
In this section, we will consider the region where $Z$ either fails to be robustly 1-broad, or fails to be $(2,2)$-broad and has large second fundamental form. We will show that the lines having large intersection with this region must be covered by a union of two and three dimensional prisms (i.e. prisms with either two or three ``long'' directions). These sets of lines will be the sets $\Sigma_1$ and $\Sigma_2$ from the statement of Theorem \ref{discretizedSeveri}. The main results of this section are Proposition \ref{narrowVarietiesProp}, which describes what happens when $Z$ fails to be robustly $1$-broad, and Proposition \ref{22NarrowCovered}, which describes what happens when $Z$ has large second fundamental form and fails to be $(2,2)$-broad. 
\subsection{$1$-Narrow varieties are ruled by lines}

\begin{prop}\label{narrowVarietiesProp}
Let $0<\delta<s$ and $c>\delta$. Let $P\in\RR[x_1,\ldots,x_4]$ be a polynomial of degree at most $D$; let $Z= Z(P)\cap B(0,1)$; let $\Sigma\subset \Sigma_{\delta,c}(Z)$ be a semi-algebraic set of complexity at most $E$. Let $\Gamma\subset\Gamma(N_{\delta}(Z),\Sigma)$ be a semi-algebraic set of complexity at most $E$. Let $\Phi\subset Z\times S^3$ be associated to $\Gamma$, in the sense of Definition \ref{defnAssociated}. Suppose that
\begin{align}
&1\operatorname{-SBroad}_{s}(\Phi)=\emptyset,\label{noBroadS}\\
&|\Gamma(\ell)|\geq c\quad \textrm{for every}\ \ell\in\Sigma.\label{GammaEllLargeSize1} 
\end{align}

Then there is a set of $O_{D,E}(c^{-1}s^{-2})$ rectangular prisms of dimensions $2\times s\times s\times s$, so that every line from $\Sigma$ is covered by one of the prisms.
\end{prop}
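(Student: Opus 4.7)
The plan is to leverage two ingredients: an entropy bound on $\Phi$ obtained from the strong $1$-narrowness hypothesis via Lemma~\ref{entropyOfNarrowGamma}, and a coarea-type collapse that exploits the length hypothesis $|\Gamma(\ell)|\ge c$. Since $Z=Z(P)\cap B(0,1)$ has dimension at most $3$, applying Lemma~\ref{entropyOfNarrowGamma} to the hypothesis $1\operatorname{-SBroad}_s(\Phi)=\emptyset$ yields
$$
\mathcal{E}_s(\Phi)\lesssim_{D,E}s^{-3}.
$$

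For each $\ell\in\Sigma$ I would associate the curve $\gamma_\ell=\{(f_Z(x),v(\ell)):x\in\Gamma(\ell)\}\subset\Phi$. Because $|f_Z(x)-f_Z(x')|\ge|x-x'|-2\delta$ and $\delta<s$, any set of diameter $\le s$ in $\Phi$ meets $\gamma_\ell$ in a set whose preimage in $\Gamma(\ell)$ has diameter at most $s+2\delta\le 3s$ on $\ell$; together with $|\Gamma(\ell)|\ge c$ this gives $\mathcal{E}_s(\gamma_\ell)\gtrsim c/s$. Moreover, since $|f_Z(x)-x|\le\delta$, the $z$-coordinate of every point of $\gamma_\ell$ lies within $\delta$ of $\ell$.

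Next, introduce the semi-algebraic Lipschitz map $G\colon\Phi\to S^3\times\RR^3$ sending $(z,v)$ to $(v,y)$, where $y$ is the intersection of the line $z+\RR v$ with $\{x_1=0\}$; this is well defined because $v\cdot e_1$ is bounded below by a positive absolute constant on $\pi_S(\Phi)$. The previous paragraph implies that $G(\gamma_\ell)$ is contained in a ball of radius $O(\delta)<s$ around the line parameters $(v(\ell),y_\ell)$ of $\ell$. Take an efficient $s$-cover $\{B_j\}$ of $G(\Phi)$ with $O(1)$ overlap. The Lipschitz property of $G$ ensures that the preimages $G^{-1}(B_j)\cap\Phi$ have $O_{D,E}(1)$ overlap in $\Phi$; on the other hand, every $B_j$ containing some $(v(\ell),y_\ell)$ has $G^{-1}(B_j)\cap\Phi\supseteq\gamma_\ell$ and hence $s$-covering number $\gtrsim c/s$. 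Summing over $j$ and using the entropy bound on $\Phi$ gives
$$
\mathcal{E}_s(G(\Phi))\cdot\frac{c}{s}\lesssim_{D,E}\mathcal{E}_s(\Phi)\lesssim_{D,E}s^{-3},
$$
so $\mathcal{E}_s(G(\Phi))\lesssim_{D,E}c^{-1}s^{-2}$.

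Finally, to each $B_j$ in the cover I would associate a rectangular prism $T_j$ of dimensions $2\times s\times s\times s$ with long direction equal to the $S^3$-center of $B_j$ and long axis through its $\RR^3$-center. After slightly rescaling the $B_j$'s by an absolute constant (whose $O(1)$ overhead is absorbed into $O_{D,E}$), a routine geometric check guarantees $|\ell\cap T_j|\ge 2$ for every line whose $(v,y)$-parameters lie in $B_j$. Since each $\ell\in\Sigma$ has $(v(\ell),y_\ell)$ within $O(\delta)<s$ of $G(\Phi)$, it lies in some $B_j$ and is covered by $T_j$, giving the required $O_{D,E}(c^{-1}s^{-2})$ prisms. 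The most delicate steps will be the scale-$s$ entropy lower bound on $\gamma_\ell$ (which hinges on $\delta<s$ via the bi-Lipschitz-like estimate for $f_Z$) and the bounded-overlap property of the $G^{-1}(B_j)$'s that powers the coarea-type inequality $\mathcal{E}_s(G(\Phi))\cdot(c/s)\lesssim\mathcal{E}_s(\Phi)$.
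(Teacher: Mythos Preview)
Your argument is correct and follows essentially the same strategy as the paper's proof: both start from the entropy bound $\mathcal{E}_s(\Phi)\lesssim_{D,E}s^{-3}$ from Lemma~\ref{entropyOfNarrowGamma}, and both exploit that each line $\ell\in\Sigma$ contributes $\gtrsim c/s$ to the $s$-entropy of $\Phi$ inside a ``tube'' region of phase space. The only difference is packaging: the paper works with a maximal family of prisms $R$ and their phase-space thickenings $R^*=\{(x,v):x\in R,\ \angle(v,v(R))\le s/4\}$, showing that any $R$ with $R^*\cap\Phi\neq\emptyset$ satisfies $\mathcal{E}_s(4R^*\cap\Phi)\ge c/s$, while you pass through the line-parameter map $G$ and an $s$-cover $\{B_j\}$ of $G(\Phi)$, with the $G^{-1}(B_j)$ playing the role of the $R^*$. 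One small wording point: the inequality $\sum_j\mathcal{E}_s(G^{-1}(B_j)\cap\Phi)\lesssim\mathcal{E}_s(\Phi)$ needs not just that the $G^{-1}(B_j)$ have bounded overlap as sets (which is automatic from the $B_j$), but that any $s$-ball in $\Phi$ meets only $O(1)$ of them---this is precisely where the Lipschitz bound on $G$ enters, so you might phrase that step more carefully.
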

\begin{proof}

By Lemma \ref{entropyOfNarrowGamma}, 
\begin{equation}\label{XHasSmallCoveringNumber}
\mathcal{E}_s(\Phi)\lesssim_{D,E} s^{-3}.
\end{equation}

Let $\mathcal{R}_{\operatorname{max}}$ be a maximal set of essentially disjoint rectangular prisms of dimensions $2\times s/4\times s/4\times s/4$ that intersect $B(0,1)$. For each $R\in \mathcal{R}_{\operatorname{max}}$, define $v(R)$ to be the direction of the long axis of $R$, and define
$$
R^* = \{(x,v)\in \RR^4\times S^3\colon x\in R,\ \angle(v, v(R))\leq s/4\}.
$$

Observe that every line intersecting $B(0,1)$ is covered by some rectangular prism from $\mathcal{R}_{\operatorname{max}}$, and that for each $C\geq 1$, the $C$--fold dilates of the sets $\{CR^*\}_{R\in \mathcal{R}_{\operatorname{max}}}$ are $O_C(1)$--overlapping. 
 
Let $(x,\ell)\in\Gamma$ and let $(f_Z(x), v(\ell))$ be the corresponding element of $\Phi$. Note that if $\ell\times \{v(\ell)\}\cap R^*\neq\emptyset$ then $\ell\cap R\neq\emptyset$ and $\angle(v(\ell), v(R))\leq s/4$, and thus $\ell$ is covered by the 4-fold dilate of $R$. Furthermore, if this happens then $\Phi\cap R^*\neq\emptyset$. Thus to prove the lemma, it suffices to show that 

\begin{equation}\label{boundOnNumberofT}
|\{R\in \mathcal{R}_{\operatorname{max}} \colon R^* \cap \Phi\neq\emptyset\}|\lesssim_{D,E} c^{-1}s^{-2}. 
\end{equation}
Let $R\in \mathcal{R}_{\operatorname{max}}$ and suppose $R^*\cap \Phi \neq\emptyset$. We will show that
\begin{equation}\label{largeIntersectionWithTStar}
\mathcal{E}_s(\Phi\cap 4R^*)\geq c/s.
\end{equation}
To see this, let $(z,v)\in R^* \cap \Phi$. Then there exists a point $(x,\ell)\in\Gamma$ with $\operatorname{dist}(x,z)<\delta$ and $v(\ell)=v$. Thus
$$
\angle( v(\ell), v(R))\leq s/4.
$$
Thus $\ell\cap B(0,1)\subset \ell\cap 4S $. Since $\ell\in\Sigma,$ we have $\mathcal{E}_s(\Gamma(\ell))\geq  c/s$ and thus there exist $\geq c/s$ $s$-separated points $x^\prime\in\Gamma(\ell)$ with $(x^\prime,v(\ell))\in S^*$. Of course for each of these points $x^\prime\in\Gamma(\ell)$ there exists a point $z^\prime\in B(x^\prime,\delta)$ with $(z^\prime,v(\ell))\in \Phi\cap R^*$, which establishes \eqref{largeIntersectionWithTStar}. Since the sets $\{4R^*\}_{\mathcal{R}_{\operatorname{max}}}$ are $O(1)$ overlapping, by combining \eqref{XHasSmallCoveringNumber} and \eqref{largeIntersectionWithTStar} we have \eqref{boundOnNumberofT}.
\end{proof}

\subsection{$2$-narrow varieties are ruled by planes}\label{nonStrongThreeBroadReductionSec}
\begin{lem}\label{3NarrowCovered}
Let $\delta,u,s,\kappa,c,D,E$ be parameters with $0<\delta<u<s<c$ and $\delta<\kappa$. Then there exists a number $w\gtrsim_{D,E}(us\kappa c)^{O(1)}$ so that the following holds.

Let $P\in\RR[x_1,\ldots,x_4]$ be a polynomial of degree at most $D$ and let 
\begin{equation}\label{defnOfZ}
Z \subset \{z\in Z(P)\cap B(0,1)\colon 1\leq |\nabla P(z)|\leq 2,\ \Vert II(z)\Vert_\infty \geq \kappa\}.
\end{equation}
Let $\Sigma\subset\Sigma_{\delta,c}(Z)$, and let $\Gamma\subset\Gamma(N_{\delta}(Z),\Sigma)$. Suppose that $Z,\Sigma,$ and $\Gamma$ are semi-algebraic sets of complexity at most $E$. Let $\Phi$ be associated to $\Gamma$, in the sense of Definition \ref{defnAssociated}. Suppose that
\begin{align}
&Z\subset (2,1)\operatorname{-Narrow}_{w}(\Phi),\label{Zsubset21narrowW}\\
&|\Gamma(\ell)|\geq c\quad\textrm{for all}\ \ell\in\Sigma. \label{GammaEllLargeSize}
\end{align}

Then we can write $\Sigma = \Sigma^\prime\cup\Sigma^{\prime\prime}$, where the lines in $\Sigma^\prime$ can be covered by $O_{D,E}(c^{-O(1)}s^{-2})$ rectangular prisms of dimensions $2\times s\times s\times s$, and the lines in $\Sigma^{\prime\prime}$ can be covered by $O_{D,E}((cs)^{-O(1)}u^{-1})$ rectangular prisms of dimensions $2\times 2\times u\times u$.
\end{lem}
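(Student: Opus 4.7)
The plan is to split $\Sigma$ into a ``one-dimensional'' piece handled by Proposition \ref{narrowVarietiesProp} and a ``planar'' piece handled by a Gauss-map argument that exploits the curvature hypothesis $\|\II(z)\|_\infty\geq\kappa$. Set $Z_{\mathrm n} = Z\setminus 1\operatorname{-SBroad}_s(\Phi)$ and $Z_{\mathrm p} = Z\cap 1\operatorname{-SBroad}_s(\Phi)$, with corresponding restrictions $\Gamma_{\mathrm n},\Gamma_{\mathrm p}$ of $\Gamma$. Pigeonholing hypothesis \eqref{GammaEllLargeSize}, every $\ell\in\Sigma$ satisfies $|\Gamma_{\mathrm n}(\ell)|\geq c/2$ or $|\Gamma_{\mathrm p}(\ell)|\geq c/2$; I denote the two resulting sets of lines by $\Sigma^\prime$ and $\Sigma^{\prime\prime}$. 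The restriction of $\Phi$ to $Z_{\mathrm n}\times S^3$ satisfies $1\operatorname{-SBroad}_s=\emptyset$ by construction, so Proposition \ref{narrowVarietiesProp} applied with $c/2$ in place of $c$ immediately produces the required $O_{D,E}(c^{-O(1)}s^{-2})$ prisms of dimensions $2\times s\times s\times s$ covering $\Sigma^\prime$.

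For $\Sigma^{\prime\prime}$, I begin by extracting, for each $z\in Z_{\mathrm p}$, the $2$-sphere $S(z)\subset S^3$ provided by $(2,1)$-narrowness at width $w$; via Lemma \ref{selPtFiberProp}, the assignment $z\mapsto S(z)$ is made semi-algebraic of complexity $O_{D,E}(1)$. Any $v\in\pi_S(\Phi(z))$ also satisfies the first-order vanishing $v\cdot\nabla P(z)=O(\delta)$, so $v$ lies $O(\delta)$-close to the tangent $2$-sphere of $Z(P)$ at $z$. The curvature bound $\|\II(z)\|_\infty\geq\kappa$ now forces $S(z)$ and the tangent $2$-sphere to be quantitatively transverse on a subset of $Z_{\mathrm p}$ of full relative measure---after removing an exceptional locus whose size is absorbed by choosing $w$ a small enough power of $us\kappa c$---by the same computation that underlies Lemma \ref{largeFundFormEscapesPlane}. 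Hence $\pi_S(\Phi(z))$ is trapped in an $O(w/\kappa)$-neighborhood of the $1$-sphere $C(z):=S(z)\cap T_zZ(P)$; combined with the $1\operatorname{-SBroad}_s$ hypothesis that $\pi_S(\Phi(z))$ contains a connected component of diameter $\gtrsim s$, this singles out a well-defined $2$-plane $\Pi(z)\subset\RR^4$ through $z$ whose tangent $1$-sphere is $C(z)$, again semi-algebraic in $z$.

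The heart of the matter---the step I expect to be the main obstacle---is to show that the image $\mathcal{F}:=\{\Pi(z):z\in Z_{\mathrm p}\}\subset\operatorname{Grass}(4;2)$ is a semi-algebraic set of dimension at most $1$ with complexity $O_{D,E}(1)$; morally, this is the statement that under our hypotheses $Z$ must locally look like a scroll of $2$-planes, so the planes form a genuine one-parameter family. I would attempt this bound by combining Lemma \ref{coveringNumberOfDistinguishedSpheres} (applied with $m=2$ to the locus of $(z,S(z))$ pairs and the $s$-broadness scale) with the observation that $\Pi(z)$ depends semi-algebraically on $(z,S(z))$ and on the kernel direction of the second fundamental form, pinning down $\Pi(z)$ up to a finite ambiguity. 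Granting this, Lemma \ref{coveringNumberSemiAlg} produces a cover of $\mathcal{F}$ by $O_{D,E}((cs)^{-O(1)}u^{-1})$ balls of radius $u$ in $\operatorname{Grass}(4;2)$, and thickening each representative plane by $u$ gives the required $2\times 2\times u\times u$ prisms. Finally, for $\ell\in\Sigma^{\prime\prime}$ and any $(x,\ell)\in\Gamma_{\mathrm p}$, the direction $v(\ell)$ is tangent to $\Pi(f_Z(x))$ up to an error polynomial in $w^{-1}$ times $\delta$; choosing $w$ appropriately small ensures that this error is absorbed by the $u$-thickening, so $\ell$ is covered by the prism associated to any plane in the cover that approximates $\Pi(f_Z(x))$.
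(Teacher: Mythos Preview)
Your opening split---peeling off the part where $1\operatorname{-SBroad}_s$ fails and feeding it to Proposition \ref{narrowVarietiesProp}---matches the paper's first move. The genuine gap is the heart of your $\Sigma''$ argument: the assertion that $\mathcal{F}=\{\Pi(z):z\in Z_{\mathrm p}\}$ has dimension at most $1$.

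Lemma \ref{coveringNumberOfDistinguishedSpheres} does not give this. It bounds the $u$-covering number of the set of \emph{pairs} $(z,S(z))$ by $s^{-O(1)}u^{-\dim Z}=s^{-O(1)}u^{-3}$, because after Lemma \ref{selPtFiberProp} the graph of $z\mapsto S(z)$ has dimension $\dim Z=3$; it says nothing about the dimension of the projection to the Grassmannian. Your appeal to ``the kernel direction of the second fundamental form'' does not help: even granting that $\Pi(z)$ is algebraically determined by local data at $z$, that data varies over a $3$-dimensional base, and nothing in the hypotheses prevents the image from being $3$-dimensional. (Two incidental points: the first-order vanishing $v\cdot\nabla P(z)=O(\delta)$ you invoke is not among the hypotheses of this lemma, and Lemma \ref{largeFundFormEscapesPlane} is a volume estimate for slabs, not a transversality statement for spheres.)

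The paper instead makes a further dichotomy on the $1\operatorname{-SBroad}$ piece. It separates lines $\ell$ along which $S\circ f_Z$ stays within $u$ on some subinterval of $\Gamma(\ell)$ of length $\gtrsim c$ (call these $\Sigma_1$) from those on which $S\circ f_Z$ sweeps by $\geq u$ on every such interval ($\Sigma_2$). For $\Sigma_1$ the paper never bounds $\dim\mathcal{F}$; it combines the $s^{-O(1)}u^{-3}$ covering bound on pairs $(z,\Pi(z))$ with a density estimate showing that each candidate $2\times 2\times u\times u$ prism $R$ with $R^*\cap\tilde A_2\neq\emptyset$ absorbs $\gtrsim (cs)^{O(1)}u^{-2}$ of those pairs (this is where $1\operatorname{-SBroad}_s$ is actually spent: $\gtrsim s/u$ line directions at $z$, each contributing $\gtrsim c/u$ points along its length with compatible plane), yielding $\lesssim (cs)^{-O(1)}u^{-1}$ prisms. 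For $\Sigma_2$ the paper runs a two-skew-tubes argument: after two further $1\operatorname{-SBroad}$ refinements, a surviving line $\ell_0$ together with a suitably skew $\ell'$ forces $\gtrsim(csu)^{O(1)}w^{-1}$ of the planes $\Pi(z_i)$ along $\ell_0$ (and the $\gtrsim(cs)^{O(1)}w^{-1}$ tubes each carries) into the $\lesssim(csu)^{-O(1)}w$-neighborhood of the hyperplane $H$ spanned by $\ell_0$ and $\ell'$, giving $|N_w(Z)\cap N_{O(w)}(H)|\gtrsim(csu)^{O(1)}w$. Lemma \ref{largeFundFormEscapesPlane} gives the opposite bound $\lesssim(csu)^{-O(1)}\kappa^{-1/2}w^{3/2}$, a contradiction once $w$ is a small enough power of $csu\kappa$. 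This is the only place $\Vert\II\Vert_\infty\geq\kappa$ enters, and it is essential: the $(2,1)$-narrow hypothesis alone does not force the planes $\Pi(z)$ into a one-parameter family.
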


\begin{proof}
Since $D$ and $E$ are fixed, whenever we write $A\lesssim B$, the implicit constant may depend on these quantities. Let  
$$
\Phi_0 = \{ (z,v)\in \Phi\colon z\in 1\operatorname{-SBroad}_{s}(\Phi) \},
$$ 
and let $\Phi_0^\prime=\Phi\backslash \Phi_0$. Define $\Gamma_0$ to be the pre-image of $\Phi_0$ under the map $\Gamma\to \Phi$, and define $\Gamma_0^\prime=\Gamma\backslash\Gamma_0$. Note that $\Gamma_0^\prime$ is the pre-image of $\Phi_0^\prime$. $\Gamma_0^\prime$ is a set of complexity $E_1 = O_{D,E}(1)$; in particular, the constant $E_1$ can be chosen to be independent of $s$ and $\delta$. 

Let 
\begin{equation*}
\begin{split}
\Sigma_0 &= \{\ell\in\Sigma\colon|\Gamma_0(x)| > c/2\},\\
\Sigma_0^\prime & = \{\ell\in\Sigma\colon|\Gamma_0^\prime(x)| > c/2\}.
\end{split}
\end{equation*}

We will put the lines in $\Sigma_0^\prime$ into $\Sigma^\prime$; by Proposition \ref{narrowVarietiesProp}, these lines can be covered by $O(s^{-2}c^{-O(1)})$ rectangular prisms of dimensions $2\times s\times s\times s$.

Define 
$$
Z_1=\{z\in Z\colon \Phi_0(z)\neq\emptyset \}\subset 1\operatorname{-SBroad}_{s}(\Phi).
$$
Let $w>(us\kappa c)^{O(1)}$ be a number that will be determined below. Define
\begin{equation}\label{defnA1}
A_1=\{(z,S)\in Z_1\times \mathcal{S}_1\colon \angle(v,S)\leq w\ \forall\ v\in \pi_S(\Phi(z))\}.
\end{equation}
Since $Z_1\subset 1\operatorname{-SBroad}_{s}(\Phi)$, we have that if $(z,S),(z,S^\prime)\in A_1$, then 
\begin{equation}\label{smallAnglePiPiPrime}
\angle(S,S^\prime)\leq 2w/s.
\end{equation}
This is because there exists two vectors $v,v^\prime\in \pi_S(\Phi(z))$ with $\angle(v,v^\prime)\geq s$, and these vectors satisfy $\angle(v,S)\leq w; \angle(v,S^\prime)\leq w;\ \angle(v^\prime,S)\leq w;$ and $\angle(v^\prime,S^\prime)\leq w$. Observe that $A_1$ obeys the hypotheses of Lemma \ref{coveringNumberOfDistinguishedSpheres}.

Define $\pi\colon A_1\to Z_1$ to be the projection $(z,S)\mapsto z$. Then by \eqref{Zsubset21narrowW} $\pi(A_1)=Z_1$. Use Lemma \ref{selPtFiberProp} to select a set $A_1^\prime\subset A_1$ so that $\pi\colon A_1^\prime\to Z_1$ is a bijection. By \eqref{smallAnglePiPiPrime}, we have that 
\begin{equation}\label{X1NearX1Prime}
A_1 \subset N_{2w/s}(A_1^\prime).
\end{equation}

For each $z\in Z_1$, define $S(z)$ to be the (unique) great circle $S$ so that $(z,S)\in A_1^\prime$. The function $S(z)$ is semi-algebraic of complexity $O(1)$. If $(x,\ell)\in \Gamma_0$, then $f_Z(x)\in Z_1$. Thus for each $\ell\in\Sigma$, the set 
$$
S\circ f_Z\circ\Gamma_0(\ell) =\{S(f_Z(x))\colon x\in \Gamma_0(\ell)\}\subset\mathcal{S}_1
$$ 
is a union of $O(1)$ connected components in $\mathcal{S}_1$ (recall that $\mathcal{S}_1$ is the parameter space of one-dimensional great circles in $S^3$). 

For each $S\in\mathcal{S}_1$, define $\operatorname{span}(S)$ to be the two-dimensional vector space in $\RR^4$ that contains the great circle $S$, i.e.~$\operatorname{span}(S)=\{rv\colon r\in\RR,\ v\in S \}$. For each $z\in Z_1$, Define $\Pi(z) = z + \operatorname{Span}(S(z))$; this is an affine 2-plane containing $z$.

Since $S\circ f_Z\circ\Gamma_0(\ell)$ is a union of $O(1)$ connected components, heuristically, this means that either (A): $\{\Pi\circ f_Z(z)\colon z\in\Gamma_0(\ell)\}$ can be covered by the thickened neighborhoods of a small number of planes containing $\ell$, or (B): the union of the planes in $\{\Pi\circ f_Z(z)\colon z\in\Gamma_0(\ell)\}$ fill out a large fraction of $N_{\delta}(Z_1)$. We will make this heuristic precise in the arguments below.

let $h\gtrsim c$ be a constant that will be determined below. Define
\begin{equation*}
\begin{split}
Y = \big\{(\ell,S, x)\in\Sigma_0&\times \mathcal{S}_1 \times \RR^4\colon x\in\Gamma_0(\ell),\\
 &\forall\ x^\prime\in \ell\cap B(x,h),\ \textrm{we have}\ x^\prime \in\Gamma_0(\ell)\ \textrm{and}\ \angle(S\circ f_Z(x^\prime),S)<u\big\}.
\end{split}
\end{equation*}

Let $\pi_{\lines}(\ell,S,x)= \ell$ and define $\Sigma_1 = \pi_{\lines}(Y)$, $\Sigma_2 = \Sigma_0\backslash\Sigma_1$. In words, $\Sigma_1$ is the set of lines $\ell\in\Sigma_0$ so that there exists a line segment $I\subset\Gamma_0(\ell)$ of length $2h$ with the property that the great circle $S\circ f_Z(x^\prime)$ does not change much as $x^\prime$ moves along $I$. 
\begin{rem}
Heuristically, if the variety $Z(P)$ was ruled by planes, and if every line was contained in one of these planes, then $\Sigma_1=\Sigma_0$. We will show that if this is the case, then $\Sigma_1$ can be partitioned into disjoint pieces that do not interact with each other (geometrically, if $Z(P)$ is ruled by planes and if every line lies in one of these planes, then we can write $Z(P)$ as a disjoint union of planes and consider each of these planes individually). 
\end{rem}

On the other hand, $\Sigma_2$ is the set of lines $\ell\in\Sigma_0$ so that for every interval $I\subset\Gamma_0(\ell)$ of length $2h$, $S\circ f_Z(I)$ has diameter $\geq u$. Since $S\circ f_Z \circ \Gamma_0(\ell)$ is a union of $O(1)$ connected components, if $h\gtrsim c$ is selected sufficiently small, then there must exist an interval $I\subset\Gamma_0(\ell)$ of length $2h$ so that $S\circ f_Z(I)$ is connected. 
\begin{rem}
Heuristically, if the variety $Z(P)$ is a small perturbation of a hyperplane, then it could be the case that $\Sigma_2=\Sigma_0$. We will show that if this is the case, then most of $Z(P)$ can be contained in a thin neighborhood of a hyperplane, and this will contradict the assumption that $\Vert II(z)\Vert_\infty$ is large on $Z$.
\end{rem}

\subsubsection*{Understanding lines in $\Sigma_1$}
Apply Lemma \ref{selPtFiberProp} to the surjection $\pi_\lines\colon Y\to\Sigma_1$ to obtain a set $Y^\prime\subset Y$ so that $\pi_\lines\colon Y^\prime\to\Sigma_1$ is a bijection. Define $S(\ell)$ to be the circle in $\mathcal{S}_1$ containing the vector $v(\ell)$ that makes the smallest angle with the circle $\pi_{\mathcal{S}_1}\circ\pi^{-1}_{\lines}(\ell)$. Observe that if $\ell\in\Sigma_1$ and if $(\ell,S,x)=\pi_{\lines}^{-1}(\ell)$, then $(x, v(\ell))\in \Gamma_0$ and thus $\angle(v(\ell),S)<w\leq u$, so $\angle(S, S(\ell))< u$.


For each $\ell\in\Sigma_1$, define $\Pi(\ell)=\ell+\operatorname{span}(S(\ell))$; this is an affine plane containing $\ell$ that points in the directions spanned by $S(\ell)$.

Define 
$$
\Gamma_1=\{(x,\ell)\in\Gamma_0\colon\ \angle(S(\ell), S(x))<2u\}.
$$
Then for each $\ell\in\Sigma_1$, we have $|\Gamma_1(\ell)|\geq 2h$. This is because $\ell\in\Sigma_1$ implies that there exists a point $(\ell,S,x)=\pi_{\lines}^{-1}(x)\in Y,$ and thus there exists an interval $I\subset \Gamma_0(\ell)$ of length $2h$ containing $x$ so that for every point $x^\prime\in I$, we have 

\begin{equation*}
\begin{split}
\angle(S(\ell),S(x^\prime))&\leq\angle(S(\ell), S)+\angle(S(x^\prime),S)\\
&< u+u\\
&=2u.
\end{split}
\end{equation*}

Let $\Phi_1$ be the set associated to $\Gamma_1$, in the sense of Definition \ref{defnAssociated}. Define
$$
\Phi_2 = \{(z,v)\in \Phi_1\colon z \in 1\operatorname{-SBroad}_{s}(\Phi_1) \}.
$$
Define $\Gamma_2\subset\Gamma_1$ to be the pre-image of $\Phi_2$ and define $\Gamma_{2}^\prime=\Gamma_{1}\backslash\Gamma_{2}$ (this is the pre-image of $\Phi_2^\prime = \Phi_1\backslash \Phi_2$). Define 
\begin{equation*}
\begin{split}
\Sigma_{1}^\prime&=\{\ell\in\Sigma_1\colon |\Gamma_2(\ell)|\geq h\},\\
\Sigma_{1}^{\prime\prime}&=\{\ell\in\Sigma_1\colon |\Gamma_2^\prime(\ell)|\geq h\}.
\end{split}
\end{equation*}
$\Gamma_2^\prime$ and $\Sigma_{1}^{\prime\prime}$ are semi-algebraic sets of complexity $O(1)$; the lines in $\Sigma_1^{\prime\prime}$ will be added to $\Sigma^\prime$; by Proposition \ref{narrowVarietiesProp}, these lines can be covered by $O(s^{-2}h^{-O(1)})=O(s^{-2}c^{-O(1)})$ rectangular prisms of dimensions $2\times s\times s\times s$.

Define $\Sigma^{\prime\prime} = \Sigma_{1}^\prime$ (recall that $\Sigma^\prime$ and $\Sigma^{\prime\prime}$ are the output of the lemma). We will show that these lines can be covered by $O(u^{-1} (sc)^{-O(1)})$ rectangular prisms of dimensions $2\times 2\times u\times u$. 

 Define $Z_2 = \{z\in Z_1\colon \Phi_2(z) \neq\emptyset\}$. Define 
$$
A_2 = \{(z, S)\in A_1\colon z\in Z_2\}.
$$
This is the set of pairs $(z,S)$ so that $z\in Z_2$ and all of the vectors from $\pi_S(\Phi(z))$ (and thus all of the vectors from $\pi_S(\Phi_2(z)) $) are contained in the $2u$--neighborhood of $S$. In practice, it will be more convenient to work with the set 
$$
\tilde A_2 = \{(z, z + \operatorname{span}(S)\colon (z,S)\in A_2\};
$$
this is the set of pairs $(z,\Pi)$, where $\Pi$ is an affine 2-plane containing $z$ that is spanned by the vectors in $S(z)$. 

Let $\mathcal{R}_{\operatorname{max}}$ be a maximal set of essentially disjoint rectangular prisms of dimensions $2\times 2\times u\times u$ that intersect $B(0,1)$. For each $R\in \mathcal{R}_{\operatorname{max}}$, define $\Pi(R)$ to be the affine plane concentric with the long axes of $R$, and define 
$$
R^*= \{(x,\Pi)\in R \times\operatorname{Grass}(4;2)\colon  \operatorname{dist}(\Pi,\Pi(R))\leq u \}.
$$
The expression $\operatorname{dist}(\Pi,\Pi(R))$ should be interpreted as follows: Select a semi-algebraic embedding of the affine Grassmannian $\operatorname{Grass}(4;2)$ into $\RR^N$; then $\operatorname{dist}(\Pi,\Pi(R))$ is the Euclidean distance between the points in $\RR^N$ corresponding to the images of $\Pi$ and $\Pi(R)$.

Observe that for each $C$, the $C$-fold dilates $\{CR^*\}_{R\in\mathcal{R}_{\operatorname{max}} }$ are $O_C(1)$--fold overlapping. For each $z\in Z_2$, there is a prism $R\in \mathcal{R}_{\operatorname{max}}$ so that $(z, \Pi(z))\in R^*$. This value of $R$ satisfies
\begin{equation}\label{PiStarMeetsA2}
R^*\cap \tilde A_2\neq\emptyset.
\end{equation}

For this $R$, we also have 
\begin{equation}\label{ellInN2wPi}
\Pi(z) \cap B(0,1)\subset 4R,
\end{equation}
where $4R$ is the four-fold dilate or $R$. Note as well that if $(x,\ell)\in \Gamma_1$ with $x\in f_Z^{-1}(z)$, then $\ell$ is covered by $4R$. This is because $\ell\cap 2R\neq\emptyset$ and 
\begin{equation*}
\begin{split}
\angle(\ell,\Pi(R))&\leq\angle(\Pi(\ell), \Pi(z))+\angle(\Pi(z), \Pi(R))\\
&\leq 2u.
\end{split}
\end{equation*} 

Thus we must establish the bound
\begin{equation}\label{SStarSmall}
|\{R\in \mathcal{R}_{\operatorname{max}} \colon R^*\cap \tilde A_2\neq\emptyset\}|\lesssim (hs)^{-O(1)}u^{-1}.
\end{equation}

By Lemma \ref{coveringNumberOfDistinguishedSpheres},
\begin{equation}\label{X2SmallCoveringNumber}
\mathcal{E}_u(\tilde A_2)=\mathcal{E}_u(A_2)\lesssim s^{-O(1)} u^{-3}.
\end{equation}

Since the sets $\{CR^*\colon R\in \mathcal{R}_{\operatorname{max}}\}$ are $O_C(1)$ overlapping, in order to prove \eqref{SStarSmall}, it suffices to prove that if $R^*\cap \tilde A_2\neq\emptyset,$ then
\begin{equation}\label{SStarIntersectX1}
\mathcal{E}_u(8R^*\cap \tilde A_2)\gtrsim (hs)^{O(1)}u^{-2}.
\end{equation}

Suppose that $R^*\cap \tilde A_2\neq\emptyset$ and let $(z,\Pi)\in R^*\cap \tilde A_2$. Since $z\in 1\operatorname{-SBroad}_{s}(\Phi_1)$, there are $\geq s/u$ lines $\ell$ that point in pairwise $\geq u$ separated directions with $(x,\ell)\in \Gamma_1$ for some $x\in f_Z^{-1}(z)$. For each of these lines $\ell$, we have 
\begin{equation*}
\begin{split}
\angle(\Pi(\ell),\Pi(R))&\leq \angle(\Pi(\ell), \Pi(z)) + \angle(\Pi(z), \Pi(R)) \\
& < 4u.
\end{split}
\end{equation*} 
On each of these lines, we have $|\Gamma_1(\ell)|\geq h$, so we can select $\geq h/(2u)$ points that are all pairwise $u$ separated and have distance $\geq h/2$ from $z$. For each such point $x^\prime,$ we have 
\begin{equation*}
\begin{split}
\angle\big(\Pi\circ f_Z(x^\prime),\Pi(R)\big)&\leq \angle\big(\Pi\circ f_Z(x^\prime),\Pi(\ell)\big)+\angle\big(\Pi(\ell),\Pi(R)\big)\\
&\leq 8u.
\end{split}
\end{equation*} 
This gives us a set of $\geq hs/(2u^2)\gtrsim (sc)^{O(1)}u^{-2}$ points $f_Z(x^\prime)\in Z_1$ that are pairwise $\gtrsim u$ separated, are contained in $Z_1\cap N_{8u}(\Pi)$, and satisfy $\angle(\Pi(u),\Pi(R))\leq 8u$. This establishes \eqref{SStarIntersectX1}.
\subsubsection*{Understanding lines in $\Sigma_2$} 
We will prove that if $w\geq (cush\kappa)^{O(1)}$ is sufficiently small, then the lines in $\Sigma_2$ can be covered by $O(s^{-2})$ rectangular prisms of dimensions $2\times s\times s\times s$.  

Observe that for each $\ell\in\Sigma_2$, $\Gamma_0(\ell)$ has complexity $O(1)$. Thus if we select $h\gtrsim c$ sufficiently small, then there is an interval $I\subset \Gamma_0(\ell)$ of length $\geq 2h$ so that $\Pi(I)$ is connected. Since $\ell\in\Sigma_2$, $\Pi(I)$ must also have diameter $\geq u$. We will now fix a value of $h$ so that this holds. Define
 
$$
\Gamma_3 = \{(x,\ell)\in\Gamma_0\colon \ell\in\Sigma_2,\ \exists\ \textrm{an interval}\ x\in I\subset\Gamma_0(\ell)\ \textrm{of length}\ 2h\}.
$$
Observe that $|\Gamma_3(\ell)|\geq 2h\gtrsim c$ for all $\ell\in\Sigma_2$. For each $\ell\in\Sigma_2$, let $\gamma_\ell\subset\operatorname{Grass}(4;2)$ be a connected set of diameter $\geq u$ that is contained in $\Pi\circ f_Z(\Gamma(\ell))$. Let $\Phi_3$ be the set associated to $\Gamma_3$, in the sense of Definition \ref{defnAssociated}. Define
$$
\Phi_3^\prime = \{(z,v)\in \Phi_3\colon z\in 1\operatorname{-SBroad}_{s}(\Phi_3)\},
$$
and let $\Phi_3^{\prime\prime}=\Phi_3\backslash \Phi_3^\prime$. Let $\Gamma_3^\prime $ be the pre-image of $\Phi_3^\prime$ under the map $\Gamma_3\to \Phi_3$, and define $\Gamma_3^{\prime\prime}=\Gamma_3\backslash\Gamma_3^\prime$. Define

\begin{equation*}
\begin{split}
\Sigma_3 &= \{\ell\in\Sigma_2\colon |\Gamma_3^\prime(\ell)|\geq h\},\\
\Sigma_3^\prime &= \{\ell\in\Sigma_2\colon |\Gamma_3^{\prime\prime}(\ell)|\geq h\}.
\end{split}
\end{equation*}

Arguing as above, the lines in $\Sigma_3^\prime$ can be covered by $O(c^{-O(1)}s^{-2})$ rectangular prisms of dimensions $2\times s\times s\times s$. We must now repeat the above process one more time. Define 
$$
\Gamma_4 = \{(x,\ell)\in\Gamma_0\colon \ell\in\Sigma_3\}.
$$
Let $\Phi_4$ be the set associated to $\Gamma_4$, in the sense of Definition \ref{defnAssociated}. Define
$$
\Phi_4^\prime = \{(z,v)\in \Phi_4\colon z\in 1\operatorname{-SBroad}_{s}(\Phi_4)\},
$$
and let $\Phi_4^{\prime\prime}=\Phi_4\backslash \Phi_4^\prime$. Let $\Gamma_4^\prime $ be the pre-image of $\Phi_4^\prime$ under the map $\Gamma_4\to \Phi_4$, and define $\Gamma_4^{\prime\prime}=\Gamma_4\backslash\Gamma_4^\prime$. Define

\begin{equation*}
\begin{split}
\Sigma_4 &= \{\ell\in\Sigma_3\colon |\Gamma_4^\prime(\ell)|\geq h/2\},\\
\Sigma_4^\prime &= \{\ell\in\Sigma_3\colon |\Gamma_4^{\prime\prime}(\ell)|\geq h/2\}.
\end{split}
\end{equation*}
Again, the lines in $\Sigma_4^\prime$ can be covered by $O(c^{-O(1)}s^{-2})$ rectangular prisms of dimensions $2\times s\times s\times s$. We will show that if $w\geq (csu\kappa)^{O(1)}$ is sufficiently small, then $\Sigma_4$ is empty. The basic idea is as follows: If $\Sigma_4$ is not empty, then we will find lines $\ell_0$ and $\ell^\prime$ that are (quantitatively) skew so that there are many (i.e.~about $w^{-1}$) points $x\in\ell_0$ where the plane $\Pi\circ f_Z(x)$ intersects $\ell^\prime$. This implies that the plane $\Pi\circ f_Z(x)$ is contained in the hyperplane $H$ spanned by $\ell$ and $\ell^\prime$. Each of these planes $\Pi\circ f_Z(x)$ contains many lines (almost) contained in $\Sigma_2$, which will imply that the $w$--neighborhood of these planes each intersect $N_{w}(Z)$ in a set of measure roughly $w^2$. Since there are roughly $w^{-1}$ such planes, this implies that $|N_{w}(H)\cap N_w(Z)|$ has size roughly $w$. This contradicts the fact that $\Vert II(x)\Vert_\infty\geq\kappa$ on $Z$, which implies that $|N_{w}(H)\cap N_w(Z)|$ has size at most $\kappa^{-1/2}w^{3/2}$.

We will now make this argument precise. For each $\ell\in\Sigma_2$, let $T_{\ell}$ be the $w$-neighborhood of $\ell\cap B(0,1)$, and let $Y_2(T_{\ell})\subset T$ be the $w$-neighborhood of $\Gamma_0(\ell)$. Observe that $|Y_2(T_\ell)|\gtrsim c|T_\ell|$ for each such $\ell\in\Sigma_2$. Let $\tubes_2$ be a maximal set of essentially distinct tubes from $\{ T_{\ell}\colon \ell\in\Sigma_2\}$. 

Similarly, for each $\ell\in\Sigma_3$, let $T_{\ell}$ be the $w$-neighborhood of $\ell\cap B(0,1)$, and let $Y_3(T_{\ell})\subset T$ be the $w$-neighborhood of $\Gamma_3(\ell)$. Again, we have $|Y_3(T_\ell)|\gtrsim c|T_\ell|$ for each such $\ell\in\Sigma_3$. Let $\tubes_3$ be a maximal set of essentially distinct tubes from $\{ T_{\ell}\colon \ell\in\Sigma_3\}$. 

\begin{rem}\label{lotsOfTubesOutsideAPlane}
Observe that for every $T\in\tubes_3$ and every plane $\Pi$ containing the line coaxial with $T$, there are $\gtrsim (sc)^{O(1)}w^{-2}$ tubes $T^\prime\in \tubes_2$ with $T\cap T^\prime\neq\emptyset$, $\angle(v(T),v(T^\prime))\gtrsim (sch)^{O(1)}$, and $\angle(v(T^\prime), \Pi)\gtrsim  (uch)^{O(1)}$.
\end{rem}

Our next task is to establish the bound
\begin{equation}\label{tubes0Small}
|\tubes_2|\lesssim c^{-O(1)}w^{-3}.
\end{equation}
For each $x\in Z$, define
$$
\tubes_2(x) = \{T\in\tubes_2\colon x\in Y_2(T)\}.
$$ 
For each $T\in\tubes_2,$ define $v(T)=v(\ell)$, where $\ell$ is the line coaxial with $T$, and define
$$
T^* = \{(x,v)\in\RR^4\times S^3\colon\ x\in T,\ \angle(v,v(T))\leq w\}.
$$ 
Then the sets $\{T^*\}_{T\in\tubes_2}$ are  $O(1)$ overlapping.

Recall the set $A_1$ from \eqref{defnA1}. By Lemma \ref{coveringNumberOfDistinguishedSpheres}, $\mathcal{E}_w(A_1)\lesssim s^{-O(1)}w^{-3}$. Define
$$
G = \{(z,v)\in Z_1\times S^3\colon v\in S(z) \}.
$$
By Lemma \ref{coveringNumberSemiAlg}, we have 
\begin{equation}\label{YSmall}
\mathcal{E}_w(G)\lesssim s^{-O(1)}w^{-4}.
\end{equation} 

On the other hand, if $T_\ell\in\tubes_2$, then there are $\gtrsim \frac{c}{w}$ $w$-separated points $z\in T_\ell\cap Z_1$ with $\angle(v(\ell),\Pi(z))\leq w$ and thus $(z, v(T))\in G$. Thus if $T\in\tubes_2$, we have 
\begin{equation}\label{TStarIntersectsY}
N_{w}(T^*\cap G)\gtrsim cw^{-1}.
\end{equation}
Combining \eqref{YSmall} and \eqref{TStarIntersectsY}, we obtain \eqref{tubes0Small}.

We will now show that $\Sigma_4$ is empty. Suppose not. Let $\ell_0\in\Sigma_4$. Let $T_0$ be the $w$-neighborhood of $\ell\cap B(0,1)$, and let $Y_4(T)$ be the $w$-neighborhood of $\Gamma_4^\prime(\ell_4)$. Let $z_1,\ldots,z_{p}$, $p\gtrsim (csu)^{O(1)} w^{-1}$ be points in $f_Z(\Gamma_4^\prime(\ell_0))\subset Y_4(T_0)$ so that the planes $\Pi(z_i)$ point in $w$--separated directions. (Recall that each of these planes makes an angle $\leq w$ with $v(\ell_0)=v(T_0)$) 

For each $i=1,\ldots,p$, since $z_i\in Y_4(T_0)$, we can select a set $\tubes^{(i)}$ of $\gtrsim (cs)^{O(1)}w^{-1}$ tubes from $\tubes_3$ passing through $z_i$, so that each of these tubes makes an angle $\gtrsim (cs)^{O(1)}$ with $v(T_0)$. Since the set $\tubes^{(i)}$ is contained in the $s^{-O(1)}w$--neighborhood of the plane $\Pi(z_i)$, and these planes point in $w$--separated directions, we can refine the set of indices $1,\ldots,p$ to a new indexing set $1,\ldots,p^\prime$ with $p^\prime \gtrsim (csu)^{O(1)} w^{-1}$ so that the corresponding sets of tubes $\{\tubes^{(i)}\}_{i=1}^{p^\prime}$ are disjoint. As discussed in Remark \ref{lotsOfTubesOutsideAPlane}, for each index $i$ and each $T\in\tubes^{(i)}$, there exist $\gtrsim (csu)^{O(1)}w^{-2}$ tubes from $\tubes$ that intersect $T$, make an angle $\gtrsim (csu)^{O(1)}$ with $v(T)$, and that make an angle $\gtrsim (csu)^{O(1)}$ with the plane spanned by $v(T_0)$ and $v(T)$ (and thus make an angle $\gtrsim (csu)^{O(1)}$ with the plane $\Pi(z_i)$). We can also require that each of these tubes intersect $T$ in a point that has distance $\gtrsim (csu)^{O(1)}$ from $T_0$, i.e. each of these tubes is  $\gtrsim (csu)^{O(1)}$ skew to $T_0$.

Thus if $C=O(1)$ is chosen sufficiently large, there are $\gtrsim (csu)^{O(1)} w^{-4}$ pairs
\begin{equation}\label{goodPairs}
\begin{split}
\{(T,T^{\prime})\in\tubes_3\times\tubes_2\colon & T_0 \cap T\neq\emptyset,\ \angle(v(T_0),v(T))\gtrsim (csu)^{C},\\
& T\cap T^{\prime}\neq\emptyset,\ \angle(v(T),v(T^\prime))\gtrsim (csu)^{C},\\
& T\ \textrm{and}\ T^{\prime}\ \textrm{are}\ \gtrsim (csu)^{C}\ \textrm{skew}\}.
\end{split}
\end{equation}

Since $|\tubes_2|\lesssim c^{-O(1)}w^{-3}$, we can select a tube $T^\prime\in\tubes_2$ that is $\gtrsim (csu)^{C}$ skew to $T_0$, so that there are $\gtrsim (csu)^{O(1)} w^{-1}$ tubes $T\in\tubes_3$ with $(T,T^\prime)\in\eqref{goodPairs}.$ Note that at most $\lesssim (csu)^{-1}$ of these tubes $T\in\tubes_3$ can lie in the $w$--neighborhood of a common plane containing $v(T_0)$, since $T^\prime$ is $\gtrsim (csu)^{C}$ skew to $T_0$. Thus we can select $\gtrsim (csu)^{O(1)} w^{-1}$ tubes $T$ with $(T,T^\prime)\in\eqref{goodPairs}$ so that the planes $\{\operatorname{span}(v(T_0), v(T)\}$ point in $w$--separated directions.

Let $H$ be the hyperplane containing the lines coaxial with $T_0$ and $T^\prime$. Observe that if $(T,T^\prime)\in\eqref{goodPairs}$, and if $x\in T_0\cap T$, then $\angle(\Pi(x), H)\lesssim (csu)^{-O(1)}w$. Re-indexing the sets of tubes $\{\tubes^{(1)},\ldots,\tubes^{(p^\prime)}\}$ again, we can select sets $\tubes^{(1)},\ldots,\tubes^{(p^{\prime\prime)}},\ p^{\prime\prime}\gtrsim (csu)^{O(1)}w^{-1}$ so that every set of tubes $\tubes^{(i)}$ contains a tube $T$ with $(T,T^\prime)\in\eqref{goodPairs}$. This means that for each index $i$, the tubes in $\tubes^{(i)}$ are contained in the $\lesssim (csu)^{-O(1)}w$--neighborhood of the hyperplane $H$ (see Figure \ref{twoTubePlaneFig}). We have

$$
\Big|\bigcup_{i=1}^{p^{\prime\prime}}\bigcup_{T\in\tubes^{(i)}} Y_2(T)\Big|\gtrsim (csu)^{O(1)}w.
$$
Since $Y_2(T)\subset N_{w}(Z)$ for each tube $T$ in the above union, we have
$$
|N_{w}(Z) \cap N_{\lesssim(csu)^{-O(1)}w }(H)|\gtrsim (csu)^{O(1)}w.
$$
\begin{figure}[h!]
 \centering
\begin{overpic}[width=0.6\textwidth]{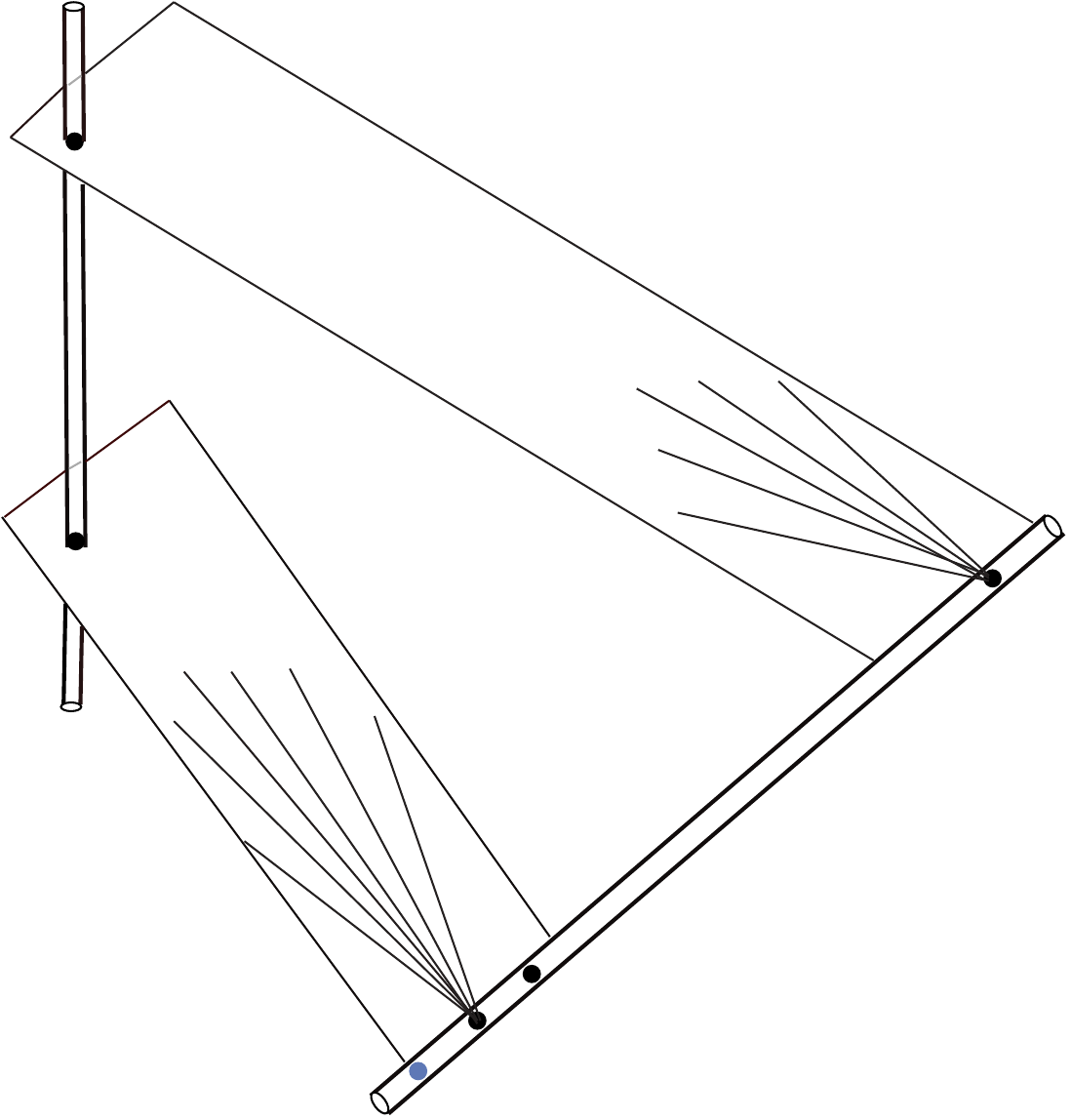}
 \put (38, 1) {$z_1$}
 \put (44, 6) {$z_2$}
 \put (49, 10) {$z_3$}
 \put (90, 45) {$z_{p^{\prime\prime}}$}
 \put (51,66) {$\tubes^{(p^{\prime\prime})}$}
 \put (9,51) {$T^\prime\cap \Pi(z_2)$}
 \put (9,70) {$T^\prime$}
 \put (65,22) {$T_0$}
 \put (9,87) {$T^\prime\cap \Pi(z_{p^{\prime\prime}})$}
\end{overpic}
\caption{The tubes $T_0$ and $T^\prime$; the points $z_1,\ldots,z_{p^{\prime\prime}}\in T_0$, the planes $\Pi(z_1),\ldots,\Pi(z_{p^{\prime\prime}})$, and the sets of tubes $\tubes^{(1)},\ldots,\tubes^{(p^{\prime\prime})}$. This entire figure is ostensibly contained in $\RR^4$, but in fact it is contained in the $\lesssim (csu)^{-O(1)}w$--neighborhood of the hyperplane spanned by the lines coaxial with $T_0$ and $T^\prime$. }\label{twoTubePlaneFig}
\end{figure}

But by Lemma \ref{largeFundFormEscapesPlane}, we have
$$
|N_{w}(Z) \cap N_{\lesssim (csu)^{-O(1)} w }(H)|\lesssim  (csu)^{-O(1)}\kappa^{-1/2} w^{3/2},
$$
and thus
\begin{equation}\label{impossibleIneqW}
(csu)^{O(1)}w \lesssim (csu)^{-O(1)}\kappa^{-1/2} w^{3/2}.
\end{equation}
If $w\gtrsim (c\kappa us)^{O(1)}$ is chosen sufficiently small, then \eqref{impossibleIneqW} is impossible, which contradicts the assumption that $\Sigma_4\neq\emptyset$. We conclude that $\Sigma_4=\emptyset$, which completes the proof of Lemma \ref{3NarrowCovered}.
\end{proof}

Lemma \ref{3NarrowCovered} can be used to understand hypersurfaces that are doubly-ruled by planes.
\begin{prop}\label{22NarrowCovered}
Let $\delta,u,s,\kappa,c,D,E$ be parameters with $0<\delta<u<s<c$ and $\delta<\kappa$. Then there exists a number $w\gtrsim_{D,E}(us\kappa c)^{O(1)}$ so that the following holds.

Let $P\in\RR[x_1,\ldots,x_4]$ be a polynomial of degree at most $D$ and let 
\begin{equation}\label{defnOfZ}
Z \subset \{z\in Z(P)\cap B(0,1)\colon 1\leq |\nabla P(z)|\leq 2,\ \Vert II(z)\Vert_\infty \geq \kappa\}.
\end{equation}
Let $\Sigma\subset\Sigma_{\delta,c}(Z)$, and let $\Gamma\subset\Gamma(N_{\delta}(Z),\Sigma)$. Suppose that $Z,\Sigma,$ and $\Gamma$ are semi-algebraic sets of complexity at most $E$. Let $\Phi$ be associated to $\Gamma$, in the sense of Definition \ref{defnAssociated}. Suppose that
\begin{align}
&Z\subset (2,2)\operatorname{-Narrow}_{w}(\Phi),\label{Z22Narrow}\\
&|\Gamma(\ell)|\geq c\quad\textrm{for all}\ \ell\in\Sigma. 
\end{align}

Then we can write $\Sigma = \Sigma^\prime\cup\Sigma^{\prime\prime}$, where the lines in $\Sigma^\prime$ can be covered by $O_{D,E}(c^{-O(1)}s^{-2})$ rectangular prisms of dimensions $2\times s\times s\times s$, and the lines in $\Sigma^{\prime\prime}$ can be covered by $O_{D,E}((cs)^{-O(1)}u^{-1})$ rectangular prisms of dimensions $2\times 2\times u\times u$.
\end{prop}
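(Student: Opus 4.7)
The plan is to reduce Proposition \ref{22NarrowCovered} to Lemma \ref{3NarrowCovered} by a two-way partition of $\Phi$. At each point $z \in Z$, the hypothesis $Z \subset (2,2)\operatorname{-Narrow}_w(\Phi)$ supplies a pair of $2$-spheres $S_1(z), S_2(z) \in \mathcal{S}_2$ so that every direction in $\pi_S(\Phi(z))$ lies within angle $w$ of $S_1(z) \cup S_2(z)$. The set of triples $(z, S_1, S_2) \in Z \times \mathcal{S}_2 \times \mathcal{S}_2$ enjoying this property is semi-algebraic of complexity $O_{D,E}(1)$ and projects onto $Z$. I would use Lemma \ref{selPtFiberProp} to choose a semi-algebraic section $z \mapsto (S_1(z), S_2(z))$ of complexity $O_{D,E}(1)$.

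Next, I would split $\Phi$ into
$$
\Phi^{(1)} = \{(z,v) \in \Phi : \angle(v, S_1(z)) \leq w\}, \qquad \Phi^{(2)} = \Phi \setminus \Phi^{(1)},
$$
so that $\pi_S(\Phi^{(i)}(z))$ is contained in the $w$-neighborhood of the single sphere $S_i(z)$ for $i = 1,2$ (the tie-breaking assigns any direction close to both spheres to $\Phi^{(1)}$, which is harmless). By construction, $\Phi^{(i)}$ is $(2,1)$-narrow at width $w$ on the set $Z^{(i)} = \{z \in Z : \Phi^{(i)}(z) \neq \emptyset\}$, and each $\Phi^{(i)}, Z^{(i)}$ is semi-algebraic of complexity $O_{D,E}(1)$. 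Pulling back under the surjection $\Gamma \to \Phi$ from Definition \ref{defnAssociated}, I obtain $\Gamma = \Gamma^{(1)} \cup \Gamma^{(2)}$, and since $|\Gamma(\ell)| \geq c$ for every $\ell \in \Sigma$, the sets
$$
\Sigma^{(i)} = \{\ell \in \Sigma : |\Gamma^{(i)}(\ell)| \geq c/2\}, \quad i = 1,2,
$$
cover $\Sigma$.

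The final step is to apply Lemma \ref{3NarrowCovered} separately to each triple $(Z^{(i)}, \Sigma^{(i)}, \Gamma^{(i)}|_{\Sigma^{(i)}})$ with $\Phi^{(i)}$ in the role of $\Phi$, using parameter $c/2$ in place of $c$ (this only changes implicit constants, since all factors of $c$ enter polynomially). Choosing $w \gtrsim_{D,E} (us\kappa c)^{O(1)}$ to be the minimum of the two values supplied by these two applications, each $\Sigma^{(i)}$ is covered by $O_{D,E}(c^{-O(1)}s^{-2})$ prisms of dimensions $2\times s\times s\times s$ together with $O_{D,E}((cs)^{-O(1)}u^{-1})$ prisms of dimensions $2\times 2\times u\times u$. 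Taking the union of the two covers yields the required partition $\Sigma = \Sigma^\prime \cup \Sigma^{\prime\prime}$.

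The main obstacle is essentially bookkeeping rather than new geometry: the analytic and algebraic-geometric content already sits inside Lemma \ref{3NarrowCovered}. One has to be careful that the semi-algebraic complexity of every object introduced along the way is bounded by $O_{D,E}(1)$ independently of $\delta, w, u, s, \kappa$; this is guaranteed by Lemma \ref{selPtFiberProp} and the stability of semi-algebraic complexity under the Boolean operations used in forming $\Phi^{(i)}, \Gamma^{(i)}, \Sigma^{(i)}$. Once that check is made, the $(2,2)$ hypothesis genuinely costs nothing beyond a factor of two, because the two $(2,1)$-narrow pieces can be treated independently.
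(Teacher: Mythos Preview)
Your approach is essentially identical to the paper's: both use Lemma \ref{selPtFiberProp} to select a semi-algebraic assignment $z\mapsto (S_1(z),S_2(z))$, split $\Gamma$ according to which sphere $v(\ell)$ lies near, pigeonhole to get $\Sigma^{(1)}\cup\Sigma^{(2)}=\Sigma$ with $|\Gamma^{(i)}(\ell)|\geq c/2$, and feed each piece into Lemma \ref{3NarrowCovered} with parameter $c/2$.

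There is one small technical wrinkle in your write-up. You apply Lemma \ref{3NarrowCovered} to the triple $(Z^{(i)},\Sigma^{(i)},\Gamma^{(i)})$ and declare ``$\Phi^{(i)}$ in the role of $\Phi$.'' But in Lemma \ref{3NarrowCovered} the set $\Phi$ is not a free input: it is \emph{determined} by $Z$ and $\Gamma$ via the closest-point map $f_Z$ of Definition \ref{defnAssociated}. If you replace $Z$ by the smaller set $Z^{(i)}$, the closest-point map changes to $f_{Z^{(i)}}$, and the associated $\Phi$ need not coincide with your $\Phi^{(i)}$ (which was built using $f_Z$); in particular the sphere $S_i(f_Z(x))$ is indexed by the wrong base point and the $(2,1)$-narrowness verification breaks. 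The paper avoids this by applying Lemma \ref{3NarrowCovered} with the \emph{original} $Z$ rather than $Z^{(i)}$: then the associated $\Phi$ for $\Gamma_i$ is exactly $\{(f_Z(x),v(\ell)):(x,\ell)\in\Gamma_i\}\subset\Phi^{(i)}$, and $Z\subset (2,1)\operatorname{-Narrow}_w$ of this set is immediate from $\angle(v,S_i(z))\leq w$. With that one-word fix (use $Z$, not $Z^{(i)}$) your argument is complete and matches the paper line for line.
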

\begin{proof}
Let $w\gtrsim_{D,E}(us\kappa c)^{O(1)}$ be the constant from Lemma \ref{3NarrowCovered} associated to the values $\delta^\prime=\delta,u^\prime=u,s^\prime=s,\kappa^\prime=\kappa,c^\prime=c/2,D^\prime=D,E^\prime = O_{D,E}(1)$. Define  
$$
A = \{(z,S_1,S_2)\in Z \times (\mathcal{S}_1)^2 \colon\ \min\big(\angle(v, S_1),\ \angle(v,S_2)\big)\leq w\ \forall\ v\in \pi_S(\Phi(z))\}.
$$
Since $Z\subset (2,2)\operatorname{-Narrow}_{w}(\Phi)$, the projection $A\mapsto Z$ is onto. Use Lemma \ref{selPtFiberProp} to select a set $A^\prime\subset A$ so that the map $\pi\colon A^\prime\to Z$ is a bijection. Define the semi-algebraic functions $S_1(z)$ and $S_2(z)\colon Z\to \mathcal{S}_1$ so that for each $z\in Z,$ $(z,S_1(z),S_2(z))\in A^\prime$. For $i=1,2$, define
$$
\Gamma_i = \{(x,\ell)\in\Gamma\colon \angle( v(\ell),S_i\circ f_Z(x)\leq w \}.
$$
By \eqref{Z22Narrow}, for every $(x,\ell)\in\Gamma$ we have that 
$$
\angle\big(v(\ell),\ S_1\circ f_Z(x)\big)\leq w \quad\textrm{and/or}\quad \angle\big(v(\ell),\ S_2\circ f_Z(x)\big)\leq w. 
$$
Thus $\Gamma=\Gamma_1\cup\Gamma_2$. For $i=1,2,$ define 
$$
\Sigma_i=\{\ell\in\Sigma\colon |\Gamma_i(\ell)|\geq c/2\}.
$$ 
Then $\Sigma = \Sigma_1\cup\Sigma_2$. We have that $Z,\Gamma_i,$ and $\Sigma_i$, $i=1,2$, are semi-algebraic of complexity $E^\prime=O_{D,E}(1)$. For $i=1,2$, apply Lemma \ref{3NarrowCovered} to the data $P, Z, \Sigma_i, \Gamma_i,$ with the parameters $\delta^\prime,u^\prime,s^\prime,\kappa^\prime,c^\prime,D^\prime,$ and $E^\prime$ described above, and let $\Sigma_i^{\prime}$ and $\Sigma_i^{\prime\prime}$ be the output from the lemma. Define $\Sigma^\prime = \Sigma_1^\prime\cup \Sigma_2^\prime$ and define $\Sigma^{\prime\prime}=\Sigma_1^{\prime\prime}\cup\Sigma_2^{\prime\prime}$.
\end{proof}

\section{Broad varieties}\label{broadVarietiesSection}
In this section, we will consider the region where $Z$ is robustly 1-broad, $(2,2)$-broad, and has large second fundamental form. We will show that many of the lines that have large intersection with this region must lie near a quadratic hypersurface; this will be the set of lines $\Sigma_4^\prime$ from the statement of Theorem \ref{discretizedSeveri}. This result will be proved in Proposition \ref{discretizedSegreNonDegenProp}, which is the main result of this section.  

\subsection{Neighborhoods of quadratic curves}
\begin{lem}\label{smallNeighborhoodOfQuadraticCurve}
Let $Q(x_1,x_2) = a_{11}x_1^2 + a_{22}x_2^2 + a_{12}x_1x_2$ be a monic quadratic polynomial. Let 
\begin{equation*}
\begin{split}
S_1 &= S^2 \cap Z\Big( \big(a_{12}+\mathcal{R}[(a_{12}^2 - 4a_{11}a_{22})^{1/2}]\big)x_1 + 2a_{22}x_2\Big),\\
S_2 & = S^2 \cap Z\Big( \big(a_{12}-\mathcal{R}[(a_{12}^2 - 4a_{11}a_{22})^{1/2}]\big)x_1 + 2a_{22}x_2\Big),
\end{split}
\end{equation*}
where $\mathcal{R}[z]$ is the real part of $z$. Then there is an absolute constant $C$ so that for each $t> 0$,
\begin{equation}\label{neighborhoodOfZq}
\{x\in S^2\colon |Q(x)|\leq t\} \subset N_{Ct^{1/2}}(S_1\cup S_2).
\end{equation}
\end{lem}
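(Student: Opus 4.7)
The plan is to convert the bound $|Q(x)|\le t$ into a bound on the product $|L_1(x)L_2(x)|$, where $L_i$ is the linear form defining $S_i$. The critical identity is that as polynomials in $x_1,x_2$,
\begin{equation*}
L_1 L_2 \;=\; 4a_{22}\,Q \;+\; D\, x_1^2\cdot \mathbf 1_{D<0},
\end{equation*}
i.e.\ $L_1 L_2 = 4a_{22}Q$ when the discriminant $D:=a_{12}^2-4a_{11}a_{22}$ is non-negative, and $L_1^2 = 4a_{22}Q - |D|x_1^2$ when $D<0$ (in which case the real-part convention forces $L_1=L_2=a_{12}x_1+2a_{22}x_2$). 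First I would verify this by direct expansion, using $(a_{12}+\sqrt D)(a_{12}-\sqrt D)=a_{12}^2-D=4a_{11}a_{22}$ in the $D\ge 0$ case and a straightforward comparison of coefficients in the $D<0$ case.

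Next I would extract the bound $|L_1(x)L_2(x)|\le 4|a_{22}||Q(x)|$ from the identity. For $D\ge 0$ this is immediate. For $D<0$, the identity $L_1^2=4a_{22}Q-|D|x_1^2$ combined with $L_1^2\ge 0$ shows that $a_{22}Q\ge |D|x_1^2/4\ge 0$, so $a_{22}$ and $Q$ have the same sign on the sphere, and $L_1^2\le 4a_{22}Q = 4|a_{22}||Q|$. Consequently, on the set $\{|Q|\le t\}$, at least one of $|L_1(x)|, |L_2(x)|$ is bounded by $2\sqrt{|a_{22}|t}$.

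Finally I would translate this into a distance estimate on $S^2$. The great circle $S_i=S^2\cap\{L_i=0\}$ has normal vector $v_i=\bigl(a_{12}\pm\mathcal R[\sqrt D],\,2a_{22},\,0\bigr)\in\RR^3$, so $\|v_i\|\ge 2|a_{22}|$, and for $x\in S^2$ the spherical distance satisfies $\mathrm{dist}_{S^2}(x,S_i)\lesssim |L_i(x)|/\|v_i\|$. Putting everything together gives
\begin{equation*}
\min_{i\in\{1,2\}}\mathrm{dist}_{S^2}(x,S_i)\;\lesssim\;\frac{2\sqrt{|a_{22}|t}}{2|a_{22}|}\;=\;\sqrt{t/|a_{22}|}.
\end{equation*}
Interpreting the ``monic'' hypothesis as the normalization $a_{22}=1$ (which is consistent with the asymmetric role of $a_{22}$ in the formulas for $L_1,L_2$, which come from applying the quadratic formula to $Q$ viewed as a polynomial in $x_2$) yields the desired bound $\mathrm{dist}_{S^2}(x,S_1\cup S_2)\le C t^{1/2}$ with $C$ an absolute constant.

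The main subtlety is the $D<0$ case: there $L_1=L_2$ is a single linear form whose zero set does not literally factor $Q$, and one has to use the correction term $-|D|x_1^2$ carefully to see that $|L_1|^2\le 4|a_{22}||Q|$ still holds. Geometrically, when $Q$ is definite, the set $\{|Q|\le t\}\cap S^2$ concentrates near the poles $(0,0,\pm1)$, which both lie on every great circle of the form $\{L=0\}\cap S^2$ for any $L$ involving only $x_1,x_2$; the identity verifies that these caps are in fact contained in a $\sqrt t$-neighborhood of the specific great circle singled out by the formula.
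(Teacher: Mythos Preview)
The paper states this lemma without proof, so there is nothing to compare your argument against. Your route via the factorization identity $L_1L_2=4a_{22}Q$ (respectively $L_1^2=4a_{22}Q-|D|x_1^2$ when $D<0$), combined with the lower bound $\|v_i\|\ge 2|a_{22}|$ for the normal vectors, is correct and is the natural approach; the distance comparison $\operatorname{dist}(x,S_i)\le\sqrt 2\,|L_i(x)|/\|v_i\|$ on $S^2$ is easily checked.

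Your observation that the resulting bound is $\sqrt{t/|a_{22}|}$, and hence that ``monic'' must be read as $|a_{22}|\sim 1$, is not a defect of your argument but a genuine feature of the statement. With the specific $S_1,S_2$ prescribed by the formulas, the conclusion fails if one only normalizes the largest coefficient: take $Q=x_1^2+\epsilon x_2^2$, so $D=-4\epsilon<0$ and $S_1=S_2=\{x_2=0\}\cap S^2$; the point $(0,1,0)$ has $|Q|=\epsilon$ but sits at Euclidean distance $\sqrt 2$ from $S_1\cup S_2$, forcing $C\gtrsim\epsilon^{-1/2}$. So the interpretation $a_{22}=1$ (monic as a polynomial in $x_2$, which is exactly what the quadratic-formula shape of $L_1,L_2$ suggests) is the correct one, as you infer. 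In the paper's only application of this lemma (inside the proof of Lemma~\ref{quadraticDichotomy}) one is free to swap the two variables before invoking it, so this reading causes no trouble downstream.
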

\begin{rem}
The requirement that $Q$ be monic can be replaced by the condition that the largest coefficient of $Q$ has magnitude $A>0$. Then the constant $C$ in \eqref{neighborhoodOfZq} depends on $A$.
\end{rem}

\begin{defn}\label{defnDegenerage}
Let $Q\in\RR[x_1,x_2,x_3]$ be a homogeneous polynomial of degree 2. We say that $Q$ is $w$-degenerate if there exist great circles $S_1,S_2\subset S^2$ so that $Z(Q)\cap S^2 \subset N_{w}(S_1\cup S_2)$. If $Q$ is not $w$-degenerate, then we call it $w$-non-degenerate.
\end{defn}

\begin{lem}\label{smallPieceGreatCircle}
Let $Q\in \RR[x_1,x_2,x_3]$ be a homogeneous quadratic polynomial. Suppose that $Q$ is $w$-non-degenerate. Then for every $t<w$ and for every great circle $S \subset S^2$, we have
$$
|S_2 \cap Z(Q) \cap N_{t}(S)|\lesssim (t/w)^{1/2},
$$
where $|\cdot|$ denotes one-dimensional Lebesgue measure.
\end{lem}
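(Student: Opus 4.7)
The plan is to reduce the three-dimensional estimate to a planar one by restricting $Q$ to the plane containing $S$ and then invoking Lemma \ref{smallNeighborhoodOfQuadraticCurve}. First, after a rotation, I would assume $S = S^2 \cap \{x_3 = 0\}$ is the equator and rescale $Q$ so that its largest coefficient equals $1$. Decompose $Q(x_1,x_2,x_3) = \tilde Q(x_1,x_2) + x_3 L(x_1,x_2) + a x_3^2$, where $\tilde Q$ is the restriction of $Q$ to the equatorial plane $\{x_3=0\}$, $L$ is linear, and $a$ is constant. The point of this decomposition is that $\tilde Q$ captures exactly the way $Z(Q)$ meets $S$ and $N_t(S)$ to first order.

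For any $x = (x_1,x_2,x_3) \in Z(Q) \cap S^2 \cap N_t(S)$ we have $|x_3| \leq Ct$, and substituting into $Q(x) = 0$ gives $|\tilde Q(x_1,x_2)| \leq |x_3||L(x_1,x_2)| + |a| x_3^2 \lesssim t$. Hence the orthogonal projection of each such $x$ onto $S$ lies in $\{y \in S^1 : |\tilde Q(y)| \lesssim t\}$. I would then apply Lemma \ref{smallNeighborhoodOfQuadraticCurve} to $\tilde Q$: this produces two great circles $S_1', S_2'$ (determined by $\tilde Q$, and automatically transverse to $S$) with the property that the bad set on $S^1$ is contained in the $O(\sqrt{t/A})$-neighborhood of $S_1' \cup S_2'$, where $A$ is the largest coefficient of $\tilde Q$. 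Intersecting with $S$ yields $O(1)$ arcs of total length $O(\sqrt{t/A})$, and lifting back to $S^2$ confines $Z(Q) \cap S^2 \cap N_t(S)$ to $O(1)$ caps of diameter $O(\sqrt{t/A})$. Since $Z(Q) \cap S^2$ is an algebraic curve of bounded degree, Lemma \ref{coveringNumberSemiAlg} gives arclength $\lesssim \sqrt{t/A}$ inside each such cap, and hence in total.

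It remains to control $A$ in terms of the non-degeneracy parameter $w$. The natural idea is that if $A$ is very small then $\tilde Q$ is nearly zero on the unit circle, so $Q$ nearly factors as $x_3(L + a x_3)$: concretely, from $|x_3 L(x_1,x_2)| \leq |\tilde Q(x_1,x_2)| + |a| x_3^2 \lesssim A$ on $Z(Q)\cap S^2$ we conclude that each point of the curve lies within $O(\sqrt{A})$ of the union $S \cup (Z(L)\cap S^2)$ of two great circles. This would make $Q$ an $O(\sqrt A)$-degenerate polynomial; invoking the $w$-non-degeneracy hypothesis then forces $A \gtrsim w^2$, and substituting gives an arclength bound $\sqrt{t/A} \lesssim \sqrt{t}/w$, which is then seen to yield the claimed $(t/w)^{1/2}$ estimate for $t < w$.

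The main obstacle lies in the final step: the naive pointwise comparison above is lossy and only delivers $A \gtrsim w^2$, giving a length bound that is weaker than the target by a factor of $\sqrt{w}$. Tightening it will require either a more refined construction of the optimal pair $S_1, S_2$ of covering great circles (for instance, using the principal-axis decomposition of the symmetric matrix defining $Q$ and distinguishing the cases in which the middle eigenvalue is small from the cases in which the curve is tangent to $S$), or a geometric refinement of the projection argument that separately accounts for transverse versus tangential intersections of $Z(Q) \cap S^2$ with $S$ and extracts a sharp $w^{-1/2}$ dependence from a quantitative form of transversality.
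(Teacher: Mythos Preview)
The paper states this lemma without proof, so there is no argument to compare against. Your approach---restrict to the equatorial plane, apply Lemma~\ref{smallNeighborhoodOfQuadraticCurve}, and then relate the leading coefficient $A$ of $\tilde Q$ to the non-degeneracy parameter---is the natural one, and the steps you outline are correct. In particular, your bound $A\gtrsim w^2$ is obtained exactly as you describe: when $A$ is small one has $|x_3(L+ax_3)|=|\tilde Q|\le A$ on the curve, so every point lies within $O(\sqrt A)$ of $\{x_3=0\}\cup\{L+ax_3=0\}$, forcing $\sqrt A\gtrsim w$.

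Where you go astray is in believing that this bound is suboptimal. In fact $A\gtrsim w^2$ is sharp, and the exponent in the lemma as stated appears to be a typo: the correct conclusion is
\[
|S^2\cap Z(Q)\cap N_t(S)|\ \lesssim\ (t/w^2)^{1/2}\ =\ t^{1/2}/w,
\]
which is precisely what your argument gives. To see that $(t/w)^{1/2}$ is too strong, take $Q=x_1x_3-\eps x_2^2$ with $\eps$ small. Near the intersection point $(0,1,0)$ of the two circles $\{x_1=0\}$ and $\{x_3=0\}$ the curve $Z(Q)\cap S^2$ is a hyperbola $x_1x_3\approx\eps$, which stays at distance $\sim\sqrt\eps$ from the pair of circles; one checks that no other pair of great circles does better, so $w\sim\sqrt\eps$. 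Now take $S=\{x_3=0\}$: near $(1,0,0)$ the curve is $x_3\approx\eps x_2^2$, so $|x_3|\le t$ forces $|x_2|\lesssim\sqrt{t/\eps}$, and the arclength there is $\sim\sqrt{t/\eps}=t^{1/2}/w$, which exceeds $(t/w)^{1/2}$ by a factor $w^{-1/2}$.

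This does not affect the paper: the only place the lemma is invoked is in Lemma~\ref{hairbrushHasLargeVolumeLem}, where the conclusion is used inside an expression of the form $(w\lambda t/K)^{O(1)}$ and only polynomial dependence on $w$ matters. So your argument, with the corrected exponent, is a complete proof of the (corrected) lemma, and the downstream applications go through unchanged.
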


\begin{lem}\label{quadraticDichotomy}
Let $Q(x_1,x_2,x_3)$ be a monic homogeneous polynomial of degree 2 and let $w>0.$ Then there is an absolute constant $c>0$ so that at least one of the following two things must hold

\begin{enumerate}
\item\label{situation2} There exist two great circles $S_1,S_2\subset S^2$ so that
$$
\{x\in S^2\colon |Q(x)|\leq cw^2\}\subset  N_{w}(S_1\cup S_2).
$$
\item\label{situation1} For each $t>0$, we have
$$
\{x\in S^2\colon |Q(x)|\leq ct\}\subset  N_{t/w^2}(Z(Q)).
$$
\end{enumerate}
\end{lem}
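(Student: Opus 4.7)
My plan is to reduce to a diagonal form and then perform a case analysis based on the smallest eigenvalue. Since $Q$ is a real homogeneous quadratic, there is an orthogonal change of coordinates (which preserves $S^2$, Euclidean distances, great circles, and zero sets, and so preserves the statement of the lemma) after which $Q(x) = \lambda_1 x_1^2 + \lambda_2 x_2^2 + \lambda_3 x_3^2$ with $|\lambda_1| \ge |\lambda_2| \ge |\lambda_3|$; monicity forces $|\lambda_1| = 1$. A direct computation shows that on $S^2$,
\[
|\nabla Q(x)|^2 = 4(\lambda_1^2 x_1^2 + \lambda_2^2 x_2^2 + \lambda_3^2 x_3^2) \ge 4 \lambda_3^2,
\]
so a lower bound on $|\lambda_3|$ yields a uniform lower bound on $|\nabla Q|$ over the sphere. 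The natural threshold is $|\lambda_3| \asymp w^2$, and the two alternatives of the lemma correspond to the two sides of this threshold.

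Case A: $|\lambda_3| \ge C^{-1} w^2$ for a large absolute constant $C$. I claim alternative \ref{situation1} holds. Given $x \in S^2$ with $|Q(x)| \le ct$, I restrict $Q$ to the line through $x$ in the unit direction $v = \nabla Q(x)/|\nabla Q(x)|$. This restriction is a quadratic polynomial in the arc-length parameter $s$ with value $Q(x)$ at $s=0$, linear coefficient $|\nabla Q(x)| \ge 2w^2/C$, and leading coefficient $v^T A v$ of absolute value at most $\|A\| = 1$. Its discriminant is $|\nabla Q(x)|^2 - 4(v^T A v) Q(x)$, which is nonnegative as soon as $4|Q(x)| \le |\nabla Q(x)|^2$; for $c$ small and $t$ at most an absolute constant this is automatic. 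The smaller root then lies at $|s| \le 2|Q(x)|/|\nabla Q(x)| \le cC t/w^2$, so for $c$ small relative to $C$ we find a point of $Z(Q)$ within distance $t/w^2$ of $x$. The only edge case is $t/w^2$ larger than a universal constant; then $S^2 \subset N_{t/w^2}(\{0\}) \subset N_{t/w^2}(Z(Q))$ because $0 \in Z(Q)$, and the inclusion is trivial.

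Case B: $|\lambda_3| < C^{-1} w^2$. I claim alternative \ref{situation2} holds. On $S^2$ we have the pointwise bound $|Q(x) - \tilde Q(x_1, x_2)| = |\lambda_3| x_3^2 \le C^{-1} w^2$, where $\tilde Q(x_1, x_2) = \lambda_1 x_1^2 + \lambda_2 x_2^2$ is monic. Hence
\[
\{x \in S^2 : |Q(x)| \le cw^2\} \subset \{x \in S^2 : |\tilde Q(x_1, x_2)| \le (c + C^{-1}) w^2\}.
\]
Viewing $\tilde Q$ as a polynomial in $\RR[x_1, x_2]$ not depending on $x_3$, I apply Lemma \ref{smallNeighborhoodOfQuadraticCurve} at level $t' = (c + C^{-1}) w^2$, which produces great circles $S_1, S_2 \subset S^2$ (coming from the real or real-part linear factors of $\tilde Q$) such that the right-hand side is contained in $N_{C' \sqrt{c + C^{-1}}\, w}(S_1 \cup S_2)$. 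Choosing $c$ small relative to $C$ and $C'$ makes this radius at most $w$. If in addition $|\lambda_2| < C^{-1} w^2$ then $\tilde Q$ itself is essentially $\lambda_1 x_1^2$ and one sees directly that the set lies in the $w$-neighborhood of the single great circle $\{x_1 = 0\} \cap S^2$, which is a degenerate form of alternative \ref{situation2}.

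The main obstacle is purely bookkeeping: the constants $C$ and $c$ must be chosen consistently so that the threshold $|\lambda_3| \asymp w^2$ splits the two cases cleanly, the discriminant estimate in Case A yields a root inside the prescribed $t/w^2$-ball, and the two-dimensional neighborhood radius in Case B stays below $w$. No conceptual difficulty arises beyond this, since both the gradient-flow step in Case A and the reduction to Lemma \ref{smallNeighborhoodOfQuadraticCurve} in Case B are quantitatively explicit.
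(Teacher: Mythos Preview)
Your overall strategy—diagonalize $Q$ and split on whether the smallest eigenvalue $|\lambda_3|$ exceeds a threshold $\sim w^2$—is a clean variant of the paper's argument (which instead splits on $\min_{p \in Z(Q)\cap S^2}|\nabla Q(p)|$ and rotates to place a small-gradient zero at a coordinate point). Case B is correct and parallels the paper's reduction to Lemma~\ref{smallNeighborhoodOfQuadraticCurve}.

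There is, however, a real gap in Case A. You claim the discriminant $|\nabla Q(x)|^2 - 4(v^TAv)Q(x)$ is nonnegative ``as soon as $4|Q(x)|\le |\nabla Q(x)|^2$; for $c$ small and $t$ at most an absolute constant this is automatic.'' But your only lower bound is $|\nabla Q(x)|^2\ge 4\lambda_3^2\ge 4C^{-2}w^4$, so the condition becomes $ct\le C^{-2}w^4$. This is \emph{not} covered by your edge case $t/w^2\gtrsim 1$: the range $C^{-2}w^4/c\lesssim t\lesssim w^2$ is left unhandled, and for such $t$ moving along the full gradient line in $\RR^3$ can genuinely miss $Z(Q)$ when $Q(x)$ and $v^TAv$ share a sign.

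The fix is minor: flow along the tangential gradient on $S^2$ rather than along a straight line in $\RR^3$. One computes
\[
|\nabla_{S^2}Q(x)|^2 = |\nabla Q(x) - (x\cdot\nabla Q(x))\,x|^2 = 4\bigl(|Ax|^2 - Q(x)^2\bigr)\ge 4\bigl(\lambda_3^2 - (ct)^2\bigr),
\]
so as long as $ct\le|\lambda_3|/2$ (which, for $c$ small depending only on $C$, is implied by the non-edge case $t\lesssim w^2$) the tangential gradient is $\gtrsim C^{-1}w^2$ throughout the sublevel set. The critical values of $Q|_{S^2}$ are exactly the $\lambda_i$, all of magnitude $\ge C^{-1}w^2$, so the flow from $x$ toward decreasing $|Q|$ cannot stall and must reach $Q=0$ after arc length at most $|Q(x)|/\min|\nabla_{S^2}Q|\le Cct/w^2\le t/w^2$. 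This closes the gap with the same edge-case threshold you already use.
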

\begin{proof}

Let $c_1>0$ be a constant to be determined later. We will consider two cases. Case (A): there exists a point $p\in Z(Q)\cap S^2\subset \RR^3$ where the map $x\mapsto Q(x)$ has small derivative, i.e.~$|DQ(p)|\leq c_1w^2$. We will show that Item \ref{situation2} must hold. After a rotation, we can assume that $p = (1,0,0)$. After applying this rotation, we have 
$$
Q(x_1,x_2,x_3) = a_{22}x_2^2 + a_{33}x_3^2 + a_{12}x_1x_2 + a_{13}x_1x_3 + a_{23}x_2x_3,
$$ 
and 
$$
DQ(p)=(\partial_{x_2}Q(1,0,0),\ \partial_{x_3}Q(1,0,0)) = (a_{12},a_{13}).
$$
Thus
$$
Q(x_1,x_2,x_3) =   a_{22}x_2^2 + a_{33}x_3^2 + a_{23}x_2x_3 + O(c_1w^2)(x_1x_2 + x_1x_3),
$$
where at least one of $a_{22},a_{33},a_{23}$ has magnitude $\sim 1$.

By Lemma \ref{smallNeighborhoodOfQuadraticCurve}, we have that if $c_2>0$ is chosen sufficiently small (independent of $c_1$), then
$$
\{x\in S^2\colon |a_{22}x_2^2 + a_{33}x_3^2 + a_{23}x_2x_3|\leq 10 c_2 w^2 \} \subset N_{w}(S_1\cup S_2),
$$
where 
\begin{equation*}
\begin{split}
S_1 &= S^2 \cap Z\Big( \big(a_{23}+\mathcal{R}[(a_{23}^2 - 4a_{22}a_{33})^{1/2}]\big)x_2 + 2a_{33}x_3\Big),\\
S_2 & = S^2 \cap Z\Big( \big(a_{23}-\mathcal{R}[(a_{23}^2 - 4a_{22}a_{33})^{1/2}]\big)x_2 + 2a_{33}x_3\Big).
\end{split}
\end{equation*}

Next, if $c_1>0$ is chosen sufficiently small (depending on $c_2$), then
$$
\{x\in S^2\colon |Q(x)|\leq c_2w^2\}\subset \{x\in S^2\colon|a_{22}x_2^2 + a_{33}x_3^2 + a_{23}x_2x_3|\leq 10 c_2w^2\},
$$
which completes the analysis of Case (A). 

Now suppose we are in Case (B): $|DQ(p)|\geq c_1w^2$ for all $p\in Z(Q)\cap S^2$. Since $Q$ is quadratic, $DQ\colon\RR^3\to\RR^3$ is a linear map. If $c_3>0$ is chosen sufficiently small (depending on $c_1$), then $|Q(x)|\leq c_3t$ implies $\dist(t, Z(Q))\leq t/w^2$. Thus Item \ref{situation1} must hold. 

To complete the proof, choose $c = \min(c_2,c_3)$.
\end{proof}
\begin{lem}\label{quadraticConeOfP}
Let $P\in\RR[x_1,x_2,x_3,x_4]$ be a polynomial of degree at most $D$. Let 
$$
Z \subset \{x\in Z(P)\cap B(0,1),\ 1\leq|\nabla P(x)| \leq 2,\ \Vert II(x)\Vert_{\infty}\geq \kappa\}.
$$
Let $\Gamma\subset \Gamma(N_{\delta}(Z),\mathcal{L})$ and let $\Phi\subset Z\times S^3$ be associated to $\Gamma$, in the sense of Definition \ref{defnAssociated}. Suppose that
\begin{itemize}
\item $Z\subset (2,2)\operatorname{-Broad}_w(\Phi)$.
\item For each $z\in Z$ and each $v\in \pi_S(\Phi(z))$, we have 
\begin{equation}\label{smallDotProduct}
|(v\cdot\nabla)^jP(z)|\leq K\delta,\quad j=1,2.
\end{equation}
\end{itemize}
Then for each $z\in Z$, the vectors in $\pi_S(\Phi(z))$ are contained in the $\delta \big(K/(w\kappa)\big)^{O(1)}$-neighborhood of the set
\begin{equation}\label{coneOfX}
\mathcal{C}_z = \{v \in S^3\colon (v\cdot\nabla) P(z) = 0,\ (v\cdot\nabla)^2 P(z) = 0\}.
\end{equation}
\end{lem}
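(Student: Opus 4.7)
The plan is to reduce the statement to the behaviour of a single homogeneous quadratic polynomial on $S^2$, then invoke Lemma~\ref{quadraticDichotomy}, using the $(2,2)$--broadness hypothesis to rule out the ``degenerate'' alternative of the dichotomy.

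First I would fix $z\in Z$ and apply a rotation so that $\nabla P(z)$ points in the $e_4$ direction; this preserves every hypothesis and turns the first assumption $|v\cdot\nabla P(z)|\leq K\delta$ into $|v_4|\lesssim K\delta$. Writing $v=(v_1,v_2,v_3,v_4)$, $v'=(v_1,v_2,v_3)$, and $\tilde v = v'/|v'|\in S^2$, and expanding
\[
(v\cdot\nabla)^2 P(z)=Q_0(v')+2v_4\sum_{i=1}^3 a_{i4}v_i+a_{44}v_4^2,\qquad Q_0(x)=\sum_{i,j=1}^3 a_{ij}x_ix_j,
\]
where $a_{ij}=\partial_{ij}P(z)$ satisfies $|a_{ij}|\lesssim_D 1$, the second hypothesis $|(v\cdot\nabla)^2P(z)|\leq K\delta$ sharpens to $|Q_0(\tilde v)|\lesssim_D K\delta$. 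The bound $\Vert\II(z)\Vert_\infty\geq\kappa$ together with $|\nabla P(z)|\in[1,2]$ forces the largest coefficient $M$ of $Q_0$ to satisfy $M\gtrsim\kappa$, so the normalized $\tilde Q_0:=Q_0/M$ is a monic quadratic in three variables with $|\tilde Q_0(\tilde v)|\lesssim_D K\delta/M$.

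The core of the argument is to apply Lemma~\ref{quadraticDichotomy} to $\tilde Q_0$ with parameter $\omega:=c_0 w$ for a small constant $c_0=c_0(D)$. Alternative~(1) would yield two great circles $S_1,S_2\subset S^2$ with $\{|\tilde Q_0|\leq c\omega^2\}\subset N_\omega(S_1\cup S_2)$; in the regime $M\gtrsim K\delta/w^2$ our $\tilde v$'s lie in this sublevel set, so lifting $S_1,S_2$ to great circles of $S^3$ (by adjoining the coordinate $v_4=0$) and absorbing $|v_4|\lesssim K\delta$ puts all of $\pi_S(\Phi(z))$ in the $(\omega+O(K\delta))$--neighbourhood of these two great circles. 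For $c_0$ small enough this neighbourhood is smaller than $w$, contradicting $z\in (2,2)\operatorname{-Broad}_w(\Phi)$. Hence Alternative~(2) must hold, and applying it with $t\sim K\delta/M$ gives $\tilde v\in N_r(Z(Q_0))$ with $r\lesssim_D K\delta/(M\omega^2)\lesssim_D K\delta/(\kappa w^2)$. Lifting each $y\in Z(Q_0)\cap S^2$ to $(y,0)\in\mathcal{C}_z$ and recalling $|v_4|\lesssim K\delta$ yields $\dist(v,\mathcal{C}_z)\lesssim_D K\delta/(\kappa w^2)\leq \delta(K/(w\kappa))^2$, using $K\geq 1\geq \kappa$.

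The main obstacle is the complementary regime $M<CK\delta/w^2$, in which the sublevel set appearing in Alternative~(1) need not contain the relevant $\tilde v$'s, so the broadness argument does not directly produce a contradiction. In this regime, however, the inequality $\kappa\leq M$ forces $w^2\kappa\lesssim K\delta$, and hence $K/(w\kappa)\gtrsim w/\delta$; choosing the $O(1)$ exponent in the target bound sufficiently large then makes $\delta(K/(w\kappa))^{O(1)}$ exceed the diameter of $S^3$, so the conclusion is vacuous. Combining the two regimes gives the claimed estimate.
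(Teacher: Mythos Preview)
Your proposal is correct and follows essentially the same route as the paper: rotate so that $\nabla P(z)$ lies along a coordinate axis, pass to the induced homogeneous quadratic on $S^2$, normalize it using $\Vert II(z)\Vert_\infty\geq\kappa$, apply Lemma~\ref{quadraticDichotomy}, and use the $(2,2)$-broadness hypothesis to exclude the degenerate alternative. You are in fact slightly more explicit than the paper in isolating and disposing of the borderline regime $M\lesssim K\delta/w^2$ (where the sublevel set in Alternative~(1) need not capture the relevant vectors); the paper's proof suppresses this case, and your observation that the conclusion becomes vacuous there is the right way to close it.
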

\begin{proof} 
Let $x\in Z$. After a translation and rotation, we can assume that $x=0$ and $N(0) = (1,0,0,0)$. Then we can expand
$$
P(x_1,x_2,x_3,x_4)=x_1+ \sum_{\substack{|I|=2\\\textrm{yes}\ x_1}}a_Ix^I + \sum_{\substack{|I|=2\\\textrm{no}\ x_1}}a_Ix^I + \sum_{|I|>2}a_Ix^I,
$$
where the first sum is taken over all multi-indices $I$ of length two that include at least one $x_1$ term, and the second sum includes all the other multi-indices of length two. 

Let $v\in \pi_S(\Phi(x))$; we can write $v=(v_1,v_2,v_3,v_4)$. Since $v$ satisfies \eqref{smallDotProduct}, we have  $|v_1|\leq K\delta$. Define
$$
A=\Vert II(0)\Vert_{\infty}^{-1} \left[\begin{array}{ccc} a_{22} & a_{23} & a_{24} \\ a_{23} & a_{33} & a_{34} \\ a_{24} & a_{34} & a_{44} \end{array}\right].
$$
Since $v$ satisfies \eqref{smallDotProduct} and $\Vert II(0)\Vert_{\infty}\geq\kappa$, we have 

\begin{equation}\label{smallAProduct}
\Big| [v_2,v_3,v_4]^T\ A\ [v_2,v_3,v_4]\Big| \lesssim \kappa^{-1}K\delta.
\end{equation}
Now consider the function 
$$
Q(v_1,v_2,v_3) = [v_2,v_3,v_4]^T\ A\ [v_2,v_3,v_4].
$$

Since $0\in(2,2)\operatorname{-Broad}_w(\Phi)$ (remember, originally we had $z \in(2,2)\operatorname{-Broad}_w(\Phi)$, but we applied a translation sending $z$ to $0$), the set of unit vectors $(v_2,v_3,v_4)$ satisfying \eqref{smallAProduct} cannot be contained in the $w$--neighborhood of the union of two great circles in $S^2$. Thus by Lemma \ref{quadraticDichotomy}, we have that 
$$
(v_2,v_3,v_4)\in S^2\cap N_{c\kappa^{-1}K\delta/w^2}(Z(Q)),
$$
where $c>0$ is an absolute constant. Thus  
$$
v\subset N_{t}(\mathcal{C}_z),\quad\ \textrm{where}\ t \lesssim \delta K/(\kappa w^2).\qedhere
$$
\end{proof}
\begin{defn}\label{quadConeDefn}
In \eqref{coneOfX} above, we defined the set 
\begin{equation}\label{defnOfCx}
\mathcal{C}_z = \{v \in S^3\colon (v\cdot\nabla) P(z) = 0,\ (v\cdot\nabla)^2 P(z) = 0\}.
\end{equation}
We will call this the quadratic cone of $Z(P)$ with vertex $z$. More generally, any set of the form \eqref{defnOfCx} will be called a quadratic cone. Following Definition \ref{defnDegenerage}, we say that the quadratic cone $\mathcal{C}_z$ is $w$-degenerate if there exist great circles 
$$
S_1,S_2\subset \{v\in S^3\colon (v\cdot\nabla)P(z)=0\}
$$ 
so that $\mathcal{C}_z \subset N_{w}(S_1\cup S_2)$. Otherwise we say $\mathcal{C}_z$ is $w$-non-degenerate.

Define
$$
\tilde{\mathcal{C}}_z = z + \operatorname{span}(\mathcal{C}_z);
$$
this is a two-dimensional algebraic variety in $\RR^4$; it is the union of all lines that intersect $z$ and also intersect the curve $z+\mathcal{C}_z$. We say that $\tilde{\mathcal{C}}_z$ is $w$-non-degenerate if $\mathcal{C}_z$ is $w$-non-degenerate. Observe that $\tilde{\mathcal{C}_z}$ is a degree-two algebraic surface; it can be defined as the common zero locus of a degree one and a degree two polynomial in $\RR[x_1,x_2,x_3,x_4]$. If $\mathcal{C}_z$ is a quadratic cone, $\ell\in\lines,$ $z\in\ell,$ and $\operatorname{dist}\big(v(\ell), \mathcal{C}_z\big)=t$, then $\ell\cap B(0,1)\subset N_{t}(\tilde{\mathcal{C}}_z)$.
\end{defn}
\subsection{Unions of tubes}\label{unionsOfTubesSection}
For the next lemma, we will introduce some standard notation from the Kakeya problem. This notation will be used throughout the remainder of this section. Let $\tubes$ be a set of essentially distinct $\delta$-tubes, i.e.~a set of $\delta$-neighborhoods of unit line segments so that no tube is contained in the two-fold dilate of any other. For each tube $T\in\tubes$, let $Y(T)\subset T$. For each $x\in\RR^4$, define 
$$
\tubes(x) = \{T\in\tubes\colon x\in Y(T)\}.
$$ 
If the set $Y$ is ambiguous, we will sometimes use the notation $\tubes_Y(x)$ in place of $\tubes(x)$. For each $T\in\tubes,$ define $v(T)$ to be the direction of the line coaxial with $T$. Thus for example  
$$
v(\tubes(x))=\{v(T)\colon T\in\tubes(x)\}.
$$ 
For each $T_0\in\tubes,$ define 
$$
H(T_0)=\{T\in\tubes\colon Y(T_0)\cap Y(T)\neq\emptyset\}.
$$
If the set $Y$ is ambiguous, we will sometimes use the notation $H_Y(T_0)$ in place of $H(T_0)$.

The next lemma says that if $\tubes$ is a set of tubes, and if the tubes passing through a typical point lie near a non-degenerate cone, then the tubes in a typical hairbrush are mostly disjoint and thus their union has large volume. This is a variant of Wolff's ``hairbrush argument'' from \cite{W}. However, unlike in \cite{W} we do not assume that the tubes point in different directions.

\begin{lem}\label{hairbrushHasLargeVolumeLem}
Let $\delta,\lambda,t>0$. Let $\tubes$ be a set of essentially distinct $\delta$-tubes. For each $T\in\tubes$, let $Y(T)\subset T$ with $Y(T)\geq\lambda|T|$. Let $T_0\in\tubes$. Suppose that $|H(T_0)|\geq t\delta^{-2}$ and that for every $x\in Y(T_0)$, the vectors $v(\tubes(x))$ are contained in the $K\delta$--neighborhood of a $w$-non-degenerate cone $\mathcal{C}_x$. Then 
$$
\Big|\bigcup_{T\in H(T_0)}Y(T)\Big|\geq (w \lambda t/K)^{O(1)}\delta.
$$
\end{lem}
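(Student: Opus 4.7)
The plan is a Wolff-style hairbrush argument, in which the non-degeneracy of the cones $\mathcal{C}_x$ together with Lemma~\ref{smallPieceGreatCircle} plays the role that direction-separation plays in the classical setting. After a standard two-ends refinement of the sets $Y(T)$ (losing only an $|\log\delta|^{O(1)}$ factor), I set $f=\sum_{T\in H(T_0)}\chi_{Y(T)}$ and apply Cauchy--Schwarz:
$$\Bigl|\bigcup_{T\in H(T_0)}Y(T)\Bigr|\geq \frac{(\int f)^2}{\int f^2}.$$
The numerator is immediate: $\int f = \sum_{T\in H(T_0)}|Y(T)| \geq |H(T_0)|\cdot \lambda\delta^3 \geq \lambda t\delta$.

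The main work goes into bounding the denominator $\int f^2 = \sum_{T_1,T_2\in H(T_0)}|Y(T_1)\cap Y(T_2)|$. Fix $T_1$ with $x_{T_1}\in Y(T_0)\cap Y(T_1)$, and partition the remaining tubes by the dyadic scale $\Delta = |x_{T_1}-x_{T_2}|\in[\delta,1]$. The geometric heart of the argument is as follows. For $T_1\cap T_2 \neq\emptyset$, the axes of $T_1$ and $T_2$ must come within $O(\delta)$ of one another, and combined with $x_{T_1}-x_{T_2}\approx \Delta\, v(T_0)$, this forces $v(T_2)$ into the $O(\delta/\Delta)$-neighborhood of the great circle $S = \operatorname{span}(v(T_0),v(T_1))\cap S^3$. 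On the other hand $v(T_2)\in N_{K\delta}(\mathcal{C}_{x_{T_2}})$, and since $v(T_0)\in v(\tubes(x_{T_2}))\subset N_{K\delta}(\mathcal{C}_{x_{T_2}})$ and a Taylor expansion shows $v(T_1)$ is approximately perpendicular to $\nabla P(x_{T_2})$ up to error $O(K\delta + \Delta)$, the great circle $S$ lies essentially inside the 2-sphere containing $\mathcal{C}_{x_{T_2}}$. Lemma~\ref{smallPieceGreatCircle} then yields a 1-dimensional length bound of $O((\delta/(\Delta w))^{1/2})$ on the admissible direction set, giving at most $K^{O(1)}(\Delta\delta w)^{-1/2}$ essentially distinct choices for $v(T_2)$.

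Combining with the $O(\Delta/\delta)$ choices of $\delta$-segment for $x_{T_2}$ at scale $\Delta$ and the tube-intersection bound $|Y(T_1)\cap Y(T_2)|\leq |T_1\cap T_2|\lesssim \delta^4/\Delta$ (further refined by the two-ends density), then summing dyadically over $\Delta\in[\delta,1]$, one obtains an upper bound on $\sum_{T_2\neq T_1}|Y(T_1)\cap Y(T_2)|$ that, after summation over $T_1$, gives a bound on $\int f^2$. Inserting this into Cauchy--Schwarz yields the desired lower bound, with any polynomial loss in $w,\lambda,t,K$ absorbed into the $(w\lambda t/K)^{O(1)}$ factor.

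The principal obstacle I anticipate is verifying the alignment of the great circle $S$ with the 2-sphere containing $\mathcal{C}_{x_{T_2}}$: while $v(T_0)$ is close to this 2-sphere by hypothesis, $v(T_1)$ is only controlled at $x_{T_1}$, and the $O(\Delta)$ discrepancy coming from the variation of $\nabla P$ between $x_{T_1}$ and $x_{T_2}$ must be shown acceptable in Lemma~\ref{smallPieceGreatCircle}. A secondary technical point is the smallest scale $\Delta\sim\delta$, where the non-degeneracy bound does not beat the trivial estimate $|\tubes(x)|\lesssim K^2/\delta$; this regime will need to be handled via the two-ends reduction, restricting to the "far end" of each $Y(T)$ at distance $\gtrsim 1/2$ from $Y(T_0)$, where tubes leaving a common $\delta$-segment of $T_0$ have separated enough that their $Y$-sets no longer overlap appreciably.
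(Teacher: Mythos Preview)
Your approach diverges from the paper's, and the ``principal obstacle'' you flag is a genuine gap rather than a technicality.

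The lemma's hypotheses give you, for each $x\in Y(T_0)$, a $w$-non-degenerate quadratic cone $\mathcal{C}_x$, but these cones are \emph{a priori unrelated} from point to point: there is no polynomial $P$ in the statement, so your appeal to ``a Taylor expansion shows $v(T_1)$ is approximately perpendicular to $\nabla P(x_{T_2})$'' has no meaning here. Even in the application where such a $P$ exists, the error you obtain is $O(\Delta)$, and $\Delta$ ranges up to $1$. This means the great circle $S=\operatorname{span}(v(T_0),v(T_1))\cap S^3$ can sit at angle $\sim\Delta$ to the $2$-sphere containing $\mathcal{C}_{x_{T_2}}$; feeding that into Lemma~\ref{smallPieceGreatCircle} gives an arc-length bound of order $(\Delta/w)^{1/2}$ rather than your claimed $(\delta/(\Delta w))^{1/2}$, and the dyadic sum then fails to close by a full power of $\delta$. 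Your $L^2$ scheme really needs information linking $\mathcal{C}_{x_{T_1}}$ to $\mathcal{C}_{x_{T_2}}$, and none is available.

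The paper avoids this entirely by arguing \emph{pointwise} along $T_0$ and then quoting Wolff's hairbrush as a black box. At each $x\in Y(T_0)$ one takes the plane $\Pi_x$ tangent to the cone $\tilde{\mathcal{C}}_x$ along the line near $v(T_0)$, and uses Lemma~\ref{smallPieceGreatCircle} (applied to the \emph{single} cone $\mathcal{C}_x$) to discard the few tubes through $x$ making small angle with $\Pi_x$. The surviving tubes then satisfy the Wolff condition: for any plane $\Pi$ containing the axis of $T_0$ and any $z\in T_0$, at most $(K/(w\lambda t))^{O(1)}(\rho/\delta)$ tubes of the refined hairbrush lie in $N_{10\delta}(\Pi)$ and meet $T_0$ within $\rho$ of $z$. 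Wolff's argument from \cite{W} then gives the volume bound directly. The point is that the cone hypothesis is used only to establish transversality of the bristles to planes through the stem, which is a local statement at each $x$; no comparison between cones at distinct points of $T_0$ is ever needed.
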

\begin{proof}
By pigeonholing, we can select a set of $\geq t\lambda/2\delta$ points $x\in Y(T_0)$ that are $\delta$ separated and that satisfy $|\tubes(x)| \geq \frac{1}{2}t\lambda\delta^{-1}$. The line coaxial with $T_0$ passes through $x$ and makes an angle $\leq K\delta$ with a line $\tilde\ell$ in the cone $\tilde{\mathcal{C}}_x$ (see Figure \ref{ConeFig}). Let $\Pi_x$ be the plane that is tangent to $\tilde{\mathcal{C}}_x$ along $\tilde\ell$. 

\begin{figure}[h!]
 \centering
\begin{overpic}[width=0.5\textwidth]{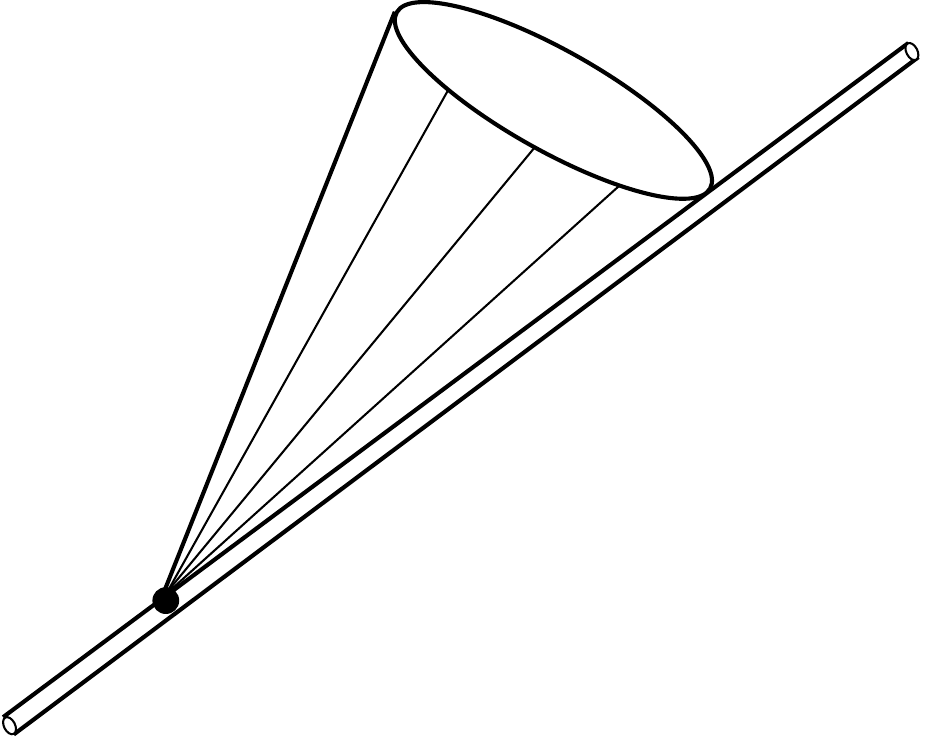}
\put (18,9) {$x$}
\put (90,61) {$T_0$}
\put (54,69) {$\tilde{\mathcal{C}}_x$}
\end{overpic}
\caption{The tube $T_0$, the point $x$, and the cone $\tilde{\mathcal{C}}_x$. The plane $\Pi_x$ (not pictured) contains the line coaxial with $T_0$ and is tangent to $\tilde{\mathcal{C}}_x$.}\label{ConeFig}
\end{figure}

Let $p  =  \frac{1}{16}wt^2\lambda^2 K^{-2}$, and let
$$
\tubes(x)^\prime =\{T\in \tubes(x)\colon \angle(v(T),\Pi(x))\geq p  \}.
$$
Since $\mathcal{C}_x$ is $w$--non-degenerate, by Lemma \ref{smallPieceGreatCircle}, we have that 
$$
|\{v\in S^2\colon \angle(v, \Pi_x)\leq p,\ v\in N_{K\delta}(\mathcal{C}_x)\}|\lesssim (K\delta)(p/w)^{1/2},
$$
where $|\cdot|$ denotes two-dimensional Haar measure on the sphere $S^2$; this set can contain at most $K(p/w)^{1/2}\delta^{-1}$ $\delta$-separated points on $S^2$, which implies that 
\begin{equation*}
\begin{split}
|\tubes(x)\backslash \tubes^\prime(x)|&\leq K(p/w)^{1/2}\delta^{-1}\\
& \leq |\tubes(x)|/2,
\end{split}
\end{equation*}
and thus $|\tubes^\prime(x)|\geq \frac{1}{4} t\lambda\delta^{-1}$ for each of the values of $x$ chosen above. Furthermore, for every plane $\Pi$ containing the line coaxial with $T$, we have that 
$$
|\{T\in\tubes^\prime(x)\colon T\subset N_{10\delta}(\Pi)\}|\lesssim \big( K/(w\lambda t)\big)^{O(1)}.
$$

Define $H^\prime(T)=\bigcup_x \tubes^\prime(x);$ all of these tubes intersect $T_0$. We have that
\begin{equation}\label{manyTubesInHPrimeT}
|H^\prime(T)|\geq \big(w\lambda t)/K\big)^{O(1)} \delta^{-2}.
\end{equation}
Furthermore, for each plane $\Pi$ containing the line coaxial with $T_0$; for each point $z\in T_0$; and for each $\delta\leq \rho\leq 1$, we have
\begin{equation}\label{fewTubesInPlane}
|\{T\in H^\prime(T_0)\colon T\subset N_{10\delta}(\Pi),\ \operatorname{dist}(z, T\cap T_0)\leq \rho\}| \lesssim \big( K/(w\lambda t)\big)^{O(1)}(\rho/\delta).
\end{equation}
Wolff's hairbrush argument from \cite{W} says that the union of any set of tubes intersecting $T_0$ that satisfy \eqref{manyTubesInHPrimeT} and \eqref{fewTubesInPlane} must have volume $\gtrsim\big( w\lambda t/K\big)^{O(1)}\delta$. Thus
$$
\Big|\bigcup_{T\in H(T_0)}Y(T)\Big| \geq \Big|\bigcup_{T\in H^\prime(T_0)}Y(T)\Big|\gtrsim \big( w\lambda t/K\big)^{O(1)}\delta.
\qedhere
$$
\end{proof}

The following lemma gives sufficient conditions for a semi-algebraic subset of a hypersurface in $\RR^4$ to be large (specifically, for it to have $\delta$-covering number roughly $\delta^{-3})$. In short, if a semi-algebraic subset of a hypersurface contains at least one line whose hairbrush contains many cones, then the set must be large.

\begin{lem}\label{hairbrushMakesBigEntropy}
Let $\delta,c,s,w,\kappa$ be positive real numbers. Let $P$ be a polynomial of degree at most $D$. Let 
\begin{equation}\label{ZIsCurved}
Z_2\subset Z_1 \subset \{x\in Z(P)\cap B(0,1),\ 1\leq|\nabla P(x)| \leq 2,\ \Vert II(x)\Vert_{\infty}\geq \kappa\}.
\end{equation}
Let $\emptyset\neq \Sigma_2\subset\Sigma_1$ with $\Sigma_i\subset \Sigma_{\delta,c}(Z_i)$ for $i=1,2$. Let $\Gamma_2\subset\Gamma_1$ with $\Gamma_i\subset \Gamma(N_{\delta}(Z_i), \Sigma_i)$ for $i=1,2.$ Suppose that the sets $Z_i,\Sigma_i,$ and $\Gamma_i$, $i=1,2$ are semi-algebraic of complexity at most $E$. For each $i=1,2$, let $\Phi_i\subset Z_i\times S^3$ be associated to $\Gamma_i$, in the sense of Definition \ref{defnAssociated}. Suppose that
\begin{align}
&Z_2 \subset 1\operatorname{-SBroad}_{s}(\Phi_1)\cap(2,2)\operatorname{-Broad}_{w}(\Phi_1) , \label{Z1BroadChain}\\
&|\Gamma_i(\ell)|\geq c\ \textrm{for each}\ \ell\in\Sigma_i,\ i=1,2, \label{allEllHaveBigIntersectionInSigma}\\
&|(v\cdot\nabla)^jP(z)|\leq K\delta,\ j=1,2,\ \textrm{for each}\ z\in Z_2\ \textrm{and each}\ v\in \pi_S(\Phi_2(z)). \label{smallDotProdEachI}
\end{align}
Then 
$$
\mathcal{E}_\delta(Z_1)\gtrsim_{D,E}(scw\kappa)^{O(1)}\delta^{-3}.
$$
\end{lem}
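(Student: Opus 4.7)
The plan is to prove the lower bound by a Wolff-style hairbrush argument built on a single line of $\Sigma_2$, via Lemma~\ref{hairbrushHasLargeVolumeLem}. Since $\Sigma_2 \neq \emptyset$, fix $\ell_0 \in \Sigma_2$ and let $T_0$ be its $\delta$-tube with $Y(T_0) = N_\delta(\Gamma_2(\ell_0))$, so that $|Y(T_0)| \gtrsim c|T_0|$ by~\eqref{allEllHaveBigIntersectionInSigma}. For each $\ell \in \Sigma_1$, let $T_\ell$ be the $\delta$-tube around $\ell$ and set $Y(T_\ell) = N_\delta(\Gamma_1(\ell))$, so again $|Y(T_\ell)| \gtrsim c|T_\ell|$; let $\tubes$ be a maximal essentially distinct subfamily of $\{T_\ell : \ell \in \Sigma_1\}$. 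Each $Y(T)$ lies in $N_\delta(Z_1)$, so once Lemma~\ref{hairbrushHasLargeVolumeLem} produces $|\bigcup_{T \in H(T_0)} Y(T)| \gtrsim_{D,E} (scw\kappa)^{O(1)} \delta$, dividing by the volume $\sim \delta^4$ of a $\delta$-ball yields $\mathcal{E}_\delta(Z_1) \gtrsim_{D,E} (scw\kappa)^{O(1)} \delta^{-3}$.

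To invoke Lemma~\ref{hairbrushHasLargeVolumeLem} I would verify two items. First, $|H(T_0)| \gtrsim_{D,E} (cs)^{O(1)} \delta^{-2}$: select $\gtrsim c\delta^{-1}$ many $\delta$-separated points $x \in Y(T_0)$; by~\eqref{Z1BroadChain} each $z = f_Z(x)$ lies in $1\operatorname{-SBroad}_s(\Phi_1)$, so $\pi_S(\Phi_1(z))$ contains a connected semi-algebraic component of diameter $\geq s$, furnishing $\gtrsim s\delta^{-1}$ many $\delta$-separated directions of lines $\ell \in \Sigma_1$ passing through $B(z,\delta)$. Each such line produces a tube in $H(T_0)$, and a routine double-counting (noting that each tube is counted at most $O(1)$ times per base point on $\ell_0$ unless it is nearly parallel to $\ell_0$, a case easily handled) yields the bound. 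Second, for each $x \in Y(T_0)$ the directions $v(\tubes(x))$ must lie in a $K\delta$-neighborhood of a $w$-non-degenerate quadratic cone $\mathcal{C}_x$; this is exactly the content of Lemma~\ref{quadraticConeOfP}, where smallness~\eqref{smallDotProdEachI} for $\Phi_2$ forces $\pi_S(\Phi_2(z))$ into the neighborhood of $\mathcal{C}_z$, and the $(2,2)$-broadness of $\Phi_1 \supseteq \Phi_2$ rules out $w$-degeneracy of $\mathcal{C}_z$ via Lemma~\ref{quadraticDichotomy}.

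With both ingredients in hand, Lemma~\ref{hairbrushHasLargeVolumeLem} applied with $\lambda \gtrsim c$, $t \gtrsim_{D,E} (cs)^{O(1)}$, and $K \lesssim_{D,E} (w\kappa)^{-O(1)}$ delivers the required volume bound, hence the desired entropy bound on $Z_1$. The hardest step is verifying the cone-containment for the ambient hairbrush built from $\Sigma_1$: the broadness hypotheses are stated for $\Phi_1$ while smallness is only hypothesized for $\Phi_2$, so one must argue (for instance by invoking the Proposition~\ref{goodPolyProp}-type smallness that in every intended application is in force for $\Phi_1$ as well) that \emph{all} tubes in $\tubes(x)$ really do point near $\mathcal{C}_x$, not merely those coming from $\Phi_2$.
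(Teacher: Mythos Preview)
Your approach is essentially identical to the paper's: pick a tube $T_0$ coming from $\Sigma_2$, form its hairbrush from the $\Sigma_1$-tubes, and invoke Lemma~\ref{hairbrushHasLargeVolumeLem} via Lemma~\ref{quadraticConeOfP} to get the volume (hence entropy) lower bound. The paper's own proof is two terse sentences that amount to exactly this.

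The gap you flag in your final paragraph is real, and the paper's proof glosses over the same point. Lemma~\ref{quadraticConeOfP} requires both the $(2,2)$-broadness hypothesis and the derivative smallness~\eqref{smallDotProduct} to hold for the \emph{same} set $\Phi$: smallness places $\pi_S(\Phi(z))$ inside the sublevel set $\{|Q|\lesssim \kappa^{-1}K\delta\}$, and broadness then certifies that this sublevel set cannot sit in the $w$-neighborhood of two great circles, ruling out case~\ref{situation2} of Lemma~\ref{quadraticDichotomy}. With broadness only for $\Phi_1$ and smallness only for $\Phi_2\subset\Phi_1$, neither implication goes through as written---you cannot conclude that the $\tubes_1$-tubes through a point of $Y(T_0)$ lie near $\mathcal{C}_z$, nor directly that $\mathcal{C}_z$ is $w$-non-degenerate, and both are needed in Lemma~\ref{hairbrushHasLargeVolumeLem}. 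Your proposed fix is exactly the right one: in the paper's sole use of this lemma (inside Proposition~\ref{partitioningResult}), condition~\eqref{dotProdCondition} furnishes the derivative smallness for every $\Phi_i$, so in practice hypothesis~\eqref{smallDotProdEachI} should simply be strengthened to cover all $z\in Z_1$ and all $v\in\pi_S(\Phi_1(z))$. With that amendment the argument runs exactly as you and the paper outline.
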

\begin{proof}
For $i=1,2$, define $\tubes_i$ to be a maximal $\delta$-separated subset of $\Sigma_i$ and for each $T\in\tubes_i$, define $Y_i(T)$ to be the $\delta$-neighborhood of $\Gamma_i(T)$. Since $\Sigma_2$ is non-empty, there exists a tube $T_0\in\tubes_2$. By Lemma \ref{quadraticConeOfP}, the tube $T_0$ and the pair $(\tubes,Y)$ satisfy the hypotheses of Lemma \ref{hairbrushHasLargeVolumeLem}. Applying Lemma \ref{hairbrushHasLargeVolumeLem}, we conclude that
$$
\Big|\bigcup_{T\in \tubes_1 \colon Y_1(T)\cap Y_2(T_0)\neq\emptyset} Y_1(T)\Big| \gtrsim (scw\kappa)^{O(1)}\delta.
$$
But since the above set is contained in $N_{\delta}(Z_1)$, we have $\mathcal{E}_\delta(Z_1)\gtrsim (scw\kappa)^{O(1)}\delta^{-3}.$
\end{proof}

\subsection{Lines in broad varieties lie near a quadratic hypersurface}
We are now ready to state the main result of this section.

\begin{prop}\label{discretizedSegreNonDegenProp}
Let $\delta,s,w,\kappa,t$ be positive real numbers. Let $P$ be a polynomial of degree at most $D$. Let 
\begin{equation}\label{ZIsCurved}
Z \subset \{x\in Z(P)\cap B(0,1),\ 1\leq|\nabla P(x)| \leq 2,\ \Vert II(x)\Vert_{\infty}\geq \kappa\}.
\end{equation}
Let $\Sigma\subset\lines$ with $|\mathcal{E}_\delta(\Sigma)|\geq L^{-1}\delta^{-3}$ and let $\Gamma\subset \Gamma(N_{\delta}(Z), \Sigma)$. Suppose that $Z,\Sigma,$ and $\Gamma$ are semi-algebraic of complexity at most $E$. Let $\Phi\subset Z\times S^3$ be associated to $\Gamma$, in the sense of Definition \ref{defnAssociated}. Suppose that
\begin{align}
&Z \subset 1\operatorname{-SBroad}_{s}(\Phi)\cap (2,2)\operatorname{-Broad}_{w}(\Phi) , \label{WIs1SBroadAndWIs22Broad}\\
&\mathcal{E}_\delta(Z)\geq t \delta^{-3}, \label{allEllHaveBigIntersection}\\
&|(v\cdot\nabla)^jP(z)|\leq K\delta,\quad j=1,2\quad \textrm{for each}\ z\in Z\ \textrm{and each}\ v\in \pi_S(\Phi(z)). \label{smallDotProd}
\end{align}
Then there is a set $\Sigma^\prime\subset\Sigma$ and a quadratic polynomial $Q$ so that
\begin{equation}
\mathcal{E}_{\delta}(\Sigma^\prime)\gtrsim_{D,E}\big(sw\kappa t/KL\big)^{O(1)}  \mathcal{E}_{\delta}(\Sigma),
\end{equation}
and for every $\ell^\prime\in\Sigma^\prime$, there is a line $\ell\subset Z(Q)$ with $\operatorname{dist}(\ell,\ell^\prime)\lesssim \big(sw\kappa t/KL\big)^{-O(1)}\delta$. 
\end{prop}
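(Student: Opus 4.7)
\emph{Overall strategy and choice of $Q$.} The plan is to let $Q$ be the degree-$\le 2$ Taylor polynomial of $P$ at a carefully chosen base point $z_0\in Z$, and then to show that a fraction $\gtrsim (sw\kappa t/(KL))^{O(1)}$ of the lines in $\Sigma$ sit within $(sw\kappa t/(KL))^{-O(1)}\delta$ of a line of $Z(Q)$. To pick $z_0$, first pass to a maximal $\delta$-separated subset $\tubes\subset\Sigma$ and apply a double-counting argument to $\Gamma$, using the entropy lower bounds $\mathcal{E}_\delta(Z)\ge t\delta^{-3}$ and $\mathcal{E}_\delta(\Sigma)\ge L^{-1}\delta^{-3}$, to extract a line $\ell_0\in\Sigma$ whose hairbrush in $\tubes$ is as large as possible; Lemma \ref{hairbrushMakesBigEntropy} guarantees that we are not volume-limited. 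Pick $z_0\in Y(T_{\ell_0})\cap Z$ with $\mathcal{C}_{z_0}$ a $w$-non-degenerate quadratic cone---automatic since $Z\subset (2,2)\text{-Broad}_w(\Phi)$ and Lemma \ref{quadraticConeOfP} places $v(\ell_0)$ essentially inside $\mathcal{C}_{z_0}$---and set
$$
Q(x):=\nabla P(z_0)\cdot(x-z_0)+\tfrac{1}{2}(x-z_0)^{\top}\operatorname{Hess}P(z_0)(x-z_0).
$$
Then $z_0\in Z(Q)$, the $2$-jet of $Q$ at $z_0$ equals that of $P$, so $\tilde{\mathcal{C}}_{z_0}\subset Z(Q)$, and $P-Q$ vanishes to order $3$ at $z_0$.

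\emph{Lines through $z_0$ are close to lines of $Z(Q)$.} For $\ell\in\Sigma$ passing (nearly) through $z_0$ in direction $v\in\mathcal{C}_{z_0}$, write $\ell(t)=z_0+tv$. The hypothesis $\ell\in\Sigma_{\delta,c}(Z)$ together with the reduction of Proposition \ref{goodPolyProp} forces $|P(\ell(t))|\lesssim_D \delta$ on an interval of length $\gtrsim 1$, so by Lemma \ref{multiDimRemez} every Taylor coefficient of the univariate polynomial $P(\ell(t))=\sum_{j=0}^D c_j t^j$ satisfies $|c_j|\lesssim_D \delta$. Since $Q$ was built to match the $2$-jet of $P$ at $z_0$, one computes $Q(\ell(t))=c_0+c_1 t+c_2 t^2$, hence $|Q(\ell(t))|\lesssim_D \delta$ on $[-1,1]$. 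The lines in $Z(Q)$ are cut out by three polynomial equations ``$Q(p)=v\cdot\nabla Q(p)=v^{\top}\operatorname{Hess}Q\,v=0$'' in $(p,v)\in\RR^4\times S^3$; the $w$-non-degeneracy of $\mathcal{C}_{z_0}$ gives this system a Jacobian of condition number $\lesssim w^{-O(1)}$ at $(z_0,v)$, so the implicit function theorem produces a genuine line $\tilde{\ell}\subset Z(Q)$ with $\operatorname{dist}(\tilde{\ell},\ell)\lesssim w^{-O(1)}\delta$.

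\emph{Propagation to lines not through $z_0$.} For a general $\ell'\in\Sigma$ in the hairbrush of $\ell_0$, pick $x'\in Y(T_{\ell'})\cap Z$ near $\ell_0$ and repeat the previous paragraph's argument at $x'$ in place of $z_0$, while keeping the \emph{same} global quadric $Q$. What is needed is that the $2$-jet of $P$ at $x'$ matches the $2$-jet of $Q$ at $x'$ to within $(sw\kappa t/(KL))^{-O(1)}\delta$. To see this, use that $P$ is small along $\ell_0$, and apply Lemma \ref{multiDimRemez} to the univariate restriction $P(\ell_0(\cdot))$ to deduce $|(v(\ell_0)\cdot\nabla)^kP(z_0)|\lesssim_D \delta$ for all $k\ge 0$; this controls precisely the higher-order pieces of the Taylor expansion of $P-Q$ at $z_0$ that contribute to the $2$-jet discrepancy at $x'=z_0+\tau v(\ell_0)$. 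Running Remez on $P(\ell'(\cdot))$ again gives coefficients $\lesssim \delta$, and the Newton step---well-conditioned because $\mathcal{C}_{x'}$ is itself $w$-non-degenerate (as $x'\in Z\subset(2,2)\text{-Broad}_w(\Phi)$)---yields a line of $Z(Q)$ within $(sw\kappa t/(KL))^{-O(1)}\delta$ of $\ell'$. Letting $\Sigma'$ be the set of $\ell'$ for which the construction succeeds, the pigeonhole from the first paragraph delivers $\mathcal{E}_\delta(\Sigma')\gtrsim (sw\kappa t/(KL))^{O(1)}\mathcal{E}_\delta(\Sigma)$.

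\emph{Main obstacle.} The hardest step is transporting the $2$-jet of $P$ from $z_0$ to a distant point $x'$ while preserving the claimed $(\ldots)^{-O(1)}\delta$ accuracy. The na\"ive Taylor remainder $P-Q$ is $O(|x'-z_0|^3)$, which is $O(1)$---not $O(\delta)$---at distance $|x'-z_0|\sim 1$. Overcoming this requires exploiting the very special structure of the translation vectors $x'-z_0$ that actually arise: they lie along lines of $\Sigma$ along which $P$ is small, so the iterated directional derivatives of $P$ that appear in the remainder are all controlled by Remez. Equivalently, one must argue that strong $1$-broadness combined with $w$-non-degeneracy of the cones forces the $2$-jet of $P$ to be ``quadric-consistent'' along every tube of $\tubes$ that meets the hairbrush. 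Making this comparison fully quantitative, while simultaneously verifying that the Newton step at $x'$ remains well-conditioned and that the final perturbation distance genuinely scales like $\delta$ times a polynomial factor in the parameters, is where the bulk of the technical work resides.
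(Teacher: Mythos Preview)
Your choice of $Q$ as the second-order Taylor polynomial of $P$ at a single base point $z_0$ is not the construction the paper uses, and the propagation step you flag as the ``main obstacle'' does not go through. The difficulty is exactly the one you identify, but your proposed fix is incorrect: Remez applied to $P|_{\ell_0}$ controls only the \emph{pure} directional derivatives $(v(\ell_0)\cdot\nabla)^k P(z_0)$, not the mixed derivatives $(v(\ell_0)\cdot\nabla)^k\partial_i P(z_0)$ that appear in the $2$-jet of $P-Q$ at $x'=z_0+\tau v(\ell_0)$. Concretely, if you expand $Q$ along $\ell'(t)=x'+tv'$ you get a linear coefficient $\nabla P(z_0)\cdot v' + \tau\, v(\ell_0)^{\top}\operatorname{Hess}P(z_0)\,v'$ and a quadratic coefficient $\tfrac12 v'^{\top}\operatorname{Hess}P(z_0)\,v'$; nothing in your hypotheses forces these to be $O(\delta)$, since $v'$ is a direction at $x'$, not at $z_0$. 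A toy model such as $P=x_4+x_1^2x_2$ with $\ell_0$ the $x_1$-axis shows that the tangent plane and Hessian of $P$ can rotate by order one as you move along $\ell_0$, so the $2$-jet at $z_0$ simply does not approximate $P$ near $x'$ to accuracy $\delta$.

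The paper circumvents this by building $Q$ from \emph{two} quadratic cones $\tilde{\mathcal C}_{x_0}$ and $\mathcal C^*$ at well-separated points $x_0,x_1$, together with a line meeting both, via a ``14-point'' interpolation (so $Q$ is determined globally, not as a local jet). Tubes $T$ in the hairbrush of a suitable $T_0$ then meet \emph{both} cones, which already lie in $Z(Q)$; this gives $\gtrsim\delta^{-1}$ $\delta$-separated points on each such $T$ where $|Q|\lesssim\delta$, and a second application of Remez---now to $Q|_{T}$, which is degree $\le 2$---forces $|Q|$ small on all of $T$. That two-cone structure is what replaces your failed jet-transport; a single cone cannot do it, because a quadric through $\tilde{\mathcal C}_{z_0}$ alone is far from unique and there is no reason its line family should track those of $Z(P)$ away from $z_0$.
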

\begin{proof}
Since the constants $D$ and $E$ are fixed, all implicit constants may depend on these quantities; i.e.~we will write $\lesssim$ instead of $\lesssim_{D,E}$.

For each $\ell\in\Sigma$, let $T_{\ell}=N_{\delta}(\ell)\cap B(0,1)$ and define $Y(T)= N_{\delta}(\Gamma(\ell))$. Let $\tubes$ be a maximal essentially distinct subset of $\{T_{\ell}\colon \ell\in\Sigma\}$. Note that 
\begin{equation}\label{lotsOfTubes}
|\tubes|\sim\mathcal{E}_\delta(\Sigma)\geq L^{-1}\delta^{-3}.
\end{equation}

%
By \eqref{WIs1SBroadAndWIs22Broad}, we have that for each $x\in \bigcup_{\tubes}Y(T)$,
\begin{equation}\label{manyTubesThroughEachPoint}
|\tubes(x)|\gtrsim s \delta^{-1}.
\end{equation}
By \eqref{ZIsCurved}, \eqref{WIs1SBroadAndWIs22Broad}, \eqref{smallDotProd}, and Lemma \ref{quadraticConeOfP}, we have that $v(\tubes(x))$ is contained in the $\lesssim  \delta (K/w\kappa)^{O(1)}$-neighborhood of the quadratic cone $\mathcal{C}_x$ of $P$ at $x$. By \eqref{WIs1SBroadAndWIs22Broad}, this cone is $\gtrsim w s^{O(1)}$-non-degenerate. Thus there exists a constant $A\lesssim (K/w\kappa)^{O(1)}$ so that for every $x\in Z$, we have that $v(\tubes(x))$ is contained in the $A\delta$-neighborhood of $\mathcal{C}_x$. In particular,
\begin{equation}\label{fewTubesThroughEachPoint}
|\tubes(x)|\lesssim A^{O(1)} \delta^{-1}.
\end{equation}
Since $t\delta\lesssim |N_{\delta}(Z)|\lesssim\delta$ (the lower bound comes from \eqref{allEllHaveBigIntersection} and the upper bound comes from the fact that $|N_{\delta}(Z)|\lesssim\delta$), we have 
\begin{equation}\label{integral}
st \lesssim \int_{N_{\delta}(Z)}\sum_{T\in\tubes}\chi_T(x)dx\lesssim A^{O(1)},
\end{equation}
and thus by \eqref{lotsOfTubes} and \eqref{integral},
\begin{equation}\label{sizeOfTubes}
L^{-1}\delta^{-3}\lesssim |\tubes|\lesssim A^{O(1)}\delta^{-3}. 
\end{equation}

By pigeonholing, we can select a point $x_0\in \bigcup Y(T)$ with
$$
\sum_{T\in \tubes(x_0)}|Y(T)|\gtrsim (st)^{O(1)} \delta^2.
$$

For each point $x\in \bigcup_{\tubes(x_0)} Y(T)$, define 
$$
N(x) = \Big|\big\{\tubes(x)\cap \bigcup_{T\in\tubes(x_0)} H(T)\big\}\Big|.
$$ 
$N(x)$ is an integer satisfying $0\leq N(x) \lesssim A^{O(1)}\delta^{-1}$. For each $T\in\tubes\backslash\tubes(x_0)$, define a new shading
$$
Y^\prime(\tube)=\{x\in Y(\tube)\colon N(x) \gtrsim (st)^{C}\delta^{-1}\}.
$$
If the constant $C$ is chosen sufficiently large, then 
$$
\sum_{T\in\tubes\backslash\tubes(x_0)}|Y^\prime(\tube)|\geq \frac{1}{2} \sum_{T\in\tubes}|Y(T)|.
$$
Thus by pigeonholing, we can select a set $\tubes^\prime\subset\tubes$ so that $|\tubes^\prime|\gtrsim(st/L)^{O(1)}|\tubes|$ and $|Y^\prime(T)|\gtrsim (st/L)^{C}|T|$ for all $T\in \tubes^\prime$.  Select a point $x_1$ with $\operatorname{dist}(x_0,x_1)\gtrsim (st/L)^{O(1)}$ and 
\begin{equation}\label{intersectionProperty}
|\tubes^\prime(x_1)|\gtrsim (st/L)^{O(1)}\delta^{-1}.
\end{equation}
For this value of $x_1$, if we select the constant $C\lesssim 1$ sufficiently large then there are $\gtrsim (st/L)^{O(1)}\delta^{-3}$ tubes $T\in\tubes$ that satisfy $|Y(T)|\geq (st/L)^C|T|$ and 
$$
\exists\ T_0\in \tubes(x_0),\ T_1 \in \tubes^\prime(x_1)\colon T\cap T_i\neq \emptyset,\ i=1,2,\ \dist(T\cap T_0,\ T\cap T_1)\geq C^{-1}(st/L)^C.
$$

Call this set of tubes $\tubes^{\prime\prime}$. Select a tube $T_0\in\tubes^{\prime\prime}$ with $|H(T_0)\cap \tubes^{\prime\prime}|\gtrsim (st/L)^{O(1)}\delta^{-2}$. Let $\mathcal{C}_{x_0}$ be the quadratic cone associated to $x_0$ and let $\tilde{\mathcal{C}}_{x_0} = x_0 + \operatorname{span}(\mathcal{C}_{x_0})$. Define $\tilde{\mathcal{C}}_{x_1}$ similarly, with $x_1$ in place of $x_0$. 

Equation \eqref{intersectionProperty} implies that 
$$
|N_{A\delta}(\tilde{\mathcal{C}}_{x_0}) \cap N_{A\delta}(\tilde{\mathcal{C}}_{x_1})\big|\gtrsim (st/L)^{O(1)}\delta^{3}. 
$$
Since the cones $\tilde{\mathcal{C}}_{x_0}$ are $\gtrsim A$ non-degenerate and their vertices are $\gtrsim (sc)^C$-separated, the set $N_{A\delta}(\tilde{\mathcal{C}}_{x_0}) \cap N_{A\delta}(\tilde{\mathcal{C}}_{x_1})$ is contained in the $\lesssim (AL/st)^{O(1)}\delta$--neighborhood of a curve. However, it need not be the case that the cones $\tilde{\mathcal{C}}_{x_0}$ and $\tilde{\mathcal{C}}_{x_1}$ themselves intersect. To overcome this annoying technicality, we will replace $\tilde{\mathcal{C}}_{x_1}$ by a different cone that is comparable to $\tilde{\mathcal{C}}_{x_1}$ but which does intersect $\tilde{\mathcal{C}}_{x_0}$ in a curve. We will call this cone $\tilde{\mathcal{C}}^*$; we will describe its construction in the next paragraph. 

By our choice of $x_1$, there exist three points $p_1,p_2,p_3\in \tilde{\mathcal{C}}_{x_0}\cap N_{A\delta}(\tilde{\mathcal{C}}_{x_1})$ so that all $3\times 3$ minors of $[p_1-x_1,\ p_2-x_1,\ p_3-x_3]$ have magnitude $\gtrsim (st/AL)^{O(1)}$. Let $H$ be the hyperplane passing through $x_1,p_1,p_2,p_3$ (our condition on the minors of $[p_1-x_1,\ p_2-x_1,\ p_3-x_3]$ ensures that this hyperplane is ``well conditioned'' in the sense that a small perturbation to one of the points $p_1,p_2,$ or $p_3$ will only cause a small change in the choice of hyperplane). Since $p_1,p_2,p_3\in  N_{A\delta}(\tilde{\mathcal{C}}_{x_1})$, and $\tilde{\mathcal{C}}_{x_1}\subset T_{x_1}(Z)$, the condition on the minors of $[p_1-x_1,\ p_2-x_1,\ p_3-x_3]$ implies that
\begin{equation}\label{angleHTx1}
\angle(H, T_{x_1}(Z))\lesssim (L/st)^{O(1)}A\delta.
\end{equation}
Define $\mathcal{C}^* = H \cap Z(P_1)$, where $P_1$ is the homogeneous polynomial of degree 2 arising from the Taylor expansion of $P$ around $x_1$. Since $H$ and $Z(P_1)$ intersect $\geq \kappa$ transversely (i.e. the tangent plane of $Z(P_1)$ and of $H$ make an angle $\geq \kappa$ at every point of intersection), \eqref{angleHTx1} implies that $\tilde{\mathcal{C}}_{x_1}$ and $\mathcal{C}^*$ are comparable in the sense that 
$$
B(0,1)\cap \tilde{\mathcal{C}}_{x_1} \subset  N_{(AL/(st\kappa))^{O(1)}\delta}(\mathcal{C}^*),\quad\textrm{and}\quad
B(0,1)\cap \mathcal{C}^*\subset N_{(AL/(st\kappa))^{O(1)}\delta}(\tilde{\mathcal{C}}_{x_1}).
$$
We also have that $\tilde{\mathcal{C}}_{x_0} \cap \mathcal{C}^*$ is a degree-two curve lying in the plane $T_{x_0}Z \cap H$. 

Let $\ell$ be a line with $\ell\cap B(0,1)\subset T_0$ so that $\ell$ intersects $\tilde{\mathcal{C}}_{x_0}$ and $\mathcal{C}^*$ at points that are $\gtrsim (st\kappa/AL)^{O(1)} $ separated. Observe that the cones $\tilde{\mathcal{C}}_{x_0}$ and $\mathcal{C}^*$ intersect in a one-dimensional degree-two curve, and the line $\ell$ intersects each of $\tilde{\mathcal{C}}_{x_0}$ and $\mathcal{C}^*$ at distinct points that are not on this curve.

We can now use the ``14 point'' argument from \cite{KZ} to find a monic polynomial $Q$ that vanishes on $\tilde{\mathcal{C}}_{x_0}, \mathcal{C}^*,$ and $\ell$. In brief, select 5 points $p_1,\ldots,p_5\in \tilde{\mathcal{C}}_{x_0}\cap \mathcal{C}^*$; any polynomial of degree $\leq 2$ that vanishes on $p_1,\ldots,p_5$ must vanish on the degree-two plane curve $\tilde{\mathcal{C}}_{x_0}\cap \mathcal{C}^*$. Let $p_6,p_7$ be the points of intersection of $\tilde{\mathcal{C}}_{x_0} \cap \ell$ and $\mathcal{C}^*\cap \ell$, and let $p_8$ be another point on $\ell$; any polynomial of degree $\leq 2$ that vanishes on $p_6,p_7,p_8$ must vanish on $\ell$. Let $p_9=x_0$ and let $p_{10}=x_1$. Let $p_{11}$ and $p_{12}$ be two points on $\tilde{\mathcal{C}}_{x_0} $, and let $p_{13}$ and $p_{14}$ be two points on $\mathcal{C}^*$. See Figure \ref{14ptFig}. Let $Q$ be a polynomial of degree $\leq 2$ that vanishes on $p_1,\ldots,p_{14}$. We can choose $Q$ so that its largest coefficient has magnitude 1. $Q$ will be the output from this proposition. The remainder of the proof is devoted to finding the set $\Sigma^\prime$ so that $Q$ and $\Sigma^\prime$ satisfy the conclusions of the proposition.

\begin{figure}[h!]
 \centering
\begin{overpic}[width=0.8\textwidth]{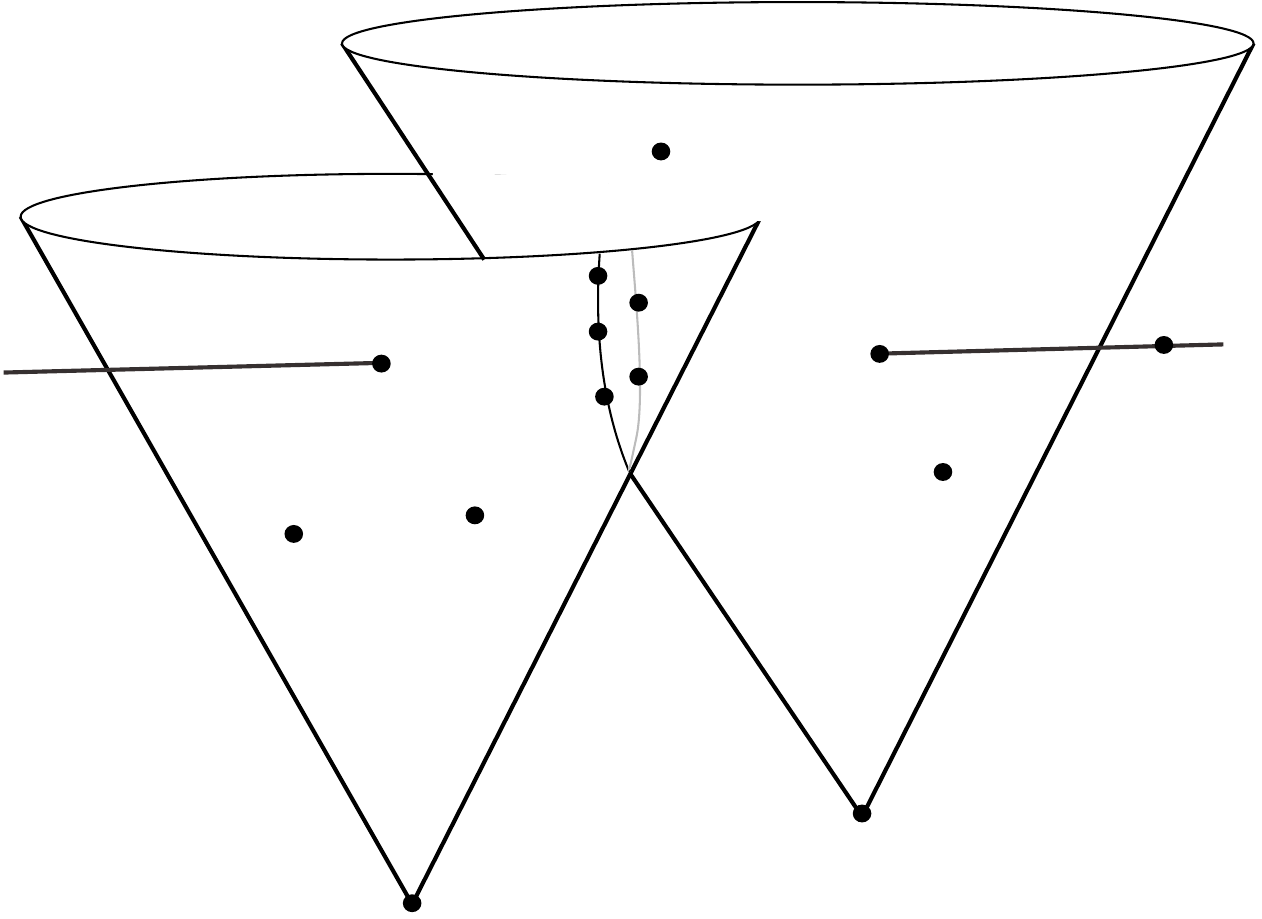}
\put (43,50) {$p_1$}
\put (43,46) {$p_2$}
\put (43,41) {$p_3$}
\put (52,49) {$p_4$}
\put (51,44) {$p_5$}
\put (31, 42) {$p_6$}
\put (65, 43) {$p_7$}
\put (92, 42) {$p_8$}
\put (28, 0) {$p_9$}
\put (63,6) {$p_{10}$}
\put (22,27) {$p_{11}$}
\put (36,29) {$p_{12}$}
\put (73,32) {$p_{13}$}
\put (54,60) {$p_{14}$}
\end{overpic}
\caption{The cones $\tilde{\mathcal{C}}_{x_0}$ (left), $\mathcal{C}^*$ (right), the line $\ell$, and the $14$ points $p_1,\ldots,p_{14}$.} \label{14ptFig}
\end{figure}

Let $\ell_1$ be the line passing through $p_9$ and $p_{11}$; this is a line in $\tilde{\mathcal{C}}_{x_0}$ passing through the vertex $p_9 = x_0$, so it intersects the curve $\tilde{\mathcal{C}}_{x_0}  \cap \mathcal{C}^*$ at some point $x$. Since $Q$ vanishes at the three collinear points $p_9,p_{11},$ and $x$, $Q$ must vanish on the entire line $\ell_1$. Similarly, $Q$ vanishes on the line $\ell_2$ passing through $p_9$ and $p_{12}$, and the line $\ell_3$ passing through $p_9$ and $p_6$. Thus $Q$ vanishes on the five-dimensional (reducible) curve $\tilde{\mathcal{C}}_{x_0} \cap \mathcal{C}^* \cup \ell_1 \cup \ell_2 \cup \ell_3$. Since $Q$ has degree at most 2 and $\tilde{\mathcal{C}}_{x_0} $ has degree at most 2, we conclude that $Q$ vanishes on $\tilde{\mathcal{C}}_{x_0} $. An identical argument shows that $Q$ vanishes on $\mathcal{C}^*$.

Recall that for each $T\in H(T_0)\cap \tubes^{\prime\prime}$, we have that $Z(Q)$ vanishes on $\gtrsim (st\kappa/AL)^{O(1)}\delta^{-1}$ distinct $\delta$-separated points on $T$. Since $Q$ is monic and has degree 2, we have 

$$
|Q(x)|\lesssim (AL/st\kappa)^{O(1)}\delta\quad\textrm{for all}\ x\in T.
$$ 
By the definition of $\tubes^{\prime\prime}$, we have
$$
\sum_{T\in H(T_0)\cap \tubes^{\prime\prime} }|H_Y(T)|\gtrsim (st\kappa/AL)^{O(1)} \delta^{-4}.
$$
Thus there exists a set $\tubes^{\prime\prime\prime}$ with $|\tubes^{\prime\prime\prime}|\gtrsim (st\kappa/AL)^{O(1)} \delta^{-3}$ and 
$$
\Big| Y(T^\prime)\cap \bigcup_{T\in H(T_0)} Y(T)\Big|\gtrsim (st\kappa/AL)^{O(1)} |T^\prime|\quad\textrm{for every}\ T^\prime\in \tubes^{\prime\prime\prime}.
$$

Since $Q$ is monic and $Z(Q)\cap B(0,1)\neq\emptyset$, we can assume that at least one non-constant term of $Q$ has size $\sim 1$. We can also assume that at least one degree-two term of $Q$ has magnitude $\gtrsim (\kappa st/AL)^{O(1)}$; if this were not the case, then $Z(Q)\cap B(0,1)$ would be contained in the $\sim (\kappa st/AL)^{O(1)}$-neighborhood of a hyperplane $H$, and thus
$$
|N_{\delta}(Z)\cap N_{(\kappa st/AL)^{O(1)}}(H)|\geq \bigcup_{T\in H(T_0)} Y(T)\gtrsim (\kappa st/AL)^{O(1)}\delta,
$$
but this would contradict the estimate
$$
|N_{\delta}(Z)\cap N_{(\kappa st/AL)^{O(1)}}(H)|\lesssim \delta^{3/2}(\kappa st/AL)^{-O(1)}
$$
coming from Lemma \ref{largeFundFormEscapesPlane}. Thus at least one degree-two term of $Q$ must have magnitude $\gtrsim (\kappa st/AL)^{O(1)}$. Next, the set
$$
\{x\in B(0,1)\colon |\nabla Q(x)|\leq (\kappa st/AL)^{O(1)}\}
$$
is contained in the $(\kappa st/AL)^{O(1)}$--neighborhood of a hyperplane $H^\prime$. By the same argument as above, we can choose a refinement $\tubes^{(\mathrm{iv})}\subset\tubes^{\prime\prime\prime}$ with 
\begin{equation}\label{sizeOfTubesiv}
|\tubes^{(\mathrm{iv})}|\gtrsim (\kappa st/AL)^{O(1)} \delta^{-3}
\end{equation}
and a shading $Y^{(\mathrm{iv})}(T)$ so that $|\nabla Q(x)|\gtrsim (\kappa sc/A)^{O(1)}$ for all $x\in Y^{(\mathrm{iv})}(T)$ and all $T\in\tubes^{(\mathrm{iv})}$. 

Again by pigeonholing, we can refine $Y^{(\mathrm{iv})}$ to get a shading $Y^{(\mathrm{v})}$ so that $|\tubes_{Y^{(\mathrm{v})}}(x)|\gtrsim (\kappa st/AL)^{O(1)}\delta^{-1}$ for all $x\in \bigcup_{T\in\tubes^{(\mathrm{iv})}}Y^{(\mathrm{v})}(T)$. Now fix a point $x\in \bigcup_{T\in\tubes^{(\mathrm{iv})}}Y^{(\mathrm{v})}(T)$. We will show that $T_x(Z(Q))\cap Z(Q)$ is a $\zeta$--non-degenerate cone, where $\zeta = (AL/(st\kappa))^{O(1)}$. Indeed, $v(\tubes^{(\mathrm{iv})}(x))$ is contained in the $A\delta$--neighborhood of the $w$--non-degenerate cone $\mathcal{C}_x$, and $|\tubes^{(\mathrm{iv})}(x)|\gtrsim (st\kappa/AL)^{O(1)}\delta^{-1}$. At most $(\zeta/w)^{1/2}A\delta$ $\delta$--separated vectors can be contained in the intersection of $N_{A}(\mathcal{C}_x)$ with the $\zeta$--neighborhood of a plane. We conclude that the cone $T_x(Z(Q))\cap Z(Q)$ is $\zeta$--non-degenerate for some $\zeta =  (s ct\kappa/A)^{O(1)}$.

We conclude that if $T\in\tubes^{(\mathrm{iv})}$ and $x\in Y^{(\mathrm{v})}(T)$, then $v(T)$ makes an angle $\lesssim (AL/(st\kappa))^{O(1)} \delta$ with the quadratic cone $T_x(Z(Q))\cap Z(Q)$ of $Q$ at $x$. However, since $Q$ is degree-two, if $v$ is a vector contained in the quadratic cone of $Q$ at $x$, then the line $\{x + v t\colon t\in\RR\}$ is contained in $Z(Q)$. Thus if $T\in\tubes^{(\mathrm{iv})}$ with $Y^{(\mathrm{v})}(T)\neq\emptyset$, then there is a line $\ell$ contained in $Z(Q)$ with $\ell\cap B(0,1)\subset N_{(AL/(st\kappa))^{O(1)}\delta}(T)$. 

By \eqref{sizeOfTubes} and \eqref{sizeOfTubesiv}, we have that 
\begin{equation*}
\begin{split}
|\tubes^{(\mathrm{iv})}|&\geq (st\kappa/AL)^{O(1)}|\tubes|\\
&\gtrsim \big(swt\kappa/KL\big)^{O(1)} |\tubes|\\
&\gtrsim \big(swt\kappa/KL\big)^{O(1)} \mathcal{E}_\delta(\Sigma).
\end{split}
\end{equation*}
Thus there is a set $\Sigma^\prime\subset\Sigma$ (note that $\Sigma^\prime$ need not be semi-algebraic) with 
$$
\mathcal{E}_\delta(\Sigma^\prime)\gtrsim\big(swt\kappa/KL\big)^{O(1)}\mathcal{E}_\delta(\Sigma)
$$ 
so that for all $\ell^\prime\in\Sigma^\prime$ there is a line $\ell$ contained in $Z(Q)$ with 
$$
\operatorname{dist}(\ell,\ell^\prime)\lesssim \big(swt\kappa/KL\big)^{O(1)}\delta.\qedhere
$$
\end{proof}

\section{Proof of Theorem \ref{discretizedSeveri}}\label{mainProofSection}
The following result allows us to separate the lines in $\Sigma_{\delta}(Z)$ into two sets---those that can be covered by a small number of one and two-dimensional rectangular prisms, and those that are amenable to Proposition \ref{discretizedSegreNonDegenProp}. 
\begin{prop}\label{partitioningResult}
Let $\delta,s,u,c,\kappa$ be positive real numbers. Let $P$ be a polynomial of degree at most $D$. Let 
\begin{equation}\label{ZIsCurved}
Z \subset \{x\in Z(P)\cap B(0,1),\ 1\leq|\nabla P(x)| \leq 2,\ \Vert II(x)\Vert_{\infty}\geq \kappa\}.
\end{equation}
Let $\Sigma\subset\Sigma_{\delta,c}(Z)$ and $\Gamma\subset \Gamma(N_{\delta}(Z), \Sigma)$. Suppose that $Z,\Sigma,$ and $\Gamma$ are semi-algebraic of complexity at most $E$. Let $\Phi\subset Z\times S^3$ be associated to $\Gamma$, in the sense of Definition \ref{defnAssociated}. Suppose that
\begin{align}
&|\Gamma(\ell)|\geq c,\quad\textrm{for all}\ \ell\in\Sigma,\\
&|(v\cdot\nabla)^jP(z)|\leq K\delta,\quad j=1,2\quad \textrm{for each}\ z\in Z\ \textrm{and each}\ v\in \pi_S(\Phi(z)).\label{nablaCondition}
\end{align}

Then there is a number $w\gtrsim_{D,E}(us\kappa c)^{O(1)}$ and sets $\Sigma^\prime,\Sigma^{\prime\prime},\Sigma^{\prime\prime\prime}$, $Z^{\prime\prime\prime}$ and $\Gamma^{\prime\prime\prime}$ with $\Sigma=\Sigma^\prime\cup\Sigma^{\prime\prime}\cup\Sigma^{\prime\prime\prime},\ \Sigma^{\prime\prime\prime}\subset \Sigma_{\delta, c/27}(Z^{\prime\prime\prime}),$ and $\Gamma^{\prime\prime\prime}\subset \Gamma(N_{\delta}(Z^{\prime\prime\prime}), \Sigma^{\prime\prime\prime}),$ so that
\begin{itemize}
\item The lines in $\Sigma^{\prime}$ can be covered by $O_{D,E}(c^{-O(1)}s^{-2})$ rectangular prisms of dimensions $2\times s\times s\times s$.
\item The lines in $\Sigma^{\prime\prime}$ can be covered by $O_{D,E}((sc)^{-O(1)}u^{-1})$ rectangular prisms of dimensions $2\times 2\times u\times u$.
\item $\Sigma^{\prime\prime\prime}$, $Z^{\prime\prime\prime}$ and $\Gamma^{\prime\prime\prime}$ are semi-algebraic of complexity $O_{D,E}(1)$. We have 
\begin{equation}\label{SigmaPPPNotTooBig}
|N_{\delta}(\Sigma^{\prime\prime\prime})|\lesssim_{D} (K/(us\kappa c))^{O(1)}\delta^{-3}.
\end{equation} 
Finally, let $\Phi^{\prime\prime\prime}$ be the set associated to $\Gamma^{\prime\prime\prime}.$ Then 

\begin{align}
&Z^{\prime\prime\prime} \subset 1\operatorname{-SBroad}_{s}(\Phi^{\prime\prime\prime}) \cap (2,2)\operatorname{-Broad}_{w}(\Phi^{\prime\prime\prime}),\label{ZpppIsSBroadAndBroad}\\
&\mathcal{E}_\delta(Z^{\prime\prime\prime})\gtrsim (scu\kappa)^{O(1)}\delta^{-3}.\label{ZpppHasBigEntropy}
\end{align}
\end{itemize}
\end{prop}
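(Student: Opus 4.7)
The plan is to decompose $Z$ into three semi-algebraic pieces according to the broadness structure of $\Phi$, and then dispatch each piece to exactly one of the tools developed in the previous sections. Let $w\gtrsim_{D,E}(us\kappa c)^{O(1)}$ be the smaller of the widths produced by Propositions \ref{narrowVarietiesProp} and \ref{22NarrowCovered} when applied with parameter $c/3$. Set
\[
Z_1 = Z\setminus 1\operatorname{-SBroad}_{s}(\Phi),\qquad Z_3 = Z\cap 1\operatorname{-SBroad}_{s}(\Phi)\cap(2,2)\operatorname{-Broad}_{w}(\Phi),
\]
\[
Z_2 = Z\cap 1\operatorname{-SBroad}_{s}(\Phi)\cap(2,2)\operatorname{-Narrow}_{w}(\Phi),
\]
so that $Z=Z_1\sqcup Z_2\sqcup Z_3$, each piece being semi-algebraic of complexity $O_{D,E}(1)$. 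Put $\Gamma_i=\{(x,\ell)\in\Gamma:f_Z(x)\in Z_i\}$ and let $\Phi_i$ denote the set associated (in the sense of Definition \ref{defnAssociated}) to $\Gamma_i$ with base $Z_i$. Because $f_Z(x)\in Z_i$ forces $f_{Z_i}(x)=f_Z(x)$ (the common closest point already lies in $Z_i$, and the lex tie-break selects the same minimizer), we have the key identity $\Phi_i(z)=\Phi(z)$ for every $z\in Z_i$. This identity transfers the broadness/narrowness hypotheses of $\Phi$ on $Z_i$ verbatim to $\Phi_i$: namely $1\operatorname{-SBroad}_{s}(\Phi_1)=\emptyset$, $Z_2\subset(2,2)\operatorname{-Narrow}_{w}(\Phi_2)$, and $Z_3\subset 1\operatorname{-SBroad}_{s}(\Phi_3)\cap(2,2)\operatorname{-Broad}_{w}(\Phi_3)$. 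Since $|\Gamma(\ell)|\geq c$ and $\Gamma=\Gamma_1\sqcup\Gamma_2\sqcup\Gamma_3$, a three-way pigeonhole partitions $\Sigma=\bar\Sigma_1\sqcup\bar\Sigma_2\sqcup\bar\Sigma_3$ with $|\Gamma_i(\ell)|\geq c/3$ whenever $\ell\in\bar\Sigma_i$.

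\textbf{Processing $\bar\Sigma_1,\bar\Sigma_2$ and defining the broad part.}
Apply Proposition \ref{narrowVarietiesProp} to $(P,Z_1,\bar\Sigma_1,\Gamma_1)$ to cover $\bar\Sigma_1$ by $O_{D,E}(c^{-O(1)}s^{-2})$ rectangular prisms of dimensions $2\times s\times s\times s$. Apply Proposition \ref{22NarrowCovered} to $(P,Z_2,\bar\Sigma_2,\Gamma_2)$---its curvature hypothesis $\Vert II\Vert_\infty\geq\kappa$ on $Z_2$ is inherited from \eqref{ZIsCurved}---to obtain a partition $\bar\Sigma_2=\bar\Sigma_2^\prime\cup\bar\Sigma_2^{\prime\prime}$, where $\bar\Sigma_2^\prime$ is covered by $O_{D,E}(c^{-O(1)}s^{-2})$ prisms of dimensions $2\times s\times s\times s$ and $\bar\Sigma_2^{\prime\prime}$ by $O_{D,E}((cs)^{-O(1)}u^{-1})$ prisms of dimensions $2\times 2\times u\times u$. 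Now declare $\Sigma^\prime=\bar\Sigma_1\cup\bar\Sigma_2^\prime$, $\Sigma^{\prime\prime}=\bar\Sigma_2^{\prime\prime}$, $\Sigma^{\prime\prime\prime}=\bar\Sigma_3$, $Z^{\prime\prime\prime}=Z_3$, and $\Gamma^{\prime\prime\prime}=\Gamma_3\cap\Gamma(N_\delta(Z^{\prime\prime\prime}),\Sigma^{\prime\prime\prime})$; each is semi-algebraic of complexity $O_{D,E}(1)$, the containment $\Sigma^{\prime\prime\prime}\subset\Sigma_{\delta,c/27}(Z^{\prime\prime\prime})$ holds with room to spare since $|\Gamma^{\prime\prime\prime}(\ell)|\geq c/3$, and \eqref{ZpppIsSBroadAndBroad} is the statement about $\Phi_3$ from the previous paragraph applied to the set associated to $\Gamma^{\prime\prime\prime}$.

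\textbf{Entropy bounds and the main obstacle.}
For the lower bound \eqref{ZpppHasBigEntropy}, when $\Sigma^{\prime\prime\prime}\neq\emptyset$ I would invoke Lemma \ref{hairbrushMakesBigEntropy} with $Z_1=Z_2=Z^{\prime\prime\prime}$, $\Sigma_1=\Sigma_2=\Sigma^{\prime\prime\prime}$, $\Gamma_1=\Gamma_2=\Gamma^{\prime\prime\prime}$; all its hypotheses are either immediate or provided by \eqref{ZpppIsSBroadAndBroad} and \eqref{nablaCondition}, and its conclusion $\mathcal{E}_\delta(Z^{\prime\prime\prime})\gtrsim (sw\kappa c)^{O(1)}\delta^{-3}$, combined with $w\gtrsim(us\kappa c)^{O(1)}$, yields the required $(scu\kappa)^{O(1)}\delta^{-3}$. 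If instead $\Sigma^{\prime\prime\prime}=\emptyset$ the bound is vacuous and we absorb $\Sigma^{\prime\prime\prime}$ into $\Sigma^\prime$. For the upper bound \eqref{SigmaPPPNotTooBig}, Lemma \ref{quadraticConeOfP} shows that $\pi_S(\Phi(z))$ at each $z\in Z^{\prime\prime\prime}$ lies in the $\delta(K/(w\kappa))^{O(1)}$-neighborhood of a one-dimensional quadratic cone in $S^3$, so through any given point of $N_\delta(Z^{\prime\prime\prime})$ pass at most $(K/(us\kappa c))^{O(1)}\delta^{-1}$ essentially distinct tubes from $\Sigma^{\prime\prime\prime}$; combining this per-point bound with the hypersurface volume estimate $|N_\delta(Z^{\prime\prime\prime})|\lesssim\delta$ and the shading lower bound $|\Gamma^{\prime\prime\prime}(\ell)|\gtrsim c$ via a standard double count delivers the claimed bound. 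The main obstacle throughout is verifying that the broadness/narrowness and semi-algebraic complexity pass cleanly through the restriction $\Phi\rightsquigarrow\Phi_i$; once the identity $\Phi_i(z)=\Phi(z)$ for $z\in Z_i$ is recorded, everything else is the bookkeeping that stitches Propositions \ref{narrowVarietiesProp} and \ref{22NarrowCovered} together with Lemmas \ref{hairbrushMakesBigEntropy} and \ref{quadraticConeOfP} into the stated output.
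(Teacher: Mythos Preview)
Your overall strategy matches the paper's: partition $Z$ by the broadness of $\Phi$, dispatch the narrow pieces via Propositions \ref{narrowVarietiesProp} and \ref{22NarrowCovered}, and retain the broad piece. The identity $\Phi_i(z)=\Phi(z)$ for $z\in Z_i$ is correct and is indeed the key bookkeeping step.

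However, your verification of \eqref{ZpppIsSBroadAndBroad} has a real gap. You define $\Gamma'''=\Gamma_3\cap\Gamma(N_\delta(Z'''),\Sigma''')$, which discards every pair $(x,\ell)$ with $\ell\notin\bar\Sigma_3$. Consequently you only get $\Phi'''(z)\subset\Phi_3(z)=\Phi(z)$ for $z\in Z_3$, and the containment can be strict: a line $\ell$ that contributes a direction to $\Phi(z)$ may carry most of its $\Gamma$-mass over $Z_1\cup Z_2$ and hence land in $\bar\Sigma_1$ or $\bar\Sigma_2$ rather than $\bar\Sigma_3$. Since broadness is a \emph{lower} bound on the spread of the fiber, the inclusion $Z_3\subset 1\operatorname{-SBroad}_s(\Phi_3)\cap(2,2)\operatorname{-Broad}_w(\Phi_3)$ does not descend to the smaller $\Phi'''$, and your invocation of Lemma \ref{hairbrushMakesBigEntropy} then rests on an unverified hypothesis.

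The paper avoids this by not restricting $\Gamma'''$ to lines in $\Sigma'''$: it takes $\Gamma'''$ to be the full pre-image over the broad base (your $\Gamma_3$), so that $\Phi'''(z)=\Phi(z)$ on $Z'''$ and broadness transfers verbatim. The paper also runs the broad/narrow split through three nominal iterations and feeds the resulting nested pair into Lemma \ref{hairbrushMakesBigEntropy}; with $\Gamma$ left unrestricted this iteration in fact stabilizes after the first step, so your single-pass architecture is fine provided you drop the restriction $\ell\in\bar\Sigma_3$ from the definition of $\Gamma'''$ (and correspondingly enlarge $\Sigma'''$ to the line-projection of $\Gamma_3$, which is harmless for the downstream application to Proposition \ref{discretizedSegreNonDegenProp}).
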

\begin{proof}
Let $w\gtrsim_{D,E}(us\kappa c)^{O(1)}$ be the constant from Proposition \ref{22NarrowCovered} associated to the values $u,s,\kappa,c,D,E$. Define $\Sigma_1 = \Sigma, \Gamma_1=\Gamma$, and $\Phi_1 = \Phi.$ For each $i=1,2,3$, inductively define
\begin{align*}
&\Phi_i^\prime = \{(z,v)\in\Phi_i\colon z \not\in 1\operatorname{-SBroad}_{s}(\Phi_i) \},\\
&\Phi_i^{\prime\prime} = \{(z,v)\in\Phi_i\colon z \in (2,2)\operatorname{-Narrow}_{w}(\Phi_i) \},\\
&\Phi_i^{\prime\prime\prime}=\Phi_i\backslash(\Phi_i^\prime\cup\Phi_i^{\prime\prime}).
\end{align*}
Let $\Gamma_i^\prime$, $\Gamma_i^{\prime\prime},$ and $\Gamma_i^{\prime\prime\prime}$ be the pre-images of $\Phi_i^\prime$, $\Phi_i^{\prime\prime}$, and $\Phi_i^{\prime\prime\prime}$ (respectively) under the map $\Gamma_i\to\Phi_i$.  Define
\begin{align*}
\Sigma_i^\prime & = \{\ell\in\Sigma_i\colon |\Gamma_i^\prime(\ell)|\geq 3^{-i}c\},\\
\Sigma_i^{\prime\prime} & = \{\ell\in\Sigma_i\colon |\Gamma_i^{\prime\prime}(\ell)|\geq 3^{-i}c\},\\
\Sigma_i^{\prime\prime\prime} & = \{\ell\in\Sigma_i\colon |\Gamma_i^{\prime\prime\prime}(\ell)|\geq 3^{-i}c\},
\end{align*}
so $\Sigma_i\subset\Sigma_i^\prime\cup\Sigma_i^{\prime\prime}\cup\Sigma_i^{\prime\prime\prime}$.

By Proposition \ref{narrowVarietiesProp}, the lines in $\Sigma_i^\prime$ can be covered by $O_{D,E}(c^{-O(1)}s^{-2})$ rectangular prisms of dimensions $2\times s\times s\times s$; these lines will be placed in $\Sigma^{\prime}$. By Proposition \ref{22NarrowCovered}, the lines in $\Sigma_i^{\prime\prime}$ can be partitioned into two sets; the first set can be covered by $O_{D,E}(c^{-O(1)}s^{-2})$ rectangular prisms of dimensions $2\times s\times s\times s$ and the second set can be covered by $O_{D,E}((cs)^{-O(1)}u^{-1})$ rectangular prisms of dimensions $2\times 2\times u\times u$. We will place these lines into $\Sigma^{\prime}$ and $\Sigma^{\prime\prime}$, respectively.  

Define 
$$
\Sigma_{i+1} = \Sigma_i^{\prime\prime\prime}
$$
and define
$$
\Gamma_{i+1}=\Gamma_i^{\prime\prime\prime}.
$$ 
Define $Z_{i+1}$ to be the image of $\Phi_i$ under the map $(z,v)\mapsto z$. Then we have
\begin{itemize}
\item $Z_{i+1}\subset Z_i$.
\item $\Sigma_{i+1}\subset \Sigma_{\delta, 3^{-i}c}(Z_i)$.
\item  $\Sigma_{i+1}\subset\Sigma_i$.
\item $\Gamma_{i+1}\subset\Gamma_i$.
\item \itemizeEqnVSpacing
\begin{equation}\label{Zip1InSBroadBroad}
Z_{i+1}\subset 1\operatorname{-SBroad}_{s}(\Phi_i)\cap(2,2)\operatorname{-Broad}_{w}(\Phi_i).
\end{equation}
\item \itemizeEqnVSpacing
\begin{equation}\label{GammaiIsBig}
|\Gamma_i(\ell)|\geq 3^{-i}c\geq c/27.
\end{equation}
\item By \eqref{nablaCondition},
\begin{equation}\label{dotProdCondition}
|(v\cdot\nabla)^jP(z)|\leq K\delta,\ j=1,2,\ \textrm{for each}\ z\in Z_i\ \textrm{and each}\ v\in \pi_S(\Phi_i(z)).
\end{equation}
\end{itemize} 

If $\Sigma_3 =\emptyset$ then define $\Sigma^{\prime\prime\prime}=\emptyset,$ $Z^{\prime\prime\prime}=\emptyset$ and $\Gamma^{\prime\prime\prime}=\emptyset$ and we are done. If not, define $Z^{\prime\prime\prime} = Z_2$, $\Sigma^{\prime\prime\prime} = \Sigma_1,$ and $\Gamma^{\prime\prime\prime} = \Gamma_2$. \eqref{ZpppIsSBroadAndBroad} follows from \eqref{Zip1InSBroadBroad}. Applying Lemma \ref{hairbrushMakesBigEntropy} to the sets $Z_3\subset Z_2$; $\Sigma_3\subset\Sigma_2;$ $\Gamma_3\subset\Gamma_2$ (equations \eqref{Zip1InSBroadBroad}, \eqref{GammaiIsBig} and \eqref{dotProdCondition} guarantee that the hypotheses of Lemma \ref{hairbrushMakesBigEntropy} are met), we conclude that 
$$
\mathcal{E}_\delta(Z^{\prime\prime\prime})=\mathcal{E}_\delta(Z_2)\gtrsim (scw\kappa)^{O(1)}\delta^{-3},
$$
which is \eqref{ZpppHasBigEntropy}. 

Finally, it remains to prove \eqref{SigmaPPPNotTooBig}. But this follows from the observation that $|N_{\delta}(Z)|\lesssim_D \delta^{-3}$, and that for each $z\in Z$,  
$$
N_{\delta}( \Phi^{\prime\prime\prime}(N_{\delta}(z))\lesssim_{D,E}(K/(us\kappa c))^{O(1)}\delta^{-1}.\qedhere
$$
\end{proof}
We are now ready to prove Theorem \ref{discretizedSeveri}. First, we will state a slightly more technical version that will be useful for applications.
\begin{techDiscretizedSeveriThm}
Let $P\in\RR[x_1,x_2,x_3,x_4]$ be a polynomial of degree $D$, and let $Z = Z(P)\cap B(0,1)$. Let $\delta,\kappa,u,s\in (0,1)$ be numbers satisfying $0<\delta<  u < s<1$ and $\delta<\kappa<1$ (if these conditions are not satisfied the theorem is still true, but it has no content). 

Define
$$
\Sigma = \{\ell\in\lines\colon |\ell\cap N_{\delta}(Z)|\geq 1 \},
$$
and let $\Sigma^\prime\subset\Sigma$ be a semi-algebraic set of complexity at most $E$. Then we can write $\Sigma^\prime = \Sigma_1\cup\Sigma_2\cup\Sigma_3\cup\Sigma_4$, where

\begin{itemize}
\item There is a collection of $O_{D,E}\big(|\log\delta|^{O(1)}s^{-2}\big)$ rectangular prisms of dimensions $2\times s\times s \times s$ so that every line from $\Sigma_1$ is covered by one of these prisms.
\item There is a collection of $O_{D,E}\big((|\log\delta|/s)^{O(1)}u^{-1}\big)$ rectangular prisms of dimensions $2\times 2\times u \times u$ so that every line from $\Sigma_2$ is covered by one of these prisms.
\item There is a collection of $O_{D,E}\big(|\log\delta|^{O(1)}\big)$ rectangular prisms of dimensions $2 \times 2 \times 2 \times \kappa$ so that every line in $\Sigma_3$ is covered by one of these prisms.
\item There is a set $\Sigma_4^\prime\subset\Sigma_4$ with 
$$
\mathcal{E}_\delta(\Sigma_4^\prime)\gtrsim_{D,E}\big(us\kappa/|\log\delta|\big)^{O(1)}\mathcal{E}_\delta(\Sigma_4)
$$ 
and a quadratic hypersurface $Q$ so that for every line $\ell^\prime\in \Sigma_4^\prime$, there is a line $\ell$ contained in $Z(Q)$ with 
$$
\operatorname{dist}(\ell,\ell^\prime)\lesssim_D \big(|\log\delta|/(us\kappa)\big)^{O(1)}\delta.
$$
\end{itemize}
\end{techDiscretizedSeveriThm}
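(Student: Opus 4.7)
The plan is to combine Propositions \ref{goodPolyProp}, \ref{FlatVarietiesNearAPlaneProp}, \ref{partitioningResult}, and \ref{discretizedSegreNonDegenProp} into a single four-fold decomposition of $\Sigma^\prime$. First, apply Proposition \ref{goodPolyProp} to $\Sigma^\prime$, producing $b\lesssim_{D,E}|\log\delta|$ triples $(\Sigma^{(j)},P_j,\Gamma^{(j)})$ with $\Sigma^\prime=\bigcup_{j=1}^b\Sigma^{(j)}$, each $P_j$ of degree at most $D$, and $\Gamma^{(j)}\subset\Gamma(N_\delta(Z_j),\Sigma^{(j)})$ for $Z_j=\{x\in Z(P_j)\cap B(0,1)\colon 1\le|\nabla P_j(x)|\le 2\}$, so that $|\Gamma^{(j)}(\ell)|\gtrsim c:=|\log\delta|^{-1}$ for every $\ell\in\Sigma^{(j)}$ and the directional derivative bound $|(v(\ell)\cdot\nabla)^iP_j(y)|\lesssim K\delta$ holds with $K=|\log\delta|^{O(1)}$ for nearby $y\in Z_j$. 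Since the theorem permits $|\log\delta|^{O(1)}$ slack in every prism count and in the $\Sigma_4$ entropy ratio, it suffices to process each $j$ in isolation and then combine by summation (for $\Sigma_1,\Sigma_2,\Sigma_3$) or pigeonhole (for $\Sigma_4$).

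Fix $j$ and split $Z_j$ into its flat locus $Z_j^{\mathrm{flat}}=\{z\colon\|\II(z)\|_\infty\le\kappa\}$ and its curved locus $Z_j^{\mathrm{curv}}=Z_j\setminus Z_j^{\mathrm{flat}}$, then partition $\Sigma^{(j)}=\Sigma^{(j)}_{\mathrm{flat}}\cup\Sigma^{(j)}_{\mathrm{curv}}$ according to which locus carries at least half of $|\Gamma^{(j)}(\ell)|$. Lines in $\Sigma^{(j)}_{\mathrm{flat}}$ are handled by Proposition \ref{FlatVarietiesNearAPlaneProp} (with $c/2$ in place of $c$), producing $O_D(|\log\delta|^{O(1)})$ prisms of dimensions $2\times 2\times 2\times\kappa$; these are absorbed into $\Sigma_3$. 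Lines in $\Sigma^{(j)}_{\mathrm{curv}}$, together with the corresponding restrictions of $Z_j$ and $\Gamma^{(j)}$, satisfy the hypotheses of Proposition \ref{partitioningResult} (in particular \eqref{nablaCondition} is exactly the output of Proposition \ref{goodPolyProp}). That proposition yields a further decomposition into three pieces: one covered by $O_{D,E}(|\log\delta|^{O(1)}s^{-2})$ prisms of dimensions $2\times s\times s\times s$ (absorbed into $\Sigma_1$), one covered by $O_{D,E}((|\log\delta|/s)^{O(1)}u^{-1})$ prisms of dimensions $2\times 2\times u\times u$ (absorbed into $\Sigma_2$), and a residual set $\Sigma^{\prime\prime\prime,(j)}$ whose associated $Z^{\prime\prime\prime,(j)},\Gamma^{\prime\prime\prime,(j)}$ are $1\operatorname{-SBroad}_s$ and $(2,2)\operatorname{-Broad}_w$ with $w\gtrsim(us\kappa/|\log\delta|)^{O(1)}$ and satisfy $\mathcal{E}_\delta(Z^{\prime\prime\prime,(j)})\gtrsim(us\kappa/|\log\delta|)^{O(1)}\delta^{-3}$, while inheriting the directional derivative bound.

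Set $\Sigma_4:=\bigcup_j\Sigma^{\prime\prime\prime,(j)}$, and pigeonhole over the $b\lesssim|\log\delta|$ values of $j$ to pick $j^\ast$ with $\mathcal{E}_\delta(\Sigma^{\prime\prime\prime,(j^\ast)})\gtrsim|\log\delta|^{-1}\mathcal{E}_\delta(\Sigma_4)$. Apply Proposition \ref{discretizedSegreNonDegenProp} to the quadruple $(P_{j^\ast},Z^{\prime\prime\prime,(j^\ast)},\Sigma^{\prime\prime\prime,(j^\ast)},\Gamma^{\prime\prime\prime,(j^\ast)})$, with broadness parameters $s,w$, curvature $\kappa$, entropy lower bound $t=(us\kappa/|\log\delta|)^{O(1)}$ (from \eqref{ZpppHasBigEntropy}), $K=|\log\delta|^{O(1)}$, and $L=\delta^{-3}/\mathcal{E}_\delta(\Sigma^{\prime\prime\prime,(j^\ast)})$, the smallest value compatible with the hypothesis. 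The output is a subset $\Sigma_4^\prime\subset\Sigma^{\prime\prime\prime,(j^\ast)}\subset\Sigma_4$ and a quadratic polynomial $Q$ such that every line of $\Sigma_4^\prime$ lies within $\lesssim_D(|\log\delta|/(us\kappa))^{O(1)}\delta$ of a line in $Z(Q)$; substituting $sw\kappa t/K\gtrsim(us\kappa/|\log\delta|)^{O(1)}$ into the loss factor $(sw\kappa t/KL)^{O(1)}\mathcal{E}_\delta(\Sigma^{\prime\prime\prime,(j^\ast)})$ and folding in the pigeonhole loss produces the claimed lower bound on $\mathcal{E}_\delta(\Sigma_4^\prime)$.

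The principal obstacle is the parameter bookkeeping in the final application, specifically tracking the $L$-dependence of Proposition \ref{discretizedSegreNonDegenProp}: the conclusion degrades precisely when the residual set is small, so one must carefully combine \eqref{SigmaPPPNotTooBig}, the choice $L=\delta^{-3}/\mathcal{E}_\delta(\Sigma^{\prime\prime\prime,(j^\ast)})$, and the accumulated $|\log\delta|^{O(1)}$ factors from Proposition \ref{goodPolyProp}, the flat--curved split, and the pigeonhole step, to verify that the final loss collapses to an $(us\kappa/|\log\delta|)^{O(1)}$ factor as required by the theorem statement.
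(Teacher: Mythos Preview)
Your approach is essentially identical to the paper's: apply Proposition~\ref{goodPolyProp}, split each $Z_j$ into its flat and curved loci, dispatch the flat lines via Proposition~\ref{FlatVarietiesNearAPlaneProp} into $\Sigma_3$, run Proposition~\ref{partitioningResult} on the curved part to peel off $\Sigma_1$ and $\Sigma_2$, and finally invoke Proposition~\ref{discretizedSegreNonDegenProp} on the residual to produce $\Sigma_4'$ and $Q$. The paper carries out exactly these steps (applying Proposition~\ref{discretizedSegreNonDegenProp} to every $j$ and then selecting the best one, rather than pigeonholing first as you do, but this is immaterial).

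There is, however, a genuine gap in your handling of the $L$-parameter. You are right to flag it as the principal obstacle, but the resolution you sketch points the wrong way: inequality~\eqref{SigmaPPPNotTooBig} is an \emph{upper} bound on $\mathcal{E}_\delta(\Sigma^{\prime\prime\prime})$, which with your choice $L=\delta^{-3}/\mathcal{E}_\delta(\Sigma^{\prime\prime\prime,(j^\ast)})$ yields only a \emph{lower} bound on $L$ and hence makes the loss factor $(sw\kappa t/KL)^{O(1)}$ worse, not better. What is actually needed is a \emph{lower} bound on $\mathcal{E}_\delta(\Sigma^{\prime\prime\prime,(j^\ast)})$, and this comes not from~\eqref{SigmaPPPNotTooBig} but from the broadness and entropy conclusions of Proposition~\ref{partitioningResult} that you already recorded. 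Since $Z^{\prime\prime\prime,(j^\ast)}\subset 1\operatorname{-SBroad}_s(\Phi^{\prime\prime\prime,(j^\ast)})$, every $z\in Z^{\prime\prime\prime,(j^\ast)}$ has $\gtrsim s/\delta$ $\delta$-separated lines of $\Sigma^{\prime\prime\prime,(j^\ast)}$ passing through $N_\delta(z)$; summing over the $\gtrsim (us\kappa/|\log\delta|)^{O(1)}\delta^{-3}$ $\delta$-separated points supplied by~\eqref{ZpppHasBigEntropy} and dividing by the $\lesssim\delta^{-1}$ such points any single line can meet gives $\mathcal{E}_\delta(\Sigma^{\prime\prime\prime,(j^\ast)})\gtrsim (us\kappa/|\log\delta|)^{O(1)}\delta^{-3}$, hence $L\lesssim(|\log\delta|/us\kappa)^{O(1)}$. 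With this bound the loss factor does collapse to $(us\kappa/|\log\delta|)^{O(1)}$ as required. The paper itself glosses over this point, so your instinct that the $L$-bookkeeping needed attention was correct; only the ingredient you reached for was the wrong one.
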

\begin{proof}
Apply Proposition \ref{goodPolyProp} to $\Sigma$ and $P$. Let $P_1,\ldots,P_b,\ \Sigma_1,\ldots,\Sigma_b,$ and $\Gamma_1,\ldots,\Gamma_b,$ $b = O_{D,E}(|\log\delta|)$ be the output from the proposition. By Item \ref{GammaJCovers} from Proposition \ref{goodPolyProp}, there exists a number $c \gtrsim_{D}|\log\delta|^{-1}$ so that $|\Gamma_{j}(\ell)|\geq c$ for each $j=1,\ldots,b$ and each $\ell\in\Sigma_j$. 

For each index $j$, define
$$
Z_j = \{ x\in Z(P_j)\cap B(0,1)\colon 1\leq |\nabla P_j(x)|\leq 2\}.
$$
By Item \ref{GammaJInZJ} from Proposition \ref{goodPolyProp}, we have that for all $\ell\in\Sigma_j$ and all $x\in\Gamma_j(\ell)$, $x\in N_{\delta}(Z_j)$. Let $\Phi_j\subset Z_j\times S^3$ be associated to $\Gamma_j$, in the sense of Definition \ref{defnAssociated}. We have that for each $z\in Z_j$ and each $v\in \Pi_S(\Phi_j(z))$,  
\begin{equation}\label{smallDirectionalDerivativeConclusion}
|(v(\ell)\cdot \nabla)^i P_j(z)| \lesssim_{D} |\log\delta|^2 \delta,\ i=1,2.
\end{equation}

Define
\begin{align*}
Z_j^\prime &= \{z\in Z_j\colon \Vert II(z)\Vert_{\infty}\leq \kappa \},\\
Z_j^{\prime\prime} &= \{z\in Z_j\colon \Vert II(z)\Vert_{\infty}> \kappa \}.
\end{align*}

Define
\begin{equation*}
\begin{split}
\Phi_j^\prime&=\{(z,v)\in \Phi_j\colon z\in Z_j^\prime\},\\
\Phi_j^{\prime\prime}&=\{(z,v)\in \Phi_j\colon z\in Z_j^{\prime\prime}\}.
\end{split}
\end{equation*}

Let $\Gamma_j^\prime$ and $\Gamma_j^{\prime\prime}$ be the pre-images of $\Phi_j^\prime$ and $\Phi_j^{\prime\prime}$, respectively, under the map from $\Phi_j\to\Gamma_j$. Define
\begin{align*}
\Sigma_j^\prime &= \{\ell\in\Sigma_j \colon |\Gamma_j^\prime(\ell)|\geq c/2\},\\
\Sigma_j^{\prime\prime} &= \{\ell\in\Sigma_j \colon |\Gamma_j^{\prime\prime}(\ell)|\geq c/2\}.
\end{align*}

Apply Proposition \ref{FlatVarietiesNearAPlaneProp} to $P_j,\ \Sigma_j^{\prime}$ and $\Gamma_j^{\prime}$. Define $\Sigma_j^{(1)}=\Sigma_j^{\prime}.$ We conclude that the lines in $\Sigma_j^{(1)}$ can be covered by $\lesssim_{D,E} (|\log\delta|/c)^{O(1)}\lesssim_{D,E}|\log\delta|^{O(1)}$ rectangular prisms of dimensions $2\times 2\times 2\times \kappa$.

Apply Proposition \ref{partitioningResult} to $P_j$, $Z_j^{\prime\prime}$, $\Sigma_j^{\prime\prime}$, and $\Gamma_j^{\prime\prime}$, with the parameters $\delta,s,u,c/2,$ and $\kappa$. We obtain a number $w\gtrsim_{D,E}(us\kappa c)^{O(1)} \gtrsim_{D,E}(us\kappa /|\log\delta|)^{O(1)}$; sets of lines $\Sigma_j^{(2)}$, $\Sigma_j^{(3)}$, and $\Sigma_j^{(4)}$; and sets $Z_j^{(4)}$ and $\Gamma_j^{(4)}$ so that $\Sigma_j^{\prime\prime}=\Sigma_j^{(2)}\cup \Sigma_j^{(3)}\cup \Sigma_j^{(4)},$ and
\begin{itemize}
\item The lines in $\Sigma_j^{(2)}$ can be covered by $O_{D,E}(c^{-O(1)}s^{-2})=O_{D,E}(|\log\delta|^{O(1)}s^{-2})$ rectangular prisms of dimensions $2\times s\times s\times s$. 
\item The lines in $\Sigma_j^{(3)}$ can be covered by $O_{D,E}((sc)^{-O(1)}u^{-1})=O_{D,E}((|\log\delta|/s)^{O(1)}u^{-1})$ rectangular prisms of dimensions $2\times 2\times u\times u$.
\item $\Sigma_j^{(4)}$, $Z_j^{(4)}$ and $\Gamma_j^{(4)}$ are semi-algebraic of complexity $O_{D,E}(1)$. If $\Phi_j^{(4)}$ is the set associated to $\Gamma_j^{(4)},$ then 
\begin{align}
&Z_j^{(4)} \subset 1\operatorname{-SBroad}_{s}(\Phi_j^{(4)}) \cap (2,2)\operatorname{-Broad}_{w}(\Phi_j^{(4)}),\\
&\mathcal{E}_\delta(Z_j^{(4)})\gtrsim_{D,E} (scu\kappa)^{O(1)}\delta^{-3}\gtrsim_{D,E} (su\kappa/|\log\delta|)^{O(1)}\delta^{-3}.
\end{align}
\end{itemize}

The sets $Z_j^{(4)}$, $\Sigma_j^{(4)}$, and $\Gamma_j^{(4)}$ satisfy the hypotheses of Proposition \ref{discretizedSegreNonDegenProp}. Thus there exists a set $(\Sigma_j^{(4)})^\prime\subset \Sigma_j^{(4)}$ and a quadratic polynomial $Q_j$ so that
\begin{equation}
\mathcal{E}_{\delta}\big((\Sigma_j^{(4)})^\prime\big)\gtrsim_{D,E}\big(su\kappa/|\log\delta|\big)^{O(1)}  \mathcal{E}_{\delta}\big(\Sigma_j^{(4)}\big),
\end{equation}
and for every $\ell^\prime\in(\Sigma_j^{(4)})^\prime$, there is a line $\ell\subset Z(Q_j)$ with 
$$
\operatorname{dist}(\ell,\ell^\prime)\lesssim \big(su\kappa/|\log\delta|\big)^{-O(1)}\delta.
$$

For $i=1,2,3,4$, define  
$$
\Sigma_i  = \bigcup_{j=1}^b\Sigma_j^{(i)}.
$$
Then $\Sigma=\Sigma_1\cup\Sigma_2\cup\Sigma_3\cup\Sigma_4$, and the sets $\Sigma_1,\Sigma_2,\Sigma_3,$ and $\Sigma_4$ satisfy the conclusions of Theorem \ref{discretizedSeveri} (the set $\Sigma_4^\prime$ is the set of the form $(\Sigma_j^{(4)})^\prime$ that maximizes $\mathcal{E}_\delta\big((\Sigma_j^{(4)})^\prime\big)\ $).
\end{proof}

\section{Proof of Corollary \ref{mainThm}}\label{proofOfMainCorSection}
Corollary \ref{mainThm} will be proved by combining induction on scale and re-scaling arguments. First, we will state a variant of Corollary \ref{mainThm} that is more amenable to induction. 

\begin{prop}\label{mainProp}
For each $D,\eps>0$, there exists a constant $C_{D,\eps}$ so that the following holds for all $0<\delta\leq 1$.

Let $P\in\RR[x_1,\ldots,x_4]$ be a polynomial of degree at most $D$ and let $Z= Z(P)\cap B(0,1)$.
Then
$$
\mathcal{E}_{\delta}\big(v(\Sigma_{\delta}(Z))\big) \big) \leq C_{D,\eps}\delta^{-2-\eps}.
$$
\end{prop}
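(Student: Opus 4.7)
My plan is to induct on $\delta$: the base case $\delta \geq \delta_0(\eps,D)$ follows by choosing $C_{D,\eps}$ sufficiently large, so I focus on the inductive step, assuming the bound at all scales $\delta' > \delta$. The approach is to apply the technical version of Theorem \ref{discretizedSeveri} to a carefully chosen semi-algebraic subset $\Sigma^\prime \subset \Sigma_\delta(Z)$ whose direction image coincides with that of $\Sigma_\delta(Z)$. Applying Lemma \ref{selPtFiberProp} to the direction map $v: \Sigma_\delta(Z) \to S^3$, I obtain such a $\Sigma^\prime$ of complexity $O_D(1)$ on which $v$ is injective. I then apply Theorem \ref{discretizedSeveri} to $\Sigma^\prime$ with $s = u = \kappa = |\log\delta|^{-C}$ for a large constant $C = C(\eps, D)$ to be chosen, producing $\Sigma^\prime = \Sigma_1\cup\Sigma_2\cup\Sigma_3\cup\Sigma_4$, and I bound $\mathcal{E}_\delta(v(\Sigma_i))$ for each piece.

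For $i \in \{1,2,3\}$ I argue by re-scaling and induction. For each prism $R$ covering part of $\Sigma_i$, let $T$ be the affine map sending $R$ to the unit box and let $a \in \{s,u,\kappa\}$ denote the shortest dimension of $R$. The pulled-back variety $T(Z)\cap B(0,1)$ is defined by a polynomial of degree $D$, and a line of $\Sigma_i$ covered by $R$ rescales (after normalization) to a line in $\Sigma_{\delta/a,\,c}(T(Z)\cap B(0,1))$ for a constant $c$ of the right order. The inductive hypothesis at scale $\delta/a > \delta$ bounds the $(\delta/a)$-covering of rescaled directions by $C_{D,\eps}(\delta/a)^{-2-\eps}$. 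To pass back, observe that the direction transform $v \mapsto Tv/|Tv|$ has inverse Lipschitz of constant $1$ in the long coordinates of $R$ and of constant $a$ in the short coordinates, so a $(\delta/a)$-ball in rescaled direction space pulls back to a rectangular set of width $\delta/a$ in each long coordinate of $R$ and $\delta$ in each short one. Counting $\delta$-separated points in this set, multiplying by the number of prisms (respectively $|\log\delta|^{O(1)}s^{-2}$, $(|\log\delta|/s)^{O(1)}u^{-1}$, and $|\log\delta|^{O(1)}$), and simplifying yields a total contribution of the form $C_{D,\eps}|\log\delta|^{O(1)}a^{\eps}\delta^{-2-\eps}$, which is $\leq \tfrac{1}{8}C_{D,\eps}\delta^{-2-\eps}$ once $C$ is taken sufficiently large.

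For $i = 4$, the theorem gives $\Sigma_4^\prime \subset \Sigma_4$ with $\mathcal{E}_\delta(\Sigma_4^\prime)\gtrsim|\log\delta|^{-O(1)}\mathcal{E}_\delta(\Sigma_4)$ and a quadratic hypersurface $Q$ such that every line of $\Sigma_4^\prime$ lies within $A\delta = |\log\delta|^{O(1)}\delta$ of a line in $Z(Q)$. A real analog of Corollary \ref{SeveriCor} (obtainable by complexifying $Q$) shows that the directions of lines contained in the real quadric $Z(Q)\subset\RR^4$ form a semi-algebraic set $D_Q\subset S^3$ of dimension at most two and of complexity $O(1)$. Hence $v(\Sigma_4^\prime)\subset N_{A\delta}(D_Q)$ has $\delta$-covering number at most $|\log\delta|^{O(1)}\delta^{-2}$ by Lemma \ref{coveringNumberSemiAlg}. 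To upgrade this into a bound on $\mathcal{E}_\delta(v(\Sigma_4))$ itself, I iterate: replace $\Sigma_4$ by the semi-algebraic subset obtained by removing the semi-algebraic tube $\{\ell \colon \dist(\ell,Z(Q))\leq A\delta\}$ and reapply the theorem. After $|\log\delta|^{O(1)}$ iterations the set is exhausted, and summing the per-iteration contributions of $|\log\delta|^{O(1)}\delta^{-2}$ gives $\mathcal{E}_\delta(v(\Sigma_4))\leq\tfrac{1}{8}C_{D,\eps}\delta^{-2-\eps}$ for $\delta$ small enough. Summing the four cases closes the induction.

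The main obstacle will be the iterative treatment of $\Sigma_4$: one must verify that the complexity parameter $E$ remains polylogarithmic in $\delta$ across the $|\log\delta|^{O(1)}$ iterations (since the bounds in Theorem \ref{discretizedSeveri} depend on $E$), and that subtracting the semi-algebraic ``near-quadric'' overestimate at each step provably decreases the line-covering number by a polylogarithmic factor rather than leaving it unchanged. Resolving this likely requires formulating the iteration in terms of a packing (rather than covering) version of the theorem, so that each round makes quantitative progress; the remaining bookkeeping is routine once these two points are settled.
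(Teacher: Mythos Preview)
Your treatment of $\Sigma_1,\Sigma_2,\Sigma_3$ matches the paper's exactly: rescale each prism, apply the induction hypothesis at the coarser scale, and check that the gain of $a^\eps$ (with $a\in\{s,u,\kappa\}$) beats the polylogarithmic losses once $s,u,\kappa$ are chosen as suitable negative powers of $|\log\delta|$. The paper packages this rescaling step as a separate lemma (Lemma~\ref{rescaleT}), but the content is the same.

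The genuine difference is in your handling of $\Sigma_4$. You propose to iterate Theorem~\ref{discretizedSeveri}, peeling off one quadric at a time, and you correctly flag the two obstructions: the complexity parameter $E$ grows with each iteration (and the implicit $O_{D,E}(1)$ constants in the theorem are not quantified as functions of $E$, so even polylogarithmic growth in $E$ could be fatal), and covering numbers are not additive, so removing a subset with large $\mathcal{E}_\delta$ need not decrease $\mathcal{E}_\delta$ of the remainder. These are real gaps, not just bookkeeping.

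The paper avoids iteration entirely by reading the conclusion of Theorem~\ref{discretizedSeveri} the other way. The inequality $\mathcal{E}_\delta(\Sigma_4')\gtrsim (us\kappa/|\log\delta|)^{O(1)}\mathcal{E}_\delta(\Sigma_4)$ is not merely ``a positive fraction of $\Sigma_4$ lies near $Q$''; inverted, it says
\[
\mathcal{E}_\delta(\Sigma_4)\ \lesssim\ \big(|\log\delta|/(us\kappa)\big)^{O(1)}\,\mathcal{E}_\delta(\Sigma_4').
\]
Since the direction map $v$ is $1$-Lipschitz on $\lines$, one has $\mathcal{E}_\delta(v(\Sigma_4))\leq \mathcal{E}_\delta(\Sigma_4)$, so a single upper bound on $\mathcal{E}_\delta(\Sigma_4')$ (via the quadric $Q$) already controls $\mathcal{E}_\delta(v(\Sigma_4))$ with only polylogarithmic loss. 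No iteration, no complexity growth, no packing reformulation needed. This is the step you are missing; once you use it, the ``main obstacle'' paragraph in your proposal disappears.
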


\subsection{Re-scaling arguments}
\begin{lem}\label{rescaleT}
Fix $D$ and $\eps>0$ and let $\delta>0$. Suppose that Proposition \ref{mainProp} holds for all values of $\delta^\prime$ with $\delta<\delta^\prime\leq 1$, and let $C_{D,\eps}$ be the associated constant. Let $P$ be a polynomial of degree at most $D$, and let $Z = Z(P)\cap B(0,1)$. Let $R$ be a rectangular prism (of arbitrary orientation) that has $1\leq d\leq 3$ ``long'' directions and $4-d$ ``short'' directions; suppose that $R$ has length $2$ in the long directions and length $t$ in the short directions (i.e. inside $B(0,1)$,  $R$ is comparable to the $t$--neighborhood of a $d$-dimensional affine hyperplane). Then 
\begin{equation}\label{rescaledDirections}
\mathcal{E}_\delta\big( v(\Sigma_\delta(Z\cap R))\big) \leq C\cdot C_{D,\eps} t^{3-d+\eps}\delta^{-2-\eps},
\end{equation}
where $C$ is an absolute constant.
\end{lem}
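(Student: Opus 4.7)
The proof is a rescaling argument that reduces to Proposition \ref{mainProp} applied at scale $\delta/t$. We may restrict to the regime $\delta\leq t\leq 1$: the case $t\sim 1$ is immediate from Proposition \ref{mainProp} applied to $Z$ directly (since $\Sigma_\delta(Z\cap R)\subset\Sigma_\delta(Z)$), while $t<\delta$ can be absorbed into the endpoint $t=\delta$ by replacing $R$ with its $\delta$-thickening. After a rigid motion (which does not affect direction covering numbers up to absolute constants), I would place $R$ in axis-aligned standard position $R=[-1,1]^d\times[-t,t]^{4-d}$, and introduce the linear map $T=\mathrm{diag}(1,\ldots,1,1/t,\ldots,1/t)$ with $d$ ones followed by $4-d$ copies of $1/t$. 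Then $T(R)=[-1,1]^4$, and $T(Z\cap R)=Z(\tilde P)\cap T(R)$, where $\tilde P=P\circ T^{-1}$ is again a polynomial of degree at most $D$.

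The first step is to show that lines survive the rescaling in an appropriate sense. For any $\ell\in\Sigma_\delta(Z\cap R)$, the condition $|\ell\cap N_\delta(Z\cap R)|\geq 1$ forces $|\ell\cap R|\geq 1$, which in turn forces $|v_S|\lesssim t$ when one writes $v(\ell)=(v_L,v_S)\in\RR^d\times\RR^{4-d}$. Hence $|Tv(\ell)|\sim 1$, and since $T$ dilates the short directions by $1/t$ one has $T\big(N_\delta(Z\cap R)\big)\subset N_{\delta/t}\big(T(Z\cap R)\big)$. After rescaling the ambient box $[-1,1]^4$ into $B(0,1)$ by an absolute constant, this yields $T\ell\in\Sigma_{c_1\delta/t}\big(Z(\tilde P)\cap B(0,1)\big)$ for some absolute constant $c_1>0$. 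Applying Proposition \ref{mainProp} at the allowed scale $\delta/t\in(\delta,1]$ then gives
\begin{equation}\label{rescaleT:keyBound}
\mathcal{E}_{\delta/t}\Big(v\big(\Sigma_{c_1\delta/t}(Z(\tilde P)\cap B(0,1))\big)\Big)\leq C_{D,\eps}(\delta/t)^{-2-\eps}.
\end{equation}

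The final step converts \eqref{rescaleT:keyBound} into a bound at scale $\delta$ on the original directions. Parameterize the relevant portion of $S^3$ by $(u,v_S)\in S^{d-1}\times\RR^{4-d}$, so that admissible directions lie in the slab $|v_S|\lesssim t$ and the rescaling acts as the affine map $(u,v_S)\mapsto(u,v_S/t)$. The preimage of a single $\delta/t$-ball in the rescaled direction space is then a box of side $\delta/t$ in each of the $d-1$ $u$-directions and side $\delta$ in each of the $4-d$ $v_S$-directions; such a box contains at most $\lesssim t^{-(d-1)}$ points that are $\delta$-separated in the original metric, since each long factor accommodates $\sim(\delta/t)/\delta=1/t$ many $\delta$-separated points while each short factor admits only $O(1)$. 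Combining with \eqref{rescaleT:keyBound} gives
\[
\mathcal{E}_\delta\big(v(\Sigma_\delta(Z\cap R))\big)\lesssim t^{-(d-1)}\cdot C_{D,\eps}(\delta/t)^{-2-\eps}=C\cdot C_{D,\eps}\cdot t^{3-d+\eps}\delta^{-2-\eps},
\]
which is the required estimate. The main subtlety is precisely this anisotropic direction count: one must verify that the rescaling becomes affine in the correct local coordinates on $S^3$ and carefully track its effect on $\delta$-separation so that the absolute constant in the final inequality is genuinely independent of $\delta$ and $t$. Once the coordinates and the scaling of $|Tv|$ are set up correctly, the rest of the argument is an essentially routine change of variables.
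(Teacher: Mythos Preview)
Your proposal is correct and follows essentially the same rescaling strategy as the paper's proof: normalize $R$ to axis-aligned position, apply the anisotropic dilation $T$ that stretches the short directions by $1/t$, observe that lines in $\Sigma_\delta(Z\cap R)$ map to lines in $\Sigma_{O(\delta/t)}$ of the rescaled variety, invoke Proposition \ref{mainProp} at scale $\delta/t$, and then account for the factor $t^{-(d-1)}$ coming from the mismatch between $\delta$-separation before rescaling and $\delta/t$-separation after. The only organizational difference is that the paper first \emph{thins} a $\delta$-separated set $\lines_1$ in the $d-1$ long transverse coordinates to obtain $\lines_2$ with $|\lines_2|\gtrsim t^{d-1}|\lines_1|$ whose images are $\delta/t$-separated, whereas you run the same count in reverse by bounding how many $\delta$-separated directions can land in a single $\delta/t$-ball after rescaling; these are two phrasings of the identical estimate.
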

\begin{proof}
Let $H$ be a $d$-dimensional hyperplane with $N_{t}(H)\cap B(0,1)$ comparable to $R\cap B(0,1)$. After applying a translation, we can assume that $H$ contains the origin. First, we can assume $t\leq 1/100$, or the theorem is trivial. Second, we can assume that $H$ makes an angle $\leq 1/5$ with the vector $e_1$, since otherwise $\Sigma_\delta(Z\cap N_{t}(H))$ is empty. Apply a rotation so that $H$ is the $d$-dimensional hyperplane given by $x_{d+1}=0,\ldots,x_{4}=0$. After applying this rotation, it is still true that every line in $\Sigma_\delta(Z\cap N_{t}(H))$ makes an angle $\leq 1/3$ with the $e_1$ direction. Note that the map 
\begin{equation*}
\begin{split}
\{v\in S^4\subset\RR^4\colon \angle(v,e_1)\leq 1/3\} & \to \RR^3,\\
(v_1,v_2,v_3,v_4)&\mapsto (v_2,v_3,v_4),
\end{split}
\end{equation*}
is bi-Lipschitz with constant $\sim 1$. In particular, if $\ell,\ell^\prime\in \Sigma_\delta(Z\cap N_{t}(H))$ with $\angle(\ell,\ell^\prime)\geq\delta$, and if $v=v(\ell),\ v^\prime=v(\ell^\prime)$, then $\max(|v_2-v_2^\prime|,\ |v_3-v_3^\prime|,\ |v_4-v_4^\prime|)\gtrsim\delta$. Note as well that if $\ell\in \Sigma_\delta(Z\cap N_{t}(H))$ and if $v=v(\ell)$, then $|v_{d+1}|,\ldots,|v_{4}|\lesssim t$. 

Let $f\colon\RR^4\to\RR^4$ be the linear map that dilates $x_{d+1},\ldots,x_{4}$ by a factor of $1/t$ and leaves $x_{1},\ldots,x_d$ unchanged. Observe that if $\ell\in \Sigma_\delta(Z\cap N_{t}(H))$, and if $v=v(\ell),\ \tilde v = v(f(\ell))$, then $v_i\sim \tilde v_i/t$ for $i=d+1,\ldots,4$, and $v_i\sim \tilde v_i$ for $i=2,\ldots, d$. Thus if $\ell,\ell^\prime\in \Sigma_\delta(Z\cap N_{t}(H))$ with $\angle(\ell,\ell^\prime)\geq\delta$, and if $\tilde v = v(f(\ell))$ and $\tilde v^\prime = v(f(\ell^\prime))$, then 

\begin{equation}\label{spacingOfTildeV}
\max(|\tilde v_2-\tilde v_2^\prime|,\ldots, |\tilde v_{d}-\tilde v_{d}^\prime|,\ t|\tilde v_{d+1}-\tilde v_{d+1}^\prime|,\ldots,t|\tilde v_4-\tilde v_4^\prime|)\gtrsim \delta.
\end{equation}

Let $\lines_1\subset \Sigma_\delta(Z\cap N_{t}(H))$ be a set of lines pointing in $\delta$-separated directions. We will ``thin out'' the set of lines in $\lines_1$ by a factor of $t^{-1}$ in each of the $d-1$ directions $e_{2},\ldots,e_d$. More precisely, let $\lines_2\subset\lines_1$ be a set of lines with $|\lines_2|\gtrsim t^{d-1}|\lines_1|$, so that if $\ell,\ell^\prime\in \lines_2$ are distinct, and if $v=v(\ell),\ v^\prime=v(\ell^\prime)$, then at least one of $t|v_2-v_2^\prime|,\ldots, t|v_{d}- v_{d}^\prime|,$ or at least one of $| v_{d+1}- v_{d+1}^\prime|,\ldots,| v_4- v_4^\prime|)$ is $\gtrsim \delta.$

By \eqref{spacingOfTildeV}, we have that if $\ell,\ell^\prime\in \lines_2$, then $\angle\big(v(f(\ell)),\ v(f(\ell^\prime))\big)\geq\delta/t$. For each $\ell\in\lines_2$, we have that 
$$
f(\ell)\in\Sigma_{\delta/t}\big( B(0,1) \cap f(Z\cap N_{t}(H))\big).
$$
Applying Proposition \ref{mainProp} with $\delta^\prime = \delta/t$ and the same values of $D$ and $\eps$ as above, we conclude that
$$
|\lines_2|\leq C_{D,\eps} (\delta/t)^{-2-\eps},
$$
and thus
$$
|\lines_1|\lesssim C_{D,\eps} t^{3-d+\eps} \delta^{-2-\eps}.
$$

Since $\lines_1$ was an arbitrary set of lines in $\Sigma_\delta(Z\cap N_{t}(H))$ pointing in $\delta$-separated directions, we obtain \eqref{rescaledDirections}.
\end{proof}

\subsection{Proof of Proposition  \ref{mainProp}}
\begin{proof}
For each fixed value of $D$ and $\eps$, we will prove Proposition \ref{mainProp} by induction on $\delta$. Fix $D$ and $\eps$, and suppose that Proposition  \ref{mainProp} has been proved for all $\delta<\delta^\prime\leq 1$; let $C_{D,\eps}$ be the corresponding constant. We will show that if $C_{D,\eps}$ is sufficiently large (depending only on $D$ and $\eps$), then Proposition  \ref{mainProp} holds for $\delta$.

Let $s,\kappa,$ and $u$ be parameters that will be determined below; for the impatient reader, $s,\kappa$ and $u$ will be of size roughly $|\log\delta|^{-O_\eps}(1)$. Let $P$ be a polynomial of degree at most $D$. Apply Lemma \ref{selPtFiberProp} to the map $v\colon \Sigma_{\delta}(Z)\to S^3$ to select a semi-algebraic set $\Sigma\subset \Sigma_{\delta}(Z)$ whose lines point in different directions. We have that $\Sigma\subset S^3$ is a semi-algebraic set of complexity $O_D(1)$, and 
$$
\mathcal{E}_\delta\big(v(\Sigma)\big)=\mathcal{E}_\delta\big( v(\Sigma_{\delta}(Z))\big).
$$
Apply Theorem \ref{discretizedSeveri} to $Z(P)$ and $\Sigma$, and let $\Sigma_1,\Sigma_2,\Sigma_3,\Sigma_4$ be the resulting sets of lines. 

{\bf 1.} The lines in $\Sigma_1$ can be covered by $O_{D}(|\log\delta|^{O(1)}s^{-2})$ rectangular prisms of dimensions $2\times s\times s\times s$. Applying Lemma \ref{rescaleT} to each of these prisms, we conclude that
\begin{align*}
\mathcal{E}_\delta\big(v(\Sigma_1)\big) & \lesssim_{D} \Big(|\log\delta|^{O(1)}s^{-2}\Big) C_{D,\eps} s^{2+\eps} \delta^{-2-\eps}\\
&\leq \Big(C_{D}|\log\delta|^{O(1)} s^{\eps}\Big) C_{D,\eps}\delta^{-2-\eps}.
\end{align*}
Thus there exist constants $c_1>0$ and $C_1$, depending only on $D$ and $\eps$, so that if we define $s= c_1 |\log\delta|^{-C_1}$ then 
\begin{equation}\label{Sigma1Small}
\mathcal{E}_\delta\big(v(\Sigma_1)\big) \leq\frac{C_{D,\eps}}{4}\delta^{-2-\eps}.
\end{equation}

{\bf 2.} The lines in $\Sigma_2$ can be covered by $O_{D}\big((|\log\delta|/s)^{O(1)}u^{-1}\big)$ rectangular prisms of dimensions $2\times 2\times u\times u$. Applying Lemma \ref{rescaleT} to each of these prisms, we conclude that
\begin{align*}
\mathcal{E}_\delta\big(v(\Sigma_2)\big) & \lesssim_{D} \Big((|\log\delta|/s)^{O(1)}u^{-1}\Big) C_{D,\eps}u^{1+\eps} \delta^{-2-\eps}\\
&\leq \Big(C_{D}|\log\delta|^{O(1)}s^{-O(1)} u^{\eps}\Big) C_{D,\eps}\delta^{-2-\eps}.
\end{align*}
Thus there exist constants $c_2>0$ and $C_2$, depending only on $D$ and $\eps$ (also on $c_1$ and $C_1$, but this depends only on $D$ and $\eps)$, so that if we define $u= c_2 |\log\delta|^{-C_2}$ then 
\begin{equation}\label{Sigma2Small}
\mathcal{E}_\delta\big(v(\Sigma_2)\big) \leq\frac{C_{D,\eps}}{4}\delta^{-2-\eps}.
\end{equation}

{\bf 3.} The lines in $\Sigma_3$ can be covered by $O_{D}\big(|\log\delta|^{O(1)}\big)$ rectangular prisms of dimensions $2\times 2\times 2\times \kappa$. Applying Lemma \ref{rescaleT} to each of these prisms, we conclude that
\begin{align*}
\mathcal{E}_\delta\big(v(\Sigma_3)\big) & \lesssim_{D} \Big(|\log\delta|^{O(1)}\Big) C_{D,\eps} \kappa^{\eps} \delta^{-2-\eps}\\
&\leq \Big(C_{D}|\log\delta|^{O(1)} \kappa^{\eps}\Big) C_{D,\eps}\delta^{-2-\eps}.
\end{align*}
Thus there exist constants $c_3>0$ and $C_3$, depending only on $D$ and $\eps$, so that if we define $\kappa= c_3 |\log\delta|^{-C_3}$ then 
\begin{equation}\label{Sigma3Small}
\mathcal{E}_\delta\big(v(\Sigma_3)\big) \leq\frac{C_{D,\eps}}{4}\delta^{-2-\eps}.
\end{equation}

{\bf 4.} Finally, there exists a set $\Sigma_4^\prime\subset\Sigma_4$ and a quadratic polynomial $Q$ so that $\mathcal{E}_\delta(\Sigma_4^\prime)\gtrsim_{D} (s\kappa u/|\log\delta|)^{O(1)}\mathcal{E}_\delta(\Sigma_4)$, and for every line $\ell^\prime\in\Sigma_4^\prime$, there is a line $\ell$ contained in $Z(Q)$ with $\operatorname{dist}(\ell,\ell^\prime)\lesssim_D \big(|\log\delta|/(s\kappa u)\big)^{O(1)}\delta.$ Note that for every quadratic polynomial $Q$, we have 
$$
\mathcal{E}_{\delta}\Big(\big\{ \ell\in\lines\colon\ \textrm{there exists}\ \ell^\prime\subset Z(Q)\ \textrm{with}\ \operatorname{dist}(\ell,\ell^\prime)\lesssim_D (s\kappa u)^{-O(1)}\delta\big\}\Big)\lesssim_{D} (s\kappa u)^{-O(1)}\delta^{-2},
$$
and thus
$$
\mathcal{E}_{\delta}\big(v(\Sigma_4^\prime)\big)  \lesssim_{D} \big(|\log\delta|/s\kappa u\big)^{-O(1)}\delta^{-2},
$$
so
$$
\mathcal{E}_\delta\big(v(\Sigma_4)\big)\lesssim_D |\log\delta|^{O(1)}(s\kappa u)^{-O(1)}\delta^{-2}.
$$ 
Thus there exist constants $c_4>0$ and $C_4$, depending only on $D$ and $\eps$ (also on $c_1,c_2,c_3,C_1,C_2,C_3),$ but these in turn only depend on $D$ and $\eps$) so that

$$
\mathcal{E}_\delta\big(v(\Sigma_4)\big)\leq c_4 |\log\delta|^{-C_4}\delta^{-2}.
$$
If $C_{D,\eps}$ is sufficiently large (depending only on $D$ and $\eps$), then $c_4 |\log\delta|^{-C_4}\leq \frac{C_{D,\eps}}{4}\delta^{-2-\eps}$, and thus 
\begin{equation}\label{Sigma4Small}
\mathcal{E}_\delta\big(v(\Sigma_4)\big) \leq\frac{C_{D,\eps}}{4}\delta^{-2-\eps}.
\end{equation}
Combining \eqref{Sigma1Small}, \eqref{Sigma2Small}, \eqref{Sigma3Small} and \eqref{Sigma4Small}, we conclude that
$$
\mathcal{E}_\delta\big(v(\Sigma_\delta(Z))\big)\leq C_{D,\eps}\delta^{-2-\eps}.
$$
This completes the induction and concludes the proof. 
\end{proof}

\section{Improved Kakeya estimates in $\mathbb{R}^4$}\label{KakeyaSection}
In this section, we will show that Corollary \ref{mainThm} implies Theorem \ref{maximalFnThm}. First, we will recall several definitions from \cite{GZ}. Throughout this section, a $\delta$-tube is the $\delta$-neighborhood of a unit line segment contained in $B(0,1)$. 

\begin{defn}[Two-ends condition]
Let $\tubes$ be a set of $\delta$-tubes. For each $T\in\tubes$, let $Y(T)\subset T$. We say that the tubes in $\tubes$ satisfy the two-ends condition with exponent $\rho$ and error $\alpha$ if for all $T\in\tubes$ and for all balls $B(x,r)$ of radius $r$, we have
\begin{equation}
|Y(T)\cap B(x,r)|\leq \alpha r^{\rho}|Y(T)|.
\end{equation}
\end{defn}  

\begin{defn}[Robust transversality condition]
Let $\tubes$ be a set of $\delta$-tubes. For each $T\in\tubes$, let $Y(T)\subset T$. We say that $\tubes$ is $\beta$--robustly transverse if for all $x\in\RR^4$ and all vectors $v$, we have
\begin{equation}\label{quantTrans}
|\{T\in\tubes \colon x\in Y(T),\ \angle(T,v)< \beta \}|\leq \frac{1}{100} |\{T\in\tubes\colon x\in Y(T)\}|.
\end{equation}
\end{defn}

\begin{defn}
Let $\tubes$ be a set of $\delta$-tubes. We say that $\tubes$ satisfies the linear Wolff axioms if for every rectangular prism $R$ of dimensions $1\times t_1\times t_2\times t_3$, at most $100 t_1t_2t_3\delta^{-3}$ tubes from $\tubes$ can be contained in $R$.
\end{defn}

With these two definitions, we can now state Proposition 6.2 from \cite{GZ}:
\begin{prop}[\cite{GZ}, Proposition 6.2]\label{GZProp}
For each $\epsilon>0$ and $\rho>0$ there exist constants $c$, $C$, and $D$ so that the following holds. Let $\tubes$ be a set of $\delta$--tubes in $\RR^4$.  Suppose that $\tubes$ satisfies the linear Wolff axioms. Suppose furthermore that for every integer $1\leq E\leq D,$ for every polynomial $P\in\RR[x_1,x_2,x_3,x_4]$ of degree $E$, for every ball $B(x,r)$ of radius $r$, and for every $w>0$, we have
\begin{equation}\label{fewTubesInAHypersurface}
|\{T\in\tubes\colon T\cap B(x,r)\subset N_{10\delta}(Z)\}|\leq K_{E,w} \ r^{-1}\delta^{-2-w}.
\end{equation}

For each $T\in\tubes$, let $Y(T)\subset T$ with $\lambda\leq |Y(T)|/|T|\leq 2\lambda$. Suppose that $(\tubes,Y)$ is $s$--robustly transverse and that each tube $T\in\tubes$ satisfies the two-ends condition with exponent $\rho$ and error $\alpha$. Then 
\begin{equation} \label{volumeBoundExplicitTrans}
\Big| \bigcup_{T\in\tubes}Y(T)\Big| \geq c_{s} \alpha^{-C}\lambda^{3+1/28} K^{-1} \delta^{1-1/28+\epsilon}\big(\delta^3|\tubes|\big),
\end{equation}
where $ K=\max_{1\leq E\leq D}{K_E}$.
\end{prop}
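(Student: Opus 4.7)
\medskip

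\noindent\textbf{Proof proposal.}
The plan is to prove the volume lower bound by an outer induction on $\delta^3|\tubes|$ (the ``normalized'' number of tubes) combined with an inner induction on scale, using polynomial partitioning as the main decomposition tool. Fix $\epsilon,\rho$. Choose $D=D(\epsilon,\rho)$ and a large auxiliary degree $D_0=D_0(\epsilon,\rho)$ (with $D_0\ll D$), and assume the conclusion for all configurations with strictly smaller $\delta^3|\tubes|$. Two base cases are handled first: if $\delta^3|\tubes|$ is very small, a trivial counting via the two-ends condition suffices; if $\tubes$ has a ``thin'' sub-configuration concentrated in a ball $B(x,r)$ with $r$ much smaller than $1$, one rescales $B(x,r)$ to the unit ball (which multiplies $\delta$ by $1/r$ but preserves the hypotheses up to constants) and applies the induction on scale. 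This setup is modelled directly on the standard Dvir--Katz--Guth polynomial method framework.

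For the main step I would invoke polynomial partitioning at degree $D_0$: there exists a nonzero $P\in\RR[x_1,\ldots,x_4]$ of degree $\leq D_0$ such that $\RR^4\setminus Z(P)$ has $\sim D_0^4$ open ``cells,'' each intersected by at most $O(D_0^{-4})|\tubes|$ tubes. Each tube either (a)~lies entirely in one cell (\emph{cellular}); or (b)~crosses many cells, in which case a standard B\'ezout argument shows it must sit in the $10\delta$-neighborhood of $Z(P)$ (\emph{algebraic}). In the cellular-dominated case, apply the inductive hypothesis within each cell (with $|\tubes|$ replaced by $D_0^{-4}|\tubes|$, and $K$ replaced by $K$ since the hypothesis on tubes-in-hypersurfaces is scale-invariant), and sum: the loss $D_0^{-4}$ per cell is offset by the factor $D_0^4$ of cells, and the induction closes provided $D_0$ is chosen large enough in terms of $\epsilon$.

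The crux is the algebraic case, where all but a small fraction of tubes lie in $N_{10\delta}(Z(P))$. Refine the shading $Y(T)$ to its intersection with $N_{10\delta}(Z(P))$ (preserving two-ends and transversality up to a controlled loss in $\alpha$). Now split these boundary tubes by their angle $\theta$ with the tangent plane to the smooth locus of $Z(P)$: for \emph{transverse} tubes (say $\theta\gtrsim\delta^{1/2}$), each tube meets $N_{10\delta}(Z(P))$ in length $\lesssim\delta^{1/2}/\theta$, and the two-ends condition forces the shading $Y(T)$ to still fit into such a short piece inside some small ball $B(x,r)$; one then applies the hypothesis \eqref{fewTubesInAHypersurface} at the rescaled scale to cap the count of transverse tubes. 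For \emph{tangential} tubes (the main case) I would use a ``grain'' decomposition: partition $N_{10\delta}(Z(P))$ into rectangular slabs of dimensions roughly $s\times s\times s\times \delta/s$ straightening the surface on scale $s$, run a Wolff-type hairbrush argument \emph{inside each three-dimensional grain}, and use \eqref{fewTubesInAHypersurface} at degree $D$ (applied to the auxiliary surfaces that arise inside a grain) to prevent secondary degeneracies of the hairbrush.

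The main obstacle is the bookkeeping in the tangential case. Within each grain, the hairbrush yields a lower bound of the form $\lambda^{a} s^{b} \delta^{c}\,(\delta^3|\tubes|)$, where the exponents $a,b,c$ come from (i)~Wolff's three-dimensional hairbrush gain inside the grain, (ii)~the loss from grain thickness $\delta/s$, (iii)~the two-ends parameter $\rho$, and (iv)~the transversality parameter $s$; the hypothesis \eqref{fewTubesInAHypersurface} is what controls the number of tubes in any subgrain. Optimizing $s$ against these competing powers is where the exponent $1/28$ materializes; I expect this to be the delicate numerical computation, and it is essential that the induction hypothesis close up with the same exponent $3+1/28$ when combined with the cellular contribution. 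The polynomial degree $D$ (in the hypothesis) must be taken larger than $D_0$ so that the algebraic surfaces arising inside grains are still covered by \eqref{fewTubesInAHypersurface}; the constants $c_s$ and $\alpha^{-C}$ absorb the losses from transversality and two-ends in each step.
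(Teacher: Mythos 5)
This proposition is not proved in the paper at all: it is quoted verbatim from \cite{GZ} (Proposition 6.2 there), and the only ``proof'' given here is the citation. So you are attempting to reconstruct from scratch a result whose proof occupies a substantial portion of a separate paper, and the reconstruction as written has genuine gaps. The most important one is that the entire quantitative content of the statement --- the exponent $3+1/28$ --- is explicitly deferred: you write that ``optimizing $s$ against these competing powers is where the exponent $1/28$ materializes'' and that you ``expect this to be the delicate numerical computation.'' That computation \emph{is} the proposition; without it you have a plan, not a proof. Relatedly, you never verify that the induction closes with the stated dependence on $\lambda$, $\alpha$, and $K$, and the factor $K^{-1}$ in \eqref{volumeBoundExplicitTrans} (which records exactly how the polynomial Wolff axiom enters) is never tracked through your argument.

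Several intermediate steps are also shaky as stated. In the cellular case, the bound \eqref{volumeBoundExplicitTrans} is linear in $\delta^3|\tubes|$, so the usual ``$D_0^{-4}$ tubes per cell times $D_0^4$ cells'' bookkeeping does not by itself produce a gain that closes an induction; one needs the correct accounting of how many cells a single tube enters (about $D_0$, by B\'ezout) versus how the per-cell volume bounds add, and this is precisely where such arguments either close or fail. In the algebraic case, a tube meeting $Z(P)$ transversally at angle $\theta$ intersects $N_{10\delta}(Z(P))$ in length $\sim\delta/\theta$, not $\delta^{1/2}/\theta$, and hypothesis \eqref{fewTubesInAHypersurface} only counts tubes with $T\cap B(x,r)\subset N_{10\delta}(Z)$, i.e.~tangential tubes, so it cannot be ``applied at the rescaled scale to cap the count of transverse tubes'' in the way you describe. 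Finally, the grain-plus-hairbrush step in the tangential case is exactly the heart of the Guth--Zahl argument (where planiness, the two-ends condition, and the polynomial Wolff axiom interact), and asserting that a ``Wolff-type hairbrush inside each grain'' works, without specifying how the grains are produced or how the polynomial Wolff axiom controls the degenerate configurations, leaves the main difficulty untouched. For the purposes of this paper the correct move is simply to cite \cite{GZ}; if you want to reprove the result, each of the steps above needs to be carried out in full.
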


Observe that inequality \eqref{fewTubesInAHypersurface} is a re-scaled version of Theorem \ref{mainThm}. Indeed, Theorem \ref{mainThm} asserts that if $\tubes$ is a set of $\delta$-tubes pointing in $\delta$-separated directions, then for every polynomial $P\in\RR[x_1,x_2,x_3,x_4]$ of degree $E$, for every ball $B(x,r)$ of radius $r$, and for every $w>0$, we have
$$
|\{T\in\tubes\colon T\cap B(x,r)\subset N_{10\delta}(Z)\}|\leq C_{E,w} r^{-3} (\delta/r)^{-2-w} \leq C_{E,w} r^{-1}\delta^{-2-w}.
$$
Since every set of $\delta$-tubes pointing in $\delta$-separated directions satisfies the linear Wolff axioms, we obtain the following variant of Proposition \ref{GZProp}.
\begin{prop}\label{alostThereProp}
For each $\epsilon>0$ and $\rho>0$ there exist constants $c$ and $C$ so that the following holds. Let $\tubes$ be a set of $\delta$--tubes in $\RR^4$ pointing in $\delta$-separated directions. For each $T\in\tubes$, let $Y(T)\subset T$ with $\lambda\leq |Y(T)|/|T|\leq 2\lambda$. Suppose that $(\tubes,Y)$ is $s$--robustly transverse and that each tube $T\in\tubes$ satisfies the two-ends condition with exponent $\rho$ and error $\alpha$. Then 
\begin{equation} \label{volumeBoundExplicitTrans}
\Big| \bigcup_{T\in\tubes}Y(T)\Big| \geq c_{s} c \alpha^{-C}\lambda^{3+1/28} \delta^{1-1/28+\epsilon}\big(\delta^3|\tubes|\big).
\end{equation}
\end{prop}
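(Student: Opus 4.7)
The plan is to deduce Proposition \ref{alostThereProp} directly from Proposition \ref{GZProp}, by checking that the two extra hypotheses of Proposition \ref{GZProp}---the linear Wolff axioms and the hypersurface estimate \eqref{fewTubesInAHypersurface}---follow automatically from the assumption that $\tubes$ consists of $\delta$-tubes pointing in $\delta$-separated directions. Once these hypotheses are in place, the conclusion \eqref{volumeBoundExplicitTrans} is word-for-word that of Proposition \ref{GZProp}, and the constants $c,C$ pass through unchanged.

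First I would verify the linear Wolff axioms. Let $R$ be a rectangular prism of dimensions $1\times t_1\times t_2\times t_3$, and let $T\in\tubes$ be contained in $R$. Then $v(T)$ lies in the angular neighborhood of the long axis of $R$ of radii $\sim t_1,t_2,t_3$ in the three orthogonal short directions, a region of measure $\lesssim t_1t_2t_3$ on the direction sphere. A $\delta$-separated set of directions contains at most $\lesssim t_1t_2t_3\delta^{-3}$ elements of such a region, which (after absorbing the implicit constant) gives the bound $100\,t_1t_2t_3\delta^{-3}$ required by the linear Wolff axioms.

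Next I would verify \eqref{fewTubesInAHypersurface} by a direct rescaling of Corollary \ref{mainThm}. Given a ball $B(x,r)$ and a polynomial $P$ of degree $E$, dilate by $r^{-1}$ centered at $x$: each tube $T\in\tubes$ with $T\cap B(x,r)\subset N_{10\delta}(Z(P))$ becomes a $(\delta/r)$-tube lying in $N_{10\delta/r}(Z(\tilde P))\cap B(0,1)$, where $\tilde P$ has degree $E$. An isotropic dilation preserves directions, so the rescaled tubes still point in $\delta$-separated directions. Corollary \ref{mainThm}, applied at scale $\delta/r$ to $\tilde P$, bounds the $(\delta/r)$-covering number of the direction set by $\lesssim_{E,w}(\delta/r)^{-2-w}$. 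Each $(\delta/r)$-ball on the direction sphere contains $\lesssim r^{-3}$ of our $\delta$-separated directions, and so the number of original tubes in the given collection is
$$
\lesssim_{E,w}\ r^{-3}(\delta/r)^{-2-w}\ \leq\ C_{E,w}\,r^{-1}\delta^{-2-w}\quad\text{for }r\leq 1,
$$
which is exactly \eqref{fewTubesInAHypersurface} with $K_{E,w}:=C_{E,w}$.

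Finally, I invoke Proposition \ref{GZProp} with the same $\epsilon,\rho$, choosing $w$ small relative to $\epsilon$ so that the $\delta^{-w}$ loss in $K_{E,w}$ is absorbed into the $\delta^{\epsilon}$ factor in \eqref{volumeBoundExplicitTrans}. Setting $K=\max_{1\leq E\leq D}K_{E,w}$, a constant depending only on $\epsilon,\rho$, Proposition \ref{GZProp} yields the desired bound. There is no real technical obstacle here: Proposition \ref{alostThereProp} is a bookkeeping corollary, with all the nontrivial content front-loaded into Corollary \ref{mainThm} (and hence Theorem \ref{discretizedSeveri}) and into the Kakeya machinery of \cite{GZ}.
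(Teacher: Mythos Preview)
Your proposal is correct and follows essentially the same route as the paper: verify that $\delta$-separated directions imply the linear Wolff axioms, rescale Corollary \ref{mainThm} by $r^{-1}$ to obtain \eqref{fewTubesInAHypersurface}, and then invoke Proposition \ref{GZProp}. The paper's argument is terser but substantively identical, and your additional remarks on absorbing the $\delta^{-w}$ loss into the $\delta^{\epsilon}$ are a helpful clarification.
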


Finally, Theorem \ref{maximalFnThm} follows from Proposition \ref{alostThereProp} using the standard two-ends reduction and bilinear reduction. See e.g. \cite[Section 2]{GZ} for details.


\begin{thebibliography}{}
%
\bibitem{BB} S.~Barone and S.~Basu. On a real analog of Bezout inequality and the number of connected components of sign conditions. \emph{Proc. London Math. Soc.} 112(1):115--145, 2016.
%
\bibitem{BCR} J.~Bochnak, M.~Coste and M.~Roy. \emph{Real algebraic geometry}. Springer-Verlag, Berlin. 1998.
%
\bibitem{BG} Y.~Brudnyi and I.~Ganzburg. On an extremal problem for polynomials in $n$ variables. \emph{Math. USSR Izvestijia} 7:345--356. 1973.
%
\bibitem{C} A. C\'ordoba. The Kakeya maximal function and the spherical summation multiplier. \emph{Am. J. Math.} 99:1--22. 1977.
%
\bibitem{De} C.~Demeter. On the restriction theorem for paraboloid in $\mathbb{R}^4$. arXiv:1701.03523. 2017.
%
\bibitem{D} Z.~Dvir. On the size of Kakeya sets in finite fields. \emph{J. Amer. Math. Soc}. 22:1093--1097. 2009.  
%
\bibitem{DKSS} Z.~Dvir, S.~Kopparty, S.~Saraf, and M.~Sudan. Extensions to the method of multiplicities, with applications to
Kakeya sets and mergers. \emph{SIAM J. Comput.} 42(6):2305--2328. 2013.
%
\bibitem{F} C. Fefferman. Inequalities for strongly singular convolution operators. \emph{Acta Math.} 124:9--36, 1970.
%
\bibitem{G2} L.~Guth. Restriction estimates using polynomial partitioning. \emph{J. Amer. Math. Soc}. 29(2):371--413, 2016.
%
\bibitem{G} L.~Guth. Restriction estimates using polynomial partitioning II. arXiv:1603.04250. 2016.
%
\bibitem{GZ} L.~Guth, J.~Zahl. Polynomial Wolff axioms and Kakeya-type estimates in $\RR^4$. To appear, \emph{Proc. London Math. Soc.} arXiv:1701.07045. 2017.
%
\bibitem{KLT}N.~Katz, I.~\L{}aba, T.~Tao. An improved bound on the Minkowski dimension of Besicovitch sets in $\RR^3$. \emph{Ann. of Math.} 152: 383--446, 2000.
%
\bibitem{KR} N.~Katz, K.~Rogers. On the polynomial Wolff axioms. arXiv:1802.09094. 2018. 
%
\bibitem{KZ} N.~Katz, J.~Zahl. An improved bound on the Hausdorff dimension of Besicovitch sets in $\RR^3$. arXiv:1704.07210. 2017.
%
\bibitem{LT}I.~\L{}aba, T.~Tao. An improved bound for the Minkowski dimension of Besicovitch sets in medium dimension. \emph{Geom. Funct. Anal.} 11(4):773-806. 2001.
%
\bibitem{M} J.~Milnor. On the Betti numbers of real varieties. \emph{Proc. Amer. Math. Soc.} 15(2): 275--280. 1964.
%
\bibitem{R} E.~Rogora. Varieties with many lines. \emph{Manuscripta Math.} 82(1): 207--226. 1994.
%
\bibitem{Se} B.~Segre. Sulle $V_n$ aventi pi\'u di $\infty^{n-k}S_k$. \emph{ Rendiconti dell’accademia nazionale dei Lincei}. Vol. V, note I e II. 1948.
%
\bibitem{S}  F.~Severi. Intorno ai punti doppi impropri di una superficie generale dello spazio a quattro dimensioni, e a'suoi punti tripli apparenti. \emph{Rend. circ. mat. Palermo} 15(1): 33--51. 1901.
%
\bibitem{SS} M.~Sharir and N.~Solomon. Incidences between points and lines in $\mathbb{R}^4$. \emph{Disc. Comput. Geom.} 57(3); 702--756. 2017. 
%
\bibitem{T} T.~Tao. A new bound for finite field Besicovitch sets in four dimensions. \emph{Pacific J. Math.} 222: 43--57. 2005
%
\bibitem{W}  T.~Wolff. An improved bound for Kakeya type maximal functions. \emph{Rev. Mat. Iberoam.} 11(3): 651--674. 1995.
%
\bibitem{Wo} R.~Wongkew. Volumes of tubular neighbourhoods of real algebraic varieties. \emph{Pacific J. Math.} 159:177--184. 2003.
%
\bibitem{Yo} Y. Yomdin and G. Comte, \emph{Tame geometry with application in smooth analysis}. Springer-Verlag, Berlin, 2004.
%
\bibitem{Z} A. Zygmund. On Fourier coefficients and transforms of functions of two variables. \emph{Studia Math.} 50:189--201. 1974.
\end{thebibliography}
\end{document}